\documentclass[11pt,reqno]{amsart}

\usepackage[T1]{fontenc}
\usepackage[a4paper,margin=1in]{geometry}
\usepackage{amsmath,amssymb,amsthm,mathtools,mathrsfs}
\usepackage{enumitem}
\usepackage{microtype}
\usepackage[hidelinks]{hyperref}
\allowdisplaybreaks
\newtheorem{theorem}{Theorem}[section]
\newtheorem{lemma}[theorem]{Lemma}
\newtheorem{proposition}[theorem]{Proposition}
\newtheorem{corollary}[theorem]{Corollary}
\theoremstyle{definition}

\theoremstyle{remark}
\newtheorem{remark}[theorem]{Remark}

\numberwithin{equation}{section}

\newcommand{\C}{\mathbb C}
\newcommand{\R}{\mathbb R}
\newcommand{\CHyp}{\mathbb H_{\mathbb C}}
\newcommand{\Heis}{\mathcal H}
\newcommand{\Ucal}{\mathcal U}
\newcommand{\Gell}{\mathcal G}
\newcommand{\dd}{\,d}

\title[Critical Geller Equations on Complex Hyperbolic Space]{Critical Geller Equations on Complex Hyperbolic Space:\\
Sharp Stability, Ground-State Symmetry, and Global Compactness}
\author{Jungang Li}
\address{Department of Mathematics, University of Science and Technology of
China, Hefei, Anhui, China}
\email{jungangli@ustc.edu.cn}
\date{}
\subjclass[2020]{35J20, 35J61, 35R03, 35H20, 35B33, 35B38, 32Q45, 43A80}
\keywords{Geller operator, complex hyperbolic space, Bianchi--Egnell stability,
sharp Sobolev inequality, profile decomposition, Brezis--Nirenberg problem,
optimal stability quotient, nondegeneracy, second-order asymptotics,
Palais--Smale compactness,
Busemann barycenter}
\hypersetup{
 pdftitle={Critical Geller Equations on Complex Hyperbolic Space:
 Sharp Stability, Ground-State Symmetry, and Global Compactness},
 pdfauthor={Jungang Li},
 pdfsubject={Critical Geller equations on complex hyperbolic space},
 pdfkeywords={Geller operator, complex hyperbolic space, Bianchi-Egnell
 stability, sharp Sobolev inequality, profile decomposition,
 Brezis-Nirenberg problem, optimal stability quotient, ground-state
 nondegeneracy, second-order asymptotics}
}

\begin{document}

\begin{abstract}
For integers \(n,\ell\geq1\), set
\(Q_\ell=2(n+\ell)+2\) and
\(q_\ell=2Q_\ell/(Q_\ell-2)\). On the Siegel domain
\(\Ucal=\Heis^n\times(0,\infty)\), we study
\[
 -\Delta_{\Heis^n}v
 -4\rho\bigl(v_{\rho\rho}+T^2v\bigr)-4\ell v_\rho
 =|v|^{q_\ell-2}v.
\]
For its Dirichlet form \(E_\ell\), we determine the sharp Sobolev constant
and all extremals, prove
\[
 S_\ell\|v\|_{q_\ell}^2\leq E_\ell(v),\qquad
 E_\ell(v)-S_\ell\|v\|_{q_\ell}^2
 \geq\kappa_{n,\ell}
 \operatorname{dist}_{\dot S^1_\ell}
 \bigl(v,\mathfrak M_\ell^{\R}\bigr)^2,
\]
where \(\mathfrak M_\ell^{\R}\) is the extremal cone. We classify
nonnegative finite-energy solutions and establish the linearized kernel,
profile decomposition,
and attainment of the optimal stability quotient. Cayley conjugation yields
ground-state symmetry and nondegeneracy, global Palais--Smale compactness,
and perturbative existence for critical equations on
\(\CHyp^{n+1}\).

The mechanism is the radial lift
\(v^\uparrow(z,w,t)=v(z,t,|w|^2)\), which reverses the
interior-to-boundary construction by realizing the Siegel domain as a
symmetry-reduced slice of a larger Heisenberg group. The completed-space
reduction resolves the degenerate axis, hidden auxiliary concentration,
and the mismatch of extremal cones. Together with the Cayley transform,
this supplies the missing nonlinear layer---classification, stability,
bubbling, and variational compactness---on complex hyperbolic space and
provides a blueprint for other rank-one symmetric spaces.
\end{abstract}

\maketitle

\section{Introduction}

The analysis of a critical Sobolev equation on a noncompact geometry
requires more than a coercive embedding.  A nonlinear theory must also
identify the best constant and its equality manifold, determine the
linearized kernel, quantify the distance to the extremals, describe every
loss of compactness, and place perturbed critical points below the
corresponding concentration thresholds.  On Euclidean space these
components form the classical Sobolev--Brezis--Nirenberg theory.  On real
hyperbolic space they interact with the spectral gap and with a different
noncompactness geometry: profiles may occur at a small Euclidean scale or
escape under hyperbolic isometries.

The broader analysis of noncompact homogeneous and symmetric spaces
already supplies qualitative variational and concentration--compactness
frameworks for semilinear elliptic problems
\cite{BiroliSchindlerTintarev2003Symmetries}.  Among rank-one spaces, real
hyperbolic space provides the closest comparison with the present work.
Mancini--Sandeep \cite{ManciniSandeep2008Semilinear} established the
second-order existence and classification theory, and Bhakta--Sandeep
\cite{BhaktaSandeep2012Poincare} described Palais--Smale sequences and
constructed sign-changing solutions.  At higher order, Lu--Yang
\cite{LuYang2019PaneitzHyperbolic} proved Hardy--Sobolev--Maz'ya
inequalities for Paneitz-type operators, while Li--Lu--Yang
\cite{LiLuYang2022HyperbolicBN} developed a substantial
Brezis--Nirenberg theory, including existence, nonexistence, and symmetry.
More recently, Bhakta--Ganguly--Karmakar--Mazumdar
\cite{BhaktaGangulyKarmakarMazumdar2025Stability,
BhaktaGangulyKarmakarMazumdar2025Struwe} established quantitative
Poincar\'e--Sobolev stability, from the optimizer manifold to multibubble
Struwe decompositions.  On complex hyperbolic space, Lu--Yang
\cite{LuYang2022SiegelHardySobolevMazya} made the foundational
harmonic-analytic advances relevant here.  They obtained factorization
formulas for the operators associated with the Geller family, proved
Poincar\'e--Sobolev and Hardy--Sobolev--Maz'ya inequalities, and established
sharp Adams and Hardy--Adams inequalities.  In particular, their work
identifies the correct operators and provides the coercive estimates needed
to formulate the present critical problems.  For the power-type nonlinear
Sobolev quotients studied below, the Poincar\'e--Sobolev and
Hardy--Sobolev--Maz'ya estimates serve as this indispensable starting point;
the optimal quotient constants and equality cases are not determined in
that work.  The questions that remain are therefore nonlinear and
geometric rather than spectral: one must recover the optimizer geometry,
classify finite-energy solutions, quantify stability, and resolve every
concentration channel of critical sequences.

A second, closely related line of work uses the Siegel domain in the usual
interior-to-boundary direction.  The foundational scattering construction
of Graham--Zworski \cite{GrahamZworski2003Scattering} recovers conformally
covariant operators at infinity from interior eigenvalue problems on
conformally compact Einstein manifolds.  In the CR setting,
Frank--Gonz\'alez--Monticelli--Tan
\cite{FrankGonzalezMonticelliTan2015CRExtension} realized the conformal
fractional powers of the Heisenberg sub-Laplacian as Dirichlet-to-Neumann,
or equivalently scattering, operators for a linear extension problem on the
Siegel upper half-space, and derived the corresponding sharp CR Sobolev
trace inequality.  Flynn--Lu--Yang \cite{FlynnLuYang2025CRTrace} developed
this picture at higher orders by constructing conformally covariant boundary
operators, proving the higher-order extension theorems, and establishing
sharp higher-order CR Sobolev trace inequalities.  In this picture the
interior equation is the linear mechanism that encodes a critical operator
and inequality on the boundary Heisenberg group.

These developments give complex hyperbolic analysis a strong linear,
harmonic-analytic, and boundary theory.  Prior to the present work, what
remained unavailable for interior equations on complex hyperbolic space was
a unified sharp nonlinear theory: one that simultaneously determines the
critical constant and equality manifold, classifies finite-energy solutions,
proves quantitative stability, resolves the full profile decomposition, and
controls perturbed critical points by exact concentration thresholds.  Our
purpose is to construct this theory for the interior Geller equation.  For a
real parameter \(\ell\), the Geller operator
\begin{equation}\label{eq:intro-geller}
 \Gell_\ell v
 =-\Delta_{\Heis^n}v-4\rho\bigl(v_{\rho\rho}+T^2v\bigr)-4\ell v_\rho
\end{equation}
on the Siegel domain \(\Ucal=\Heis^n\times(0,\infty)\) is the conformally
natural elliptic family introduced in Geller's harmonic analysis of the
CR sphere \cite{Geller1980Kohn}.

Our organizing idea uses the same bulk--boundary geometry in the opposite
direction.  Rather than eliminating the interior variable in order to
extract an operator on \(\Heis^n\), we retain the Siegel domain as the space
of the nonlinear equation, adjoin an auxiliary block \(w\in\C^\ell\), and
write
\[
 v^\uparrow(z,w,t)=v(z,t,|w|^2),
 \qquad w\in\C^\ell.
\]
When \(\ell\) is an integer, the map
\((z,w,t)\mapsto(z,t,|w|^2)\) induces an identification of \(\Ucal\) with
the principal stratum of the orbit space
\(\Heis^{n+\ell}/U(\ell)\), and \(\Gell_\ell\) becomes the radial part of
\(-\Delta_{\Heis^{n+\ell}}\) on the fixed sector.  Thus the critical
Geller equation is realized as the fixed sector of the critical
Folland--Stein equation on the larger Heisenberg group
\(\Heis^{n+\ell}\).  This dimension-raising radial lift is exact in the
completed energy space.  From this viewpoint, the Siegel domain is not
merely an extension space over its Heisenberg boundary, but a symmetry-reduced
radial sector of a larger Heisenberg geometry.  The resulting lift--descent
mechanism is what makes the missing nonlinear layer accessible.  It also
suggests a route for critical nonlinear analysis across rank-one symmetric
spaces: realize an interior problem through an invariant sector of the
nilpotent boundary model, and then descend sharp and compactness information
through the symmetry.

There is a second, intrinsic realization.  For every integer
\(\ell\geq1\), the smooth equation is exactly conjugated by the Cayley
multiplier
\(\mathcal C_\ell U=\rho^{-(n+\ell)/2}U\), to a Brezis--Nirenberg equation
for a Schr\"odinger operator on complex hyperbolic space
\(\CHyp^{n+1}\).  When \(\ell>1\), this second correspondence extends to
the completed form domains, and the two realizations fit into the diagram
\[
\renewcommand{\arraystretch}{1.4}
\begin{array}{ccc}
 \dot S^1_\ell(\Ucal)
 &\ \xrightarrow{\ \ \uparrow\ \ }\ &
 \dot H^1(\Heis^{n+\ell})^{U(\ell)}\\
 \Big\downarrow{\scriptstyle\ \mathcal C_\ell^{-1}} & &\\
 H^1(\CHyp^{n+1},g) & &
\end{array}
\]
The horizontal map is a similarity onto the invariant sector, with squared
energy factor \(c_\ell=\pi^\ell/\Gamma(\ell)\); for \(\ell>1\), the
vertical map is an isomorphism of Friedrichs form domains.  The two
realizations contribute different parts of the nonlinear theory.  The
Heisenberg realization carries sharp constants
\cite{FrankLieb2012SharpHeisenberg}, extremal classification
\cite{JerisonLee1988Extremals}, profile decomposition
\cite{Benameur2008ProfileHeisenberg}, and quantitative stability
\cite{Loiudice2005ImprovedSobolev}.  The complex hyperbolic realization
carries an intrinsic isometry group, a spectral gap, and the negatively
curved geometry underlying barycenter constructions.  In Siegel
coordinates it is the harmonic \(AN\) group
\(\Heis^n\rtimes\R_+\), whose spherical analysis---radial Laplacian, heat
kernel, and spectral theory---was developed by Anker--Damek--Yacoub
\cite{AnkerDamekYacoub1996HarmonicAN} and, for general noncompact
symmetric spaces, by Anker \cite{Anker1990Multipliers} and Anker--Ji
\cite{AnkerJi1999HeatGreen}.  The critical exponent is
governed by the Heisenberg homogeneous dimension \(Q_\ell=2(n+\ell)+2\),
not by the Riemannian dimension of \(\CHyp^{n+1}\); the integer family is
therefore transverse to the Riemannian Yamabe problem, which it meets only
at the formal value \(\ell=0\).  This places the Geller family within a
sharp nonlinear theory on a rank-one symmetric space, organized jointly
by its sub-Riemannian and negatively curved realizations.

The dimension-raising lift is not a formal transfer principle.  The
homogeneous completion of the reduced energy must cross the degenerate
axis \(\rho=0\), so a reduced weak solution cannot initially be tested
against functions that are smooth through the axis.  Full-space
translations can move a profile into the invisible auxiliary
\(\C^\ell\)-directions, and the full optimizer cone is strictly larger
than its fixed part.  Consequently, neither profile decomposition nor
quantitative stability descends by restricting a full-space theorem.  On
the intrinsic side, the Cayley multiplier converts a constant spectral
term into the singular Hardy weight \(\rho^{-1}\).  The sharp theory,
optimizer classification, compactness, and perturbed equations therefore
require additional fixed-sector arguments.

The main innovation is to use the \(U(\ell)\)-symmetry to resolve the
obstructions created by the reduction.  The auxiliary translation tangent
at a bubble is annihilated by the projection onto the fixed sector; this
orthogonality converts a full-space Bianchi--Egnell estimate into a
uniform fixed-sector estimate
(Proposition~\ref{prop:distance-comparison}).  Rotating an unbounded
auxiliary center would produce arbitrarily many asymptotically orthogonal
weak limits of equal norm, which eliminates the invisible concentration
parameter from the profile decomposition
(Theorem~\ref{thm:intro-profile}).  Finally, the barycenter of a
\(U(\ell)\)-invariant boundary measure is itself fixed and hence lies in a
totally geodesic copy of \(\CHyp^{n+1}\); this supplies continuous
center--scale coordinates for the linking argument
(Lemma~\ref{lem:busemann-coordinates}).  The symmetry is therefore not
merely imposed on the ambient theory: it provides the mechanism by which
the nonlinear theory closes.

\subsection{Setup and notation}

Fix \(n\geq1\) and an integer \(\ell\geq1\), and set
\[
 \Ucal=\Heis^n\times(0,\infty),
 \qquad
 Q_\ell=2(n+\ell)+2,
 \qquad
 q_\ell=\frac{2Q_\ell}{Q_\ell-2}=2+\frac{2}{n+\ell}.
\]
We write \(\nabla_H\) and \(\Delta_{\Heis^n}\) for the horizontal gradient
and sub-Laplacian on \(\Heis^n\), and \(T=\partial_t\) for the central
field; the normalizations are recorded at the start of
Section~\ref{sec:radial-lift}.  The Hermitian product
\(\langle z,z'\rangle=\sum_jz_j\overline{z'_j}\) is linear in its first
variable.  The operator is \(\Gell_\ell\) as in \eqref{eq:intro-geller};
the measure and energy adapted to it are
\[
 \dd\mu_\ell=\rho^{\ell-1}\dd z\dd t\dd\rho,
 \qquad
E_\ell(v)
=\int_{\Ucal}\bigl(
|\nabla_Hv|^2+4\rho(|v_\rho|^2+|v_t|^2)
\bigr)\dd\mu_\ell .
\]
We denote the associated polarized bilinear form by
\(E_\ell(v,w)\).
Unless another measure is displayed we write
\(\|v\|_r=(\int_{\Ucal}|v|^r\dd\mu_\ell)^{1/r}\), and \(\dot
S^1_\ell(\Ucal)\) denotes the completion of \(C_c^\infty(\Ucal)\) in the
norm \(E_\ell^{1/2}\).  No boundary trace at \(\rho=0\) is part of this
definition.

The completed linear energy-space identities, the sharp inequality, and the
profile decomposition are valid over either \(\R\) or \(\C\).  All
solutions, Palais--Smale sequences, and variational manifolds in the Hardy
and compact-potential problems are taken over \(\R\).

For the Hardy problem we assume \(\ell>1\).  Since then
\(n+\ell\geq3\),
\begin{equation}\label{eq:intro-subcubic-range}
 2<q_\ell\leq\frac83<3.
\end{equation}
Thus the critical Nemytskii map is naturally \(C^{1,q_\ell-2}\), rather
than twice differentiable, on the energy space.  This is the source of the
fractional remainder orders in the small-coupling expansions below.

The restriction to integer \(\ell\) is structural: it is exactly the
condition under which \(\Gell_\ell v\) is the restriction of
\(-\Delta_{\Heis^{n+\ell}}v^\uparrow\) to the \(U(\ell)\)-invariant sector.
This observation transfers sharp full-space information to the Geller
problem, but, as explained above, the transfer is not formal at the level
of homogeneous completions.

\subsection{Main results}

The results below form three connected layers of one theory.  The sharp
fixed-sector layer consists of the sharp inequality, equality and
solution classification, profile decomposition, quantitative stability,
and the spectral geometry of the optimizer manifold.  The intrinsic
layer uses the Cayley correspondence to treat attractive Hardy
perturbations, including global compactness, uniqueness, symmetry,
nondegeneracy, and small-coupling asymptotics.  The compact-potential
layer combines the sharp thresholds with the negatively curved geometry
to produce solutions for repulsive perturbations.  We state the results
in this order to make their logical dependence explicit.

The sharp Sobolev constant is defined by its variational role,
\begin{equation}\label{eq:intro-Sl}
 S_\ell
 =\inf_{0\ne v\in\dot S^1_\ell(\Ucal)}
 \frac{E_\ell(v)}{\|v\|_{q_\ell}^2}.
\end{equation}
The sharp fixed-sector theory begins by determining this constant, the
full equality family, and all nonnegative finite-energy solutions of the
critical equation.  The closed form of the constant, obtained from the
polar Jacobian of the integer lift and the sharp full-Heisenberg constant
of Frank--Lieb, is recorded in Section~\ref{sec:sharp-profile}.

\begin{theorem}[Sharp inequality and classification]
\label{thm:intro-sharp}
For every \(v\in\dot S^1_\ell(\Ucal)\),
\[
 S_\ell\|v\|_{q_\ell}^2\leq E_\ell(v).
\]
The constant is optimal.  The nonzero real equality cases are precisely
\[
 A\left[
  (\rho+\beta+|z-z_0|^2)^2
  +(t-t_0+2\operatorname{Im}\langle z,z_0\rangle)^2
 \right]^{-(n+\ell)/2},
\]
where \(A\ne0\), \(\beta>0\), \(z_0\in\C^n\), and \(t_0\in\R\).  Moreover,
every nonzero nonnegative weak solution in \(\dot S^1_\ell(\Ucal)\) of
\[
 \Gell_\ell v=v^{q_\ell-1}
\]
is a coefficient-one member of this family.
\end{theorem}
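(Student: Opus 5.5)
The plan is to transfer everything through the radial lift \(v^\uparrow(z,w,t)=v(z,t,|w|^2)\), \(w\in\C^\ell\). First I verify the polar-coordinate identities for \(v\in C_c^\infty(\Ucal)\). With \(\rho=|w|^2\) and polar coordinates in \(w\) (Jacobian \(\frac{\pi^\ell}{\Gamma(\ell)}\rho^{\ell-1}\dd\rho\)), one has
\[
\int_{\Heis^{n+\ell}}|\nabla_H v^\uparrow|^2=c_\ell E_\ell(v),\qquad \int_{\Heis^{n+\ell}}|v^\uparrow|^{q_\ell}=c_\ell\|v\|_{q_\ell}^{q_\ell}.
\]
The first identity comes from expanding the horizontal fields \(X_j,Y_j\) in the \(w\)-block on radial functions. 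The \(w\)-gradient contributes \(4\rho|v_\rho|^2\) from \(|\partial_w|w|^2|^2\) together with \(4\rho|v_t|^2\) from the \(2\operatorname{Im}\) twist, and the cross terms vanish after angular integration in \(w\). I then extend both identities by density to a map \(\dot S^1_\ell(\Ucal)\to\dot H^1(\Heis^{n+\ell})^{U(\ell)}\). This map is isometric up to \(c_\ell^{1/2}\). For surjectivity onto the invariant sector, I average an approximating sequence over \(U(\ell)\); then I must show that invariant smooth functions supported near the axis \(w=0\) are approximable by lifts of \(C_c^\infty(\Ucal)\) functions. For this I use a cutoff \(\chi(\log|w|/\log\epsilon)\)-type logarithmic truncation in \(\rho\), whose energy cost vanishes because the axis has codimension \(2\ell\geq2\) (a capacity-zero set for \(Q_\ell\)-dimensional horizontal energy). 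This density step is the first place where care is needed.

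Given the similarity, the inequality follows from Frank--Lieb \cite{FrankLieb2012SharpHeisenberg}, since \(c_\ell\) scales as \(c_\ell E\) against \(c_\ell^{2/q}\|\cdot\|^2\). This gives \(S_\ell=c_\ell^{1-2/q_\ell}S_{FL}(n+\ell)\) and the inequality. Optimality and the equality cases follow because the Jerison--Lee extremals \(A|(t'+i|\zeta|^2)+ \ldots|^{-(n+\ell)}\) form a family that contains \(U(\ell)\)-invariant members, and these are exactly the ones centered at points with \(w_0=0\). Equality for \(v\) forces \(v^\uparrow\) to be a full extremal, and such an extremal is \(U(\ell)\)-invariant only if its center has vanishing \(w\)-component. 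Writing \(|w|^2=\rho\) in the standard extremal centered at \((z_0,0,t_0)\) with scale \(\beta\) gives the displayed formula after expanding \(|z-z_0|^2\) and the twisted \(t\)-difference. Checking that invariance kills \(w_0\) is elementary: the modulus \(|\rho+\beta+|z-z_0|^2-2\langle w,w_0\rangle+|w_0|^2+i(\ldots)|\) depends on \(w\) only through \(|w|\) iff \(w_0=0\).

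For the classification, the main obstacle is that a weak solution in \(\dot S^1_\ell\) is tested only against \(C_c^\infty(\Ucal)\), i.e.\ functions supported away from \(\rho=0\). I first show that \(v^\uparrow\) is a weak solution of \(-\Delta_{\Heis^{n+\ell}}u=u^{q-1}\) against invariant test functions in \(C_c^\infty(\Heis^{n+\ell})\). This is the same logarithmic-capacity truncation applied to the test function, with the nonlinear term controlled by \(u^{q-1}\in L^{q'}\) and Hölder's inequality. By averaging over \(U(\ell)\), \(u=v^\uparrow\) is then a weak solution against all test functions, because \(\int u^{q-1}\varphi=\int u^{q-1}\bar\varphi\) for \(\bar\varphi\) the \(U(\ell)\)-average, and likewise for the energy pairing. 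Now \(u\geq0\) is a nonzero finite-energy solution on \(\Heis^{n+\ell}\). By the Jerison--Lee classification \cite{JerisonLee1988Extremals} (in its finite-energy form, via the Cayley transform to the CR sphere and regularity), \(u\) is a Jerison--Lee bubble with the normalization forced by the equation; invariance again forces \(w_0=0\). Descending gives the coefficient-one member of the displayed family, with \(A\) fixed by plugging into \(\Gell_\ell v=v^{q_\ell-1}\); I interpret ``coefficient-one'' as this normalization in the paper's convention.
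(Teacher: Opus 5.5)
Your proposal is correct and follows essentially the paper's route. The shared steps are the polar lift with axis-capacity cutoffs (a similarity onto the $U(\ell)$-fixed sector), Frank--Lieb for the constant, and Jerison--Lee plus $U(\ell)$-invariance to force $w_0=0$. For weak solutions you also match the paper: axis truncation of the test, Haar averaging, then a regularity/positivity step (for which the paper cites Ivanov--Vassilev, Theorem 3.6(c)) before Jerison--Lee and the explicit amplitude normalization.

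One slip: from $c_\ell E_\ell(v)\geq S^{\mathrm{FS}}_{n+\ell}c_\ell^{2/q_\ell}\|v\|_{q_\ell}^2$ you get $S_\ell=c_\ell^{2/q_\ell-1}S^{\mathrm{FS}}_{n+\ell}=c_\ell^{-1/(n+\ell+1)}S^{\mathrm{FS}}_{n+\ell}$, not $c_\ell^{1-2/q_\ell}S^{\mathrm{FS}}_{n+\ell}$.
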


We write \(B_0\) for the positive centered extremal normalized by
\(\|B_0\|_{q_\ell}=1\), so that \(E_\ell(B_0)=S_\ell\).  For \(h>0\) and
\(\eta=(z_*,t_*)\in\Heis^n\), the translation--dilation action is
\begin{equation}\label{eq:intro-frame}
 (\mathcal T_{h,\eta}\phi)(z,t,\rho)
 =h^{-(n+\ell)}\phi\!\left(
 \frac{z-z_*}{h},
 \frac{t-t_*+2\operatorname{Im}\langle z,z_*\rangle}{h^2},
 \frac{\rho}{h^2}\right).
\end{equation}
Both \(E_\ell\) and \(\|{\cdot}\|_{q_\ell}\) are invariant under
\(\mathcal T_{h,\eta}\).  We call \(g=(h,\eta)\) a \emph{frame} and
\(\mathcal T_g\) its associated \emph{frame action}.  The real amplitude
cone and the signed unit-norm
orbit generated by \(B_0\) are
\[
 \mathfrak M_\ell^{\R}
 =\{A\,\mathcal T_{h,\eta}B_0:A\in\R,\;h>0,\;\eta\in\Heis^n\},
\]
\[
 \mathcal M_{1,\ell}
 =\{\sigma\,\mathcal T_{h,\eta}B_0:
 \sigma\in\{-1,1\},\;h>0,\;\eta\in\Heis^n\}.
\]

The same fixed-sector structure identifies the entire defect of
compactness.  The essential point is the parameter set: only Heisenberg
translations and homogeneous dilations occur, with no
\(\rho\)-translation and no auxiliary \(\C^\ell\)-center.

\begin{theorem}[Profile decomposition in the invariant sector]
\label{thm:intro-profile}
Every bounded sequence \((v_k)\) in \(\dot S^1_\ell(\Ucal)\) admits, after
passage to a subsequence, profiles \(\phi^j\in\dot S^1_\ell(\Ucal)\),
frames \(g_k^j=(h_k^j,\eta_k^j)\), and remainders \(r_k^J\) such that, for
every fixed \(J\),
\begin{align}
 v_k&=\sum_{j=1}^J\mathcal T_{g_k^j}\phi^j+r_k^J,
 \label{eq:intro-profile-expansion}\\
 E_\ell(v_k)
 &=\sum_{j=1}^JE_\ell(\phi^j)+E_\ell(r_k^J)+o_k(1),
 \label{eq:intro-profile-energy}\\
 \|v_k\|_{q_\ell}^{q_\ell}
 &=\sum_{j=1}^J\|\phi^j\|_{q_\ell}^{q_\ell}
  +\|r_k^J\|_{q_\ell}^{q_\ell}+o_k(1),
 \label{eq:intro-profile-mass}
\end{align}
and
\begin{equation}\label{eq:intro-profile-remainder}
 \lim_{J\to\infty}\limsup_{k\to\infty}\|r_k^J\|_{q_\ell}=0.
\end{equation}
For \(i\ne j\), either
\(\bigl|\log(h_k^i/h_k^j)\bigr|\to\infty\), or comparable scales may be
represented by the same sequence and
\[
 \left|\delta_{1/h_k^i}
 \bigl((\eta_k^i)^{-1}\eta_k^j\bigr)\right|_{\Heis^n}
 \longrightarrow\infty.
\]
\end{theorem}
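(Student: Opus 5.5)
The approach is to transport the question through the radial lift \(v\mapsto v^\uparrow\) to \(\dot H^1(\Heis^{n+\ell})^{U(\ell)}\), apply Benameur's profile decomposition \cite{Benameur2008ProfileHeisenberg} on \(\Heis^{n+\ell}\), and then force every ambient frame into the form \eqref{eq:intro-frame}. The first step is the completed-space similarity. Polar coordinates \(w=\sqrt\rho\,\omega\) give \(\int_{\Heis^{n+\ell}}|\nabla_H v^\uparrow|^2=c_\ell E_\ell(v)\) and \(\|v^\uparrow\|_{q_\ell}^{q_\ell}=c_\ell\|v\|_{q_\ell}^{q_\ell}\) for \(v\in C_c^\infty(\Ucal)\). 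I will show that the closure of these lifts is exactly the \(U(\ell)\)-invariant subspace, so that the lift extends to a similarity onto it. The intended argument is that invariant test functions on \(\Heis^{n+\ell}\) are functions of \((z,t,|w|^2)\), and that the axis \(\{w=0\}\) has zero capacity because its real codimension \(2\ell\geq2\) exceeds the horizontal-energy threshold. Hence cutoffs near the axis cost \(o(1)\) energy, and invariant smooth functions are approximated by lifts of functions in \(C_c^\infty(\Ucal)\). Since \(U(\ell)\) acts by automorphisms of \(\Heis^{n+\ell}\), averaging over \(U(\ell)\) is an orthogonal projection \(P\) that commutes with the \(L^{q}\) structure.

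Next I apply the full-space decomposition to \(u_k=v_k^\uparrow\). This gives profiles \(\Phi^j\), frames \((h_k^j,\xi_k^j)\) with \(\xi_k^j=(z_k^j,w_k^j,t_k^j)\), and remainders satisfying the analogues of \eqref{eq:intro-profile-energy}--\eqref{eq:intro-profile-remainder}. The main step is to show that, after renormalizing, \(|w_k^j|/h_k^j\) stays bounded for every \(j\). Suppose instead that \(|w_k^j|/h_k^j\to\infty\) along a subsequence. Then for each \(m\) I choose rotations \(R_1,\dots,R_m\in U(\ell)\) such that the rotated centers \(R_i\xi_k^j\) are pairwise separated in the rescaled Heisenberg metric. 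This is possible because the separation is at least \(c\,|w_k^j|/h_k^j\) for distinct well-spread \(R_i w\). Since \(u_k\) is invariant, \(u_k=u_k\circ R_i\), and each translate-dilate pulled back along \(R_i\xi_k^j\) has the same weak limit \(\Phi^j\circ R_i\). The standard orthogonality of asymptotically separated frames then gives \(E(u_k)\geq\sum_{i\le m}E(\Phi^j)-o(1)=m\,E(\Phi^j)-o(1)\), which contradicts boundedness once \(m\) is large. Therefore \(w_k^j/h_k^j\) is bounded, and passing to a subsequence it converges to some \(a_j\). I then absorb this limit by replacing \(\Phi^j\) with its translate by \((0,a_j,0)\), which changes the remainder only by \(o(1)\) in energy. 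This puts the auxiliary component of every center at zero, and it is the only delicate step.

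With \(w\)-centers equal to zero, the frame maps commute with \(U(\ell)\). Applying \(P\) to the expansion is therefore legitimate: because \(P u_k=u_k\) and \(P\) commutes with the frames, the profiles may be replaced by \(P\Phi^j\), which is the weak limit anyway, and each such profile is invariant and equal to \((\phi^j)^\uparrow\). The frames \((h,(z_*,0,t_*))\) restrict exactly to \(\mathcal T_{h,\eta}\) with \(\eta=(z_*,t_*)\), since \(\operatorname{Im}\langle (z,w),(z_*,0)\rangle=\operatorname{Im}\langle z,z_*\rangle\) and \(\rho=|w|^2\) scales by \(h^2\). The remainders \(Pr_k^J=r_k^J\) are invariant and descend as well. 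Dividing the Pythagorean identities by \(c_\ell\) yields \eqref{eq:intro-profile-energy} and \eqref{eq:intro-profile-mass}, and \eqref{eq:intro-profile-remainder} follows from the ambient statement since the \(L^{q}\) norms agree up to \(c_\ell^{1/q_\ell}\).

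Finally, the ambient orthogonality condition for \(i\neq j\) reads \(|\log(h_k^i/h_k^j)|\to\infty\) or \(|\delta_{1/h_k^i}((\xi_k^i)^{-1}\xi_k^j)|_{\Heis^{n+\ell}}\to\infty\). For centers with zero \(w\)-part, the product \((\xi^i)^{-1}\xi^j\) has zero \(w\)-part and its \(\Heis^{n+\ell}\) gauge equals the \(\Heis^n\) gauge of \((\eta^i)^{-1}\eta^j\). This gives the stated dichotomy. The step I expect to be hardest is the rotation argument that excludes unbounded auxiliary centers, because it requires checking carefully that the rotated frames are pairwise orthogonal uniformly in \(k\).
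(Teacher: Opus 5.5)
Your proposal is correct and follows essentially the same route as the paper. Both lift to the $U(\ell)$-fixed sector of $\dot S^1(\Heis^{n+\ell})$ and apply Benameur's decomposition. Both then exclude unbounded normalized auxiliary centers by the rotated-frame Bessel argument, absorb the bounded auxiliary center through the exact factorization $\xi_k=\eta_k\,\delta_{h_k}\zeta_k$ (no central correction) together with strong continuity of translations, and descend the now-invariant profiles and remainders through the polar identities. The one step you leave implicit, which the paper isolates as Lemma~\ref{lem:bounded-recentering}, is that the ambient orthogonality you invoke at the end must hold for the recentered frames $(h_k^j,\eta_k^j)$, not only for the original ones; this follows because $\delta_{1/h_k^i}\bigl((\xi_k^i)^{-1}\xi_k^j\bigr)=(\zeta_k^i)^{-1}\,\delta_{1/h_k^i}\bigl((\eta_k^i)^{-1}\eta_k^j\bigr)\,\zeta_k^j$ with $\zeta_k^i,\zeta_k^j$ bounded.
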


Quantitative stability is the next component of this layer.  Let
\(\widetilde B\) be a full-space bubble
on \(\Heis^{n+\ell}\), and let \(\widetilde{\mathfrak M}_\ell^{\mathrm{full}}\)
and \(\widetilde{\mathfrak M}_\ell^{\mathrm{fix}}\) denote, respectively,
its full real translation--dilation cone and the subcone whose auxiliary
\(\C^\ell\)-center is zero.  We prove that every \(U(\ell)\)-fixed real
\(F\in\dot H^1(\Heis^{n+\ell})\) satisfies
\begin{equation}\label{eq:intro-cone-comparison}
 \operatorname{dist}_{\dot H^1}
 \bigl(F,\widetilde{\mathfrak M}_\ell^{\mathrm{fix}}\bigr)
 \leq C_{n,\ell}
 \operatorname{dist}_{\dot H^1}
 \bigl(F,\widetilde{\mathfrak M}_\ell^{\mathrm{full}}\bigr).
\end{equation}
The polar lift then converts \eqref{eq:intro-cone-comparison} and
Loiudice's full-space stability theorem
\cite{Loiudice2005ImprovedSobolev} into the following.

\begin{theorem}[Quantitative fixed-sector stability]
\label{thm:intro-stability}
There exists \(\kappa_{n,\ell}>0\) such that for every real
\(v\in\dot S^1_\ell(\Ucal;\R)\),
\[
 E_\ell(v)-S_\ell\|v\|_{q_\ell}^2
 \geq\kappa_{n,\ell}
 \operatorname{dist}_{\dot S^1_\ell}
 \!\bigl(v,\mathfrak M_\ell^{\R}\bigr)^2.
\]
\end{theorem}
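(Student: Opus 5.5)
We transfer the full-space Bianchi--Egnell estimate of Loiudice \cite{Loiudice2005ImprovedSobolev} through the radial lift \(v\mapsto v^\uparrow\) and close the gap between the two extremal cones with the comparison \eqref{eq:intro-cone-comparison}.

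The first step records the exact lift identities. Polar coordinates in \(w\in\C^\ell\), with \(\rho=|w|^2\), give \(dw=c\,\rho^{\ell-1}d\rho\,d\sigma\). Using the fact that \(\Gell_\ell\) is the radial part of \(-\Delta_{\Heis^{n+\ell}}\) on the fixed sector, we obtain
\[
\|\nabla_H v^\uparrow\|^2_{L^2(\Heis^{n+\ell})}=c_\ell E_\ell(v),\qquad
\|v^\uparrow\|^{q_\ell}_{L^{q_\ell}(\Heis^{n+\ell})}=c_\ell\|v\|_{q_\ell}^{q_\ell}.
\]
This holds first for \(v\in C_c^\infty(\Ucal)\), and then on all of \(\dot S^1_\ell(\Ucal)\) by density; here we use that the horizontal map in the diagram is a similarity onto \(\dot H^1(\Heis^{n+\ell})^{U(\ell)}\). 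Comparing Rayleigh quotients gives \(S_\ell=c_\ell^{1-2/q_\ell}S_{\Heis^{n+\ell}}\), which is consistent with Theorem~\ref{thm:intro-sharp}. It follows that
\[
\delta(v):=E_\ell(v)-S_\ell\|v\|_{q_\ell}^2=c_\ell^{-1}\Bigl(\|\nabla_Hv^\uparrow\|_2^2-S_{\Heis^{n+\ell}}\|v^\uparrow\|_{q_\ell}^2\Bigr).
\]
Next we check that the lift carries \(\mathfrak M_\ell^{\R}\) onto \(\widetilde{\mathfrak M}_\ell^{\mathrm{fix}}\). Substituting \(\rho=|w|^2\) into the extremals of Theorem~\ref{thm:intro-sharp} produces exactly the Jerison--Lee bubbles on \(\Heis^{n+\ell}\) whose center has zero \(w\)-component. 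The frame action \(\mathcal T_{h,\eta}\) corresponds to full Heisenberg dilations together with translations by \((z_*,0,t_*)\). Conversely, every \(U(\ell)\)-fixed bubble has zero auxiliary center. Since the lift is a similarity with factor \(c_\ell^{1/2}\), this gives
\[
\operatorname{dist}_{\dot S^1_\ell}(v,\mathfrak M_\ell^{\R})=c_\ell^{-1/2}\operatorname{dist}_{\dot H^1}(v^\uparrow,\widetilde{\mathfrak M}_\ell^{\mathrm{fix}}).
\]
We then chain the estimates. Loiudice's theorem bounds the full deficit below by \(c\,\operatorname{dist}(v^\uparrow,\widetilde{\mathfrak M}_\ell^{\mathrm{full}})^2\), and \eqref{eq:intro-cone-comparison} applied to the real fixed function \(F=v^\uparrow\) converts this into the distance to the fixed cone. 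The result is \(\kappa_{n,\ell}=c/C_{n,\ell}^2\), after the \(c_\ell\) factors cancel.

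The main obstacle is the cone comparison \eqref{eq:intro-cone-comparison}, which is Proposition~\ref{prop:distance-comparison}; I would prove it as follows. First, normalize by homogeneity and assume \(F\) is close to the full cone, say \(\operatorname{dist}(F,\widetilde{\mathfrak M}^{\mathrm{full}})\le\varepsilon_0\|F\|\); otherwise the bound is trivial, since \(\operatorname{dist}(F,\widetilde{\mathfrak M}^{\mathrm{fix}})\le\|F\|\). Second, take a near-minimizer \(A\widetilde B_{h,\zeta}\) with auxiliary center \(w_0\). Third, average over \(U(\ell)\): the Haar average \(P\) is an orthogonal projection that fixes \(F\), so \(\|F-P(A\widetilde B_{h,\zeta})\|\le\operatorname{dist}_{\mathrm{full}}\). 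Fourth, show that \(P(A\widetilde B_{h,\zeta})\) lies within \(C\,\operatorname{dist}_{\mathrm{full}}\) of the fixed cone. The estimate needed here is that, for a unit bubble at auxiliary offset \(\tau=w_0/h\), the distance of \(P\widetilde B_\tau\) to the fixed cone is at most \(C\) times the distance of \(P\widetilde B_\tau\) to the full bubble \(\widetilde B_\tau\). For small \(\tau\), the first-order term \(\tau\cdot\partial_w\widetilde B\) is annihilated by \(P\), since it is a non-invariant tangent. Hence \(P\widetilde B_\tau=\widetilde B_0+O(|\tau|^2)\), while \(\|\widetilde B_\tau-P\widetilde B_\tau\|\gtrsim|\tau|\); this handles small \(\tau\). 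For \(|\tau|\) bounded away from zero, continuity and compactness cover bounded \(\tau\), because \(\widetilde B_\tau\) is not fixed there. As \(|\tau|\to\infty\), \(P\widetilde B_\tau\rightharpoonup0\) and its norm tends to zero, because the rotated copies become asymptotically orthogonal. Hence the distance to \(\widetilde B_\tau\) tends to \(1\), while the distance to the fixed cone stays at most \(\|P\widetilde B_\tau\|\le1\). Combining the three ranges with the triangle inequality yields \eqref{eq:intro-cone-comparison}.
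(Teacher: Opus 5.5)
Your proposal is correct and follows the paper's own proof: the polar lift scales both the deficit and the squared cone distance by $c_\ell$, Loiudice's full-space estimate supplies the lower bound, and Proposition~\ref{prop:distance-comparison} is proved by the same small/compact/large auxiliary-center trichotomy. The only variation there is inessential: you bound $\operatorname{dist}(P_{U(\ell)}\widetilde B_\tau,\mathcal M^{\R,\mathrm{fix}}_{n+\ell})$ by $\|(I-P_{U(\ell)})\widetilde B_\tau\|_{\dot H^1}$ and pass through $P_{U(\ell)}U$, whereas the paper bounds $\|U_s-\widetilde B\|_{\dot H^1}$ and passes through the recentered bubble $U_0$. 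Two small points: the lower bound $\|(I-P_{U(\ell)})\widetilde B_\tau\|\gtrsim|\tau|$ needs the uniform injectivity of the auxiliary tangent map (Lemma~\ref{lem:auxiliary-tangent}), not just the fact that it is annihilated by the projection; and comparing Rayleigh quotients gives $S_\ell=c_\ell^{2/q_\ell-1}S^{\mathrm{FS}}_{n+\ell}$, not $c_\ell^{1-2/q_\ell}S^{\mathrm{FS}}_{n+\ell}$, which is the constant for which your deficit identity holds.
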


The new ingredient is Proposition~\ref{prop:distance-comparison}, which
supplies \eqref{eq:intro-cone-comparison}: the distance from a
\(U(\ell)\)-fixed function to the reduced optimizer cone is bounded by a
constant multiple of its distance to the strictly larger full-space cone.
Three regimes of the auxiliary center are treated separately---an
infinitesimal regime controlled by the tangent estimate of
Lemma~\ref{lem:auxiliary-tangent}, compact annuli on which the ratio is
continuous with nonvanishing denominator, and a large-center regime in
which the Haar average of the bubble correlation vanishes.  The estimate is
uniform in the amplitude and in the frame, and this uniformity is what
survives the polar descent.

The completed lift also permits a direct spectral analysis of the
linearization.  Let
\[
 \mathcal Q_{B_0}(\phi,\psi)
 =E_\ell(\phi,\psi)
 -(q_\ell-1)S_\ell
 \int_{\Ucal}B_0^{q_\ell-2}\phi\psi\dd\mu_\ell,
\]
let \(\mathcal Z_\ell=T_{B_0}\mathcal M_{1,\ell}\), and put
\[
 Y_\ell
 =\left\{\phi:
 \int_{\Ucal}B_0^{q_\ell-1}\phi\dd\mu_\ell=0,\quad
 E_\ell(\phi,Z)=0\ \text{for every }Z\in\mathcal Z_\ell
 \right\}.
\]
We write \(\widehat X_j,\widehat Y_j\), \(\partial_t\), and
\(\Lambda_\ell\) for the infinitesimal translations, central
translation, and dilation, respectively.

\begin{theorem}[Fixed-sector nondegeneracy and local spectral gap]
\label{thm:intro-nondegeneracy}
The linearized operator at \(B_0\) satisfies
\[
 \ker L_{B_0}
 =\mathcal Z_\ell
 =\operatorname{span}_{\R}
 \left\{
 \widehat X_jB_0,\widehat Y_jB_0\ (1\leq j\leq n),
 \partial_tB_0,\Lambda_\ell B_0
 \right\}.
\]
Moreover,
\[
 \mathcal Q_{B_0}(\phi,\phi)
 \geq\frac{2}{n+\ell+4}E_\ell(\phi)
 \qquad(\phi\in Y_\ell),
\]
and the constant is optimal.  Equivalently, the exact local
Hessian-to-energy coefficient of the Sobolev deficit along the real
optimizer cone is \(2/(n+\ell+4)\).
\end{theorem}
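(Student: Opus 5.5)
We prove both assertions by lifting to \(\Heis^{n+\ell}\) and using the known spectral structure of the Jerison--Lee bubble there. We then descend through the \(U(\ell)\)-projection.

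\emph{Step 1: transfer of the quadratic form.} By the polar lift, \(E_\ell(\phi,\psi)=c_\ell^{-1}\int\nabla_H\phi^\uparrow\cdot\nabla_H\psi^\uparrow\) and \(\int B_0^{q_\ell-2}\phi\psi\,d\mu_\ell=c_\ell^{-1}\int (B_0^\uparrow)^{q_\ell-2}\phi^\uparrow\psi^\uparrow\). Here \(B_0^\uparrow=\widetilde B\) is a constant multiple of the centered full-space bubble, and the lift is a similarity onto the invariant sector. Hence \(\mathcal Q_{B_0}(\phi,\phi)=c_\ell^{-1}\widetilde{\mathcal Q}(\phi^\uparrow,\phi^\uparrow)\), where \(\widetilde{\mathcal Q}\) is the linearized form at \(\widetilde B\) on \(\dot H^1(\Heis^{n+\ell})\). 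Weak solutions of \(L_{B_0}\phi=0\) in \(\dot S^1_\ell\) lift to weak solutions in the fixed sector. Testing only against invariant functions suffices, because averaging a test function over \(U(\ell)\) commutes with the invariant operator. This is exactly the point where the completed-space identification is needed.

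\emph{Step 2: the kernel.} Via the Cayley transform to the CR sphere \(S^{2(n+\ell)+1}\), the weighted eigenproblem \(-\Delta_H f=\lambda\widetilde B^{q_\ell-2}f\) becomes the spherical-harmonic decomposition of the conformal sublaplacian. Its eigenvalues are indexed by bidegrees \((p,q)\), with values proportional to \((p+N/2)(q+N/2)\), where \(N=n+\ell\). The eigenvalues are \(\lambda_0\) (constants, giving \(\widetilde B\) itself), then \(\lambda_1=(q_\ell-1)\lambda_0\) on the bidegree \((1,0)\oplus(0,1)\) and \((1,1)\) pieces that generate the \(2N+2\) full-space kernel directions, and then the next eigenvalue \(\lambda_2\). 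The full kernel is spanned by \(\widetilde X_j\widetilde B,\widetilde Y_j\widetilde B\) for \(j\le n+\ell\), together with \(\partial_t\widetilde B\) and \(\widetilde\Lambda\widetilde B\). We project onto \(U(\ell)\)-invariants by Haar averaging. The auxiliary translation directions \(\widetilde X_{n+i}\widetilde B,\widetilde Y_{n+i}\widetilde B\) transform as the standard representation of \(U(\ell)\), so their averages vanish. The remaining directions are invariant and are the lifts of \(\widehat X_jB_0,\widehat Y_jB_0,\partial_tB_0,\Lambda_\ell B_0\). This gives \(\ker L_{B_0}=\mathcal Z_\ell\).

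\emph{Step 3: the gap and its optimality.} For \(\phi\in Y_\ell\), the lift \(\phi^\uparrow\) is \(L^2(\widetilde B^{q_\ell-2})\)-orthogonal to \(\widetilde B\) and to all invariant kernel directions. It is automatically orthogonal to the non-invariant auxiliary directions, since it is invariant. So \(\phi^\uparrow\) lies in the span of eigenspaces with eigenvalue at least \(\lambda_2\), and
\[\widetilde{\mathcal Q}(\phi^\uparrow,\phi^\uparrow)\ge\Bigl(1-\tfrac{(q_\ell-1)\lambda_0}{\lambda_2}\Bigr)\|\nabla_H\phi^\uparrow\|_2^2.\]
The main obstacle is to identify \(\lambda_2\) and to show that the resulting coefficient equals \(2/(N+4)\). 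Since \(q_\ell-1=(N+2)/N\), this requires \(\lambda_2/\lambda_0=(N+2)(N+4)/(N(N+2))\cdot\)(normalisation). I would first check which bidegree, \((2,0)/(0,2)\) or \((2,1)/(1,2)\), gives the value \((N+4)/N\) relative to \(\lambda_0\) after accounting for \(\lambda_0\propto (N/2)^2\). I would then adjust the candidate set if the first guess fails. For optimality, we need an invariant eigenfunction at \(\lambda_2\), which must also lie in \(Y_\ell\). We use a \(U(\ell)\)-invariant spherical harmonic of the relevant bidegree, for instance the Cayley image of a polynomial such as \(|\zeta_{n+1}|^2+\dots+|\zeta_{n+\ell}|^2-\tfrac{\ell}{N+1}\) type, or a \(U(\ell)\)-invariant harmonic in the \(\Heis^n\)-variables of bidegree \((2,0)\) when \(n\ge1\). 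Its descent to \(\Ucal\) attains equality, which shows that \(2/(n+\ell+4)\) is sharp. The equivalent Hessian-to-energy reformulation then follows from the second-order expansion of the deficit along the normal bundle of \(\mathfrak M_\ell^{\R}\).
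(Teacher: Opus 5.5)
Your route is the same as the paper's: polar lift, CR Cayley transform, the bispherical spectrum $\Lambda_{j,k}=(N+2j)(N+2k)/N^2$ with $N=n+\ell$, and passage to $U(\ell)$-fixed vectors. The kernel descent is exactly right: the auxiliary translations carry the standard representation of $U(\ell)$, so they contribute no fixed vectors. However, the spectral bookkeeping is wrong at the decisive points, and the constant $2/(N+4)$ and its sharpness are what the theorem actually asserts.

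\textbf{The kernel level.} $\mathcal H_{1,1}$ does not lie at the kernel level, because $\Lambda_{1,1}=(N+2)^2/N^2\neq(N+2)/N=q_\ell-1$. This error is not harmless. $\mathcal H_{1,1}$ contains $U(\ell)$-fixed vectors, for instance $|\zeta_{N+1}|^2-\frac1{N+1}$, so if it belonged to the kernel, the reduced kernel would be strictly larger than $\mathcal Z_\ell$, contradicting what you want to prove. The correct identification comes from solving $(N+2j)(N+2k)=N(N+2)$, that is $N(j+k-1)+2jk=0$. Its only solutions in $\mathbb N_0^2$ are $(1,0)$ and $(0,1)$, and this gives the count $2N+2$ intrinsically, consistent with the orbit derivatives.

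\textbf{The second level.} Step 3 leaves $\lambda_2$ undetermined, and that is the whole content of the gap estimate. Since $\Lambda_{j,k}$ increases in both indices, only $(2,0)$, $(0,2)$ and $(1,1)$ compete for the next level. Here $\Lambda_{2,0}=\Lambda_{0,2}=(N+4)/N<(N+2)^2/N^2=\Lambda_{1,1}$, and bidegree $(2,1)$ plays no role. The ratio $\mathcal Q/E$ on a generalized eigenfunction is $1-(q_\ell-1)/\Lambda$, which gives $1-\frac{N+2}{N+4}=\frac2{N+4}$.

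\textbf{Optimality.} Your primary optimality candidate $\sum_\alpha|\zeta_{n+\alpha}|^2-\frac{\ell}{N+1}$ lies in $\mathcal H_{1,1}$. Its quotient is therefore $1-\frac{N}{N+2}=\frac2{N+2}>\frac2{N+4}$, so it does not prove sharpness. You need a $U(\ell)$-fixed vector in $\mathcal H_{2,0}\oplus\mathcal H_{0,2}$. The paper uses $\operatorname{Re}\zeta_{N+1}^2$ and $\operatorname{Im}\zeta_{N+1}^2$, which are fixed because $U(\ell)$ acts only on the $w$-block of sphere coordinates. Your fallback $\operatorname{Re}\zeta_1^2$ from the $z$-block also works, since $n\geq1$.

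\textbf{Completeness.} Your claim that $\phi^\uparrow$ lies in the span of eigenspaces at level at least $\lambda_2$ presupposes that the bispherical decomposition is complete in the completed energy space. The paper proves this in Lemma~\ref{lem:spherical-spectrum-normalization}: the omitted pole has zero capacity, so functions vanishing near it form a form core. You should invoke that fact rather than assume it.
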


The proof takes the \(U(\ell)\)-fixed part of the full Heisenberg kernel.
The auxiliary translation modes form the standard representation of
\(U(\ell)\) and have no fixed vector, whereas all visible translations
and the dilation survive.  At the next spectral level, however,
\(U(\ell)\)-fixed bispherical harmonics remain; consequently the local
fixed-sector spectral gap is exactly the full-space gap, not a larger one.
The value in Theorem~\ref{thm:intro-nondegeneracy} is local and should not
be confused with the best constant in the global stability inequality.

In fact, the optimal fixed-sector quotient has its own compactness
theorem.  Define
\[
 \kappa_{n,\ell}^{\mathrm{opt}}
 =\inf_{v\in\dot S^1_\ell(\Ucal;\mathbb R)
 \setminus\mathfrak M_\ell^{\mathbb R}}
 \frac{
 E_\ell(v)-S_\ell\|v\|_{q_\ell}^2
 }{
 \operatorname{dist}_{\dot S^1_\ell}
 (v,\mathfrak M_\ell^{\mathbb R})^2
 }.
\]

\begin{theorem}[Optimal fixed-sector stability quotient]
\label{thm:intro-optimal-stability}
For all integers \(n,\ell\geq1\),
\[
 0<\kappa_{n,\ell}^{\mathrm{opt}}
 <\min\left\{
 \frac2{n+\ell+4},\,
 2-2^{(n+\ell)/(n+\ell+1)}
 \right\}.
\]
The infimum is attained by a real function outside the optimizer cone.
Every minimizing sequence normalized in \(L^{q_\ell}\) is precompact in
\(\dot S^1_\ell(\Ucal)\), after passage to a subsequence and application
of reduced translations and dilations.
\end{theorem}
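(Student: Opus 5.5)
Positivity \(\kappa^{\mathrm{opt}}_{n,\ell}>0\) is Theorem~\ref{thm:intro-stability}. The plan has four steps: two strict upper bounds from test families, a local dichotomy for minimizing sequences, and a global compactness argument. Both thresholds are values the quotient approaches along degenerate sequences, so beating them is what rules out loss of compactness.

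\emph{Local threshold.} Take \(v_\varepsilon=B_0+\varepsilon\phi\) with \(\phi\in Y_\ell\) a second-level eigenfunction from Theorem~\ref{thm:intro-nondegeneracy}, chosen \(U(\ell)\)-fixed and bispherical. The quotient then tends to \(\frac12\mathcal Q_{B_0}(\phi,\phi)/E_\ell(\phi)\cdot 2\), that is, to \(2/(n+\ell+4)\) once normalizations are matched. For the strict inequality I would expand to third and fourth order in \(\varepsilon\). Because \(q_\ell<3\), the remainder is of fractional order, but on the positive bubble \(B_0^{q_\ell-3}\) stays integrable against smooth decaying \(\phi\), so a genuine cubic term appears. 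Choosing the sign of \(\varepsilon\), or replacing \(\phi\) by \(\phi+\varepsilon\psi\) with a suitable \(\psi\), makes the correction negative. This is the Dolbeault--Esteban--Figalli-type computation, and I expect it to be the main technical obstacle. It requires a nonzero cubic coefficient \(\int B_0^{q_\ell-3}\phi^3\dd\mu_\ell\), or, if that vanishes by symmetry, a negative quartic combination after optimizing \(\psi\).

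\emph{Two-bubble threshold.} Take \(v=B_0+\mathcal T_{h,0}B_0\) with \(h\to\infty\). The deficit tends to \(2S_\ell-S_\ell 2^{2/q_\ell}\). The squared distance to \(\mathfrak M^{\R}_\ell\) tends to \(S_\ell\), since the nearest cone point is asymptotically one of the two bubbles. The quotient therefore tends to \(2-2^{2/q_\ell}=2-2^{(n+\ell)/(n+\ell+1)}\). To show this value is not attained in the limit, I would expand the interaction at large separation. The interaction term \(\int B_0^{q_\ell-1}\mathcal T_{h,0}B_0\) enters the deficit with a coefficient that beats its entry in the distance, because \(q_\ell>2\) and the cross term in the \(L^{q_\ell}\) norm has coefficient \(q_\ell\) rather than \(2\). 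This gives a strictly smaller value for large finite \(h\), exactly as in the Euclidean argument of K\"onig.

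\emph{Compactness of minimizing sequences.} Normalize \(\|v_k\|_{q_\ell}=1\), and split according to whether \(d_k=\operatorname{dist}(v_k,\mathfrak M^{\R}_\ell)\) tends to zero.
\begin{itemize}[leftmargin=*]
\item If \(d_k\to0\), then after reduced translations and dilations, Theorem~\ref{thm:intro-nondegeneracy} together with a second-order expansion forces \(\liminf\) of the quotient to be at least \(2/(n+\ell+4)\). This contradicts the strict local bound.
\item Otherwise the quotient stays bounded, so \(E_\ell(v_k)\) is bounded. Apply Theorem~\ref{thm:intro-profile}.
\end{itemize}

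\emph{Ruling out splitting in the second case.} Use the energy and mass decouplings \eqref{eq:intro-profile-energy}--\eqref{eq:intro-profile-mass} and the vanishing of remainders \eqref{eq:intro-profile-remainder}. Suppose the sequence splits into two or more nontrivial pieces. Strict concavity of \(t\mapsto t^{2/q_\ell}\) and the distance bound \(d_k^2\le\) (energy of all but one piece) then give a lower bound on the quotient of the form \(\min\) over profile configurations. This lower bound is at least \(2-2^{2/q_\ell}\), a contradiction. The worst case is two bubbles of equal mass, and a general configuration reduces to it by an elementary inequality in the masses.

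So there is a single profile. Translating and dilating it to the origin, \(v_k\to v\) in \(L^{q_\ell}\), and hence strongly in \(\dot S^1_\ell\) by lower semicontinuity combined with minimality. The limit lies off the cone because \(d_k\not\to0\), and it attains \(\kappa^{\mathrm{opt}}_{n,\ell}\).
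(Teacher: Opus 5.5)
Your architecture is the paper's: two strict test families, the near-cone versus away-from-cone dichotomy, then the profile decomposition. The step that excludes splitting, however, fails as written.

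\textbf{The splitting step.} The bound \(d_k^2\le\) (energy of all but one piece) holds only when the retained piece lies on the cone. For \(u_k=f+g_k\) with \(g_k\rightharpoonup0\), what is true is \(d_\ell(u_k)^2=E_\ell(f)+E_\ell(g_k)-\max\{m(f),m(g_k)\}+o(1)\), where \(m(w)=S_\ell^{-1}\sup_g|E_\ell(w,\mathcal T_gB_0)|^2\). This max-splitting itself needs properness of the reduced bubble orbit. It also keeps the retained piece's own squared cone distance in the denominator.

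Once that term is restored, no inequality in the masses alone yields \(2-2^{2/q_\ell}\). Take \(f\) of unit critical mass with \(\mathscr D_\ell(f)/d_\ell(f)^2\) close to \(\kappa^{\mathrm{opt}}_{n,\ell}\), and add an escaping bubble of critical mass \(\eta^{q_\ell}\). As \(\eta\to0\) the limiting quotient tends to \(\mathscr D_\ell(f)/d_\ell(f)^2\). That value is below \(2-2^{2/q_\ell}\) by the strict two-bubble bound.

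Splitting can only be excluded by using minimality. Normalize the piece with the larger correlation mass to unit critical mass, and let \(g\) be the other piece, with \(r=\|g\|_{q_\ell}\). Write the limiting quotient as \((A+C)/(B+D)\), where:
\begin{itemize}
\item \((A,B)\) are the first piece's deficit and squared distance;
\item \(C=E_\ell(g)-S_\ell\Phi(r)\) and \(D=E_\ell(g)\);
\item \(\Phi(r)=(1+r^{q_\ell})^{2/q_\ell}-1\).
\end{itemize}
Then \(A\ge\kappa^{\mathrm{opt}}_{n,\ell}B\) forces \(C/D\le\kappa^{\mathrm{opt}}_{n,\ell}\). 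The Sobolev inequality gives \(C/D\ge1-\Phi(r)/r^2\), and this is at least \(2-2^{2/q_\ell}\) exactly when \(r\le1\).

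So the case where the smaller-mass piece carries strictly larger correlation must be ruled out by a competitor. Rescaling the weaker-correlated piece by some \(c>1\) until the correlation masses agree strictly lowers the quotient, because \(\Phi(t)/t^2\) is increasing. This is the content of Lemmas~\ref{lem:fixed-correlation-max-splitting} and~\ref{lem:fixed-correlation-balance}. Grouping all but one profile into a single weakly null \(g_k\) also removes the need for your unproved reduction of general configurations to two equal bubbles.

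The same weighted-ratio argument, together with \(\kappa^{\mathrm{opt}}_{n,\ell}<1\), is what disposes of a remainder that vanishes in \(L^{q_\ell}\) but not in energy; lower semicontinuity does not. The bound \(\kappa^{\mathrm{opt}}_{n,\ell}<1\) is also what makes minimizing sequences bounded, since a bounded quotient alone does not bound \(E_\ell\).

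\textbf{The two strict tests.} Both are left open exactly where they are delicate.

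For the local test, the cubic alternative never occurs. The first normal level is exactly \(\mathcal H_{2,0}\oplus\mathcal H_{0,2}\), since \(\Lambda_{1,1}>\Lambda_{2,0}\). Each real element there is \(P+\overline P\) with \(P\) a holomorphic quadratic. Every monomial of \((P+\overline P)^3\) has nonzero weight under \(\zeta\mapsto e^{i\vartheta}\zeta\), so the cubic moment vanishes. Strictness therefore rests on the sign of an explicit quartic coefficient after optimizing the \(\varepsilon^2\)-correction, and that sign has to be computed. The paper computes it for \(\varphi=\operatorname{Re}\zeta_{n+\ell+1}^2\), with \(\psi_*\) built from the projections of \(\varphi^2\) onto \(\mathcal H_{4,0}\oplus\mathcal H_{0,4}\), \(\mathcal H_{1,1}\) and \(\mathcal H_{2,2}\). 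It also uses that the test lies on the normal fiber over the constant, so its squared distance is exactly quadratic through order \(\varepsilon^4\).

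For the two-bubble test, knowing that the nearest cone point is asymptotically one of the bubbles gives only the limit \(2-2^{2/q_\ell}\). Strictness needs \(d_\ell^2=S_\ell+o(I_h)\), where \(I_h\) is the energy interaction of the two bubbles. Equivalently, you need \(\sup_gE_\ell(v,\mathcal T_gB_0)\le S_\ell+I_h+o(I_h)\) over all frames, including intermediate scales. The paper obtains \(S_\ell+I_h+O(I_h^2)\) by a two-well localization. The mechanism is that the interaction enters the deficit at first order, with coefficient \(-2(2^{2/q_\ell}-1)\), but enters the distance only at second order.
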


The strict local inequality comes from a fourth-order normal perturbation
on the CR sphere based on
\(\operatorname{Re}\zeta_{n+\ell+1}^2\).  Its cubic moment vanishes, but
an explicit fixed second-order correction makes the quartic coefficient
strictly negative.  The second strict inequality is obtained from the raw
sum of two centered bubbles at diverging scales.  Together they exclude
the two possible losses of compactness for an optimal sequence: approach
to the optimizer cone and two-bubble splitting.

The intrinsic nonlinear layer begins with the completed Cayley
correspondence.  Equip \((\CHyp^{n+1},g)\) with the metric of holomorphic
sectional curvature \(-4\).  For \(\ell>1\), write
\[
 \mathsf H_\ell(v,v)=\int_{\Ucal}\rho^{-1}v^2\dd\mu_\ell,
 \qquad
 A^H_{\lambda,\ell}(v)=E_\ell(v)-\lambda\,\mathsf H_\ell(v,v),
\]
\[
 S^H_{\lambda,\ell}
 =\inf_{\|v\|_{q_\ell}=1}A^H_{\lambda,\ell}(v),
 \qquad
 J^H_{\lambda,\ell}(v)
 =\tfrac12A^H_{\lambda,\ell}(v)-\tfrac1{q_\ell}\|v\|_{q_\ell}^{q_\ell}.
\]
Under the Cayley correspondence, a constant complex hyperbolic spectral term
becomes
the Hardy weight \(\rho^{-1}\), and the sharp constant in the associated
Hardy inequality is \((\ell-1)^2\); this is the coercive range below.

\begin{theorem}[Ground states and global compactness]
\label{thm:intro-hardy}
Let \(\ell>1\) be an integer and \(0<\lambda<(\ell-1)^2\).  There exists
\[
 U_\lambda\in H^1(\CHyp^{n+1},g)\cap C^\infty(\CHyp^{n+1}),
 \qquad U_\lambda>0,
\]
such that
\[
 \bigl[-\Delta_g-(n+1)^2+(\ell-1)^2-\lambda\bigr]U_\lambda
 =U_\lambda^{q_\ell-1},
\]
and this solution has least action among all nonzero real critical points.
Every finite-level Palais--Smale sequence for \(J^H_{\lambda,\ell}\) is,
after passage to a subsequence, a finite sum of pairwise orthogonal
profiles solving
\[
 \Gell_\ell\phi-\lambda\rho^{-1}\phi=|\phi|^{q_\ell-2}\phi,
\]
plus a remainder converging strongly in \(\dot S^1_\ell(\Ucal)\); the
completed Cayley map transports these profiles to solutions of the
displayed complex hyperbolic equation.  With
\begin{equation}\label{eq:intro-quantum}
 d_{\lambda,\ell}
 =\frac1{Q_\ell}\bigl(S^H_{\lambda,\ell}\bigr)^{Q_\ell/2},
\end{equation}
every nonzero profile has action at least \(d_{\lambda,\ell}\), every nodal
profile has action at least \(2d_{\lambda,\ell}\), and the action splits
exactly over the profiles.  The corresponding intrinsic action quantum is
\(d_{\lambda,\ell}/4\).
\end{theorem}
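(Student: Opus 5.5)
The plan is to work entirely on the Siegel side with the functional \(J^H_{\lambda,\ell}\) on \(\dot S^1_\ell(\Ucal)\), and transport to \(\CHyp^{n+1}\) only at the end. The completed Cayley correspondence (for \(\ell>1\)) is an isomorphism of form domains, and it conjugates the Hardy-perturbed Geller operator to \(-\Delta_g-(n+1)^2+(\ell-1)^2-\lambda\).

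\emph{Coercivity and the strict threshold.} First I will use the sharp Hardy inequality \(\mathsf H_\ell(v,v)\le (\ell-1)^{-2}E_\ell(v)\). It gives \(A^H_{\lambda,\ell}\ge(1-\lambda/(\ell-1)^2)E_\ell\), so \(A^H_{\lambda,\ell}\) is an equivalent norm and \(S^H_{\lambda,\ell}>0\). Next I will show the strict inequality \(S^H_{\lambda,\ell}<S_\ell\). Take the extremal \(B_0\) from Theorem~\ref{thm:intro-sharp}. It lies in \(\dot S^1_\ell\) and \(\mathsf H_\ell(B_0,B_0)>0\), so \(A^H_{\lambda,\ell}(B_0)=S_\ell-\lambda\mathsf H_\ell(B_0,B_0)<S_\ell\). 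I will check that \(\mathsf H_\ell(B_0,B_0)<\infty\), which is immediate from the Hardy inequality.

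\emph{Global compactness.} Let \((v_k)\) be a Palais--Smale sequence at level \(c\). The identity \(J^H(v_k)-\frac1{q_\ell}\langle (J^H)'(v_k),v_k\rangle=(\frac12-\frac1{q_\ell})A^H(v_k)\) together with coercivity shows it is bounded. I then apply Theorem~\ref{thm:intro-profile}. The key point is how \(\rho^{-1}\) behaves under the frames: it is dilation-homogeneous of the same degree as \(E_\ell\), since \(\mathsf H_\ell(\mathcal T_{h,\eta}\phi)=\mathsf H_\ell(\phi)\), and it is invariant under Heisenberg translations. So the Hardy term is frame invariant, and it is not lower order. Consequently each profile is extracted as a weak limit \(\phi^j=\text{w-}\lim\mathcal T_{g_k^j}^{-1}v_k\) and solves the full Hardy-perturbed equation \(\Gell_\ell\phi-\lambda\rho^{-1}\phi=|\phi|^{q_\ell-2}\phi\), rather than the unperturbed one. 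To pass to the limit in the nonlinearity I will test against \(C_c^\infty(\Ucal)\) and use local compactness of \(\dot S^1_\ell\hookrightarrow L^r_{loc}\) for \(r<q_\ell\). For the remainder, the standard Brezis--Lieb and orthogonality argument gives \(\langle (J^H)'(r_k^J),r_k^J\rangle\to0\). The energy splitting \eqref{eq:intro-profile-energy} also holds for \(A^H\), because \(A^H\) is a frame-invariant quadratic form and the frames are asymptotically orthogonal. The mass splitting \eqref{eq:intro-profile-mass} holds as stated. Given these, I will show the remainder goes to zero strongly. From \(A^H(r)\approx\|r\|_{q}^q\) and \(A^H(r)\ge S^H\|r\|_q^2\), either \(r\to0\) strongly or \(A^H(r)\) is bounded below by \((S^H)^{Q/2}\). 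The latter would itself produce a further profile, and \eqref{eq:intro-profile-remainder} excludes it, so the sum is finite. Exact action splitting follows.

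\emph{Action quanta and least-action solution.} If \(\phi\ne0\) is a critical point, then \(A^H(\phi)=\|\phi\|_q^q\) and \(A^H(\phi)\ge S^H\|\phi\|_q^2\). These give \(J^H(\phi)=\frac1{Q_\ell}A^H(\phi)\ge d_{\lambda,\ell}\). If \(\phi\) is nodal, testing the equation with \(\phi^\pm\) shows that each of \(\phi^\pm\) satisfies \(A^H(\phi^\pm)=\|\phi^\pm\|^q_q\), so the bound applies to each piece and gives \(2d\). For existence, take a minimizing sequence for \(S^H\), or equivalently a Palais--Smale sequence on the Nehari manifold at level \(d\). By the decomposition above, each nontrivial profile carries action at least \(d\). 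A profile supported at the unperturbed level would cost at least \(\frac1{Q}S_\ell^{Q/2}>d\), and this is where the strict threshold enters; in fact all profiles are Hardy-perturbed, so exactly one profile appears with no loss and strong convergence holds. Replacing \(v\) by \(|v|\) gives a nonnegative minimizer, and the strong maximum principle makes it positive. Smoothness comes from hypoelliptic regularity, using the lift to \(\Heis^{n+\ell}\) where the operator is \(-\Delta_{\Heis^{n+\ell}}\) minus the singular potential \(\lambda|w|^{-2}\), or more simply by working on \(\CHyp^{n+1}\), where the transported equation is a uniformly elliptic Riemannian Schrödinger equation with smooth coefficients and standard bootstrap applies. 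Finally, the Cayley multiplier rescales the nonlinear term by a constant power. Tracking that constant through \(\mathcal C_\ell\) and the normalization of \(g\) produces the intrinsic quantum \(d/4\); this is a routine bookkeeping computation.

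\emph{Main obstacle.} The delicate step is the profile extraction for the singular weight. I must show that the weak limits genuinely satisfy the Hardy equation, since the term \(\lambda\rho^{-1}\phi\) is merely bounded and not compact. I must also show that no concentration escapes toward the axis \(\rho=0\) at fixed scale. I would handle the first by the frame invariance just described. For the second, I would argue that \(\rho\)-translations are not symmetries of the problem, so concentration at a point \(\rho_0>0\) at scale \(h\to0\) must have its profile re-expressed in the frame centered at \((\eta,0)\). There, as \(\rho_0/h^2\to\infty\), the profile would be a solution of the unperturbed equation on a half-space with a vanishing Hardy term, which costs \(\ge\frac1Q S_\ell^{Q/2}\). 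This possibility is consistent with the decomposition as stated (profiles solving the Hardy equation); I would verify that the reduced frames of Theorem~\ref{thm:intro-profile} already capture it.
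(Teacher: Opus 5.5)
Your argument is correct. Its backbone matches the paper's: Hardy coercivity, frame invariance of $\mathsf H_\ell$, the reduced profile decomposition, Nehari identities for the quanta, and transport by the completed Cayley map. Frame invariance is what makes every reduced-frame weak limit of the Palais--Smale sequence solve the full Hardy equation.

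You differ from the paper in two places.

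\emph{The remainder.} The paper proves full derivative splitting in $H_\ell^*$ by transporting the derivative Brezis--Lieb lemma through the moving frames. Each finite-stage remainder is then itself a Palais--Smale sequence, and profiles are counted by their energy. You only need the scalar statement $A^H_{\lambda,\ell}(r_k^J)-\|r_k^J\|_{q_\ell}^{q_\ell}\to0$. It follows from the energy, Hardy and mass splittings together with the Nehari identities of the critical profiles. The dichotomy ``$\|r_k^J\|_{q_\ell}\to0$ or $\|r_k^J\|_{q_\ell}^{q_\ell}\geq(S^H_{\lambda,\ell})^{Q_\ell/2}-o(1)$'', combined with \eqref{eq:intro-profile-remainder}, then gives both finiteness and strong convergence. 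This is more economical and suffices for the statement. The paper's route buys the stronger dual-norm splitting recorded in Theorem~\ref{thm:hardy-global}.

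\emph{Existence.} The paper decomposes a normalized minimizing sequence directly. There $\sum_j m_j=1$ and $\sum_j m_j^{2/q_\ell}\le1$, so strict concavity leaves a single profile, and no Palais--Smale sequence is needed. Your route gets the ground state as the single profile at level $d_{\lambda,\ell}$, so existence becomes a corollary of global compactness. You must, however, state how the Palais--Smale sequence at that level is produced: Ekeland's principle on the Nehari manifold, or the mountain-pass theorem, whose level is exactly $d_{\lambda,\ell}$.

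In both routes the strict inequality $S^H_{\lambda,\ell}<S_\ell$ plays no role, because every profile sees the invariant Hardy term. Also, no rescaling of the nonlinear term occurs under the Cayley map. All three identities \eqref{eq:cayley-energy}--\eqref{eq:cayley-critical} carry the same factor $4$, so coefficient-one solutions correspond directly and $J^H_{\lambda,\ell}(\mathcal C_\ell U)=4J^{\mathrm{CH}}_{\mu,\ell}(U)$.

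Your ``main obstacle'' paragraph is unnecessary, and its heuristic is wrong. Concentration at an interior height $\rho_0>0$ produces no unperturbed half-space profile costing $\frac1{Q_\ell}S_\ell^{Q_\ell/2}$; if it did, it would contradict the statement you are proving. Away from $\rho=0$, the term $4\rho|v_t|^2$ restores $\partial_t$, so $E_\ell$ controls the full Euclidean gradient in real dimension $2n+2$. There $q_\ell=2+2/(n+\ell)<2+2/n$ is subcritical, so no critical mass can concentrate at an interior point. In the reduced frame such a bump drifts to $\rho\to\infty$ and has weak limit zero. In any case \eqref{eq:intro-profile-remainder} already places all critical mass in reduced translation--dilation frames, so there is nothing left to verify. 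The same subcriticality is what makes your ``standard bootstrap'' on $\CHyp^{n+1}$ work; the paper carries it out as a local Moser iteration.
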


The preceding theorem produces minimizers throughout the coercive range.
For small coupling, nondegeneracy determines them much more rigidly.

\begin{theorem}[Uniqueness, geodesic radial symmetry, and nondegeneracy]
\label{thm:intro-symmetry}
Let \(\ell>1\) be an integer.  For all sufficiently small
\(\lambda>0\), the normalized minimizers of
\(S^H_{\lambda,\ell}\) form one orbit under the transported identity
component of the holomorphic isometry group of \(\CHyp^{n+1}\), together
with sign.  Equivalently, the corresponding positive intrinsic minimizer
is unique up to holomorphic isometry and is geodesically radial about some
point of \(\CHyp^{n+1}\).

Let \(v_\lambda\) be the centered positive normalized minimizer, put
\[
 \bar v_\lambda
 =(S^H_{\lambda,\ell})^{1/(q_\ell-2)}v_\lambda,
\]
and let \(A_\lambda\) be the Riesz operator, with respect to \(E_\ell\),
of the linearized form
\[
 \begin{aligned}
 \mathcal Q_\lambda(\phi,\psi)
 ={}&E_\ell(\phi,\psi)-\lambda\mathsf H_\ell(\phi,\psi)\\
 &-(q_\ell-1)\int_{\Ucal}
 \bar v_\lambda^{q_\ell-2}\phi\psi\dd\mu_\ell.
 \end{aligned}
\]
If \(G=\operatorname{Isom}_0(\CHyp^{n+1},g)\), acting through the Cayley
correspondence, then
\[
 \ker A_\lambda
 =T_{\bar v_\lambda}(G\cdot\bar v_\lambda),
 \qquad
 \dim\ker A_\lambda=2n+2.
\]
Moreover, for some \(c_{n,\ell}>0\),
\[
 \|A_\lambda\phi\|_{\dot S^1_\ell}
 \geq c_{n,\ell}\|\phi\|_{\dot S^1_\ell}
 \qquad
 \bigl(\phi\perp_E\ker A_\lambda\bigr).
\]
\end{theorem}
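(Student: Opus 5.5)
We argue by perturbation from \(\lambda=0\), using the sharp fixed-sector theory as the unperturbed model.

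\emph{Step 1: convergence to the bubble manifold.} Since the weight \(\rho^{-1}\) is controlled by the Hardy inequality with constant \((\ell-1)^2\), we have \(S^H_{\lambda,\ell}\to S_\ell\) as \(\lambda\to0\). Consequently every normalized minimizer \(v_\lambda\) has Sobolev deficit \(E_\ell(v_\lambda)-S_\ell\le \lambda\,\mathsf H_\ell(v_\lambda,v_\lambda)\le C\lambda\). Theorem~\ref{thm:intro-stability} then gives \(\operatorname{dist}(v_\lambda,\mathfrak M_\ell^{\R})\le C\lambda^{1/2}\).

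Modulo sign, \(v_\lambda\) lies within \(C\lambda^{1/2}\) of some \(\mathcal T_{h,\eta}B_0\). The reduced frame group \((h,\eta)\) is exactly the transported isometry group modulo the stabilizer. Indeed, the Cayley conjugation makes the orbit of \(G=\operatorname{Isom}_0\) act on \(\dot S^1_\ell\) preserving \(E_\ell\), \(\mathsf H_\ell\) and \(\|\cdot\|_{q_\ell}\), and \(\Heis^n\rtimes\R_+\) acts simply transitively on \(\CHyp^{n+1}\). So we may center: \(v_\lambda=B_0+w_\lambda\) with \(\|w_\lambda\|=O(\lambda^{1/2})\) and \(w_\lambda\) orthogonal to \(\mathcal Z_\ell\).

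\emph{Step 2: nondegeneracy of the perturbed linearization.} By Theorem~\ref{thm:intro-nondegeneracy}, the operator \(L_{B_0}\) is Fredholm with kernel \(\mathcal Z_\ell\) of dimension \(2n+2\), and it is coercive on the complement.

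The form \(\mathcal Q_\lambda\) differs from the rescaled \(\mathcal Q_{B_0}\) by two terms. The first is \(\lambda\mathsf H_\ell\), which is bounded with norm \(O(\lambda)\) by the Hardy inequality. The second is \((q_\ell-1)\int(\bar v_\lambda^{q_\ell-2}-\bar B_0^{q_\ell-2})\phi\psi\). By Hölder and \(|a^{q-2}-b^{q-2}|\le|a-b|^{q-2}\) (valid since \(q_\ell-2<1\)), this second term has norm \(O(\lambda^{(q_\ell-2)/2})\).

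Hence \(A_\lambda\) is a small perturbation of \(A_0\), and it has at most \(2n+2\) small eigenvalues. On the other hand, \(\mathcal Q_\lambda\) is the Hessian of a \(G\)-invariant functional at a critical point, so the tangent space \(T_{\bar v_\lambda}(G\cdot\bar v_\lambda)\) lies in \(\ker A_\lambda\). This tangent space has dimension exactly \(2n+2\) once we know that the stabilizer of \(\bar v_\lambda\) is compact. That in turn follows by continuity from the fact that the orbit map of \(B_0\) has injective differential on \(\mathfrak g/\mathfrak k\) for a maximal compact \(K\) (dimension \((2n+2)\)). This gives equality of kernels. The spectral bound \(\|A_\lambda\phi\|\ge c\|\phi\|\) on the orthogonal complement follows from the uniform gap of \(A_0\) via standard perturbation theory.

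\emph{Step 3: uniqueness and radial symmetry.} Suppose \(v,v'\) are two normalized minimizers, both centered by Step 1. Their difference \(d=v-v'\) satisfies a linear equation
\[
A_\lambda d=R(v,v'),\qquad \|R\|\le C\|d\|^{\min(2,q_\ell-1)}\cdot o(1).
\]
Projecting out the kernel via a slice (the implicit function theorem applied to the centering map, which is possible because the kernel is exactly the orbit tangent) and applying the coercivity from Step 2 forces \(d=0\). So the minimizer is unique up to \(G\) and sign.

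Radial symmetry follows from uniqueness. The centered minimizer is unique in its slice, and the slice is invariant under the stabilizer \(K\cong U(n+1)\)-type compact subgroup fixing the center point. Therefore \(v_\lambda\) is \(K\)-invariant, which is to say geodesically radial. Positivity comes from replacing \(v\) by \(|v|\) together with the strong maximum principle.

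\emph{Main obstacle.} The hardest part is the low regularity \(q_\ell<3\). The Nemytskii map is only \(C^{1,q_\ell-2}\), so the remainder estimates in Steps 2 and 3 must use the fractional Hölder modulus rather than second derivatives. The \(\lambda\)-smallness therefore has to beat powers \(\lambda^{(q_\ell-2)/2}\). I would first try to handle this via the uniform Hölder continuity of \(u\mapsto |u|^{q-2}u\) from \(L^{q}\) to \(L^{q'}\).

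A secondary difficulty is near the axis \(\rho=0\), where \(\mathsf H_\ell\) must be shown compact relative to the bubble. I expect only boundedness to be needed, which the Hardy inequality provides.
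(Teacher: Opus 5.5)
Your architecture matches the paper's: stability, then centering in the normal slice, then the operator-norm estimate $\|A_\lambda-A_0\|=o(1)$ (which gives $\dim\ker A_\lambda\le 2n+2$ and the uniform inverse bound), then local uniqueness in the slice, then invariance under the stabilizer $K_o$ of the center. Your direct coercivity estimate for the difference of two centered coefficient-one solutions is an acceptable substitute for the augmented implicit-function argument of Proposition~\ref{prop:unique-hardy-branch}. It only uses $\|A_0\phi\|\ge\frac{2}{n+\ell+4}\|\phi\|$ for $\phi\perp_E\mathcal Z_\ell$ together with $\|A_\lambda-A_0\|=o(1)$, so Step~3 does not actually depend on Step~2.

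The gap is the lower bound: the claim that $T_{\bar v_\lambda}(G\cdot\bar v_\lambda)$ is a $(2n+2)$-dimensional subspace of $\ker A_\lambda$.
\begin{itemize}
\item The sentence ``$\mathcal Q_\lambda$ is the Hessian of a $G$-invariant functional at a critical point'' presupposes that the orbit map $g\mapsto\widetilde\Pi_g\bar v_\lambda$ is differentiable in $\dot S^1_\ell$. That is, $\bar v_\lambda$ must lie in the domain of the unbounded generators of the transported action. Otherwise the putative zero modes are not elements of the space on which $A_\lambda$ acts.
\item This is not automatic. In Siegel coordinates the Hardy term produces a $\lambda$-dependent indicial behaviour at $\rho=0$. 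Intrinsically, the transvection derivatives require control of second derivatives of $U_\lambda$ against the exponentially growing volume.
\item For the same reason, your ``by continuity'' transfer of injectivity from $\widehat B_0$ to $\bar v_\lambda$ fails as stated. Since $d\pi(X)$ is unbounded, $\|\bar v_\lambda-\widehat B_0\|_{\dot S^1_\ell}\to0$ gives no energy control of $d\pi(X)(\bar v_\lambda-\widehat B_0)$.
\item Compactness of the stabilizer is fine, via properness of the bubble orbit. But it converts into a dimension count only once differentiability is known.
\end{itemize}
Your stated ``main obstacle'' is therefore misplaced. The H\"older modulus of the Nemytskii map costs one line, $\|K_w-K_z\|\le S_\ell^{-1}\|w-z\|_q^{q-2}$.

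The paper fills this step by proving symmetry first and then constructing the zero modes explicitly:
\begin{itemize}
\item With $U_\lambda=f_\lambda(r)$ radial, the radial ODE gives $|f_\lambda|+|f_\lambda'|+|f_\lambda''|\le C_\varepsilon e^{-(m+\sqrt{\mu_\lambda}-\varepsilon)r}$.
\item The Rauch bound $|\nabla\omega_X|\le|X|/\sinh r$ then puts $Z_{\lambda,X}=f_\lambda'(r)\omega_X$ in $H^1$ for $X\in\mathfrak p$.
\item The Bochner identity \eqref{eq:small-hardy-group-integral} upgrades the pointwise derivative to an $H^1$ derivative, so the invariant weak equation can legitimately be differentiated.
\item The isotropy representation on a geodesic sphere gives linear independence of the $2n+2$ modes.
\end{itemize}

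If you want to keep your order, you can instead prove differentiability abstractly:
\begin{itemize}
\item Since $J^H_{\lambda,\ell}$ is $C^2$ with the above H\"older bound on its Hessian, the quotient $D_s=s^{-1}(\widetilde\Pi_{\exp sX}\bar v_\lambda-\bar v_\lambda)$ satisfies $\|A_\lambda D_s\|\le C\|sD_s\|_q^{q-2}\|D_s\|=o(\|D_s\|)$.
\item Hence $D_s$ lies asymptotically in the rank-$(2n+2)$ small-spectrum subspace.
\item Pairing with smooth vectors of the strongly continuous unitary representation shows that $D_s$ stays bounded and converges strongly to an element of $\ker A_\lambda$.
\end{itemize}
With that lemma, your compact-stabilizer dimension count closes. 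As written, however, the lower bound on the kernel is asserted rather than proved.
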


In the real hyperbolic setting, Mancini--Sandeep
\cite{ManciniSandeep2008Semilinear} obtain radiality by a moving-plane
argument for the differential equation, whereas Li--Lu--Yang
\cite{LiLuYang2022HyperbolicBN} use moving planes for the associated
integral equation in the higher-order problem.  Our proof uses no
moving-plane argument.  Fixed-sector nondegeneracy gives an equivariant
normal slice, and uniqueness within that slice forces invariance under the
stabilizer of its center.

We call \(d_{\lambda,\ell}\) the \emph{action quantum}: it is the least
action of a nonzero critical point, and it is the unit in which the
Palais--Smale levels are measured.  Combining
Theorems~\ref{thm:intro-stability} and~\ref{thm:intro-hardy} yields a
quantitative rigidity statement for the Hardy minimizers as the coupling
vanishes.  Since the frames act transitively on
\(\mathcal M_{1,\ell}\), these are orbit-distance estimates; they select
neither a center nor a scale.

\begin{theorem}[Quantitative rigidity as \(\lambda\downarrow0\)]
\label{thm:intro-rigidity}
Let \(\ell>1\) be an integer.  For every
\(0<\lambda\leq(\ell-1)^2/2\) and every real normalized minimizer
\(v_\lambda\) of \(S^H_{\lambda,\ell}\),
\[
 \operatorname{dist}_{\dot S^1_\ell}
 \!\bigl(v_\lambda,\mathcal M_{1,\ell}\bigr)\leq C_{n,\ell}\lambda,
 \qquad
 0\leq E_\ell(v_\lambda)-S_\ell\leq C_{n,\ell}\lambda^2,
\]
and
\[
 S^H_{\lambda,\ell}
 =S_\ell-\lambda\,\mathsf H_\ell(B_0,B_0)+r_\lambda,
 \qquad
 -C_{n,\ell}\lambda^2\leq r_\lambda\leq0.
\]
Consequently the action quantum \eqref{eq:intro-quantum} satisfies
\[
 d_{\lambda,\ell}
 =\frac1{Q_\ell}S_\ell^{Q_\ell/2}
 -\frac12S_\ell^{Q_\ell/2-1}\mathsf H_\ell(B_0,B_0)\lambda
 +O_{n,\ell}(\lambda^2).
\]
Setting \(\theta_\ell=(q_\ell-2)^{-1}\),
\(\bar v_\lambda=(S^H_{\lambda,\ell})^{\theta_\ell}v_\lambda\) and
\(\widehat B_0=S_\ell^{\theta_\ell}B_0\), the function \(\bar v_\lambda\)
solves the coefficient-one perturbed equation and
\[
 \operatorname{dist}_{\dot S^1_\ell}
 \bigl(\bar v_\lambda,\widehat{\mathcal M}_{1,\ell}\bigr)
 \leq C_{n,\ell}\lambda,
 \qquad
 \widehat{\mathcal M}_{1,\ell}
 =\{\sigma\mathcal T_{h,\eta}\widehat B_0:
 \sigma\in\{-1,1\},\ h>0,\ \eta\in\Heis^n\}.
\]
All constants depend only on \((n,\ell)\), uniformly over the choice of
normalized minimizer.
\end{theorem}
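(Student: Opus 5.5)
The plan is to combine the global stability estimate of Theorem~\ref{thm:intro-stability} with the existence of minimizers from Theorem~\ref{thm:intro-hardy}, using the Hardy term as a perturbation of size \(\lambda\). The one preliminary input is a uniform bound \(\mathsf H_\ell(v,v)\leq (\ell-1)^{-2}E_\ell(v)\), the sharp Hardy inequality underlying the coercive range. For \(\lambda\leq(\ell-1)^2/2\) it gives \(A^H_{\lambda,\ell}\geq\tfrac12E_\ell\), and also \(\mathsf H_\ell(B_0,B_0)<\infty\).

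\emph{Upper bound and energy control.} Testing \(S^H_{\lambda,\ell}\) with \(B_0\) gives \(S^H_{\lambda,\ell}\leq S_\ell-\lambda\mathsf H_\ell(B_0,B_0)\). The reverse inequality uses \(\mathsf H_\ell\geq0\): for a normalized minimizer, \(S_\ell\leq E_\ell(v_\lambda)=S^H_{\lambda,\ell}+\lambda\mathsf H_\ell(v_\lambda,v_\lambda)\leq S_\ell+\lambda\mathsf H_\ell(v_\lambda,v_\lambda)\). Since \(\mathsf H_\ell(v_\lambda,v_\lambda)\leq C E_\ell(v_\lambda)\) and \(E_\ell(v_\lambda)\leq 2S^H_{\lambda,\ell}\leq2S_\ell\), every quantity is \(O(1)\). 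The deficit is \(\delta_\lambda:=E_\ell(v_\lambda)-S_\ell\). Combining the test-function bound with minimality gives
\[
\delta_\lambda\leq\lambda\bigl(\mathsf H_\ell(v_\lambda,v_\lambda)-\mathsf H_\ell(B_0,B_0)\bigr).
\]
This is \(O(\lambda)\) a priori. Theorem~\ref{thm:intro-stability} then yields \(d_\lambda:=\operatorname{dist}(v_\lambda,\mathfrak M_\ell^{\R})\leq C\lambda^{1/2}\).

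\emph{Bootstrap.} Choose \(A\mathcal T_gB_0\) nearest to \(v_\lambda\) in the cone. Since \(\|v_\lambda\|_{q_\ell}=1\) and \(d_\lambda\) is small, Sobolev continuity of the norm gives \(|A|=1+O(d_\lambda)\). Then \(\mathcal M_{1,\ell}\) is within \(O(d_\lambda)\) as well, because \(E_\ell(\mathcal T_gB_0)^{1/2}\) is fixed. By invariance of \(\mathsf H_\ell\) under the frame action (the weight \(\rho^{-1}\) scales like \(h^{-2}\), matching the energy, so \(\mathsf H_\ell\) is \(\mathcal T_g\)-invariant), we have \(\mathsf H_\ell(\mathcal T_gB_0,\mathcal T_gB_0)=\mathsf H_\ell(B_0,B_0)\). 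Hence
\[
|\mathsf H_\ell(v_\lambda,v_\lambda)-\mathsf H_\ell(B_0,B_0)|\leq C(\|v_\lambda\|+1)\,d_\lambda\leq Cd_\lambda,
\]
using boundedness of the bilinear form \(\mathsf H_\ell\) on \(\dot S^1_\ell\). Thus \(\kappa d_\lambda^2\leq\delta_\lambda\leq C\lambda d_\lambda\). This gives \(d_\lambda\leq C\lambda\) and then \(\delta_\lambda\leq C\lambda^2\), which are the first two claims. For \(r_\lambda\), write \(S^H_{\lambda,\ell}=S_\ell+\delta_\lambda-\lambda\mathsf H_\ell(v_\lambda,v_\lambda)\). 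This gives \(r_\lambda=\delta_\lambda-\lambda(\mathsf H_\ell(v_\lambda,v_\lambda)-\mathsf H_\ell(B_0,B_0))\), which is \(\leq0\) by the displayed inequality and \(\geq-C\lambda d_\lambda\geq-C\lambda^2\).

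\emph{Consequences.} The expansion of \(d_{\lambda,\ell}\) follows by Taylor expanding \(s\mapsto s^{Q_\ell/2}/Q_\ell\) at \(S_\ell\). The scaled function \(\bar v_\lambda\) solves the coefficient-one equation by the Lagrange multiplier rule, the multiplier being \(S^H_{\lambda,\ell}\) through homogeneity. For the distance estimate, \(\bar v_\lambda-\sigma\mathcal T_g\widehat B_0=(S^H)^{\theta}(v_\lambda-\sigma\mathcal T_gB_0)+((S^H)^\theta-S_\ell^\theta)\sigma\mathcal T_gB_0\), and both terms are \(O(\lambda)\). Every constant involved depends only on \(S_\ell\), \(\kappa_{n,\ell}\), the Hardy constant and \(\mathsf H_\ell(B_0,B_0)\), hence only on \((n,\ell)\).

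The main obstacle is the bootstrap step. It needs the precise Lipschitz control of \(\mathsf H_\ell\) along the nearest cone point, and hence the frame invariance of \(\mathsf H_\ell\), which I would verify first from the scaling \(\rho\mapsto\rho/h^2\). It also needs a careful argument that the amplitude of the nearest cone element is \(\pm1+O(d_\lambda)\), so that the distance to the cone controls the distance to \(\mathcal M_{1,\ell}\).
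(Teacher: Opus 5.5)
Your proposal is correct and follows the paper's proof essentially step by step. The shared steps are: Hardy coercivity for the uniform energy bound; the $B_0$ test giving $0\leq\delta_\lambda\leq\lambda\bigl(\mathsf H_\ell(v_\lambda,v_\lambda)-\mathsf H_\ell(B_0,B_0)\bigr)$; the stability theorem with cone-to-orbit comparison; frame invariance of $\mathsf H_\ell$ for the Lipschitz bound; and the closure $\kappa d^2\leq\delta_\lambda\leq C\lambda d$. The paper differs only cosmetically, skipping your preliminary $O(\lambda^{1/2})$ smallness step and the exact nearest cone point, whose existence you would otherwise need to justify. It instead uses approximate cone elements and the global bound $\bigl||A|-1\bigr|\leq S_\ell^{-1/2}\|v_\lambda-W\|_{\dot S^1_\ell}$, which yields $\operatorname{dist}(v_\lambda,\mathcal M_{1,\ell})\leq2\operatorname{dist}(v_\lambda,\mathfrak M_\ell^{\R})$ with no smallness assumption.
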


Set
\[
 H_{0,\ell}=\mathsf H_\ell(B_0,B_0).
\]
By Theorem~\ref{thm:intro-nondegeneracy}, there is a unique
\(w_{1,\ell}\in Y_\ell\) such that
\begin{equation}\label{eq:intro-first-correction}
 \mathcal Q_{B_0}(w_{1,\ell},\phi)
 =\mathsf H_\ell(B_0,\phi)
 -H_{0,\ell}\int_{\Ucal}B_0^{q_\ell-1}\phi\dd\mu_\ell
 \qquad
 \bigl(\phi\in\dot S^1_\ell(\Ucal;\R)\bigr).
\end{equation}
Define
\[
 c_{2,\ell}
 =\mathsf H_\ell(B_0,w_{1,\ell})
 =\mathcal Q_{B_0}(w_{1,\ell},w_{1,\ell})>0.
\]

\begin{theorem}[Second-order expansion]
\label{thm:intro-second-order}
Let \(\ell>1\) be an integer, and let \(v_\lambda\) be any real normalized
minimizer of \(S^H_{\lambda,\ell}\).  As \(\lambda\downarrow0\), signs
\(\sigma_\lambda\) and frames \(g_\lambda\) may be chosen so that
\begin{equation}\label{eq:intro-modulated-shape}
 \mathcal T_{g_\lambda}^{-1}(\sigma_\lambda v_\lambda)
 =B_0+\lambda w_{1,\ell}
 +O_{\dot S^1_\ell}(\lambda^{q_\ell-1}).
\end{equation}
In particular, the remainder in \eqref{eq:intro-modulated-shape} is
\(o_{\dot S^1_\ell}(\lambda)\).  Moreover,
\begin{equation}\label{eq:intro-second-order-quotient}
 S^H_{\lambda,\ell}
 =S_\ell-\lambda H_{0,\ell}
 -\lambda^2c_{2,\ell}
 +O(\lambda^{q_\ell}),
\end{equation}
and hence the remainder after the quadratic term is \(o(\lambda^2)\).
The action quantum satisfies
\[
\begin{aligned}
 d_{\lambda,\ell}
={}&\frac1{Q_\ell}S_\ell^{Q_\ell/2}
 -\frac12S_\ell^{Q_\ell/2-1}H_{0,\ell}\lambda\\
&+\left[
 -\frac12S_\ell^{Q_\ell/2-1}c_{2,\ell}
 +\frac{Q_\ell-2}{8}S_\ell^{Q_\ell/2-2}H_{0,\ell}^2
 \right]\lambda^2
 +O(\lambda^{q_\ell}).
\end{aligned}
\]
\end{theorem}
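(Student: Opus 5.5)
The argument is a Lyapunov--Schmidt expansion around the modulated bubble. It uses Theorem~\ref{thm:intro-rigidity} for the a priori closeness to the real optimizer cone and Theorem~\ref{thm:intro-nondegeneracy} for invertibility on $Y_\ell$.

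\emph{Step 1 (modulation).} By Theorem~\ref{thm:intro-rigidity}, $\operatorname{dist}(v_\lambda,\mathcal M_{1,\ell})\le C\lambda$. Since $\mathcal M_{1,\ell}$ is a smooth finite-dimensional orbit and the frames act isometrically, the implicit function theorem gives $\sigma_\lambda,g_\lambda$ and a scalar $a_\lambda$ with
\[
u_\lambda:=\mathcal T_{g_\lambda}^{-1}(\sigma_\lambda v_\lambda)=a_\lambda B_0+\phi_\lambda,\qquad \phi_\lambda\in Y_\ell,
\]
where $\|\phi_\lambda\|\le C\lambda$ and $|a_\lambda-1|\le C\lambda$. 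Here $E_\ell$-orthogonality to $\mathcal Z_\ell$ is imposed by choosing the frame. The $L^{q_\ell}$ normalization, whose first variation at $B_0$ is $\int B_0^{q_\ell-1}\phi$, fixes $a_\lambda=1+O(\lambda^2)$. The conditions defining $Y_\ell$ are exactly these modulation conditions.

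\emph{Step 2 (Euler--Lagrange expansion).} Note that $\mathsf H_\ell$ is frame-invariant, since $\rho^{-1}$ scales like $h^{-2}$, the same as the energy. Hence $u_\lambda$ satisfies
\[
E_\ell(u,\psi)-\lambda\mathsf H_\ell(u,\psi)=S^H_{\lambda}\int|u|^{q_\ell-2}u\psi.
\]
Insert $u=B_0+\phi$ (absorbing $a_\lambda$) and $S^H_\lambda=S_\ell-\lambda H_{0,\ell}+O(\lambda^2)$ from Theorem~\ref{thm:intro-rigidity}. Use $E_\ell(B_0,\psi)=S_\ell\int B_0^{q_\ell-1}\psi$. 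The pointwise estimate
\[
\bigl||B_0+\phi|^{q-2}(B_0+\phi)-B_0^{q-1}-(q-1)B_0^{q-2}\phi\bigr|\le C|\phi|^{q-1}
\]
holds because $2<q_\ell<3$ by \eqref{eq:intro-subcubic-range}. Together with Sobolev and Hölder, this gives, for all $\psi$,
\[
\mathcal Q_{B_0}(\phi,\psi)=\lambda\Bigl[\mathsf H_\ell(B_0,\psi)-H_{0,\ell}\int B_0^{q_\ell-1}\psi\Bigr]+O\bigl(\lambda^2+\lambda\|\phi\|+\|\phi\|^{q_\ell-1}\bigr)\|\psi\|.
\]
Here $\mathsf H_\ell(\phi,\psi)$ is bounded by the Hardy inequality, which needs $\ell>1$.

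\emph{Step 3 (correction).} Subtract $\lambda$ times \eqref{eq:intro-first-correction}, test with $\psi=\phi-\lambda w_{1,\ell}\in Y_\ell$, and apply the coercivity of Theorem~\ref{thm:intro-nondegeneracy}. This yields
\[
\|\phi-\lambda w_{1,\ell}\|\le C\bigl(\lambda^2+\lambda\|\phi\|+\|\phi\|^{q_\ell-1}\bigr)=O(\lambda^{q_\ell-1}),
\]
using $\|\phi\|=O(\lambda)$ and $q_\ell-1<2$. This proves \eqref{eq:intro-modulated-shape}.

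\emph{Step 4 (energy expansion).} Expand the quotient $A^H_\lambda(u)/\|u\|_{q}^2$ at $u=B_0+\lambda w+r$ with $\|r\|=O(\lambda^{q-1})$. The zeroth- and first-order terms give $S_\ell-\lambda H_{0,\ell}$. The $\lambda^2$ terms are
\[
\mathcal Q_{B_0}(w,w)-2\mathsf H_\ell(B_0,w)=-c_{2,\ell}.
\]
This uses $w\in Y_\ell$ (so the normalization cross terms vanish) and the defining identity for $c_{2,\ell}$. For the remainder, the $C^{1,q-2}$ regularity of $\|\cdot\|_q^q$ gives a Taylor error $O(\lambda^{q})$. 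Contributions of $r$ enter either linearly against a first-order-stationary expression, hence at order $\lambda\|r\|$, or quadratically; both are $O(\lambda^{q_\ell})$. Alternatively, the upper bound follows by testing the trial function $B_0+\lambda w_{1,\ell}$ and the lower bound from Step 3; either way one obtains \eqref{eq:intro-second-order-quotient}.

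\emph{Step 5 (action quantum).} Raise $S^H_\lambda$ to the power $Q_\ell/2$ in \eqref{eq:intro-quantum}. With $x=-\lambda H_{0,\ell}-\lambda^2 c_{2,\ell}$, the Taylor expansion of $(S_\ell+x)^{Q_\ell/2}$ divided by $Q_\ell$ produces the linear term $\tfrac12 S_\ell^{Q_\ell/2-1}x$ and the quadratic term $\tfrac{Q_\ell-2}{8}S_\ell^{Q_\ell/2-2}H_{0,\ell}^2\lambda^2$, which is the stated formula.

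\textbf{Main obstacle.} The delicate point is Step 4: showing the remainder is $O(\lambda^{q_\ell})$ rather than merely $o(\lambda^2)$ when the nonlinearity is only $C^{1,q_\ell-2}$. I would handle it by sandwiching. The trial function gives the upper bound. For the lower bound, expand exactly at the true minimizer and use that the first variation vanishes at $u_\lambda$, so the error is controlled by $\|u_\lambda-B_0-\lambda w\|\cdot\lambda$ plus the Hölder remainder $\|\lambda w\|^{q}$.
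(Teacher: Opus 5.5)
Your proposal is correct and follows essentially the paper's proof: modulation into the normal slice, $a_\lambda=1+O(\lambda^2)$ from the $L^{q_\ell}$ constraint, the subcubic Nemytskii remainder tested against $Y_\ell$ together with the normal coercivity of Theorem~\ref{thm:intro-nondegeneracy}, and then a quotient expansion. For that last step the paper expands $E_\ell(B_0+r_\lambda)=S_\ell+\mathcal Q_{B_0}(r_\lambda,r_\lambda)+O(\|r_\lambda\|^{q_\ell})$ and $\mathsf H_\ell(B_0+r_\lambda,B_0+r_\lambda)$ directly at the minimizer, rather than using your trial-function/mean-value sandwich. One wording fix is needed in Step~4: the $O(\lambda^{q_\ell})$ error comes from the second-order pointwise bound $\bigl||a+b|^q-|a|^q-q|a|^{q-2}ab-\tfrac{q(q-1)}{2}|a|^{q-2}b^2\bigr|\le C|b|^q$ (equivalently, the Nemytskii map $u\mapsto|u|^{q-2}u$ is $C^{1,q_\ell-2}$), since $C^{1,q_\ell-2}$ regularity of $\|\cdot\|_q^q$ alone would only give $O(\lambda^{q_\ell-1})$.
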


The compact-potential layer concerns the repulsive problem, whose
variational geometry is different.  For a nonnegative potential whose
quadratic form extends continuously to \(\dot S^1_\ell(\Ucal)\) and whose
multiplier is compact, the bottom quotient remains the pure sharp constant
but is not attained
(Proposition~\ref{prop:compact-quotients}); a solution must therefore be
produced at a higher critical level.  Write
\[
 I_V(u)=\frac12\left(E_\ell(u)+\int_{\Ucal}Vu^2\dd\mu_\ell\right)
 -\frac1{q_\ell}\|u\|_{q_\ell}^{q_\ell},
 \qquad
 d_{0,\ell}=\frac1{Q_\ell}S_\ell^{Q_\ell/2}.
\]

\begin{theorem}[Compact repulsive potentials]
\label{thm:intro-potential}
Let \(\ell\geq1\) be an integer and let \(V\geq0\), \(V\not\equiv0\), be
locally integrable.  Assume that its quadratic form is continuous on
\(\dot S^1_\ell(\Ucal)\) and that
\[
 M_{\sqrt V}:\dot S^1_\ell(\Ucal)\longrightarrow L^2(\dd\mu_\ell),
 \qquad M_{\sqrt V}u=\sqrt V\,u,
\]
is compact.  If
\begin{equation}\label{eq:intro-orbit-smallness}
 \sup_{h>0,\,\eta\in\Heis^n}
 \int_{\Ucal}V|\mathcal T_{h,\eta}B_0|^2\dd\mu_\ell
 <\bigl(2^{2/Q_\ell}-1\bigr)S_\ell,
\end{equation}
then
\[
 \Gell_\ell v+Vv=|v|^{q_\ell-2}v
\]
has a nonzero one-sign weak solution satisfying
\[
 d_{0,\ell}<I_V(v)<2d_{0,\ell}.
\]
It is enough to assume
\(0<\|V\|_{L^{Q_\ell/2}(\dd\mu_\ell)}<(2^{2/Q_\ell}-1)S_\ell\); if in
addition the radial lift of \(V\) belongs locally to
\(L^p(\Heis^{n+\ell})\) for some \(p>Q_\ell/2\), the solution has an
everywhere positive locally H\"older representative.
\end{theorem}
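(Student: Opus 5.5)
The argument is a linking (Coron / Bahri--Coron type) scheme on the Nehari manifold of \(I_V\), driven by the sharp threshold data of Theorem~\ref{thm:intro-sharp} and the compactness information of Theorem~\ref{thm:intro-profile}. The steps are as follows.

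\textbf{Step 1 (compactness window).} Because \(M_{\sqrt V}\) is compact, the potential term is weakly continuous. Apply the profile decomposition of Theorem~\ref{thm:intro-profile} to a bounded Palais--Smale sequence for \(I_V\). The weak limit \(v\) solves the \(V\)-equation. Every profile with divergent frame sees \(\int V|\mathcal T_{g_k}\phi|^2\to0\), since the compact multiplier kills weakly null sequences, so it solves the pure critical equation. Hence such a profile has action at least \(d_{0,\ell}\), and at least \(2d_{0,\ell}\) if it is nodal; this follows from Theorem~\ref{thm:intro-sharp} and the standard estimate \(E_\ell(\phi^\pm)\ge S_\ell\|\phi^\pm\|^2\). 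Any nonzero critical point of \(I_V\) has action above \(d_{0,\ell}\), because \(V\ge0\) and the infimum is not attained (Proposition~\ref{prop:compact-quotients}). It follows that Palais--Smale sequences at a level \(c\in(d_{0,\ell},2d_{0,\ell})\) are relatively compact unless the weak limit vanishes and exactly one positive or negative bubble escapes, which forces \(c=d_{0,\ell}\). So (PS)\(_c\) holds in the open window \((d_{0,\ell},2d_{0,\ell})\). One-sign solutions are then produced by the usual device: work with \(I_V^+\), which uses \(u_+^{q_\ell}\), and test the limit against \(u^-\).

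\textbf{Step 2 (test family).} Take the map \((h,\eta)\mapsto t_{h,\eta}\mathcal T_{h,\eta}B_0\), projected onto the Nehari manifold. Its maximal action is
\[
\frac1{Q_\ell}\Bigl(\frac{S_\ell+\int V|\mathcal T B_0|^2}{1}\Bigr)^{Q_\ell/2},
\]
and hypothesis \eqref{eq:intro-orbit-smallness} makes this strictly less than \(\frac1{Q_\ell}(2^{2/Q_\ell}S_\ell)^{Q_\ell/2}=2d_{0,\ell}\) uniformly in the frame. As \(h\to0\), \(h\to\infty\), or \(|\eta|\to\infty\), the potential term tends to \(0\) by compactness, so the level tends to \(d_{0,\ell}\).

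\textbf{Step 3 (linking via barycenter).} Identify the frame space \((0,\infty)\times\Heis^n\) with \(\CHyp^{n+1}\) in Siegel coordinates and compactify it to a closed ball whose boundary sphere is the Heisenberg boundary at infinity. Define a continuous center--scale map \(\beta\) on the sublevel \(\{I_V\le d_{0,\ell}+\varepsilon\}\cap\mathcal N_V\) using the Busemann barycenter of the measure \(|u^\uparrow|^{q_\ell}\). This is Lemma~\ref{lem:busemann-coordinates}; by \(U(\ell)\)-invariance the barycenter lies in the totally geodesic \(\CHyp^{n+1}\). The stability Theorem~\ref{thm:intro-stability} shows that low-level Nehari elements are close to a single bubble, so \(\beta\) is well defined there and \(\beta(\text{test bubble at frame }g)\approx g\). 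Suppose there were no critical level in \((d_{0,\ell},2d_{0,\ell})\). Deforming the test family (a ball with boundary at infinity) below \(d_{0,\ell}+\varepsilon\), then composing with \(\beta\), would give a retraction of the closed ball onto its boundary sphere, which is a contradiction. The infimum of \(I_V\) on \(\mathcal N_V\) is \(d_{0,\ell}\) and is not attained, so the critical level is strictly greater than \(d_{0,\ell}\).

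\textbf{Step 4 (sufficient conditions and regularity).} By Hölder and frame invariance of the \(L^{Q_\ell/2}\) norm, \(\int V|\mathcal T B_0|^2\le\|V\|_{Q_\ell/2}\|B_0\|_{q_\ell}^2=\|V\|_{Q_\ell/2}\), which gives \eqref{eq:intro-orbit-smallness}. For positivity, lift the equation to \(\Heis^{n+\ell}\) as \(-\Delta u^\uparrow+V^\uparrow u^\uparrow=|u^\uparrow|^{q-2}u^\uparrow\). A Moser iteration gives \(L^\infty_{\rm loc}\) and Hölder continuity, and the Harnack inequality for sub-Laplacians with \(L^p\), \(p>Q/2\), potentials gives strict positivity.

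\textbf{Main obstacle.} The hardest step is Step 3. The barycenter must be continuous and must send near-bubbles at boundary frames close to those frames, uniformly as \(h\to0,\infty\). This is where the Busemann construction and the fixed-sector stability are both needed.
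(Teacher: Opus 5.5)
Your architecture is the paper's: the compactness window from the compact-potential profile decomposition, the bubble orbit as filling with the orbit-smallness upper bound, the $U(\ell)$-fixed Busemann barycenter, H\"older for the $L^{Q_\ell/2}$ criterion, and lift plus Harnack for positivity. The gap is in Step~3, at the point that is supposed to produce the contradiction.

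After the deformation, $\beta$ composed with the family takes values in the open space $\CHyp^{n+1}$, not in its ideal boundary. So you get no retraction onto a sphere unless you show that the low sublevel has barycenters avoiding some interior point. This is the paper's centered gap, Lemma~\ref{lem:centered-gap}:
\[
 c_V=\inf\{F_V(u):\|u\|_{q_\ell}=1,\ \beta(u)=0,\ \gamma(u)=0\}>S_\ell .
\]
Your remark that low-level elements lie near some bubble is one ingredient, but it does not exclude a near-bubble centered at $o$. The missing argument runs as follows. If $F_V(u_k)\to S_\ell$ on the centered slice, then $u_k$ is close to $\sigma_k\mathcal T_{g_k}B_0$ (Corollary~\ref{cor:sharp-compactness}, or your stability theorem). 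Equivariance and continuity give $g_k^{-1}o=b(\mathcal T_{g_k}^{-1}u_k)\to b(B_0)=o$, and properness of $g_{h,\eta}\mapsto g_{h,\eta}o$ forces $g_k\to e$. Then $P_V(u_k,u_k)\to P_V(B_0,B_0)>0$, which contradicts $P_V(u_k,u_k)\to0$.

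This is exactly where $V\not\equiv0$ enters the linking. Non-attainment of $\inf_{\mathcal N_V}I_V$, which you cite, does not give a uniform gap on the centered slice. As a check, for $V\equiv0$ the near-bubble property and the calibration of $\beta$ both still hold. Yet the family $\Phi$ then sits at level $d_{0,\ell}$ with $(\beta,\gamma)\circ\Phi=\mathrm{id}$, so no retraction arises.

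There are two further corrections. First, the closed ball ``with boundary at infinity'' is not an admissible parameter domain. Bubbles converge weakly to zero at the ideal boundary, so the family has no continuous extension into $\mathcal N_V$. You must work on a compact frame ball $K_R$. Choose $R$ after $\varepsilon$ so that $\max_{\partial K_R}F_V\circ\Phi<S_\ell+\varepsilon$, which your Step~2 allows, and deform relative to $\partial K_R$.

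Once you do this, the obstacle you single out disappears. By equivariance the calibration $(\beta,\gamma)(\pm\mathcal T_{h,\eta}B_0)=(\eta,\log h)$ is exact, so $(\beta,\gamma)\circ\Phi=\mathrm{id}$ on $\partial K_R$. The Brouwer degree then forces every filling to meet the centered slice. The paper records this as $a_{\mathrm{mm}}\geq c_V>S_\ell$ and extracts a constrained Palais--Smale sequence at $a_{\mathrm{mm}}$.

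Second, for the $L^{Q_\ell/2}$ criterion you must also check that $M_{\sqrt V}$ is compact and $P_V$ is continuous (Lemma~\ref{lem:lp-compact-multiplier}), not only the orbit bound. And since $|U|^{q_\ell-2}$ lies only in $L^{Q_\ell/2}_{\mathrm{loc}}$, your Moser iteration needs a preliminary Brezis--Kato step.
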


\subsection{Method}
\label{sec:intro-method}

The radial lift organizes the proof, but none of the nonlinear conclusions
above follows from symmetry restriction alone.  The substantive part of
the method consists of six mechanisms that make the fixed-sector descent
valid.  We describe the obstruction and its resolution in each case.

\emph{(i) Capacity of the auxiliary axis.}
The reduced energy space is a completion of \(C_c^\infty(\Ucal)\) with no
trace at \(\rho=0\).  A reduced weak solution is therefore only known to be
tested against functions vanishing near the axis, whereas the
\(U(\ell)\)-average of an arbitrary full-space test is merely smooth
\emph{through} the axis.  The gap is closed by a capacity estimate: for
\(\ell>1\) a cutoff on \(\rho\in[\varepsilon,2\varepsilon]\) has radial
energy \(O(\varepsilon^{\ell-1})\), while for \(\ell=1\) a logarithmic
cutoff on \([\varepsilon^2,\varepsilon]\) has radial energy
\(O(|\log\varepsilon|^{-1})\).  In both cases the axis is null for the
reduced energy, and testing extends (Lemma~\ref{lem:axis}).  The same
estimate places the reduced bubbles in the completion, and it reappears in
the regularity theory of Corollary~\ref{cor:potential-positive}.  It is
also the exact point at which \(\ell=1\) becomes borderline, and the reason
the Hardy form fails to be continuous there
(Remark~\ref{rem:ell-one}).

\emph{(ii) The auxiliary center cannot survive.}
Benameur's decomposition on \(\Heis^{n+\ell}\) produces frames
\((h_k,\xi_k)\) with \(\xi_k=(z_k,w_k,t_k)\), and a priori the normalized
auxiliary center \(w_k/h_k\) may be unbounded.  It cannot be.  If it were,
choose \(M\) rotations \(A_1,\ldots,A_M\in U(\ell)\) separating
\(A_rw_k\); since the lifted sequence is \(U(\ell)\)-fixed, the \(M\)
recentered sequences have weak limits of equal norm whose relative frames
escape, and a finite-family Bessel estimate forces
\(M\|\Psi\|^2\leq\limsup_k\|U_k\|^2\) for every \(M\).  Once the auxiliary
center is bounded it converges, and the corresponding translation is
absorbed into the profile by strong continuity of translations; the exact
group factorization \(\xi_k=\eta_k\,\delta_{h_k}\zeta_k\), valid because
the two complex blocks are Hermitian-orthogonal, shows that no central
correction is created.  Bounded recentering preserves pairwise
orthogonality (Lemma~\ref{lem:bounded-recentering}).  This is why the
sector decomposition has a single noncompactness channel, in contrast with
the two channels present on real hyperbolic space \cite{LiWang2026Global}.

\emph{(iii) The optimizer cone shrinks, but not too much.}
The full-space optimizer cone contains bubbles centered off the fixed
sector, and the reduced cone does not, so a stability estimate is not
inherited by restriction: the deficit is controlled by the distance to the
\emph{larger} set.  Proposition~\ref{prop:distance-comparison} supplies the
missing comparison.  Its infinitesimal regime rests on
Lemma~\ref{lem:auxiliary-tangent}: the auxiliary translation tangent
\(\mathsf T_{\mathrm{aux}}(s)\) is real-linear, bounded below, and
annihilated by the Haar projection \(P_{U(\ell)}\), so moving the auxiliary
center off the sector costs, to first order, exactly as much in the
fixed-sector distance as in the full distance.  The large-center regime
uses that the Haar average of a bubble correlation over \(U(\ell)\)
vanishes when the center escapes, because the stabilizer of a direction has
measure zero.

\emph{(iv) The kernel shrinks exactly to the reduced orbit.}
The full Heisenberg linearization has translation directions in both the
visible \(z\)-variables and the auxiliary \(w\)-variables.  It is not
enough to discard the latter formally, because the calculation takes
place in a homogeneous completion.  The completed polar lift identifies
the reduced kernel with the \(U(\ell)\)-fixed part of the full kernel.
The auxiliary translation space is the standard \(U(\ell)\)
representation exhibited in Lemma~\ref{lem:auxiliary-tangent} and
therefore has no nonzero fixed vector; the reduced translations and
dilation are fixed.  The next full spectral level does retain fixed
vectors, which gives the exact normal coefficient
\(2/(n+\ell+4)\).  This normal inverse is the analytic input for the
modulated expansion \eqref{eq:intro-modulated-shape}.

\emph{(v) The optimal stability quotient has two compactness thresholds.}
A minimizing sequence for
\(\mathscr D_\ell(u)/d_\ell(u)^2\) may lose compactness either by approaching
the optimizer cone or by splitting into asymptotically orthogonal bubbles.
The first alternative is governed by the normal spectral coefficient
\(2/(n+\ell+4)\), whereas the second has the universal two-bubble level
\(2-2^{(n+\ell)/(n+\ell+1)}\).  We construct fixed-sector tests strictly
below both levels.  For the remaining weak splitting, the cone distance is
written in terms of the maximal bubble correlation.  Correlation
max-splitting and Lemma~\ref{lem:fixed-correlation-balance} show that two
nontrivial components must carry equal correlation mass: otherwise
rescaling the component with smaller correlation strictly lowers the
quotient.  The strict two-bubble inequality then rules out splitting and
forces strong convergence modulo a reduced frame.

\emph{(vi) Symmetry chooses the barycenter.}
For the repulsive problem a solution must be produced inside the
compactness window \((d_{0,\ell},2d_{0,\ell})\), and placing a min--max
level there requires continuous center and scale coordinates on the
constraint manifold, calibrated on the bubble orbit.  We obtain them from
the Busemann barycenter of the lifted critical mass, pushed to the ideal
boundary of \(\CHyp^{n+\ell+1}\).  Uniqueness and equivariance of the
barycenter force this point into the \(U(\ell)\)-fixed totally geodesic
copy of \(\CHyp^{n+1}\), on which the rotation-free translation--dilation
group acts simply transitively; the boundary Cayley Jacobian of
Frank--Lieb \cite[Appendix~A]{FrankLieb2012SharpHeisenberg} identifies the
push-forward of the centered bubble mass with the visual measure at the
basepoint, which calibrates the coordinates exactly on the orbit.  A degree
argument relative to \(\partial K_R\) then produces the level.

\subsection{Comparison with prior work}

\emph{Euclidean and polyharmonic theory.}
The sharp Sobolev inequality and its extremals were determined by Aubin
\cite{Aubin1976Sobolev} and Talenti \cite{Talenti1976BestConstant}.
Brezis--Nirenberg \cite{BrezisNirenberg1983Critical} restored compactness
via a spectral lower-order term, Struwe \cite{Struwe1984Global} identified
the full defect of compactness as a bubble decomposition, and
Bianchi--Egnell \cite{BianchiEgnell1991Sobolev} proved the quantitative
stability remainder.  Recent refinements include the sharp stability
constants with optimal dimensional dependence of
Dolbeault--Esteban--Figalli--Frank--Loss
\cite{DolbeaultEstebanFigalliFrankLoss2025Sharp}, K\"onig's strict upper
bound for the best stability constant \cite{Koenig2023BianchiEgnell} and
subsequent attainment theorem \cite{Koenig2025SobolevMinimizer}, the
quantitative theory of near-solutions to the Sobolev Euler--Lagrange
equation developed by Figalli--Glaudo
\cite{FigalliGlaudo2020SharpStability}, and the fractional extension of
Chen--Frank--Weth
\cite{ChenFrankWeth2013Remainder}.  For whole-space potentials,
Benci--Cerami \cite{BenciCerami1990Positive} introduced the sharp-scale
energy window and Passaseo \cite{Passaseo1996PositiveSolutions} developed
the barycenter--concentration construction; higher-order Euclidean
analogues involve new critical dimensions and polyharmonic compactness
phenomena \cite{EdmundsFortunatoJannelli1990Biharmonic,
Gazzola1998Polyharmonic,GazzolaGrunauSquassina2003Critical}.  The present
paper extends the Bianchi--Egnell and Benci--Cerami paradigms
simultaneously to the Geller geometry.  What is new is that both extensions
must pass through a symmetric sector of an ambient group: neither the
stability estimate nor the energy window is inherited by restriction, and
the sector is what supplies the compensating structure.

\emph{Real hyperbolic space.}
On \((\mathbb H^d,g_{\mathbb H})\), the spectral bottom \((d-1)^2/4\)
interacts with the Euclidean Sobolev scale.  The linear analysis behind
this interaction---sharp \(L^p\) multiplier theorems and heat and Green
function estimates on noncompact symmetric spaces---goes back to Anker
\cite{Anker1990Multipliers} and Anker--Ji \cite{AnkerJi1999HeatGreen},
and the spectrally shifted dispersive and Sobolev estimates that exploit
it on real hyperbolic and Damek--Ricci spaces are due to
Anker--Pierfelice \cite{AnkerPierfelice2009NLS} and
Anker--Pierfelice--Vallarino \cite{AnkerPierfeliceVallarino2015Wave}.
On the elliptic side, Mancini--Sandeep
\cite{ManciniSandeep2008Semilinear} established dimension-dependent
existence and nonexistence for the second-order critical equation with a
spectral term, and Bhakta--Sandeep \cite{BhaktaSandeep2012Poincare}
developed the Palais--Smale description and sign-changing solutions.
Lu--Yang \cite{LuYang2019PaneitzHyperbolic} developed the higher-order
inequality theory for Paneitz-type operators, and Li--Lu--Yang
\cite{LiLuYang2022HyperbolicBN} subsequently established GJMS-operator
Brezis--Nirenberg theorems and radial symmetry via moving planes for the
associated integral equations.  Bhakta--Ganguly--Karmakar--Mazumdar
\cite{BhaktaGangulyKarmakarMazumdar2025Stability,
BhaktaGangulyKarmakarMazumdar2025Struwe} proved Bianchi--Egnell and
Euler--Lagrange stability for the Poincar\'e--Sobolev problem and developed
a quantitative multibubble Struwe decomposition.  Li--Wang
\cite{LiWang2026Global} proved global compactness and existence for
higher-order concentrated potentials on real hyperbolic space, with two
genuinely different noncompactness channels.  What is new here is that the
invariant-sector decomposition has a \emph{single} channel: the
auxiliary-center control that collapses the second channel has no
real-hyperbolic analogue, because there is no ambient symmetric sector.

\emph{Heisenberg and CR geometry.}
The Folland--Stein inequality \cite{FollandStein1974Heisenberg} is the
sub-Riemannian Sobolev inequality on \(\Heis^n\).  Jerison--Lee
\cite{JerisonLee1988Extremals} classified its extremals via the CR Yamabe
problem, and Frank--Lieb \cite{FrankLieb2012SharpHeisenberg} computed sharp
constants in a broad Sobolev--Hardy--Littlewood--Sobolev framework.
Benameur \cite{Benameur2008ProfileHeisenberg} proved the full-space profile
decomposition under Heisenberg translations and dilations.  Loiudice
\cite{Loiudice2005ImprovedSobolev} established a full-space
Bianchi--Egnell remainder.  The CR-sphere spectral analysis of
Malchiodi--Uguzzoni \cite{MalchiodiUguzzoni2002Webster} identifies the
kernel at a Jerison--Lee bubble.  Qiang--Tang--Zhang
\cite{QiangTangZhang2026Nondegeneracy} recently obtained a direct
full-Heisenberg nondegeneracy theorem and applied it to the construction of
sign-changing solutions for a slightly subcritical problem.
Tang--Zhang--Zhang
\cite{TangZhangZhang2024Minimizer} proved that the optimal stability
quotient on the full Heisenberg group has a nontrivial minimizer, while
Chen--Lu--Tang--Wang
\cite{ChenLuTangWang2026HeisenbergStability} gave refined local CR-sphere
and global Heisenberg stability estimates with dimension-dependent
constants, paralleling the Euclidean theory of
\cite{DolbeaultEstebanFigalliFrankLoss2025Sharp}.  Chen--Fan--Liao
\cite{ChenFanLiao2025GlobalCompactness} established sharp,
dimension-dependent residual-to-distance estimates near finite sums of
weakly interacting Jerison--Lee bubbles.  Palatucci--Piccinini--Temperini
\cite{PalatucciPiccininiTemperini2025Struwe} proved Struwe-type compactness
on bounded Heisenberg domains.  What is new here is the descent of the
sharp inequality, equality classification, profile decomposition, and
quantitative stability mechanism to a fixed unitary sector: the sector is
not a subdomain, and the obstruction is the auxiliary center rather than a
boundary.
Theorem~\ref{thm:intro-optimal-stability} additionally proves attainment
of the optimal quotient inside the fixed sector itself; neither its
minimizer nor the two strict threshold inequalities follow from the
full-space attainment theorem by restriction.

\emph{The Geller family and complex hyperbolic analysis.}
The general scattering-theoretic passage from an interior eigenvalue
problem to conformally covariant operators at infinity was established by
Graham--Zworski \cite{GrahamZworski2003Scattering}.  In the CR setting,
Frank--Gonz\'alez--Monticelli--Tan
\cite{FrankGonzalezMonticelliTan2015CRExtension} realized the fractional CR
Laplacian through a linear extension problem on the Siegel domain and proved
the associated sharp trace inequality.  The Geller operators originate in
Geller's harmonic analysis of the CR sphere \cite{Geller1980Kohn}.
Lu--Yang
\cite{LuYang2022SiegelHardySobolevMazya} proved factorization formulas,
Poincar\'e--Sobolev and Hardy--Sobolev--Maz'ya inequalities on the Siegel
domain, together with sharp Adams and Hardy--Adams results.  Their work
established a powerful higher-order framework for analysis in the
complex-hyperbolic Siegel model.  Flynn--Lu--Yang
\cite{FlynnLuYang2025CRTrace} extended the extension framework to higher
orders by constructing conformally covariant boundary operators and proving
sharp CR Sobolev trace inequalities.  The present paper builds on these
contributions and develops the complementary nonlinear layer for the
interior critical equation: the equality manifold, its quantitative and
spectral geometry, the complete loss of compactness, and critical existence
and rigidity.
These features require information beyond the coercive inequalities
themselves and are obtained here through the fixed-sector realization.

\subsection{Scope}
\label{sec:intro-scope}

The results above are stated for integer \(\ell\) and, in the nonlinear
parts, for real-valued functions; the integer lift is the reason.  For
noninteger \(\ell>0\) the operator remains
formally self-adjoint for \(\dd\mu_\ell\), but there is no ambient sector
realization, so neither the classification nor the profile decomposition is
available.  At the endpoint \(\lambda=(\ell-1)^2\) the perturbed form loses
coercivity, and one expects a logarithmic Hardy--Sobolev--Maz'ya
improvement to be needed.  The constant \(\kappa_{n,\ell}\) in
Theorem~\ref{thm:intro-stability} is inherited from the full-space
inequality of \cite{Loiudice2005ImprovedSobolev} and is not asserted to be
optimal.  Theorem~\ref{thm:intro-optimal-stability} proves attainment of
the optimal quotient and gives strict threshold bounds, but does not
determine its numerical value.  The stronger
remainders \(O_{\dot S^1_\ell}(\lambda^2)\) in the modulated shape and
\(O(\lambda^3)\) in the quotient are not asserted: because
of the subcubic range \eqref{eq:intro-subcubic-range}, they require
regularity beyond the natural energy-space
linearization and a separate analysis of the parameter-dependent
indicial exponent at \(\rho=0\).  Finally, the smallness condition
\eqref{eq:intro-orbit-smallness} is sufficient and is not expected to be
optimal.

\subsection{Organization}

Section~\ref{sec:radial-lift} establishes the radial lift, the homogeneous
completion, the axis-capacity argument, and the classification of classical
and weak solutions.  Section~\ref{sec:sharp-profile} proves the sharp
inequality and the profile decomposition.  Section~\ref{sec:stability}
carries out the quantitative fixed-sector descent and proves
Theorem~\ref{thm:intro-stability}.
Section~\ref{sec:nondegeneracy} proves fixed-sector nondegeneracy, computes
the exact local spectral coefficient, and constructs the normal slice.
Section~\ref{sec:optimal-stability} proves the two strict threshold
inequalities and attainment of the optimal fixed-sector stability
quotient.
Section~\ref{sec:cayley} gives the
exact Cayley conjugation and identifies the intrinsic form domain.
Section~\ref{sec:hardy} treats the Hardy quotient, the positive
least-action solution, and the small-parameter asymptotics of
Theorems~\ref{thm:intro-rigidity}
and~\ref{thm:intro-second-order}, as well as the uniqueness and symmetry in
Theorem~\ref{thm:intro-symmetry}; it also proves the ground-state
nondegeneracy asserted there.
Section~\ref{sec:hardy-global} proves the global Palais--Smale compactness
theorem and the action quantum.  Section~\ref{sec:compact-potentials}
studies compact quadratic perturbations, and Section~\ref{sec:minmax}
constructs the Busemann center--scale linking and proves
Theorem~\ref{thm:intro-potential}.  Technical splitting and regularity
facts are gathered in
Appendix~\ref{app:technical}, and the constants relating the reduced and
intrinsic formulations in Appendix~\ref{app:normalization}.

\section{Radial lift and classification}
\label{sec:radial-lift}

\subsection{The lifting identity}

For \(m\geq1\), write
\(\Heis^m=\C^m\times\mathbb R\) with group law
\[
 (Z,t)(Z',t')
 =
 \bigl(Z+Z',
 t+t'+2\operatorname{Im}\langle Z,Z'\rangle\bigr),
 \qquad
 \langle Z,Z'\rangle
 =
 \sum_{j=1}^m Z_j\overline{Z'_j}.
\]
Thus the Hermitian product is linear in its first variable.  If
\(Z_j=x_j+iy_j\), the horizontal vector fields are
\[
 X_j=\partial_{x_j}+2y_j\partial_t,
 \qquad
 Y_j=\partial_{y_j}-2x_j\partial_t,
\]
and
\[
 \Delta_{\Heis^m}
 =
 \sum_{j=1}^m(X_j^2+Y_j^2).
\]
We use Lebesgue measure as Haar measure.

Recall that the homogeneous dimension and critical exponent are
\[
 Q_\ell=2(n+\ell)+2,
 \qquad
 q_\ell=\frac{2Q_\ell}{Q_\ell-2}
 =2+\frac{2}{n+\ell}.
\]

\begin{lemma}[Radial lifting identity]\label{lem:lift}
Let \(v\in C^2(\Ucal)\), and define
\[
 U(z,w,t)=v(z,t,|w|^2)
\]
for \(w\in\C^\ell\).  Away from \(w=0\),
\[
 -\Delta_{\Heis^{n+\ell}}U=\Gell_\ell v.
\]
If \(U\) is \(C^2\) across \(w=0\), the identity holds everywhere.
\end{lemma}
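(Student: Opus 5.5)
The proof is a direct chain-rule computation in coordinates, organized by splitting the sub-Laplacian of \(\Heis^{n+\ell}\) into its \(z\)-block and its \(w\)-block. Write points of \(\Heis^{n+\ell}\) as \((z,w,t)\) with \(z\in\C^n\), \(w\in\C^\ell\). Since \(\langle (z,w),(z',w')\rangle=\langle z,z'\rangle+\langle w,w'\rangle\), the horizontal fields split into two families. The first is \(X_j,Y_j\) (\(1\le j\le n\)), built from \(z_j=x_j+iy_j\), which involve only \(\partial_{x_j},\partial_{y_j},\partial_t\). The second is
\[
W_k=\partial_{a_k}+2b_k\partial_t,\qquad W_k'=\partial_{b_k}-2a_k\partial_t,
\]
with \(w_k=a_k+ib_k\) and \(1\le k\le\ell\). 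Thus \(\Delta_{\Heis^{n+\ell}}=\sum_j(X_j^2+Y_j^2)+\sum_k(W_k^2+W_k'^2)\).

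On the set \(w\neq0\), the function \(U(z,w,t)=v(z,t,|w|^2)\) is \(C^2\), because \(\rho=|w|^2>0\) is smooth there and \(v\in C^2(\Ucal)\). The \(z\)-block acts only in \((z,t)\) with \(\rho\) frozen, so it gives \(\sum_j(X_j^2+Y_j^2)U=(\Delta_{\Heis^n}v)(z,t,|w|^2)\).

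For the \(w\)-block I will use \(\partial_{a_k}\rho=2a_k\) and \(\partial_{b_k}\rho=2b_k\). These give
\[
W_kU=2a_kv_\rho+2b_kv_t,\qquad W_k'U=2b_kv_\rho-2a_kv_t.
\]
Differentiating once more, and remembering that the coefficients \(2a_k,2b_k\) are themselves differentiated, I expect
\[
W_k^2U=2v_\rho+4a_k^2v_{\rho\rho}+8a_kb_kv_{\rho t}+4b_k^2v_{tt},
\]
\[
W_k'^2U=2v_\rho+4b_k^2v_{\rho\rho}-8a_kb_kv_{\rho t}+4a_k^2v_{tt}.
\]
The mixed \(v_{\rho t}\) terms cancel in the sum, which is the point where the Hermitian (symplectic) structure of the group law matters. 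Summing over \(k\) and using \(\sum_k(a_k^2+b_k^2)=\rho\) gives \(4\ell v_\rho+4\rho(v_{\rho\rho}+v_{tt})\). Adding the two blocks, changing sign, and recalling \(T=\partial_t\) yields \(-\Delta_{\Heis^{n+\ell}}U=\Gell_\ell v\) at \((z,t,|w|^2)\).

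For the axis statement, assume \(U\in C^2\) across \(\{w=0\}\). Then \(-\Delta_{\Heis^{n+\ell}}U\) is continuous on all of \(\Heis^{n+\ell}\), and \(\{w\ne0\}\) is dense. So the identity extends to \(w=0\) by continuity, where the right-hand side is read as the continuous extension of \((z,w,t)\mapsto\Gell_\ell v(z,t,|w|^2)\). Existence of this extension is guaranteed precisely because it coincides with \(-\Delta U\) off the axis.

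The only step needing care is the second-derivative bookkeeping in the \(w\)-block: the first-order term \(4\ell v_\rho\) comes from differentiating the coefficients, and the cross terms must cancel. This is routine, so I expect no genuine obstacle beyond being precise about how the axis extension is interpreted.
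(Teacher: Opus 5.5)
Your proposal is correct and is essentially the paper's proof. The paper likewise computes \(X_{w_\alpha}U=2a_\alpha v_\rho+2b_\alpha v_t\) and \(Y_{w_\alpha}U=2b_\alpha v_\rho-2a_\alpha v_t\), notes that the mixed derivatives cancel to give \(4v_\rho+4|w_\alpha|^2(v_{\rho\rho}+v_{tt})\), sums over \(\alpha\), adds the \(z\)-block, and invokes continuity on the axis; your explicit second-derivative formulas and your reading of the right-hand side at \(w=0\) simply spell out those steps in more detail.
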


\begin{proof}
Write \(w_\alpha=a_\alpha+ib_\alpha\) and
\(\rho=|w|^2\).  The horizontal fields in the \(w_\alpha\)-variable give
\[
 X_{w_\alpha}U
 =
 2a_\alpha v_\rho+2b_\alpha v_t,
 \qquad
 Y_{w_\alpha}U
 =
 2b_\alpha v_\rho-2a_\alpha v_t.
\]
Differentiating once more, the mixed derivatives cancel and
\[
 (X_{w_\alpha}^2+Y_{w_\alpha}^2)U
 =
 4v_\rho
 +4|w_\alpha|^2(v_{\rho\rho}+v_{tt}).
\]
Summing over \(\alpha=1,\ldots,\ell\), and adding the horizontal
Laplacian in the \(z\)-variables, proves the identity for \(w\neq0\).
Continuity gives the last assertion.
\end{proof}

\subsection{Classical solutions}

\begin{theorem}\label{thm:classical}
Let \(v\in C^2(\Ucal)\) be positive and satisfy
\[
 \Gell_\ell v=v^{q_\ell-1}
 \quad\text{in }\Ucal.
\]
Assume that \(v^\uparrow\) extends to a positive \(C^2\) solution of
\[
 -\Delta_{\Heis^{n+\ell}}v^\uparrow
 =
 (v^\uparrow)^{q_\ell-1}
 \quad\text{on }\Heis^{n+\ell},
\]
and that
\[
 v^\uparrow\in L^{q_\ell}(\Heis^{n+\ell}).
\]
Then there exist \(z_0\in\C^n\), \(t_0\in\mathbb R\), and \(\beta>0\)
such that
\[
 v(z,t,\rho)
 =
 C_\beta
 \left[
  \bigl(\rho+\beta+|z-z_0|^2\bigr)^2
  +\bigl(t-t_0+2\operatorname{Im}\langle z,z_0\rangle\bigr)^2
 \right]^{-(n+\ell)/2},
\]
where
\[
 C_\beta^{q_\ell-2}=4(n+\ell)^2\beta,
 \qquad
 C_\beta=\bigl(4(n+\ell)^2\beta\bigr)^{(n+\ell)/2}.
\]
\end{theorem}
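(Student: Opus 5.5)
The plan is to reduce to the Jerison--Lee classification on \(\Heis^{n+\ell}\). By hypothesis \(U=v^\uparrow\) is a positive \(C^2\) solution of the critical Folland--Stein equation on \(\Heis^{n+\ell}\) with \(U\in L^{q_\ell}\). The critical exponent of \(\Heis^{n+\ell}\) is exactly \(2Q_\ell/(Q_\ell-2)=q_\ell\), since the homogeneous dimension of \(\Heis^{n+\ell}\) is \(2(n+\ell)+2=Q_\ell\).

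First, I upgrade \(U\) to a finite-energy solution. Testing the equation against \(\chi_R^2U\), where \(\chi_R\) is a cutoff at Heisenberg scale \(R\), gives \(\int\chi_R^2|\nabla_HU|^2\le\int\chi_R^2U^{q_\ell}+C\int|\nabla_H\chi_R|^2U^2\). The last term is controlled by Hölder on the annulus: it is bounded by \(\|U\|_{L^{q_\ell}(\mathrm{annulus})}^2\), which tends to \(0\). Hence \(U\in\dot H^1(\Heis^{n+\ell})\) is a positive energy solution. The Jerison--Lee classification \cite{JerisonLee1988Extremals} (positive finite-energy solutions are precisely the Jerison--Lee bubbles) then applies. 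It says
\[
U(\xi)=C\,\bigl|(\tilde\beta+|Z-Z_0|^2)+i(t-t_0+2\operatorname{Im}\langle Z,Z_0\rangle)\bigr|^{-(n+\ell)}
\]
with \(Z=(z,w)\), center \(Z_0=(z_0,w_0)\), and \(C\) fixed by \(\tilde\beta\).

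Second, I use the \(U(\ell)\)-invariance of \(U\) in \(w\) to force \(w_0=0\). The function \(U\) depends on \(w\) only through \(|w|^2\), so its unique maximum point \((z_0,w_0,t_0)\) must be \(U(\ell)\)-fixed, which gives \(w_0=0\). Then \(|Z-Z_0|^2=|z-z_0|^2+|w|^2=|z-z_0|^2+\rho\), and \(\operatorname{Im}\langle Z,Z_0\rangle=\operatorname{Im}\langle z,z_0\rangle\). The sign convention in the Heisenberg translation must match the paper's group law, and I will check it against the frame action \eqref{eq:intro-frame}. This yields the stated form with \(\beta=\tilde\beta\).

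Finally, I compute the constant by substituting into \(\Gell_\ell v=v^{q_\ell-1}\), or equivalently into the full equation at the center. For \(f=[(\rho+\beta)^2+t^2]^{-(n+\ell)/2}\) (centered, \(z\) suppressed), a direct evaluation of \(-\Delta_{\Heis^{n+\ell}}\) of the bubble \(|\beta+|Z|^2+it|^{-(n+\ell)}\) gives \(4(n+\ell)^2\beta\,|\cdot|^{-(n+\ell)-2}\). Since \((n+\ell)(q_\ell-1)=n+\ell+2\), matching powers gives \(C_\beta^{q_\ell-2}=4(n+\ell)^2\beta\), and since \(q_\ell-2=2/(n+\ell)\) this is the stated formula for \(C_\beta\).

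The main obstacle is only bookkeeping: applying Jerison--Lee in the correct normalization (energy class versus \(L^{q_\ell}\)) and getting the translation sign right. The computation of the Laplacian of the bubble is routine, most easily done via the holomorphic structure of \(\beta+|Z|^2-it\).
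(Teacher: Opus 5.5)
Your proposal is correct and follows the paper's proof: lift to \(\Heis^{n+\ell}\), apply the Jerison--Lee classification, force \(w_0=0\) from \(U(\ell)\)-invariance and the unique maximum of the bubble, and fix \(C_\beta\) by computing \(-\Delta_{\Heis^{n+\ell}}\) of the centered bubble, which equals \(4(n+\ell)^2\beta\,\bigl|\beta+|Z|^2+it\bigr|^{-(n+\ell)-2}\); by the lifting identity this is the same as the paper's computation of \(\Gell_\ell f\). Your extra Caccioppoli step showing finite energy is valid, since the annular cutoff term is scale-invariant by H\"older and tends to zero, whereas the paper instead upgrades the \(C^2\) solution to a smooth one by local subelliptic regularity and applies Jerison--Lee directly under the \(L^{q_\ell}\) hypothesis.
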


\begin{proof}
Lemma~\ref{lem:lift} shows that the lift
\(U=v^\uparrow\) satisfies the critical equation on
\(\Heis^{n+\ell}\).  Local subelliptic regularity upgrades the assumed
positive \(C^2\) solution to a smooth one.  The homogeneous dimension is
\(Q_\ell\), and the assumed exponent \(q_\ell\) is precisely the critical
integrability exponent.  The Jerison--Lee classification
\cite[Corollary~4.2]{JerisonLee1988Extremals}, after the constant rescaling
corresponding to the present scalar-curvature convention, gives
\[
 U(Z,t)
 =
 A
 \left[
  \bigl(\beta+|Z-Z_0|^2\bigr)^2
  +\bigl(t-t_0+2\operatorname{Im}\langle Z,Z_0\rangle\bigr)^2
 \right]^{-(n+\ell)/2}.
\]

The function \(U(z,w,t)\) is invariant under \(w\mapsto Bw\) for every
\(B\in U(\ell)\).  A Heisenberg bubble has a unique maximum.  If
\(Z_0=(z_0,w_0)\), invariance would make every
\((z_0,Bw_0,t_0)\) a maximum; hence \(w_0=0\).  Substituting
\(\rho=|w|^2\) gives the stated reduced profile, apart from its amplitude.

It remains to impose the coefficient-one normalization.  By Heisenberg
translation it is enough to use the centered profile.  Put
\[
 r=\rho+\beta+|z|^2,\qquad s=t,\qquad
 D=r^2+s^2,\qquad f=CD^{-(n+\ell)/2}.
\]
A direct differentiation gives
\[
 f_r=-C(n+\ell)rD^{-(n+\ell)/2-1},
 \qquad
 f_{rr}+f_{ss}
 =
 C(n+\ell)^2D^{-(n+\ell)/2-1}.
\]
Substitution into the reduced operator yields
\[
 \Gell_\ell f
 =
 4C(n+\ell)^2\beta D^{-(n+\ell+2)/2}.
\]
Since \(q_\ell-1=1+2/(n+\ell)\), comparison with
\(f^{q_\ell-1}\) gives
\[
 C^{q_\ell-2}=4(n+\ell)^2\beta.
\]
This is the asserted value of \(C_\beta\).
\end{proof}

\subsection{The homogeneous energy space and the auxiliary axis}

Set
\[
 d\mu_\ell
 =
 \rho^{\ell-1}\dd z\dd t\dd\rho
\]
and
\[
 \Gamma_\ell(v,\varphi)
 =
 \langle\nabla_Hv,\nabla_H\varphi\rangle
 +4\rho(v_\rho\varphi_\rho+v_t\varphi_t).
\]
We denote by \(\dot S^1_\ell(\Ucal)\) the completion of
\(C_c^\infty(\Ucal)\) for the norm
\[
 \|v\|_{\dot S^1_\ell}^2
 =
 \int_{\Ucal}\Gamma_\ell(v,v)\dd\mu_\ell.
\]
No boundary trace at \(\rho=0\) is part of this definition.

\begin{lemma}[Smooth invariant factorization]\label{lem:factorization}
Let \(F(z,w,t)\) be smooth and invariant under
\(w\mapsto Bw\) for every \(B\in U(\ell)\).  Then there is a function
\(\varphi(z,t,\rho)\), smooth for \(\rho\geq0\), such that
\[
 F(z,w,t)=\varphi(z,t,|w|^2).
\]
If \(F\) is compactly supported, then so is \(\varphi\).
\end{lemma}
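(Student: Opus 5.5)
The plan is to reduce the statement to a one-variable problem in \(w\) with \((z,t)\) as smooth parameters, and then invoke the classical theorem on smooth functions of \(|w|^2\).

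\emph{Step 1: reduction to an even function on a line.} Fix \((z,t)\). Since \(U(\ell)\) acts transitively on each sphere \(\{|w|=r\}\subset\C^\ell\), \(F\) depends on \(w\) only through \(|w|\). Put
\[
 g(z,t,s)=F(z,(s,0,\ldots,0),t),\qquad s\in\R,
\]
using the first coordinate axis \(w=(s,0,\ldots,0)\) with \(s\) real. Then \(g\) is smooth in all variables. It is even in \(s\), because \(w\mapsto -w\) lies in \(U(\ell)\). Moreover \(F(z,w,t)=g(z,t,|w|)\).

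\emph{Step 2: smooth dependence on \(s^2\).} Apply Whitney's theorem on even functions, in its version with smooth parameters: a smooth function even in \(s\) can be written as \(g(z,t,s)=\varphi(z,t,s^2)\) with \(\varphi\) smooth on \(\rho\geq0\), jointly in \((z,t,\rho)\).

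If one prefers a self-contained argument, I would proceed as follows.

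\begin{itemize}
\item Away from \(\rho=0\), set \(\varphi(z,t,\rho)=g(z,t,\sqrt\rho)\), which is manifestly smooth.
\item Near \(\rho=0\), use Taylor's formula with integral remainder. Evenness kills the odd coefficients, so
\[
 g=\sum_{k<N}a_k(z,t)s^{2k}+s^{2N}R_N(z,t,s),
\]
with \(R_N\) smooth and even.
\item This shows that \(\varphi\) is \(C^{N-1}\) up to \(\rho=0\) for every \(N\).
\end{itemize}

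Alternatively, one can use Borel's lemma to build a smooth \(\tilde\varphi\) whose Taylor series in \(\rho\) is \(\sum a_k\rho^k\). The difference \(g-\tilde\varphi(s^2)\) is then flat at \(s=0\), and a flat even function composed with \(\sqrt\rho\) is smooth up to \(\rho=0\).

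Finally, \(F(z,w,t)=g(z,t,|w|)=\varphi(z,t,|w|^2)\).

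\emph{Step 3: compact support.} If \(\operatorname{supp}F\subset\{|z|,|t|,|w|\leq R\}\), then \(\varphi(z,t,\rho)=0\) whenever \(|z|>R\), \(|t|>R\), or \(\rho>R^2\). Hence \(\varphi\) is compactly supported in \(\{\rho\geq0\}\). Note that \(\varphi\) need not vanish near \(\rho=0\); this is consistent with the lemma, which only asserts smoothness up to \(\rho\geq0\).

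The only nontrivial point is Step 2, specifically the joint smoothness in the parameters \((z,t)\) at \(\rho=0\). I would settle it by citing Whitney's even-function theorem with parameters, which holds because its proof is linear and local in the parameters. Alternatively, the Taylor/Borel argument above works uniformly on compact sets of \((z,t)\), since all remainder bounds are uniform there.
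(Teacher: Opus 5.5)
Your proposal is correct and follows the paper's own proof. Transitivity of \(U(\ell)\) on spheres reduces \(F\) to its restriction \(g(z,t,s)\) to the line \(w=se_1\), which is smooth and even in \(s\) with \((z,t)\) as parameters. The parameter-dependent even-function lemma then yields \(\varphi\) smooth up to \(\rho=0\); the paper derives it from Hadamard's lemma, while you cite Whitney or a Taylor/Borel argument. Compact support follows exactly as in your Step~3.

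In your self-contained variant, the claim ``\(\varphi\) is \(C^{N-1}\)'' does hold. The reason is that \(\partial_\rho^j\bigl[R_N(z,t,\sqrt\rho)\bigr]=O(\rho^{1/2-j})\) locally uniformly for \(j\geq1\), which makes \(\rho^NR_N(z,t,\sqrt\rho)\) of class \(C^N\) up to \(\rho=0\).
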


\begin{proof}
The unitary group acts transitively on every sphere in \(\C^\ell\), so
\(F\) depends on \(w\) only through \(|w|\).  Its restriction to the line
\(w=re_1\) is a smooth even function of \(r\), with \((z,t)\) as smooth
parameters.  The parameter-dependent even-function lemma, obtained from
Hadamard's lemma, writes this restriction smoothly as a function of
\(r^2\).  This gives the asserted factorization through \(|w|^2\).
\end{proof}

\begin{lemma}[Polar lifting]\label{lem:polar-lift}
Let \(c_\ell=|\mathbb S^{2\ell-1}|/2\).  For
\(v\in C_c^\infty(\Ucal)\),
\[
 \int_{\Heis^{n+\ell}}|v^\uparrow|^{q_\ell}
 =
 c_\ell\int_{\Ucal}|v|^{q_\ell}\dd\mu_\ell
\]
and
\[
 \int_{\Heis^{n+\ell}}|\nabla_Hv^\uparrow|^2
 =
 c_\ell\int_{\Ucal}\Gamma_\ell(v,v)\dd\mu_\ell.
\]
Consequently, the radial lift extends continuously from
\(\dot S^1_\ell(\Ucal)\) to a Hilbert-space isomorphism onto the
\(U(\ell)\)-invariant subspace of
\(\dot S^1(\Heis^{n+\ell})\).
\end{lemma}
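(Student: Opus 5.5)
The plan is to prove the two integral identities for \(v\in C_c^\infty(\Ucal)\) by polar coordinates in the auxiliary variable \(w\in\C^\ell\cong\R^{2\ell}\). Then we pass to completions by density, and identify the closure of the image using the factorization Lemma~\ref{lem:factorization} together with a capacity cutoff near the axis \(w=0\).

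\emph{Step 1: the two identities.} Write \(w=r\theta\) with \(\theta\in\mathbb S^{2\ell-1}\), so that \(\dd w=r^{2\ell-1}\dd r\dd\theta\). Substituting \(\rho=r^2\) gives \(\dd\rho=2r\dd r\), hence
\[
r^{2\ell-1}\dd r=\tfrac12\rho^{\ell-1}\dd\rho .
\]
Integrating a function of \((z,t,|w|^2)\) over \(w\) therefore produces the factor \(|\mathbb S^{2\ell-1}|/2=c_\ell\) against \(\rho^{\ell-1}\dd\rho\). This gives the \(L^{q_\ell}\) identity at once. For the energy, the \(z\)-horizontal fields act on \(v^\uparrow\) exactly as on \(v\), since the \(t\)-coefficients involve only \(z\). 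For the \(w\)-fields, the formulas in the proof of Lemma~\ref{lem:lift} give
\[
|X_{w_\alpha}v^\uparrow|^2+|Y_{w_\alpha}v^\uparrow|^2
=4|w_\alpha|^2\bigl(|v_\rho|^2+|v_t|^2\bigr),
\]
because the cross terms cancel. Summing over \(\alpha\) yields \(4\rho(|v_\rho|^2+|v_t|^2)\), which is \(\Gamma_\ell(v,v)\) minus its \(z\)-part. The polar change of variables then gives the second identity. Since \(v\) is supported in \(\rho\geq\varepsilon>0\), \(v^\uparrow\) is smooth and compactly supported in \(\Heis^{n+\ell}\), so everything is legitimate.

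\emph{Step 2: extension and isometry.} The map \(v\mapsto c_\ell^{-1/2}v^\uparrow\) is an isometry from \((C_c^\infty(\Ucal),\|\cdot\|_{\dot S^1_\ell})\) into \(\dot S^1(\Heis^{n+\ell})\), so it extends to an isometry on the completion. It is injective on the completion: a Cauchy sequence whose lifts converge to \(0\) has energies tending to \(0\), hence converges to \(0\) in \(\dot S^1_\ell\). The \(L^{q_\ell}\) identity passes to the limit via the Folland--Stein embedding. The image consists of \(U(\ell)\)-invariant functions, because invariance is closed. The image is also closed, being isometric to a complete space.

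\emph{Step 3: surjectivity onto the invariant subspace — the main obstacle.} Let \(F\) be \(U(\ell)\)-invariant in \(\dot S^1(\Heis^{n+\ell})\). Approximate \(F\) by \(F_k\in C_c^\infty\), then replace each \(F_k\) by its Haar average \(P_{U(\ell)}F_k\). This is still smooth and compactly supported, and it converges to \(F\) because \(P_{U(\ell)}\) is a contraction commuting with the energy (the \(U(\ell)\)-action preserves \(|\nabla_H\cdot|\)). By Lemma~\ref{lem:factorization}, \(P_{U(\ell)}F_k=\varphi_k^\uparrow\) with \(\varphi_k\) smooth on \(\rho\geq0\) and compactly supported. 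However, \(\varphi_k\) need not vanish near \(\rho=0\), so it is not yet in \(C_c^\infty(\Ucal)\).

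To fix this, multiply \(\varphi_k\) by an axis cutoff \(\chi_\varepsilon(\rho)\). For \(\ell>1\), take \(\chi_\varepsilon\) vanishing for \(\rho<\varepsilon\), equal to \(1\) for \(\rho>2\varepsilon\), with \(|\chi'|\lesssim\varepsilon^{-1}\). For \(\ell=1\), take a logarithmic cutoff between \(\varepsilon^2\) and \(\varepsilon\). The error \(\varphi_k(1-\chi_\varepsilon)\) has energy bounded by two terms:
\[
\int4\rho\,\varphi_k^2|\chi_\varepsilon'|^2\dd\mu_\ell
+\int_{\rho<2\varepsilon}\Gamma_\ell(\varphi_k,\varphi_k)\dd\mu_\ell .
\]
The second term tends to \(0\) by dominated convergence; note \(\rho|\varphi_{k,\rho}|^2\) is bounded. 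For \(\ell>1\), the first term is \(O\bigl(\varepsilon\cdot\varepsilon^{-2}\cdot\varepsilon^{\ell}\bigr)=O(\varepsilon^{\ell-1})\). For \(\ell=1\), it is \(O(|\log\varepsilon|^{-1})\), using \(\int\rho\cdot(\rho\log\varepsilon)^{-2}\dd\rho\). Both vanish as \(\varepsilon\to0\). A diagonal choice then gives elements of \(C_c^\infty(\Ucal)\) whose lifts converge to \(F\), which proves surjectivity. The delicate point is the \(\ell=1\) borderline, where only the logarithmic cutoff works, and I expect that estimate to require the most care.
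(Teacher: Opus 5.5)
Your proposal is correct and follows essentially the same route as the paper: the polar substitution \(\rho=|w|^2\) combined with the lifting formulas gives the two identities, the completion yields a similarity with closed range, and surjectivity comes from Haar averaging, Lemma~\ref{lem:factorization}, and the power cutoff for \(\ell>1\) or the logarithmic cutoff for \(\ell=1\). The only difference is that you write out the two cutoff-error terms explicitly, where the paper appeals to boundedness near the axis and absolute continuity of the energy integral.
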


\begin{proof}
The substitution \(\rho=|w|^2\) in polar coordinates gives
\[
 dw=c_\ell\rho^{\ell-1}\dd\rho.
\]
The first identity follows immediately.  The formulas in the proof of
Lemma~\ref{lem:lift} give
\[
 \sum_{\alpha=1}^{\ell}
 \bigl(|X_{w_\alpha}v^\uparrow|^2
       +|Y_{w_\alpha}v^\uparrow|^2\bigr)
 =
 4\rho(v_\rho^2+v_t^2),
\]
which proves the energy identity.  The extension by completion is a
similarity and therefore has closed range.

It remains to prove surjectivity.  Let
\(F\in\dot S^1(\Heis^{n+\ell})^{U(\ell)}\), and choose
\(F_j\in C_c^\infty(\Heis^{n+\ell})\) converging to \(F\) in homogeneous
energy.  Replacing \(F_j\) by its Haar average over \(U(\ell)\), we may
assume that every \(F_j\) is invariant.  Lemma~\ref{lem:factorization}
then gives
\[
 F_j(z,w,t)=f_j(z,t,|w|^2),
\]
where \(f_j\) is smooth up to \(\rho=0\) and compactly supported in
\(\C^n\times\R\times[0,\infty)\).

Choose a smooth cutoff \(\eta_\varepsilon(\rho)\) which vanishes near the
axis and tends to one away from it.  For \(\ell>1\) it may be supported on
a transition interval of length \(O(\varepsilon)\), with
\[
 \int_0^\infty \rho^\ell|\eta_\varepsilon'|^2\dd\rho
 =O(\varepsilon^{\ell-1});
\]
for \(\ell=1\) a logarithmic transition between
\(\varepsilon^2\) and \(\varepsilon\) gives instead
\[
 \int_0^\infty \rho|\eta_\varepsilon'|^2\dd\rho
 =O(|\log\varepsilon|^{-1}).
\]
The polar energy identity, the boundedness of \(f_j\) near the axis, and
absolute continuity of its energy integral show that
\[
 (\eta_\varepsilon f_j)^\uparrow\longrightarrow F_j
 \quad\text{in }\dot S^1(\Heis^{n+\ell}).
\]
Since \(\eta_\varepsilon f_j\in C_c^\infty(\Ucal)\), every \(F_j\) belongs
to the closed range of the completed lift.  Passing to the limit in \(j\)
proves that \(F\) belongs to the same range.
\end{proof}

\begin{lemma}[Tests through the axis]\label{lem:axis}
Suppose that \(v\in\dot S^1_\ell(\Ucal)\) is nonnegative and satisfies
\[
 \int_{\Ucal}\Gamma_\ell(v,\varphi)\dd\mu_\ell
 =
 \int_{\Ucal}v^{q_\ell-1}\varphi\dd\mu_\ell
 \qquad
 \text{for every }\varphi\in C_c^\infty(\Ucal).
\]
Then the same identity holds for compactly supported tests that are smooth
up to \(\rho=0\).
\end{lemma}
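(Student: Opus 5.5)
The plan is to approximate a test \(\varphi\), smooth up to \(\rho=0\) and compactly supported in \(\C^n\times\R\times[0,\infty)\), by \(\eta_\varepsilon\varphi\in C_c^\infty(\Ucal)\). Here \(\eta_\varepsilon(\rho)\) is the axis cutoff from the proof of Lemma~\ref{lem:polar-lift}: for \(\ell>1\) a transition on \([\varepsilon,2\varepsilon]\), and for \(\ell=1\) a logarithmic transition between \(\varepsilon^2\) and \(\varepsilon\). We apply the hypothesis to \(\eta_\varepsilon\varphi\) and let \(\varepsilon\to0\) on both sides. It suffices to show that
\[
 \int_{\Ucal}\Gamma_\ell(v,\eta_\varepsilon\varphi)\dd\mu_\ell\to\int_{\Ucal}\Gamma_\ell(v,\varphi)\dd\mu_\ell,
 \qquad
 \int_{\Ucal}v^{q_\ell-1}\eta_\varepsilon\varphi\dd\mu_\ell\to\int_{\Ucal}v^{q_\ell-1}\varphi\dd\mu_\ell .
\]
First we record that both limit integrals are finite. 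By the polar energy identity, \(\varphi^\uparrow\) is a smooth compactly supported function on \(\Heis^{n+\ell}\), so \(\int\Gamma_\ell(\varphi,\varphi)\dd\mu_\ell=c_\ell^{-1}\|\nabla_H\varphi^\uparrow\|_2^2<\infty\). Since \(v\in L^{q_\ell}(\dd\mu_\ell)\) by Lemma~\ref{lem:polar-lift} and the Folland--Stein inequality on \(\Heis^{n+\ell}\), and \(\varphi\) is bounded with compact support, \(v^{q_\ell-1}\varphi\in L^1(\dd\mu_\ell)\) by H\"older.

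The nonlinear term converges by dominated convergence, since \(|v^{q_\ell-1}\eta_\varepsilon\varphi|\le v^{q_\ell-1}|\varphi|\) and \(\eta_\varepsilon\to1\) pointwise on \(\Ucal\).

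For the energy term we expand
\[
 \Gamma_\ell(v,\eta_\varepsilon\varphi)=\eta_\varepsilon\Gamma_\ell(v,\varphi)+4\rho\,\varphi\,\eta_\varepsilon'\,v_\rho,
\]
using that \(\eta_\varepsilon\) depends only on \(\rho\). The first piece converges by dominated convergence, because \(|\Gamma_\ell(v,\varphi)|\le\Gamma_\ell(v,v)^{1/2}\Gamma_\ell(\varphi,\varphi)^{1/2}\in L^1\). For the second piece, Cauchy--Schwarz gives
\[
 \Bigl|\int 4\rho\varphi\eta_\varepsilon'v_\rho\dd\mu_\ell\Bigr|
 \le 2\|\varphi\|_\infty
 \Bigl(\int_{K\cap\{\operatorname{supp}\eta_\varepsilon'\}}4\rho v_\rho^2\dd\mu_\ell\Bigr)^{1/2}
 \Bigl(\int_K\rho|\eta_\varepsilon'|^2\dd\mu_\ell\Bigr)^{1/2},
\]
where \(K\) is the \((z,t)\)-projection of \(\operatorname{supp}\varphi\) times \([0,\infty)\). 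The last factor is at most \(C|K'|\int_0^\infty\rho^{\ell}|\eta_\varepsilon'|^2\dd\rho\). By the capacity bounds of Lemma~\ref{lem:polar-lift}, this is \(O(\varepsilon^{\ell-1})\) for \(\ell>1\) and \(O(|\log\varepsilon|^{-1})\) for \(\ell=1\). In particular it is bounded, and it tends to zero except when \(\ell=1\)... in fact it tends to zero in all cases.

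The first factor tends to zero as well, because \(\operatorname{supp}\eta_\varepsilon'\subset\{\rho\le2\varepsilon\}\) shrinks to the null set \(\{\rho=0\}\) and \(\rho v_\rho^2\in L^1(\dd\mu_\ell)\) by absolute continuity of the integral. So the cross term vanishes in the limit, and only boundedness of the capacity factor is really needed.

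The main subtlety is that for \(v\in\dot S^1_\ell\) the objects \(\nabla_Hv\), \(\sqrt\rho\,v_\rho\) and \(\sqrt\rho\,v_t\) must be read as the \(L^2(\dd\mu_\ell)\) components of the completed gradient. The expansion above is first justified for \(v\) replaced by approximants \(v_j\in C_c^\infty(\Ucal)\). The passage \(j\to\infty\) at fixed \(\varepsilon\) is then harmless, since \(\eta_\varepsilon\varphi\in C_c^\infty(\Ucal)\). Nonnegativity of \(v\) is used only to make sense of \(v^{q_\ell-1}\).
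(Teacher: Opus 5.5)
Your proposal is correct and is essentially the paper's argument. You test with \(\eta_\varepsilon\varphi\), bound the cross term \(4\rho\varphi\eta_\varepsilon' v_\rho\) by Cauchy--Schwarz against the axis-capacity integral, and remove the remaining bilinear and nonlinear terms on the shrinking strip by absolute continuity (your dominated convergence). Your variation on the cross term is also valid and slightly sharper. You take the smallness from \(\int_{\{0<\rho\le 2\varepsilon\}}\rho v_\rho^2\,d\mu_\ell\to0\), so the capacity factor only needs to stay bounded, whereas the paper uses that the capacity itself tends to zero.
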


\begin{proof}
Let \(\varphi\) be such a test.  When \(\ell>1\), choose a cutoff
\(\eta_\varepsilon\) which vanishes near \(\rho=0\), equals one for
\(\rho\geq2\varepsilon\), and satisfies
\(|\eta_\varepsilon'|\leq C/\varepsilon\).  Its radial energy is
\[
 \int_\varepsilon^{2\varepsilon}
 \rho^\ell|\eta_\varepsilon'|^2\dd\rho
 =
 O(\varepsilon^{\ell-1}).
\]
When \(\ell=1\), choose instead a smooth logarithmic cutoff which vanishes
on \(0<\rho\leq\varepsilon^2\), equals one for
\(\rho\geq\varepsilon\), and is flat at the endpoints of its transition
interval.  It may be chosen so that
\[
 |\eta_\varepsilon'(\rho)|
 \leq
 \frac{C}{\rho|\log\varepsilon|},
 \qquad
 \int\rho|\eta_\varepsilon'|^2\dd\rho
 =
 O(|\log\varepsilon|^{-1}).
\]
Apply the weak identity to \(\eta_\varepsilon\varphi\).  The
Cauchy--Schwarz inequality and the preceding cutoff estimates remove the
radial-gradient error.  Absolute continuity of the energy integral and the
critical embedding remove the remaining bilinear and nonlinear terms on
the shrinking strip.  Letting \(\varepsilon\downarrow0\) proves the claim.
\end{proof}

\begin{lemma}[The lifted weak equation]\label{lem:weak-lift}
Under the hypotheses of Lemma~\ref{lem:axis}, the radial lift
\(U=v^\uparrow\) satisfies
\[
 \int_{\Heis^{n+\ell}}
 \langle\nabla_HU,\nabla_H\Phi\rangle
 =
 \int_{\Heis^{n+\ell}}U^{q_\ell-1}\Phi
\]
for every \(\Phi\in C_c^\infty(\Heis^{n+\ell})\).
\end{lemma}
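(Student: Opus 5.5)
The plan is to reduce an arbitrary test $\Phi\in C_c^\infty(\Heis^{n+\ell})$ to a $U(\ell)$-invariant test, and then factor that invariant test through the reduced variables. Let $P\Phi(z,w,t)=\int_{U(\ell)}\Phi(z,Bw,t)\,dB$ denote the Haar average. Each rotation $(z,w,t)\mapsto(z,Bw,t)$ with $B\in U(\ell)$ preserves the Hermitian product on $\C^{n+\ell}$, so it is a group automorphism of $\Heis^{n+\ell}$. It also preserves Lebesgue measure, and it rotates the horizontal frame orthogonally. Hence it preserves the pairing $\langle\nabla_HU,\nabla_H\Phi\rangle$ pointwise up to composition. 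Since $U=v^\uparrow$ is invariant, the change of variables gives
\[
\int\langle\nabla_HU,\nabla_H\Phi\rangle=\int\langle\nabla_HU,\nabla_H(P\Phi)\rangle,
\qquad
\int U^{q_\ell-1}\Phi=\int U^{q_\ell-1}P\Phi .
\]
So it suffices to treat invariant $\Phi$. For the left identity, $U$ lies only in $\dot S^1$, so I would first prove it for smooth compactly supported approximants. The pairing is continuous in $\dot S^1$, and the lift of a completion element is the limit of the lifts of its approximants by Lemma~\ref{lem:polar-lift}, so the identity passes to $U$.

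For invariant $\Phi$, Lemma~\ref{lem:factorization} gives $\Phi=\varphi^\uparrow$ with $\varphi$ compactly supported and smooth up to $\rho=0$. Lemma~\ref{lem:axis} then says the reduced weak identity holds with test $\varphi$. It remains to transfer both sides upstairs via polar coordinates, with the common factor $c_\ell$. The nonlinear side is the substitution $dw=c_\ell\rho^{\ell-1}d\rho$, exactly as in Lemma~\ref{lem:polar-lift}. The gradient side uses the polarized form of the pointwise identity from the proof of Lemma~\ref{lem:lift}:
\[
\sum_\alpha\bigl(X_{w_\alpha}U\,X_{w_\alpha}\Phi+Y_{w_\alpha}U\,Y_{w_\alpha}\Phi\bigr)=4\rho(v_\rho\varphi_\rho+v_t\varphi_t).
\]
This holds for $v\in C_c^\infty(\Ucal)$. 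For general $v\in\dot S^1_\ell$, both sides of the bilinear identity $\int\langle\nabla_Hv^\uparrow,\nabla_H\varphi^\uparrow\rangle=c_\ell\int\Gamma_\ell(v,\varphi)\,d\mu_\ell$ are continuous in $v$ for fixed $\varphi$. The only point to check is that $\varphi$ has finite reduced energy; since $\varphi$ is smooth and compactly supported up to the axis, $\Gamma_\ell(\varphi,\varphi)$ is bounded with compact support and $\rho^{\ell-1}$ is integrable. Density then extends the identity. Dividing both sides by $c_\ell$ matches the reduced weak equation, which proves the lemma.

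The main obstacle is this density step for the gradient pairing. $U$ is only a completion element, and $\varphi^\uparrow$ is a general smooth test that is not in the range of $C_c^\infty(\Ucal)$. I would handle it by approximating $v$ by $v_j\in C_c^\infty(\Ucal)$ and using the similarity from Lemma~\ref{lem:polar-lift}, which gives $v_j^\uparrow\to U$ in $\dot S^1(\Heis^{n+\ell})$, together with Cauchy--Schwarz in both spaces. Positivity of $v$ enters only through $U^{q_\ell-1}$ being well defined and in $L^{q_\ell'}$, which follows from the critical embedding on $\Heis^{n+\ell}$.
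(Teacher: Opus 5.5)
Your proposal is correct and follows the paper's route: Haar-average the test over \(U(\ell)\), factor the average through \(|w|^2\) by Lemma~\ref{lem:factorization}, apply Lemma~\ref{lem:axis} to the resulting reduced test, and transfer both pairings upstairs with the polar identities of Lemma~\ref{lem:polar-lift}. Your density argument for the polarized gradient identity fills in a step the paper leaves implicit. You approximate \(v\), not \(\varphi\), by functions in \(C_c^\infty(\Ucal)\), and you use that \(\varphi\) has finite reduced energy.
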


\begin{proof}
Average \(\Phi\) over \(U(\ell)\):
\[
 \Phi^\sharp(z,w,t)
 =
 \int_{U(\ell)}\Phi(z,Bw,t)\dd B.
\]
The lift \(U\), Haar measure, and the horizontal metric are invariant under
this action, so both weak pairings are unchanged when \(\Phi\) is replaced
by \(\Phi^\sharp\).  Lemma~\ref{lem:factorization} gives
\[
 \Phi^\sharp(z,w,t)=\varphi(z,t,|w|^2)
\]
with \(\varphi\) smooth through \(\rho=0\).  Lemma~\ref{lem:axis}, followed
by the polar identities in Lemma~\ref{lem:polar-lift}, now gives the full
weak equation.
\end{proof}

\subsection{Weak solutions}

\begin{theorem}\label{thm:weak}
Let \(v\in\dot S^1_\ell(\Ucal)\) be nonnegative and nonzero.  Suppose
\[
 \int_{\Ucal}\Gamma_\ell(v,\varphi)\dd\mu_\ell
 =
 \int_{\Ucal}v^{q_\ell-1}\varphi\dd\mu_\ell
 \qquad
 \text{for every }\varphi\in C_c^\infty(\Ucal).
\]
No boundary condition or trace at \(\rho=0\) is assumed.
Then \(v\) is one of the bubbles in Theorem~\ref{thm:classical}.
\end{theorem}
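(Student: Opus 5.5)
The strategy is to lift the reduced weak solution to the full Heisenberg group, where the known theory applies, and then descend through the \(U(\ell)\)-symmetry. First, the lift \(U=v^\uparrow\) lies in \(\dot S^1(\Heis^{n+\ell})\) by the completed polar lift of Lemma~\ref{lem:polar-lift}. It is nonnegative and nonzero, \(U(\ell)\)-invariant, and by the first identity of that lemma it belongs to \(L^{q_\ell}(\Heis^{n+\ell})\) with \(\|U\|_{q_\ell}^{q_\ell}=c_\ell\|v\|_{q_\ell}^{q_\ell}\). This last fact comes either from the Folland--Stein embedding on \(\Heis^{n+\ell}\) or by density. Next, Lemma~\ref{lem:weak-lift}, which rests on the axis-capacity Lemma~\ref{lem:axis}, shows that \(U\) is a weak solution of \(-\Delta_{\Heis^{n+\ell}}U=U^{q_\ell-1}\) on the whole group. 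In particular, no condition at \(w=0\) is lost.

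The second step, which I expect to be the main obstacle, is regularity and positivity of \(U\). The goal is to reach the classical hypotheses of Theorem~\ref{thm:classical}, or the weak-solution form of Jerison--Lee directly. I will run a Brezis--Kato/Moser iteration for subelliptic operators. Write the equation as \(-\Delta U=a(x)U\) with \(a=U^{q_\ell-2}\in L^{Q_\ell/2}\); this is exactly the critical integrability, so \(U\in L^p_{\mathrm{loc}}\) for every \(p<\infty\). Then \(U\in L^\infty_{\mathrm{loc}}\) follows from Folland--Stein type estimates, and Hölder regularity from the Harnack inequality for sub-Laplacians. Bootstrapping with Folland's Schauder estimates then gives \(U\in C^\infty\).

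Positivity comes from the strong maximum principle, or Bony's Harnack inequality, for the nonnegative supersolution \(-\Delta U\ge0\) with \(U\not\equiv0\). This yields \(U>0\) everywhere, including on the axis \(w=0\).

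The final step is descent. Restricting the smooth positive \(U\) to \(w\ne0\) recovers \(v\), so \(v\in C^2(\Ucal)\) is positive and solves \(\Gell_\ell v=v^{q_\ell-1}\) classically by Lemma~\ref{lem:lift}. Moreover, \(v^\uparrow=U\) extends to a positive \(C^2\) solution on \(\Heis^{n+\ell}\) lying in \(L^{q_\ell}\). All hypotheses of Theorem~\ref{thm:classical} therefore hold, and \(v\) is one of the listed bubbles, already with the coefficient-one normalization. The only delicate point is making sure the iteration uses only the global weak formulation from Lemma~\ref{lem:weak-lift} and the finite energy. That is precisely why Lemma~\ref{lem:axis} is needed: without it, \(U\) would be known to solve the equation only off the axis, and a singular solution supported there could not be excluded.
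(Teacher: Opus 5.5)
Your proposal is correct and follows the paper's proof. The steps match: the completed polar lift (Lemma~\ref{lem:polar-lift}) with Folland--Stein gives \(U\in L^{q_\ell}\); the axis-capacity extension and averaged tests (Lemmas~\ref{lem:axis} and~\ref{lem:weak-lift}) give the full weak equation; subelliptic regularity and positivity on \(\Heis^{n+\ell}\) follow; and descent to Theorem~\ref{thm:classical} closes the argument. The paper simply cites \cite[Theorem~3.6(c)]{IvanovVassilev2015Obata} for the smooth positive representative, whereas you sketch the Brezis--Kato/Moser/Harnack argument that the paper itself carries out in Lemma~\ref{lem:lifted-regularity}. One small ordering fix: since \(s\mapsto s^{q_\ell-1}\) is not smooth at \(s=0\), take strict positivity from the Harnack step before running the \(C^\infty\) bootstrap, not after.
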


\begin{proof}
The completion hypothesis is used at two distinct points.  First,
Lemma~\ref{lem:polar-lift} gives
\[
 U=v^\uparrow\in\dot S^1(\Heis^{n+\ell}),\qquad
 \|U\|_{\dot S^1}^2=c_\ell E_\ell(v),
\]
and the Folland--Stein inequality gives \(U\in L^{q_\ell}(\Heis^{n+\ell})\).
Second, the axis cutoffs in Lemma~\ref{lem:axis} allow the reduced weak
identity to be tested against functions smooth through \(\rho=0\).  If
\(\Phi\in C_c^\infty(\Heis^{n+\ell})\), averaging over \(U(\ell)\)
leaves both full-space pairings unchanged.  Lemma~\ref{lem:factorization}
writes the average as
\(\Phi^\sharp(z,w,t)=\varphi(z,t,|w|^2)\), with \(\varphi\) smooth up to
\(\rho=0\).  Polar integration (Lemma~\ref{lem:weak-lift}) therefore yields
\[
 \int_{\Heis^{n+\ell}}\langle\nabla_HU,\nabla_H\Phi\rangle
 =\int_{\Heis^{n+\ell}}U^{q_\ell-1}\Phi
\]
for every compactly supported smooth \(\Phi\).

Thus \(U\) is a nonnegative, nonzero weak solution in the full homogeneous
completion.  The regularity and positivity theorem
\cite[Theorem~3.6(c)]{IvanovVassilev2015Obata} applies to this nonnegative
nonzero full-completion weak solution and gives a smooth strictly positive
representative of \(U\).  Since the representative remains
\(U(\ell)\)-invariant, Lemma~\ref{lem:factorization} gives a smooth reduced
representative through the auxiliary axis.  The critical
\(L^{q_\ell}\) integrability and all hypotheses of
Theorem~\ref{thm:classical} now hold, and that theorem gives the stated
coefficient-one reduced bubble.
\end{proof}

\section{The sharp inequality and the reduced profile decomposition}
\label{sec:sharp-profile}

We determine the best constant in the critical embedding and characterize
all losses of compactness.  The factor
\[
 c_\ell=\frac{|S^{2\ell-1}|}{2}
 =\frac{\pi^\ell}{\Gamma(\ell)}.
\]
is the polar Jacobian obtained by writing \(w\in\C^\ell\) in terms of
\(\rho=|w|^2\); equivalently,
\(\int_{\C^\ell}f(|w|^2)\,dw
=c_\ell\int_0^\infty f(\rho)\rho^{\ell-1}\,d\rho\).

\begin{theorem}[Sharp reduced Sobolev inequality]
\label{thm:sharp-sobolev}
For every \(v\in\dot S^1_\ell(\Ucal)\),
\begin{equation}\label{eq:sharp-reduced}
 S_\ell
 \left(\int_{\Ucal}|v|^{q_\ell}\dd\mu_\ell\right)^{2/q_\ell}
 \leq E_\ell(v),
\end{equation}
where
\begin{equation}\label{eq:sharp-constant}
 S_\ell
 =\frac{4\pi(n+\ell)^2}
 {\bigl(2^{2(n+\ell)}(n+\ell)!c_\ell\bigr)^{1/(n+\ell+1)}}.
\end{equation}
The constant is optimal and is attained.  The nonzero real equality cases
are exactly
\begin{equation}\label{eq:reduced-bubbles}
 B_{A,\beta,z_0,t_0}(z,t,\rho)
 =A\left[
  (\rho+\beta+|z-z_0|^2)^2
  +(t-t_0+2\operatorname{Im}\langle z,z_0\rangle)^2
 \right]^{-(n+\ell)/2},
\end{equation}
where \(A\ne0\), \(\beta>0\), \(z_0\in\C^n\), and \(t_0\in\R\).
If \(v>0\) is an equality case, then
\[
 \Gell_\ell v=\Lambda v^{q_\ell-1},
 \qquad
 \Lambda=S_\ell\|v\|_{q_\ell}^{2-q_\ell},
\]
and \(\Lambda^{1/(q_\ell-2)}v\) solves the coefficient-one equation.
\end{theorem}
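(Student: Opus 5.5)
The plan is to transfer everything through the completed polar lift of Lemma~\ref{lem:polar-lift}. For \(v\in\dot S^1_\ell(\Ucal)\) the lift \(U=v^\uparrow\) lies in the \(U(\ell)\)-invariant subspace of \(\dot S^1(\Heis^{n+\ell})\). It satisfies \(\|\nabla_HU\|_2^2=c_\ell E_\ell(v)\) and \(\|U\|_{q_\ell}^{q_\ell}=c_\ell\|v\|_{q_\ell}^{q_\ell}\). The \(L^{q_\ell}\) identity extends from \(C_c^\infty(\Ucal)\) to the completion by the Folland--Stein embedding and Fatou/continuity.

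\textbf{Inequality and constant.} Let \(S_{FS}\) denote the sharp Folland--Stein constant on \(\Heis^{n+\ell}\) from Frank--Lieb \cite{FrankLieb2012SharpHeisenberg}. Then
\[
 E_\ell(v)=c_\ell^{-1}\|\nabla_HU\|_2^2\geq c_\ell^{-1}S_{FS}\|U\|_{q_\ell}^2
 =c_\ell^{2/q_\ell-1}S_{FS}\|v\|_{q_\ell}^2 .
\]
Since \(2/q_\ell-1=-1/(n+\ell+1)\), this gives \(S_\ell\geq c_\ell^{-1/(n+\ell+1)}S_{FS}\). Inserting Frank--Lieb's value
\[
 S_{FS}=4\pi(n+\ell)^2\bigl(2^{2(n+\ell)}(n+\ell)!\bigr)^{-1/(n+\ell+1)}
\]
in our normalization of \(X_j,Y_j\) yields \eqref{eq:sharp-constant}. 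I would double-check this normalization by evaluating both sides on the centered bubble, using the explicit constant from Theorem~\ref{thm:classical}.

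For optimality and attainment, take the centered full-space Jerison--Lee bubble with zero auxiliary center. It is \(U(\ell)\)-invariant and is an extremal for \(S_{FS}\). By surjectivity in Lemma~\ref{lem:polar-lift} it is the lift of a reduced function, namely \eqref{eq:reduced-bubbles} with \(z_0=0,t_0=0\). One must also check that this function lies in \(\dot S^1_\ell(\Ucal)\), which the axis-cutoff argument supplies. Equality then holds, so \(S_\ell\) equals the displayed constant and is attained. Heisenberg translations in \((z,t)\) and dilations commute with the lift, which gives the whole family \eqref{eq:reduced-bubbles}.

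\textbf{Classification of equality cases.} If \(v\ne0\) is real and attains equality, then \(U\) attains equality in the full Folland--Stein inequality. By the Jerison--Lee classification \cite{JerisonLee1988Extremals}, real extremals are \(U=A[(\beta+|Z-Z_0|^2)^2+(t-t_0+2\operatorname{Im}\langle Z,Z_0\rangle)^2]^{-(n+\ell)/2}\) with \(A\ne0\). These are of one sign and have a unique extremum point of \(|U|\). Invariance under \(w\mapsto Bw\) forces the auxiliary component \(w_0=0\), exactly as in the proof of Theorem~\ref{thm:classical}. With \(w_0=0\) one has \(\langle Z,Z_0\rangle=\langle z,z_0\rangle\) and \(|Z-Z_0|^2=|z-z_0|^2+\rho\), so restricting via \(\rho=|w|^2\) gives \eqref{eq:reduced-bubbles}.

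A more intrinsic alternative is also available. Any minimizer \(v\) may be replaced by \(|v|\); by strict positivity of the Euler--Lagrange solution, \(v\) is then of one sign. The Euler--Lagrange equation, rescaled, puts us in Theorem~\ref{thm:weak}. This confirms that sign-changing equality cases do not occur.

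\textbf{Euler--Lagrange statement.} For \(v>0\) extremal, differentiating \(E_\ell(v+s\varphi)-S_\ell\|v+s\varphi\|_{q_\ell}^2\ge0\) at \(s=0\) gives
\[
 E_\ell(v,\varphi)=S_\ell\|v\|_{q_\ell}^{2-q_\ell}\int v^{q_\ell-1}\varphi\dd\mu_\ell .
\]
This is the weak form of \(\Gell_\ell v=\Lambda v^{q_\ell-1}\). Multiplying \(v\) by \(\Lambda^{1/(q_\ell-2)}\) scales the equation to coefficient one, since the two sides are homogeneous of degrees \(1\) and \(q_\ell-1\). Classically the identity holds because the bubbles are smooth in \(\Ucal\).

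The main obstacle is bookkeeping rather than ideas. It consists of matching Frank--Lieb's normalization of the sub-Laplacian and group law to the present one, so that the exact numerical form of \eqref{eq:sharp-constant} comes out, and of confirming that the \(L^{q_\ell}\) identity survives the completion. I would resolve the first by the explicit bubble computation in Theorem~\ref{thm:classical}.
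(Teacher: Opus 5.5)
Your proposal is correct and follows the paper's proof essentially step for step: the completed polar lift with factor \(c_\ell\), the Frank--Lieb constant converted to the present sub-Laplacian convention (your exponent bookkeeping \(c_\ell^{2/q_\ell-1}=c_\ell^{-1/(n+\ell+1)}\) reproduces \eqref{eq:sharp-constant}), the Jerison--Lee classification with \(U(\ell)\)-invariance forcing zero auxiliary center, axis cutoffs (equivalently, surjectivity of the completed lift) to place the reduced bubbles in \(\dot S^1_\ell(\Ucal)\), and the first variation for the Euler--Lagrange normalization. The intrinsic alternative through Theorem~\ref{thm:weak} is not needed, because the classification of real extremals already excludes sign changes. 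As sketched, that alternative would also need one more step, such as the strict splitting \(E_\ell(v)=E_\ell(v^+)+E_\ell(v^-)\) or continuity of \(v\), to pass from \(|v|>0\) to \(v\) having one sign.
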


\begin{proof}
Let \(\mathscr L_\ell v=v^\uparrow\).  Lemma~\ref{lem:polar-lift}
and density give
\[
 \|\mathscr L_\ell v\|_{\dot S^1(\Heis^{n+\ell})}^2
 =c_\ell E_\ell(v),
 \qquad
 \|\mathscr L_\ell v\|_{q_\ell}^{q_\ell}
 =c_\ell\|v\|_{q_\ell}^{q_\ell}.
\]
In the present sub-Laplacian convention, the sharp full-space constant is
\[
 S_{n+\ell}^{\mathrm{FS}}
 =\frac{4\pi(n+\ell)^2}
 {\bigl(2^{2(n+\ell)}(n+\ell)!\bigr)^{1/(n+\ell+1)}};
\]
this is the Frank--Lieb constant after converting their factor
\(1/4\) convention \cite{FrankLieb2012SharpHeisenberg}.
Applying the full-space inequality to \(\mathscr L_\ell v\) gives
\eqref{eq:sharp-reduced}--\eqref{eq:sharp-constant}.

Suppose equality holds.  The lifted function is a full Heisenberg
extremal.  The full equality classification gives a Heisenberg bubble.
Since the lift is invariant under the action of \(U(\ell)\) on the
auxiliary variable, the uniqueness of the maximum forces the auxiliary
component of its center to be zero.  The resulting function is
\eqref{eq:reduced-bubbles}.

Conversely, every function in \eqref{eq:reduced-bubbles} belongs to the
reduced completion.  We indicate the only point not supplied directly by
the full-space formula.  After translation and dilation, take the centered
bubble and multiply it by an outer cutoff and an axis cutoff.  The decay of
the bubble and its horizontal derivatives makes the outer-cutoff error tend
to zero.  For \(\ell>1\), an axis cutoff supported in
\(\rho\in[\varepsilon,2\varepsilon]\) has radial energy
\[
 \int_\varepsilon^{2\varepsilon}
 \rho^\ell|\eta_\varepsilon'|^2\dd\rho
 =O(\varepsilon^{\ell-1}).
\]
For \(\ell=1\), a logarithmic cutoff on
\([\varepsilon^2,\varepsilon]\) satisfies
\[
 |\eta_\varepsilon'(\rho)|
 \leq\frac{C}{\rho|\log\varepsilon|},
 \qquad
 \int_{\varepsilon^2}^{\varepsilon}
 \rho|\eta_\varepsilon'|^2\dd\rho
 =O(|\log\varepsilon|^{-1}).
\]
The remaining terms vanish by absolute continuity of the bubble energy.
Thus smooth compactly supported functions approximate the bubble in
\(\dot S^1_\ell(\Ucal)\).  The first variation of
\eqref{eq:sharp-reduced} gives the Euler--Lagrange equation and its stated
normalization.
\end{proof}

For \(\eta=(z_*,t_*)\in\Heis^n\) and \(h>0\), write
\[
 (\mathcal T_{h,\eta}\phi)(z,t,\rho)
 =h^{-(n+\ell)}\phi\!\left(
 \frac{z-z_*}{h},
 \frac{t-t_*+2\operatorname{Im}\langle z,z_*\rangle}{h^2},
 \frac{\rho}{h^2}\right).
\]
We also write \(\mathcal T_g=\mathcal T_{h,\eta}\) when
\(g=(h,\eta)\).  Thus \(g\) is a frame parameter and \(\mathcal T_g\) is
the associated frame action.  These actions are unitary for
\(E_\ell\) and preserve the critical norm.

For the full group \(\Heis^{n+\ell}\), set
\[
 \delta_h(Z,t)=(hZ,h^2t),\qquad
 (\tau_\xi F)(x)=F(\xi^{-1}x),\qquad
 (S_hF)(x)=h^{-(n+\ell)}F(\delta_{1/h}x),
\]
and write
\[
 \mathcal T^{\mathrm{full}}_{h,\xi}=\tau_\xi S_h.
\]
The dilation \(\delta_h\) also denotes the restriction of this automorphism
to the embedded subgroup \(\Heis^n\).  These full-space operators are
unitary in the homogeneous energy and in \(L^{q_\ell}\).

\begin{corollary}\label{cor:sharp-compactness}
Let \((v_k)\subset\dot S^1_\ell(\Ucal)\) be a real sequence satisfying
\(\|v_k\|_{q_\ell}=1\) and
\[
 E_\ell(v_k)\longrightarrow S_\ell.
\]
Then, after passage to a subsequence, there exist signs
\(\sigma_k\in\{-1,1\}\) and frame parameters \(g_k\) such that
\[
 \|v_k-\sigma_k\mathcal T_{g_k}B_0\|_{\dot S^1_\ell}
 \longrightarrow0,
\]
where \(B_0\) is any fixed positive normalized centered extremal.
\end{corollary}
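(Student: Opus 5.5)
The plan is to lift to \(\Heis^{n+\ell}\), invoke full-space compactness of minimizing sequences for the Folland--Stein inequality, and then descend, with the $U(\ell)$-invariance removing the auxiliary center. Put \(U_k=c_\ell^{-1/q_\ell}v_k^\uparrow\). By Lemma~\ref{lem:polar-lift}, \(\|U_k\|_{q_\ell}=1\) and \(\|U_k\|_{\dot S^1}^2=c_\ell^{1-2/q_\ell}E_\ell(v_k)\to S^{\mathrm{FS}}_{n+\ell}\), using \(S_\ell=c_\ell^{2/q_\ell-1}S^{\mathrm{FS}}_{n+\ell}\) from the proof of Theorem~\ref{thm:sharp-sobolev}. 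So \((U_k)\) is a normalized minimizing sequence for the sharp Folland--Stein inequality on \(\Heis^{n+\ell}\).

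For the full-space step, we could cite the known compactness of such sequences; to stay self-contained, apply Benameur's profile decomposition \cite{Benameur2008ProfileHeisenberg}. The standard strict-subadditivity argument (\(\sum a_j^{2/q}\geq(\sum a_j)^{2/q}\), with equality only for a single nonzero term) shows there is exactly one profile, with full mass, and a remainder that vanishes in \(L^{q_\ell}\) and hence in energy. Thus \(U_k-\mathcal T^{\mathrm{full}}_{h_k,\xi_k}\Psi\to0\) in \(\dot S^1\), and \(\Psi\) is a normalized extremal. By the Jerison--Lee/Frank--Lieb classification, \(\Psi=\pm\widetilde B_0\) up to absorbing a fixed frame into \((h_k,\xi_k)\). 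After passing to a subsequence, the sign is constant. Because the \(U_k\) are real and energy convergence is strong, \(\Psi\) is real and the sign \(\sigma\) is well defined.

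The main obstacle is removing the auxiliary center. Write \(\xi_k=\eta_k\,\delta_{h_k}(0,\zeta_k,0)\) with \(\eta_k\in\Heis^n\) and \(\zeta_k\in\C^\ell\). This factorization is exact, with no central correction, because the \(z\)- and \(w\)-blocks are Hermitian-orthogonal. We first show \((\zeta_k)\) is bounded. If \(|\zeta_k|\to\infty\) along a subsequence, pick \(B\in U(\ell)\) with \(|B\zeta_k-\zeta_k|\to\infty\); for example, \(B=-I\) already works. Invariance gives \(U_k=U_k\circ B\), so \(U_k\) is also close to \(\sigma\widetilde B_0\) recentered at \(B\zeta_k\). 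Two bubbles whose relative frames escape are asymptotically orthogonal, so their sum has squared norm tending to twice the norm of one. That contradicts both being within \(o(1)\) of the same unit-size function \(U_k\).

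Hence \(\zeta_k\to\zeta\) along a subsequence. Strong continuity of translations lets us replace \(\zeta_k\) by \(\zeta\) at \(o(1)\) cost, so \(U_k\to\sigma\widetilde\Phi\) modulo reduced frames \((h_k,\eta_k)\). Here \(\widetilde\Phi\) is the bubble centered at \((0,\zeta,0)\). The reduced frames commute with the \(U(\ell)\) action, so the limit \(\widetilde\Phi\) is \(U(\ell)\)-invariant, and the unique-maximum argument forces \(\zeta=0\). Equivalently, the Haar average of the approximation is still an approximation, and averaging a bubble centered at \(\zeta\neq0\) does not yield a bubble.

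Finally, descend. The reduced frame action \(\mathcal T_{h,\eta}\) lifts to \(\mathcal T^{\mathrm{full}}_{h,(\eta,0)}\), and \(\widetilde B_0\) lifts \(c_\ell^{-1/q_\ell}B_0\). By the isometry of Lemma~\ref{lem:polar-lift},
\[
\|v_k-\sigma\mathcal T_{g_k}B_0\|_{\dot S^1_\ell}^2=c_\ell^{-1}\bigl\|v_k^\uparrow-\sigma(\mathcal T_{g_k}B_0)^\uparrow\bigr\|^2_{\dot S^1}\to0,
\]
with \(g_k=(h_k,\eta_k)\). Any other fixed positive normalized centered extremal differs from \(B_0\) by a fixed frame by Theorem~\ref{thm:sharp-sobolev}, which we absorb into \(g_k\).
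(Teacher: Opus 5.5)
Your argument is correct, but it proceeds in the opposite order from the paper's.

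\textbf{The paper's route.} The paper applies the reduced profile decomposition (Theorem~\ref{thm:reduced-profile}) directly to \((v_k)\). The auxiliary center has already been eliminated inside that theorem, through the \(M\)-rotation Bessel estimate and the bounded-recentering lemma. Strict concavity of \(s\mapsto s^{2/q_\ell}\) then leaves a single profile of full mass. Equality in \eqref{eq:sharp-reduced} identifies that profile as a reduced bubble, and the energy splitting gives strong convergence.

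\textbf{Your route.} You first obtain compactness on \(\Heis^{n+\ell}\), and only afterwards remove the auxiliary center, for one bubble to which \(U_k\) converges \emph{strongly}. Strong convergence is what lets a single reflection \(B=-I\) replace the \(M\)-rotation argument. The two recentered bubbles have relative normalized center \((0,-2\zeta_k,0)\), so they are asymptotically orthogonal and cannot both be \(o(1)\)-close to \(U_k\). \(U(\ell)\)-invariance of the limit and uniqueness of the maximum then force the bounded limit \(\zeta\) to vanish. Your identification step is essentially the same lift-plus-unique-maximum argument the paper uses in proving the equality cases of Theorem~\ref{thm:sharp-sobolev}.

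\textbf{What each buys.} Your proof does not depend on the reduced profile theorem or its diagonal-subsequence bookkeeping. It also shows that, for minimizing sequences, the invisible-center obstruction is nearly trivial. The paper's proof costs two lines because Theorem~\ref{thm:reduced-profile} is needed anyway. It is used for the Hardy minimizers, the Palais--Smale decompositions, and the optimal stability quotient. In those settings several profiles and only weak convergence occur, so your reflection trick would not suffice.

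\textbf{A wording point.} The remainder's energy does not vanish because it vanishes in \(L^{q_\ell}\). It vanishes because of the energy splitting together with \(\|\Psi\|_{\dot S^1}^2\geq S^{\mathrm{FS}}_{n+\ell}\|\Psi\|_{q_\ell}^2=S^{\mathrm{FS}}_{n+\ell}\).
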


\begin{theorem}[Reduced profile decomposition]
\label{thm:reduced-profile}
Let \((v_k)\) be bounded in \(\dot S^1_\ell(\Ucal)\).  After passage to one
subsequence, there are profiles
\(\phi^j\in\dot S^1_\ell(\Ucal)\), parameters
\(g_k^j=(h_k^j,\eta_k^j)\), and remainders \(r_k^J\) such that, for each
fixed \(J\),
\begin{equation}\label{eq:profile-expansion}
 v_k=\sum_{j=1}^J\mathcal T_{g_k^j}\phi^j+r_k^J,
\end{equation}
\begin{align}
 E_\ell(v_k)
 &=\sum_{j=1}^JE_\ell(\phi^j)+E_\ell(r_k^J)+o_k(1),
 \label{eq:profile-energy}\\
 \|v_k\|_{q_\ell}^{q_\ell}
 &=\sum_{j=1}^J\|\phi^j\|_{q_\ell}^{q_\ell}
 +\|r_k^J\|_{q_\ell}^{q_\ell}+o_k(1),
 \label{eq:profile-mass}
\end{align}
and
\begin{equation}\label{eq:profile-remainder}
 \lim_{J\to\infty}\limsup_{k\to\infty}
 \|r_k^J\|_{q_\ell}=0.
\end{equation}
For \(i\ne j\), either
\[
 \left|\log\frac{h_k^i}{h_k^j}\right|\longrightarrow\infty,
\]
or the scales may be represented by the same sequence and
\[
 \left|
 \delta_{1/h_k^i}\bigl((\eta_k^i)^{-1}\eta_k^j\bigr)
 \right|_{\Heis^n}\longrightarrow\infty.
\]
At every extraction step the preceding remainder is weakly null in the
new profile frame.  The profiles and remainders may be chosen real when the
original sequence is real.
\end{theorem}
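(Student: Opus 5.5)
We lift the sequence and apply Benameur's full-space decomposition \cite{Benameur2008ProfileHeisenberg}, then use the $U(\ell)$-invariance to remove the auxiliary center; this is the mechanism (ii) of Section~\ref{sec:intro-method}. Set $U_k=v_k^\uparrow$. By Lemma~\ref{lem:polar-lift}, $(U_k)$ is bounded in $\dot S^1(\Heis^{n+\ell})^{U(\ell)}$, with $\|U_k\|^2=c_\ell E_\ell(v_k)$ and $\|U_k\|_{q_\ell}^{q_\ell}=c_\ell\|v_k\|_{q_\ell}^{q_\ell}$. The argument does not apply Benameur's theorem as a black box. Instead it runs his extraction algorithm step by step, so that every profile can be chosen invariant.

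At step $j$, the current remainder $R_k^{j-1}$ is invariant. If $\limsup_k\|R_k^{j-1}\|_{q_\ell}>0$, the refined Sobolev/Besov-type estimate on $\Heis^{n+\ell}$ supplies full frames $(h_k,\xi_k)$ with $\xi_k=(z_k,w_k,t_k)$ such that $(\mathcal T^{\mathrm{full}}_{h_k,\xi_k})^{-1}R_k^{j-1}\rightharpoonup\Psi\neq0$, and $\|\Psi\|$ is bounded below by a fixed fraction of the optimal weak-limit norm.

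The key claim, and the step I expect to be the main obstacle, is that $w_k/h_k$ stays bounded. Suppose not, and pass to a subsequence with $|w_k|/h_k\to\infty$. Fix $M$ and choose $A_1,\dots,A_M\in U(\ell)$ such that the points $A_rw_k/|w_k|$ are pairwise separated. By invariance of $R_k^{j-1}$, each frame $(h_k,A_r\xi_k)$ also produces the weak limit $\Psi$. The relative frames $\delta_{1/h_k}((A_r\xi_k)^{-1}A_s\xi_k)$ have auxiliary component $(A_s-A_r)w_k/h_k$, which diverges. Hence the $M$ recentered copies are asymptotically orthogonal. A Bessel-type estimate then gives $M\|\Psi\|^2\le\limsup_k\|R_k^{j-1}\|^2$. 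Since $M$ is arbitrary, this contradicts $\Psi\ne0$.

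So, along a subsequence, $w_k/h_k\to\zeta$. Because the $z$- and $w$-blocks are Hermitian-orthogonal, the factorization $\xi_k=\eta_k\,\delta_{h_k}(0,w_k/h_k,0)$ holds exactly, with $\eta_k=(z_k,0,t_k)\in\Heis^n$. Strong continuity of translations lets us absorb the bounded auxiliary translation into the profile: we replace $\Psi$ by $\tau_{(0,\zeta,0)}\Psi$ and the frame by $(h_k,\eta_k)$. The new weak limit is invariant, because it is the weak limit of the invariant functions $(\mathcal T^{\mathrm{full}}_{h_k,\eta_k})^{-1}R_k^{j-1}$; here we use that $U(\ell)$ commutes with frames centered in $\Heis^n$. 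Lemma~\ref{lem:polar-lift} then identifies this limit as $\phi^{j\uparrow}$ for some $\phi^j\in\dot S^1_\ell(\Ucal)$. We define $R_k^j=R_k^{j-1}-\mathcal T_{g_k^j}\phi^{j\uparrow}$, which is again invariant and is weakly null in the new frame.

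Next we establish orthogonality and the splitting identities. Weak nullity gives the Hilbert-space energy Pythagoras identity, and this descends by the similarity of Lemma~\ref{lem:polar-lift} to \eqref{eq:profile-energy}. A Brezis--Lieb argument applied with local compactness in the profile frame gives the $L^{q_\ell}$ splitting, which descends to \eqref{eq:profile-mass}. The standard argument shows that frames extracted from weakly null remainders are pairwise orthogonal. When the scales are comparable, a bounded rescaling lets us use the same scale sequence, and separation of the full relative frames reduces to separation of $\delta_{1/h_k^i}((\eta_k^i)^{-1}\eta_k^j)$ in $\Heis^n$, since the auxiliary centers are now zero.

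Energy summability $\sum_jE_\ell(\phi^j)<\infty$ together with the choice of near-maximal profiles forces the remainder's weak-limit norm to tend to zero. The refined Sobolev inequality then yields \eqref{eq:profile-remainder}.

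Finally, reality is preserved. If $v_k$ is real, the weak limits are real and the whole extraction can be carried out in real Hilbert spaces.
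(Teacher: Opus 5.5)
Your proposal is correct and follows essentially the paper's argument. The shared steps are the rotated-copies Bessel estimate bounding $w_k/h_k$, the exact factorization $\xi_k=\eta_k\,\delta_{h_k}(0,w_k/h_k,0)$, absorption of the bounded auxiliary translation by strong continuity, invariance of the recentered weak limit, and descent through the surjective polar lift. The only bookkeeping difference is that you recenter at every extraction step, so the standard orthogonality argument replaces the paper's bounded-recentering lemma.

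One harmless slip: the frame $(h_k,A_r\xi_k)$ produces the weak limit $\mathscr R_{A_r}\Psi$, the auxiliary rotation of $\Psi$, not $\Psi$ itself. Only the common norm $\|\Psi\|$ enters the Bessel estimate, so your conclusion $M\|\Psi\|^2\le\limsup_k\|R_k^{j-1}\|^2$ is unaffected.
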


\begin{proof}
Benameur's profile theorem on \(\Heis^{n+\ell}\), at order one, gives a
full-space decomposition under Heisenberg translations and dilations
\cite{Benameur2008ProfileHeisenberg}.  We give the invariant descent because
it is the step that determines the parameter set in the theorem.  Let
\(K_{\mathrm{aux}}=U(\ell)\) act on the auxiliary variable
\(w\in\C^\ell\).  The polar lift maps
\(\dot S^1_\ell(\Ucal)\) onto the \(K_{\mathrm{aux}}\)-fixed subspace of
\(\dot S^1(\Heis^{n+\ell})\) and is a similarity with squared energy
factor \(c_\ell\).

\begin{lemma}[Spectral-core bridge]\label{lem:S0-bridge}
Let \(X=\dot H^1(\Heis^{n+\ell})\) and let
\(\mathcal S_0(\Heis^{n+\ell})\) be the spectrally localized Schwartz class.
If \((F_k)\) is bounded in \(X\), \(F\in X\), and
\[
 \langle F_k-F,\chi\rangle_{\mathcal S',\mathcal S}\to0
 \qquad\text{for every fixed }\chi\in\mathcal S_0,
\]
then \(F_k\rightharpoonup F\) weakly in \(X\).
\end{lemma}

\begin{proof}
The class \(\mathcal S_0\) is dense in \(X\), and spectral localization
away from zero is preserved by the sub-Laplacian.  Hence, for every fixed
\(\varphi\in\mathcal S_0\),
\(-\Delta_{\Heis^{n+\ell}}\varphi\in\mathcal S_0\) and
\[
 (F_k-F,\varphi)_X
 =\langle F_k-F,-\Delta_{\Heis^{n+\ell}}\varphi
   \rangle_{\mathcal S',\mathcal S}\to0.
\]
Put \(C=\sup_k\|F_k-F\|_X<\infty\).  Given \(\psi\in X\), choose a fixed
\(\varphi\in\mathcal S_0\) with \(\|\psi-\varphi\|_X<\varepsilon\).  Then
\[
 \limsup_{k\to\infty}|(F_k-F,\psi)_X|
 \le C\varepsilon.
\]
Letting \(\varepsilon\downarrow0\) proves weak convergence in \(X\).
\end{proof}

The normalized sequences and finite-stage remainders in the full-space
theorem are bounded in \(X\): translations and dilations are unitary, and the
finite-stage energy identity is uniform.  Lemma~\ref{lem:S0-bridge} upgrades
every \(\mathcal S_0\)-distribution limit used in the extraction to a weak
\(X\)-limit.

Let \((U_k)\) be a bounded lifted sequence and suppose that
\[
 (\mathcal T^{\mathrm{full}}_{h_k,\xi_k})^{-1}U_k
 \rightharpoonup\Psi\ne0,
 \qquad \xi_k=(z_k,w_k,t_k).
\]
The normalized auxiliary center is \(w_k/h_k\).  We claim that it is
bounded.  Otherwise, after passing to a subsequence,
\(|w_k|/h_k\to\infty\).  Given \(M\ge2\), choose rotations
\(A_1,\ldots,A_M\in U(\ell)\) such that
\(|A_rw_k-A_sw_k|\ge c_M|w_k|\) for \(r\ne s\), and replace \(\xi_k\) by
\(\xi_{r,k}=(z_k,A_rw_k,t_k)\).  Since \(U_k\) is
\(K_{\mathrm{aux}}\)-fixed,
\[
 (\mathcal T^{\mathrm{full}}_{h_k,\xi_{r,k}})^{-1}U_k
 \rightharpoonup \mathscr R_{A_r}\Psi,
\]
where \(\mathscr R_A\) denotes rotation by \(A\) in the auxiliary variable.
All these limits have norm \(\|\Psi\|\).  Their relative normalized
centers have horizontal components of size at least
\(c_M|w_k|/h_k\), so the corresponding relative frame actions converge to
zero in the weak operator topology.  For completeness, if
\(g_{r,k}^{-1}g_{s,k}\rightharpoonup0\) for \(r\ne s\) and
\(g_{r,k}^{-1}U_k\rightharpoonup\Psi^r\), then, with
\(\Phi_k=\sum_{r=1}^Mg_{r,k}\Psi^r\),
\[
 \langle U_k,\Phi_k\rangle
 \longrightarrow\sum_{r=1}^M\|\Psi^r\|^2,
 \qquad
 \|\Phi_k\|^2
 \longrightarrow\sum_{r=1}^M\|\Psi^r\|^2.
\]
Cauchy--Schwarz therefore gives the finite-family Bessel estimate
\[
 \sum_{r=1}^M\|\Psi^r\|^2
 \leq\limsup_{k\to\infty}\|U_k\|^2.
\]
Applied to the rotated limits, this yields
\(M\|\Psi\|^2\leq\limsup_k\|U_k\|^2\) for every \(M\), a contradiction.

We may consequently assume that \(w_k/h_k\to\omega\).  Put
\[
 \eta_k=(z_k,0,t_k),\qquad
 \zeta_k=(0,w_k/h_k,0),\qquad \zeta=(0,\omega,0).
\]
The orthogonality of the \(z\)- and \(w\)-variables gives the exact group
factorization \(\xi_k=\eta_k\delta_{h_k}\zeta_k\), and hence
\[
 \mathcal T^{\mathrm{full}}_{h_k,\xi_k}\Psi
 =\mathcal T^{\mathrm{full}}_{h_k,\eta_k}(\tau_{\zeta_k}\Psi).
\]
Strong continuity of translations in the homogeneous energy space shows
that \(\tau_{\zeta_k}\Psi\to\Phi:=\tau_\zeta\Psi\).  Thus the original
profile term differs by \(o(1)\) in both the energy and critical norms from
the zero-auxiliary-center term
\(\mathcal T^{\mathrm{full}}_{h_k,\eta_k}\Phi\).  Moreover,
\(\mathcal T^{\mathrm{full}}_{h_k,\eta_k}\) commutes with
\(K_{\mathrm{aux}}\), so every
\((\mathcal T^{\mathrm{full}}_{h_k,\eta_k})^{-1}U_k\) is
\(K_{\mathrm{aux}}\)-fixed.  Its weak limit \(\Phi\) is therefore
\(K_{\mathrm{aux}}\)-fixed and descends uniquely to a profile on \(\Ucal\).

\begin{lemma}[Orthogonality under bounded recentering]
\label{lem:bounded-recentering}
For \(a\in\{i,j\}\), suppose
\(\xi_k^a=\eta_k^a\,\delta_{h_k^a}\zeta_k^a\), where \((\zeta_k^a)\)
remains in a compact subset of \(\Heis^{n+\ell}\).  Replacing
\((h_k^a,\xi_k^a)\) by \((h_k^a,\eta_k^a)\) preserves pairwise profile
orthogonality.
\end{lemma}

\begin{proof}
The replacement does not change the scales, so it preserves
\(|\log(h_k^i/h_k^j)|\to\infty\).  In the equal-scale alternative,
\(h_k^i=h_k^j\) literally for every \(k\).  The automorphism property and
group law give
\[
 \delta_{1/h_k^i}\bigl((\xi_k^i)^{-1}\xi_k^j\bigr)
 =(\zeta_k^i)^{-1}\,
 \delta_{1/h_k^i}\bigl((\eta_k^i)^{-1}\eta_k^j\bigr)\,
 \zeta_k^j.
\]
A proper homogeneous gauge is bounded on compact sets.  Multiplication on
either side by elements of fixed compact sets preserves escape to infinity.
Thus the normalized relative center on the left escapes if and only if the
middle factor escapes, and the corresponding relative frame actions converge
to zero in the weak operator topology in exactly the same cases before and
after recentering.
\end{proof}

Lemma~\ref{lem:bounded-recentering} confirms that the recentering from
\(\xi_k\) to \(\eta_k\) preserves pairwise orthogonality.

All profile extractions, limits of the bounded auxiliary centers, and local
almost-everywhere limits are chosen on one countable diagonal subsequence.
Local Rellich compactness then makes every preceding remainder almost
everywhere null in the next profile frame.  The Hilbert Pythagorean identity
and a finite telescoping application of the Brezis--Lieb lemma give
\eqref{eq:profile-energy} and \eqref{eq:profile-mass}.  The outer limit in
\(J\) is taken only after this common subsequence has been fixed.  Finally,
the full-space critical remainder estimate descends through the polar
identities and yields \eqref{eq:profile-remainder}.
\end{proof}

\begin{proof}[Proof of Corollary~\ref{cor:sharp-compactness}]
Apply Theorem~\ref{thm:reduced-profile}.  The energy and mass splittings,
together with the strict concavity of \(s\mapsto s^{2/q_\ell}\), leave
exactly one nonzero profile carrying the full critical mass.  Equality in
\eqref{eq:sharp-reduced} identifies that profile, and the Hilbert remainder
then converges strongly.
\end{proof}

Theorems~\ref{thm:sharp-sobolev}, \ref{thm:weak}, and
\ref{thm:reduced-profile} prove
Theorems~\ref{thm:intro-sharp} and \ref{thm:intro-profile}.

\section{Quantitative fixed-sector descent and reduced stability}
\label{sec:stability}

The full-space Heisenberg group \(\Heis^{n+\ell}\) carries a
Bianchi--Egnell-type stability estimate proved by Loiudice
\cite{Loiudice2005ImprovedSobolev}: there exists
\(\alpha_{Q_\ell}>0\) such that for every real
\(F\in\dot H^1(\Heis^{n+\ell})\),
\begin{equation}\label{eq:full-stability}
 \|F\|_{\dot H^1}^2
 -S_{n+\ell}^{\mathrm{FS}}\|F\|_{q_\ell}^2
 \geq\alpha_{Q_\ell}
 \operatorname{dist}_{\dot H^1}
 \!\bigl(F,\mathcal M_{n+\ell}^{\R,\mathrm{full}}\bigr)^2,
\end{equation}
where \(S_{n+\ell}^{\mathrm{FS}}\) is the sharp full-space constant and
\(\mathcal M_{n+\ell}^{\R,\mathrm{full}}\) is the full real optimizer cone.
The goal of this section is to descend from \eqref{eq:full-stability} to the
reduced \(U(\ell)\)-fixed sector and prove
Theorem~\ref{thm:intro-stability}.

\subsection{The fixed and full optimizer cones}

For \(x\in\Heis^{n+\ell}\), let
\[
 (\tau_\xi f)(x)=f(\xi^{-1}x),\qquad
 (S_hf)(x)=h^{-(n+\ell)}f(\delta_{1/h}x),
 \qquad
 \mathcal T^{\mathrm{full}}_{h,\xi}=\tau_\xi S_h.
\]
These operators are isometries of \(\dot H^1(\Heis^{n+\ell})\) and
\(L^{q_\ell}(\Heis^{n+\ell})\).  Write
\(K_{\mathrm{aux}}=U(\ell)\) for the rotations of the auxiliary
\(\C^\ell\)-block, and let \(P_{U(\ell)}\) be the orthogonal projection onto
the fixed subspace, obtained by averaging over normalized Haar probability.
The full-space optimizer cone decomposes as
\[
 \mathcal M_{n+\ell}^{\R,\mathrm{full}}
 =\{A\,\mathcal T^{\mathrm{full}}_{h,\xi}\widetilde B:
 A\in\R,\;h>0,\;\xi\in\Heis^{n+\ell}\},
\]
where \(\widetilde B\) is the positive centered full-space bubble with
\(\|\widetilde B\|_{q_\ell}=1\); the choice \(A=0\) includes the cone
vertex.  Its \(K_{\mathrm{aux}}\)-fixed subset is
\[
 \mathcal M_{n+\ell}^{\R,\mathrm{fix}}
 =\{A\,\mathcal T^{\mathrm{full}}_{h,\bar\eta}\widetilde B:
 A\in\R,\;h>0,\;
 \bar\eta=(z_0,0,t_0),\;(z_0,t_0)\in\Heis^n\}.
\]
Indeed, the absolute value of a nonzero full bubble has a unique maximum at
its center.  If the bubble is fixed by every auxiliary rotation, that center
must have zero auxiliary component; the converse is immediate.  Lemma
\ref{lem:polar-lift} therefore gives the exact cone identity
\[
 \mathcal M_{n+\ell}^{\R,\mathrm{fix}}
 =\{u^\uparrow:u\in\mathfrak M_\ell^{\R}\}.
\]
The normalized full bubble and the raw lift of the chosen reduced bubble
differ only by a nonzero scalar, which is absorbed by the real amplitude in
these cones.
The raw lift is a similarity whose squared energy factor is \(c_\ell\).

\subsection{Auxiliary-center distance comparison}

The small-center regime uses the following energy-space differentiability
fact.  It is stated with all real auxiliary directions because coordinatewise
lower bounds would not rule out cancellation.

\begin{lemma}[Auxiliary translation tangent]\label{lem:auxiliary-tangent}
Let
\[
 U_s=\tau_{(0,s,0)}\widetilde B,
 \qquad s\in\C^\ell.
\]
There is an injective real-linear map
\(\mathsf T_{\mathrm{aux}}:\C^\ell\to
\dot H^1(\Heis^{n+\ell};\R)\) such that, uniformly for
\(s/|s|\in S^{2\ell-1}\),
\[
 U_s-\widetilde B
 =\mathsf T_{\mathrm{aux}}(s)+\mathcal R(s),
 \qquad
 \|\mathcal R(s)\|_{\dot H^1}=o(|s|).
\]
Moreover,
\[
 P_{U(\ell)}\mathsf T_{\mathrm{aux}}(s)=0,
 \qquad
 \|\mathsf T_{\mathrm{aux}}(s)\|_{\dot H^1}\geq c_*|s|
\]
for a constant \(c_*>0\) depending only on \((n,\ell)\).
\end{lemma}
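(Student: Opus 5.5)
We define the tangent explicitly and then check each assertion. The full bubble \(\widetilde B\) is smooth and positive, and \(\widetilde B\) and its horizontal derivatives decay like \(|x|^{-(Q_\ell-2)}\) and \(|x|^{-(Q_\ell-1)}\). Translation \((\tau_\xi F)(x)=F(\xi^{-1}x)\) with \(\xi=(0,s,0)\) is generated by right-invariant vector fields. Writing \(s=\sum_\alpha(\sigma_\alpha+i\tau_\alpha)e_\alpha\), set
\[
 \mathsf T_{\mathrm{aux}}(s)
 =-\sum_{\alpha=1}^{\ell}\bigl(\sigma_\alpha\widehat X_{w_\alpha}
 +\tau_\alpha\widehat Y_{w_\alpha}\bigr)\widetilde B,
\]
where \(\widehat X_{w_\alpha},\widehat Y_{w_\alpha}\) are the right-invariant fields in the auxiliary block. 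This map is visibly real-linear and real-valued. It lies in \(\dot H^1\) because first derivatives of \(\widetilde B\) have horizontal gradients decaying like \(|x|^{-Q_\ell}\), which is square integrable at infinity.

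For the remainder, we use Taylor's formula along the one-parameter subgroup \(r\mapsto(0,r\theta,0)\) with \(\theta=s/|s|\). This gives
\[
 \mathcal R(s)=\int_0^{|s|}\bigl(\tau_{(0,r\theta,0)}-\mathrm{Id}\bigr)\,
 \mathsf T_{\mathrm{aux}}(\theta)\,dr .
\]
Hence
\[
 \|\mathcal R(s)\|_{\dot H^1}\le|s|\sup_{r\le|s|,\,|\theta|=1}
 \|(\tau_{(0,r\theta,0)}-\mathrm{Id})\mathsf T_{\mathrm{aux}}(\theta)\|_{\dot H^1}.
\]
This supremum is \(o(1)\) by strong continuity of translations in \(\dot H^1\). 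Uniformity in \(\theta\) follows because \(\theta\mapsto\mathsf T_{\mathrm{aux}}(\theta)\) is continuous into \(\dot H^1\) on the compact sphere and the translations are isometries, so the family is equicontinuous. The only delicate point is justifying the integral identity in the homogeneous space. I would do this for smooth cutoffs of \(\widetilde B\) and pass to the limit using the decay.

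For the vanishing projection, note that \(\widetilde B\) is \(U(\ell)\)-fixed and that rotations \(\mathscr R_A\) are group automorphisms. Therefore \(\mathscr R_A\tau_{(0,s,0)}\widetilde B=\tau_{(0,As,0)}\widetilde B\). Differentiating this gives the equivariance \(\mathscr R_A\mathsf T_{\mathrm{aux}}(s)=\mathsf T_{\mathrm{aux}}(As)\). Averaging over Haar measure and using linearity,
\[
 P_{U(\ell)}\mathsf T_{\mathrm{aux}}(s)
 =\mathsf T_{\mathrm{aux}}\Bigl(\int_{U(\ell)}As\,dA\Bigr)
 =0,
\]
since \(\int A\,dA=0\): the standard representation contains no fixed vector, because \(-I\in U(\ell)\).

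For the lower bound, equivariance and the unitarity of \(\mathscr R_A\) on \(\dot H^1\) show that \(\|\mathsf T_{\mathrm{aux}}(s)\|_{\dot H^1}\) depends only on \(|s|\), because \(U(\ell)\) acts transitively on \(S^{2\ell-1}\). It therefore suffices to show \(\mathsf T_{\mathrm{aux}}(e_1)\neq0\); then \(c_*=\|\mathsf T_{\mathrm{aux}}(e_1)\|\) works, and injectivity follows at once. At the point \(w=0\) the right-invariant field \(\widehat X_{w_1}\) equals \(\partial_{a_1}\) up to \(t\)-terms carrying the factor \(a_1\) or \(b_1\). Thus \(\widehat X_{w_1}\widetilde B\) is odd under \(a_1\mapsto-a_1\) combined with a suitable reflection. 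Near \(w=0\), off the center, one computes explicitly \(\widehat X_{w_1}\widetilde B=-c\,a_1(\ldots)\neq0\), using
\[
 \widetilde B=\bigl[(1+|Z|^2)^2+t^2\bigr]^{-(n+\ell)/2}.
\]
Any nonzero smooth function has positive energy, since the homogeneous norm is a norm on \(\dot H^1\).

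The step I expect to be the main obstacle is the uniform \(o(|s|)\) remainder in the homogeneous completion, rather than the algebraic parts. It is handled by the integral representation together with equicontinuity over the compact direction sphere.
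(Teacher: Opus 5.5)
Your proposal is correct and follows essentially the same route as the paper: the same right-invariant definition of \(\mathsf T_{\mathrm{aux}}\), the fundamental theorem of calculus along the one-parameter subgroup \(r\mapsto(0,r\theta,0)\) with strong continuity of translations giving the uniform \(o(|s|)\) remainder, and the equivariance \(\mathscr R_A\mathsf T_{\mathrm{aux}}(s)=\mathsf T_{\mathrm{aux}}(As)\) together with \(\int_{U(\ell)}A\,dA=0\) for the vanishing projection. The only variation is in the lower bound. You use transitivity of \(U(\ell)\) on \(S^{2\ell-1}\) to get the exact identity \(\|\mathsf T_{\mathrm{aux}}(s)\|_{\dot H^1}=|s|\,\|\mathsf T_{\mathrm{aux}}(e_1)\|_{\dot H^1}\), where the paper argues by continuity and compactness of the unit sphere; your version is slightly cleaner, and in both cases nonvanishing comes from a pointwise evaluation of the explicit tangent together with its \(L^{q_\ell}\) decay.
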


\begin{proof}
Write \(Z=(z,w)\), \(w_\alpha=a_\alpha+ib_\alpha\), and
\[
 \widetilde B=C D^{-(n+\ell)/2},\qquad
 D=(1+|Z|^2)^2+t^2,
\]
where \(C>0\).  With the translation convention fixed above, differentiation
in the real and imaginary parts of \(s_\alpha\) gives the right-invariant
auxiliary fields
\[
 \widehat X_\alpha^w=\partial_{a_\alpha}-2b_\alpha\partial_t,
 \qquad
 \widehat Y_\alpha^w=\partial_{b_\alpha}+2a_\alpha\partial_t
\]
and
\[
 \mathsf T_{\mathrm{aux}}(s)
 =-\sum_{\alpha=1}^{\ell}
 \left((\operatorname{Re}s_\alpha)\widehat X_\alpha^w\widetilde B
 +(\operatorname{Im}s_\alpha)\widehat Y_\alpha^w\widetilde B\right).
\]
Equivalently, with the Hermitian product linear in its first argument,
\[
 \mathsf T_{\mathrm{aux}}(s)(Z,t)
 =2(n+\ell)C D^{-(n+\ell)/2-1}
 \left[(1+|Z|^2)\operatorname{Re}\langle w,s\rangle
       -t\operatorname{Im}\langle w,s\rangle\right].
\]
Put \(R_*=D^{1/4}\).  Direct differentiation shows
\[
 |\nabla_H(\widehat X_\alpha^w\widetilde B)|
 +|\nabla_H(\widehat Y_\alpha^w\widetilde B)|
 \leq C_{n,\ell}R_*^{-Q_\ell}.
\]
Since the volume of \(\{R_*\leq R\}\) is \(O(R^{Q_\ell})\), a dyadic-shell
sum gives finite energy.  To verify membership in the homogeneous
completion, choose \(\chi\in C_c^\infty([0,\infty))\) equal to one on
\([0,1]\) and supported in \([0,2]\), and put
\(\chi_M=\chi(R_*/M)\), \(M\geq1\).  For either of the
displayed real tangent fields \(E\), direct differentiation also gives
\[
 |E|=O(R_*^{-(Q_\ell-1)}),\qquad
 |\nabla_HE|=O(R_*^{-Q_\ell}),
 \qquad |\nabla_HR_*|=O(1).
\]
Thus \(\chi_ME\in C_c^\infty\), the gradient tail tends to zero, and the
cutoff error satisfies
\[
 M^{-2}\int_{\{M\leq R_*\leq2M\}}|E|^2
 =O(M^{-Q_\ell})\longrightarrow0.
\]
Consequently \(\chi_ME\to E\) in homogeneous energy.  The same decay gives
\(E\in L^{q_\ell}\), so a pointwise nonzero tangent is nonzero in the
energy completion.

Translations are strongly continuous on \(\dot H^1\): this follows first
for compactly supported smooth functions and then by density and unitarity.
The path \(r\mapsto(0,rs,0)\) is a one-parameter subgroup, so the
Hilbert-valued fundamental theorem of calculus gives
\[
 U_s-\widetilde B
 =\int_0^1\tau_{(0,rs,0)}\mathsf T_{\mathrm{aux}}(s)\,dr.
\]
Subtracting \(\mathsf T_{\mathrm{aux}}(s)\), let
\(E_{\alpha,1}=-\widehat X_\alpha^w\widetilde B\) and
\(E_{\alpha,2}=-\widehat Y_\alpha^w\widetilde B\).  The finite real basis
estimate
\[
 \frac{\|U_s-\widetilde B-\mathsf T_{\mathrm{aux}}(s)\|_{\dot H^1}}{|s|}
 \leq \sqrt{2\ell}\max_{\alpha,\nu}
 \sup_{0\leq r\leq1}
 \|\bigl(\tau_{(0,rs,0)}-I\bigr)E_{\alpha,\nu}\|_{\dot H^1}
\]
tends to zero by strong continuity, uniformly in the direction of \(s\).

Auxiliary rotations satisfy
\(k\cdot\mathsf T_{\mathrm{aux}}(s)
=\mathsf T_{\mathrm{aux}}(ks)\).  Hence
\[
 P_{U(\ell)}\mathsf T_{\mathrm{aux}}(s)
 =\mathsf T_{\mathrm{aux}}\!\left(\int_{U(\ell)}ks\,dk\right)=0.
\]
The explicit tangent formula is nonzero for every \(s\ne0\); for instance,
its value at \((z,w,t)=(0,s,0)\) is strictly positive.  Thus the real-linear
map is injective.  Continuity and compactness of the real unit sphere give
\(c_*:=\min_{|s|=1}\|\mathsf T_{\mathrm{aux}}(s)\|_{\dot H^1}>0\), which
proves the final estimate.
\end{proof}

The key geometric step is the following uniform estimate.

\begin{proposition}[Fixed/full distance comparison]
\label{prop:distance-comparison}
There exists \(C_{n,\ell}\geq1\) depending only on \((n,\ell)\) such
that for every \(U(\ell)\)-fixed
\(F\in\dot H^1(\Heis^{n+\ell};\R)\),
\begin{equation}\label{eq:dist-comparison}
 \operatorname{dist}_{\dot H^1}
 \!\bigl(F,\mathcal M_{n+\ell}^{\R,\mathrm{fix}}\bigr)
 \leq C_{n,\ell}\,
 \operatorname{dist}_{\dot H^1}
 \!\bigl(F,\mathcal M_{n+\ell}^{\R,\mathrm{full}}\bigr).
\end{equation}
\end{proposition}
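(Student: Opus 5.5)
We reduce by symmetry and homogeneity, then split the auxiliary center into three regimes. Let \(d_{\mathrm{full}}=\operatorname{dist}(F,\mathcal M^{\mathrm{full}})\) and \(d_{\mathrm{fix}}=\operatorname{dist}(F,\mathcal M^{\mathrm{fix}})\).

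\emph{Reductions.} Since the cone vertex \(0\) lies in both cones, we have \(d_{\mathrm{fix}}\le\|F\|\). The estimate is therefore trivial whenever \(d_{\mathrm{full}}\ge\varepsilon_0\|F\|\) for a fixed \(\varepsilon_0>0\) chosen later. So assume \(d_{\mathrm{full}}<\varepsilon_0\|F\|\).

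Both distances are invariant under amplitude scaling. They are also invariant under the reduced frames \(\mathcal T^{\mathrm{full}}_{h,\bar\eta}\), \(\bar\eta=(z_0,0,t_0)\), which commute with \(K_{\mathrm{aux}}\) and preserve both cones. The full cone is closed away from the vertex: a minimizing sequence of bubbles has bounded amplitude, and if its frames escaped it would converge weakly to \(0\), forcing \(d_{\mathrm{full}}^2\geq\|F\|^2\). Hence a nearest point \(A\mathcal T^{\mathrm{full}}_{h,\xi}\widetilde B\) exists. Using the factorization \(\xi=\eta\,\delta_h\zeta\) with \(\zeta=(0,s,0)\), and applying the inverse reduced frame, we may assume \(h=1\), \(\eta=0\), \(\|F\|=1\), and \(|A|\simeq1\). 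The nearest bubble is then \(AU_s\) with \(U_s=\tau_{(0,s,0)}\widetilde B\).

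Since \(F\) is fixed and \(P=P_{U(\ell)}\) is an orthogonal projection onto the fixed subspace,
\[
\|F-AP U_s\|=\|P(F-AU_s)\|\le\|F-AU_s\|=d_{\mathrm{full}}.
\]
Consequently
\[
d_{\mathrm{fix}}\le\|F-A\widetilde B\|\le d_{\mathrm{full}}+|A|\,\|PU_s-\widetilde B\|.
\]
It remains to show \(\|PU_s-\widetilde B\|\le C\,d_{\mathrm{full}}\) whenever \(d_{\mathrm{full}}\) is small.

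The complementary lower bound is \(d_{\mathrm{full}}\ge\|(I-P)(F-AU_s)\|=|A|\,\|(I-P)U_s\|\). The proof therefore rests on the comparison
\[
\|PU_s-\widetilde B\|\le C\|(I-P)U_s\|\qquad\text{for all }s\in\C^\ell,
\]
which is to be proved in three regimes of \(|s|\).

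\emph{Small \(|s|\).} Lemma~\ref{lem:auxiliary-tangent} gives \(U_s-\widetilde B=\mathsf T_{\mathrm{aux}}(s)+o(|s|)\) with \(P\mathsf T_{\mathrm{aux}}(s)=0\). Since \(P\widetilde B=\widetilde B\), this yields \(\|PU_s-\widetilde B\|=\|P\mathcal R(s)\|=o(|s|)\). On the other hand,
\[
\|(I-P)U_s\|=\|\mathsf T_{\mathrm{aux}}(s)+(I-P)\mathcal R(s)\|\ge c_*|s|-o(|s|).
\]
So the ratio tends to \(0\) on \(|s|\le\delta\).

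\emph{Compact annulus \(\delta\le|s|\le R\).} The numerator is bounded. The denominator \(\|(I-P)U_s\|\) is continuous in \(s\) by strong continuity of translations. It is nonvanishing, because \(PU_s=U_s\) would make \(U_s\) fixed, and a fixed bubble has center with zero auxiliary component, so \(s=0\). Compactness of the annulus then gives a uniform positive lower bound.

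\emph{Large \(|s|\).} We show \(PU_s\rightharpoonup0\), and in fact \(\|PU_s\|\to0\). Write
\[
\|PU_s\|^2=\langle U_s,PU_s\rangle=\int_{U(\ell)}\langle U_s,U_{ks}\rangle\,dk.
\]
Here \(\langle U_s,U_{ks}\rangle\) is the bubble correlation at relative center \((0,ks-s,0)\) modulo a central correction. This correlation tends to \(0\) as \(|ks-s|\to\infty\). The Haar measure of \(\{k:|ks-s|<\epsilon|s|\}\) tends to \(0\) as \(\epsilon\to0\), because the stabilizer of a direction has measure zero. Hence \(\|PU_s\|\to0\). It follows that \(\|(I-P)U_s\|\to\|\widetilde B\|\) while \(\|PU_s-\widetilde B\|\le2\|\widetilde B\|\), so the ratio is bounded for \(|s|\ge R\).

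Combining the three regimes gives the comparison for all \(s\), and hence \(d_{\mathrm{fix}}\le(1+C)\,d_{\mathrm{full}}\) in the small-distance case. Together with the trivial case, this proves the proposition with \(C_{n,\ell}=\max(1+C,\varepsilon_0^{-1})\).

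The main obstacle is the large-center regime. There one must justify the uniform decay of the Haar-averaged correlation, including the central term \(2\operatorname{Im}\langle s,ks\rangle\) created by non-commuting translations. A second delicate point is the existence and attainment of the nearest full-space bubble, which requires excluding minimizing sequences that drift to the vertex; this is handled by the weak-null argument above.
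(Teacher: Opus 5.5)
Your proof is correct and is essentially the paper's argument: the same centered comparison is proved in the same three regimes, followed by the factorization $\xi=\bar\eta\,\delta_h\zeta$ and the projection bound, and your form $\|PU_s-\widetilde B\|\le C\|(I-P)U_s\|$ is equivalent to the paper's $\|U_s-\widetilde B\|\le C\|(I-P)U_s\|$ via $\|U_s-\widetilde B\|^2=\|PU_s-\widetilde B\|^2+\|(I-P)U_s\|^2$. The nearest-point existence and the large-distance case split are not needed, since the estimate holds for every cone element and one then takes the infimum; the central term you flag is harmless, because escape of the horizontal component alone already makes the translates weakly null.
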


\begin{proof}
We first prove the centered estimate
\begin{equation}\label{eq:aux-center-bound}
 \|U_s-\widetilde B\|_{\dot H^1}
 \leq C_{n,\ell}^{\mathrm{bub}}
 \|(I-P_{U(\ell)})U_s\|_{\dot H^1},
 \qquad s\in\C^\ell.
\end{equation}

\emph{Small auxiliary center.}
Lemma~\ref{lem:auxiliary-tangent} gives
\[
 U_s-\widetilde B
 =\mathsf T_{\mathrm{aux}}(s)+\mathcal R(s),
 \qquad
 (I-P_{U(\ell)})U_s
 =\mathsf T_{\mathrm{aux}}(s)
 +(I-P_{U(\ell)})\mathcal R(s).
\]
The projection is contractive, the remainder is \(o(|s|)\) uniformly in
direction, and
\(\|\mathsf T_{\mathrm{aux}}(s)\|\geq c_*|s|\).  Both displayed norms are
therefore \(\|\mathsf T_{\mathrm{aux}}(s)\|+o(|s|)\), uniformly as
\(s\to0\).  Their ratio tends to one, which proves
\eqref{eq:aux-center-bound} on a sufficiently small ball.

\emph{Compact auxiliary annuli.}
Strong continuity of translations makes both norms in
\eqref{eq:aux-center-bound} continuous in \(s\).  If
\((I-P_{U(\ell)})U_s=0\) in the homogeneous completion, the critical Sobolev
embedding gives \(U_s=P_{U(\ell)}U_s\) almost everywhere and hence everywhere
for the smooth representatives.  The unique maximum of \(U_s\) would then be
fixed by every auxiliary rotation, forcing \(s=0\).  The same unique-maximum
argument shows that \(U_s\ne\widetilde B\) when \(s\ne0\).  Thus the ratio
in \eqref{eq:aux-center-bound} is bounded on every compact annulus separated
from zero.

\emph{Large auxiliary center.}
Let \(|s_j|\to\infty\), and pass to a subsequence such that
\(s_j/|s_j|\to\omega\).  Orthogonality of the Haar projection gives
\begin{equation}\label{eq:haar-correlation}
 \|P_{U(\ell)}U_{s_j}\|_{\dot H^1}^2
 =\int_{U(\ell)}
 \langle U_{ks_j},U_{s_j}\rangle_{\dot H^1}\,dk.
\end{equation}
The stabilizer of \(\omega\) is conjugate to \(U(\ell-1)\) when
\(\ell>1\), and is the identity subgroup when \(\ell=1\); in either case it
has Haar measure zero.  For every \(k\) outside this stabilizer,
\(|(k-I)s_j|\to\infty\).  The relative center is
\[
 (0,s_j,0)^{-1}(0,ks_j,0)
 =\bigl(0,(k-I)s_j,
 -2\operatorname{Im}\langle s_j,ks_j\rangle\bigr).
\]
Translations whose horizontal component escapes converge weakly to zero in
\(\dot H^1(\Heis^{n+\ell})\): for compactly supported smooth functions this
follows from separation of supports, and the general statement follows by
density and unitarity.  By unitarity, the integrand in
\eqref{eq:haar-correlation} is the correlation of the fixed bubble with its
translate by the displayed relative center.  It therefore tends to zero for
almost every \(k\), and its absolute value is bounded by
\(\|\widetilde B\|_{\dot H^1}^2\).  Dominated convergence yields
\[
 \|P_{U(\ell)}U_{s_j}\|_{\dot H^1}\longrightarrow0,
 \qquad
 \|(I-P_{U(\ell)})U_{s_j}\|_{\dot H^1}\longrightarrow
 \|\widetilde B\|_{\dot H^1}.
\]
Since
\(\|U_{s_j}-\widetilde B\|_{\dot H^1}\leq
2\|\widetilde B\|_{\dot H^1}\), a sequential contradiction proves a
uniform ratio bound outside a sufficiently large ball.  The three regimes
establish \eqref{eq:aux-center-bound} for every \(s\).

Now take an arbitrary
\(U=A\mathcal T^{\mathrm{full}}_{h,\xi}\widetilde B\) in the full cone.
If \(U=0\), set \(U_0=0\).  Otherwise write
\[
 \xi=(z_0,w_0,t_0),\qquad
 \bar\eta=(z_0,0,t_0),\qquad
 s=\frac{w_0}{h},\qquad \zeta=(0,s,0).
\]
The two complex blocks are Hermitian-orthogonal, so the group law gives the
exact factorization \(\xi=\bar\eta\,\delta_h\zeta\), with no central
correction.  Since
\(\tau_{ab}=\tau_a\tau_b\) and
\(\tau_{\delta_h\zeta}S_h=S_h\tau_\zeta\),
\[
 U=A(\tau_{\bar\eta}S_h)U_s.
\]
The operator \(\tau_{\bar\eta}S_h\) is unitary and commutes with auxiliary
rotations.  Hence
\[
 U_0=A(\tau_{\bar\eta}S_h)\widetilde B
 \in\mathcal M_{n+\ell}^{\R,\mathrm{fix}}
\]
and \eqref{eq:aux-center-bound} gives
\[
 \|U-U_0\|_{\dot H^1}
 \leq C_{n,\ell}^{\mathrm{bub}}
 \|(I-P_{U(\ell)})U\|_{\dot H^1}.
\]
This estimate is uniform in the real amplitude; at \(A=0\) both sides
vanish.

If \(F\) is \(U(\ell)\)-fixed, then
\[
 \|(I-P_{U(\ell)})U\|_{\dot H^1}
 =\|(I-P_{U(\ell)})(U-F)\|_{\dot H^1}
 \leq\|U-F\|_{\dot H^1}.
\]
The triangle inequality consequently gives
\[
 \operatorname{dist}_{\dot H^1}(F,\mathcal M_{n+\ell}^{\R,\mathrm{fix}})
 \leq(1+C_{n,\ell}^{\mathrm{bub}})\|F-U\|_{\dot H^1}.
\]
Taking the infimum over the full cone proves \eqref{eq:dist-comparison} with
\(C_{n,\ell}=1+C_{n,\ell}^{\mathrm{bub}}\).
\end{proof}

\subsection{Polar descent and the reduced stability theorem}

\begin{proof}[Proof of Theorem~\ref{thm:intro-stability}]
Let \(v\in\dot S^1_\ell(\Ucal;\R)\).  Set \(F=v^\uparrow\).
By Lemma~\ref{lem:polar-lift},
\[
 \|F\|_{\dot H^1}^2=c_\ell E_\ell(v),
 \qquad
 \|F\|_{q_\ell}^{q_\ell}=c_\ell\|v\|_{q_\ell}^{q_\ell}.
\]
Hence \(\|F\|_{q_\ell}^2=c_\ell^{2/q_\ell}\|v\|_{q_\ell}^2\) and the
relation between the sharp constants is
\[
 S_\ell
 =S_{n+\ell}^{\mathrm{FS}}\,c_\ell^{2/q_\ell-1}.
\]
The full-space deficit of \(F\) equals a polar factor times the reduced
deficit:
\[
 \|F\|_{\dot H^1}^2-S_{n+\ell}^{\mathrm{FS}}\|F\|_{q_\ell}^2
 =c_\ell\bigl(E_\ell(v)-S_\ell\|v\|_{q_\ell}^2\bigr).
\]
The raw polar lift scales squared distances by the same factor:
\[
 \operatorname{dist}_{\dot H^1}
 (F,\mathcal M_{n+\ell}^{\R,\mathrm{fix}})^2
 =c_\ell\,
 \operatorname{dist}_{\dot S^1_\ell}
 (v,\mathfrak M_\ell^{\R})^2.
\]
Since \(F\) is \(U(\ell)\)-fixed,
Proposition~\ref{prop:distance-comparison} gives
\[
 \operatorname{dist}_{\dot H^1}
 (F,\mathcal M_{n+\ell}^{\R,\mathrm{fix}})
 \leq C_{n,\ell}\,
 \operatorname{dist}_{\dot H^1}
 (F,\mathcal M_{n+\ell}^{\R,\mathrm{full}}).
\]
Applying the full-space inequality \eqref{eq:full-stability} and combining,
\begin{align*}
 c_\ell\bigl(E_\ell(v)-S_\ell\|v\|_{q_\ell}^2\bigr)
 &\geq\alpha_{Q_\ell}\,
 \operatorname{dist}_{\dot H^1}
 (F,\mathcal M_{n+\ell}^{\R,\mathrm{full}})^2\\
 &\geq\frac{\alpha_{Q_\ell}}{C_{n,\ell}^2}\,
 \operatorname{dist}_{\dot H^1}
 (F,\mathcal M_{n+\ell}^{\R,\mathrm{fix}})^2\\
 &=\frac{\alpha_{Q_\ell}}{C_{n,\ell}^2}\,c_\ell\,
 \operatorname{dist}_{\dot S^1_\ell}(v,\mathfrak M_\ell^{\R})^2.
\end{align*}
Dividing by \(c_\ell>0\) proves the theorem with
\[
 \kappa_{n,\ell}
 =\frac{\alpha_{Q_\ell}}{C_{n,\ell}^2}>0.
 \qedhere
\]
\end{proof}

\begin{remark}\label{rem:stability-scope}
Theorem~\ref{thm:intro-stability} is stated for real-valued functions.  The
constant depends on \((n,\ell)\) through the full-space constant
\(\alpha_{Q_\ell}\) of \cite{Loiudice2005ImprovedSobolev} and the distance
comparison \(C_{n,\ell}\) of
Proposition~\ref{prop:distance-comparison}.  Section~\ref{sec:nondegeneracy}
computes the exact local Hessian coefficient, but this does not determine
the optimal global stability constant.
\end{remark}

\section{Fixed-sector nondegeneracy and local spectral geometry}
\label{sec:nondegeneracy}

In this section we identify the kernel of the linearization at a reduced
bubble and compute the first positive normal spectral ratio.  The
surjectivity in Lemma~\ref{lem:polar-lift} is essential here: the argument
takes the fixed part of the full Heisenberg kernel in the completed
homogeneous space, rather than merely restricting a calculation on a
smooth core.

Put \(N=n+\ell\), \(q=q_\ell=2+2/N\), and \(S=S_\ell\).  We use the
polarized energy
\[
 E_\ell(u,v)
 =\int_{\Ucal}\Gamma_\ell(u,v)\dd\mu_\ell.
\]
Let \(B=B_0\) be the centered extremal with
\(\|B\|_q=1\), and set
\[
 \widehat B=S^{1/(q-2)}B=S^{N/2}B.
\]
Thus
\[
 \Gell_\ell B=SB^{q-1},
 \qquad
 \Gell_\ell\widehat B=\widehat B^{q-1}.
\]
Set \(\widetilde{\widehat B}=\widehat B^\uparrow\) on \(\Heis^N\).
The normalized linearized form is
\begin{equation}\label{eq:linearized-form}
 \mathcal Q_B(\phi,\psi)
 =E_\ell(\phi,\psi)
 -(q-1)S\int_{\Ucal}B^{q-2}\phi\psi\dd\mu_\ell.
\end{equation}
It is also the coefficient-one linearized form at \(\widehat B\), since
\(\widehat B^{q-2}=SB^{q-2}\).  We denote its weak operator by \(L_B\).

For \(z_j=x_j+iy_j\), let
\[
 \widehat X_j=\partial_{x_j}-2y_j\partial_t,
 \qquad
 \widehat Y_j=\partial_{y_j}+2x_j\partial_t
\]
be the right-invariant generators of the reduced translations, and let
\[
 \Lambda_\ell B
 =\left.\frac{\dd}{\dd s}\right|_{s=0}
 \mathcal T_{e^s,0}B
\]
be the dilation generator.

\begin{lemma}[Normalization and completeness of the spherical spectrum]
\label{lem:spherical-spectrum-normalization}
The energy-normalized CR Cayley transform is an isomorphism from
\(\dot S^1(\Heis^N)\) onto the form domain of \(\mathcal A_2\) on
\(\mathbb S^{2N+1}\).  The bispherical harmonic spaces
\(\mathcal H_{j,k}\) give a complete orthogonal decomposition of that
domain.  For the full generalized eigenvalue problem
\[
 -\Delta_{\Heis^N}\Phi
 =\Lambda\widetilde{\widehat B}^{\,q-2}\Phi,
\]
the corresponding eigenvalue on \(\mathcal H_{j,k}\) is
\begin{equation}\label{eq:normalized-spherical-spectrum}
 \Lambda_{j,k}
 =\frac{(N+2j)(N+2k)}{N^2},
 \qquad j,k\in\mathbb N_0.
\end{equation}
Under the completed polar lift, the reduced generalized spectrum is the
restriction of this decomposition to its \(U(\ell)\)-fixed part.
\end{lemma}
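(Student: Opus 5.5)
The plan is to prove the three assertions of Lemma~\ref{lem:spherical-spectrum-normalization} in order: the Cayley isomorphism, the spherical eigenvalues, and the descent to the fixed sector.

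\emph{Cayley isomorphism.} Use the CR Cayley transform $\mathcal C:\mathbb S^{2N+1}\setminus\{S\}\to\Heis^N$ together with its Jacobian $J_{\mathcal C}$, which is computed explicitly in \cite[Appendix~A]{FrankLieb2012SharpHeisenberg}. Define the energy-normalized pullback by $\Phi\mapsto(\Phi\circ\mathcal C)\,J_{\mathcal C}^{(Q_\ell-2)/(2Q_\ell)}$, with $Q_\ell=2N+2$. The CR covariance of the conformal sub-Laplacian gives $\int_{\Heis^N}|\nabla_H\Phi|^2=c\,\langle\mathcal A_2 f,f\rangle_{L^2(\mathbb S^{2N+1})}$ for $\Phi\in C_c^\infty$. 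Here $\mathcal A_2$ is the conformal sublaplacian, which is diagonal on $\mathcal H_{j,k}$ with eigenvalue proportional to $(N/2+j)(N/2+k)$. This extends by density to an isometry between the completions, and it is surjective because $C^\infty(\mathbb S^{2N+1})$ functions vanishing near the pole are dense in the form domain (the point has zero capacity, since $2N+1\ge3$).

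\emph{Eigenvalues.} The bubble $\widetilde{\widehat B}$ is, up to a constant, exactly $J_{\mathcal C}^{(Q_\ell-2)/(2Q_\ell)}$. Hence the weight $\widetilde{\widehat B}^{\,q-2}\,dx$ pulls back to a constant multiple of surface measure on the sphere. The generalized problem therefore becomes $\mathcal A_2 f=\Lambda\,c' f$ on $L^2(\mathbb S^{2N+1})$. The Folland--Stein decomposition $L^2=\bigoplus\mathcal H_{j,k}$ gives completeness and orthogonality in both the $L^2$ and the form inner products. The constant $c'$ is fixed by normalization: the constant function $\mathcal H_{0,0}$ corresponds to $\widetilde{\widehat B}$ itself, and $-\Delta\widetilde{\widehat B}=\widetilde{\widehat B}^{q-1}$ forces $\Lambda_{0,0}=1$. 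This yields $\Lambda_{j,k}=(N+2j)(N+2k)/N^2$. As a consistency check, $\Lambda_{1,0}=\Lambda_{0,1}=1+2/N=q-1$ reproduces the known kernel.

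\emph{Descent to the fixed sector.} $U(\ell)\subset U(N+1)$ acts on the auxiliary block. Both the Cayley transform and the Jacobian commute with this action, and the action preserves each $\mathcal H_{j,k}$. The fixed subspace of the form domain is therefore $\bigoplus_{j,k}\mathcal H_{j,k}^{U(\ell)}$, which is still complete because the Haar projection is a contraction commuting with the decomposition. Lemma~\ref{lem:polar-lift} identifies $\dot S^1_\ell(\Ucal)$ with the fixed part of $\dot S^1(\Heis^N)$. The same polar identities carry the weighted $L^2$ form $\int B^{q-2}\phi\psi\,d\mu_\ell$ to $c_\ell^{-1}$ times the full weighted form. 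So the reduced generalized eigenproblem is the restriction of the full one, with the same eigenvalues on the nonzero fixed pieces.

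\emph{Main obstacle.} I expect the difficulty to be bookkeeping of normalization constants rather than any new analysis. This means reconciling the paper's sub-Laplacian convention, which has no factor $1/4$, with Frank--Lieb's, and checking that the surjectivity of the Cayley map holds at the level of homogeneous completions. For the latter I would reuse the capacity cutoff argument near the pole, in the same way as in Lemma~\ref{lem:polar-lift}.
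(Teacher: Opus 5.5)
Your proposal is correct and follows essentially the same route as the paper. The paper likewise uses the CR Cayley energy isometry, extended onto the form domain of $\mathcal A_2$ by the zero capacity of the omitted pole; the diagonal action of $\mathcal A_2$ on the spaces $\mathcal H_{j,k}$; the fact that the bubble becomes a constant, so the weight is a constant multiple of surface measure; normalization through the constant mode, which is how the paper cancels the factor $4$ between its sub-Laplacian and Frank--Lieb's; and descent through the $U(\ell)$-intertwining together with surjectivity of the completed polar lift. The differences are cosmetic: you quote the eigenvalues $(N/2+j)(N/2+k)$ directly, whereas the paper obtains them as the $d\uparrow2$ endpoint of Frank--Lieb's Gamma multipliers combined with Folland's theory. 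Your capacity justification via the topological dimension is also valid, since horizontal energy is dominated by Riemannian energy; the paper argues instead from the homogeneous dimension $Q=2N+2>2$.
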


\begin{proof}
The smooth conformal covariance formula for the CR Cayley transform gives
an energy isometry between the Heisenberg group and the punctured sphere.
Since the homogeneous dimension is \(Q=2N+2>2\), the omitted point has
zero \(S^1\)-capacity: a cutoff on a CR normal ball about the omitted pole
of radius
\(\varepsilon\) has energy \(O(\varepsilon^{Q-2})\).  Functions vanishing
near the pole are therefore a form core, and the Cayley isometry extends
onto the full form domain.  The transformation intertwines the
\(U(\ell)\)-actions.  Together with the surjectivity in
Lemma~\ref{lem:polar-lift}, this proves the final assertion once the full
spectrum has been computed.

For \(0<d<2\), the intertwining operator \(\mathcal A_d\) in the
normalization of Frank--Lieb \cite{FrankLieb2012SharpHeisenberg} has an
explicit Gamma multiplier on \(\mathcal H_{j,k}\).  Taking its regular
endpoint as \(d\uparrow2\), and using Folland's second-order spherical
spectral theory \cite{Folland1972TangentialCR}, gives
\[
 a_{j,k}
 =\frac{
 \Gamma\!\left(\frac{Q+2}{4}+j\right)
 \Gamma\!\left(\frac{Q+2}{4}+k\right)}{
 \Gamma\!\left(\frac{Q-2}{4}+j\right)
 \Gamma\!\left(\frac{Q-2}{4}+k\right)}
 =\left(j+\frac N2\right)\left(k+\frac N2\right).
\]
The algebraic sum of the spaces \(\mathcal H_{j,k}\) is dense in
\(L^2(\mathbb S^{2N+1})\).  Since \(\mathcal A_2\) is positive and
diagonal on these spaces, its spectral truncations converge also in the
form norm; hence the same sum is a form core.

The Cayley image of the coefficient-one bubble is a positive constant,
say \(b\).  The normalization of the sub-Laplacian must be kept explicit
here.  Frank--Lieb use
\[
 \mathcal L_{\Heis^N}=-\frac14\Delta_{\Heis^N},
\]
whereas our full-space equation contains \(-\Delta_{\Heis^N}\).  Thus the
generalized eigenvalue problem becomes
\[
 4\mathcal A_2\psi=\Lambda b^{q-2}\psi
 \quad\text{on }\mathbb S^{2N+1}.
\]
The constant-mode equation is
\[
 4\mathcal A_2b=b^{q-1},
 \qquad b^{q-2}=4a_{0,0}=N^2.
\]
The common factor \(4\) therefore cancels:
\(\Lambda_{j,k}=4a_{j,k}/b^{q-2}=a_{j,k}/a_{0,0}\), which is
\eqref{eq:normalized-spherical-spectrum}.
\end{proof}

\begin{theorem}[Fixed-sector nondegeneracy]
\label{thm:fixed-nondegeneracy}
The kernel of the reduced linearized operator is
\begin{equation}\label{eq:reduced-kernel}
 \ker L_B
 =\operatorname{span}_{\R}
 \left\{
 \widehat X_jB,\widehat Y_jB\ (1\leq j\leq n),
 \partial_tB,\Lambda_\ell B
 \right\}.
\end{equation}
In particular,
\[
 \dim\ker L_B=2n+2,
 \qquad
 \ker L_B=T_B\mathcal M_{1,\ell}.
\]
\end{theorem}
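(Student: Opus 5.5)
The plan is to identify \(\ker L_B\) with the \(U(\ell)\)-fixed part of the full Heisenberg kernel and then determine which full kernel directions survive. Since \(\widehat B^{q-2}=SB^{q-2}\), \(\phi\in\ker L_B\) means exactly
\[
 E_\ell(\phi,\psi)=(q-1)\int_{\Ucal}\widehat B^{q-2}\phi\psi\dd\mu_\ell
 \qquad\text{for all }\psi\in\dot S^1_\ell(\Ucal).
\]
Under the polar lift of Lemma~\ref{lem:polar-lift}, both sides pick up the same factor \(c_\ell\). Since the lift is onto the fixed subspace, this is equivalent to the following: \(\Phi=\phi^\uparrow\) satisfies the full generalized eigenvalue equation with \(\Lambda=q-1=(N+2)/N\), tested against \(U(\ell)\)-fixed functions. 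Fixed \(\Phi\) and fixed weight imply that the pairing with a non-fixed test equals the pairing with its Haar projection \(P_{U(\ell)}\). Hence the equation holds against all tests in \(\dot H^1(\Heis^N)\), and we obtain
\[
 \ker L_B\cong\bigl(\ker L^{\mathrm{full}}\bigr)^{U(\ell)}.
\]

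Next I determine the full kernel from Lemma~\ref{lem:spherical-spectrum-normalization}. The equation \(\Lambda_{j,k}=(N+2)/N\) means \((N+2j)(N+2k)=N(N+2)\), whose only solutions in \(\mathbb N_0^2\) are \((j,k)=(1,0),(0,1)\). So the full kernel is the Cayley pullback of \(\mathcal H_{1,0}\oplus\mathcal H_{0,1}\), which has real dimension \(2(N+1)\). It contains the \(2N+2\) independent tangents to the full bubble orbit: \(2N\) horizontal right-invariant translations, the central translation, and the dilation. Each of these lies in the kernel because it differentiates the invariant equation along the frame action, and they are independent (for instance by their distinct behaviour under the \(U(N)\) rotations and parity). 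Hence \(\ker L^{\mathrm{full}}\) equals this span.

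It remains to take the \(U(\ell)\)-fixed part. Split the span into three pieces. The visible translations \(\widehat X_j,\widehat Y_j\) for \(j\le n\), the central field, and the dilation commute with the auxiliary rotations, and \(\widetilde B\) is fixed, so these tangents are fixed. Their reduced descendants are exactly \(\widehat X_jB,\widehat Y_jB,\partial_tB,\Lambda_\ell B\); this uses that the lift intertwines the reduced and full translations/dilations with \(w\)-center zero, and that the right-invariant \(z\)-fields act on \(v^\uparrow\) as on \(v\). The auxiliary part is \(\{\mathsf T_{\mathrm{aux}}(s):s\in\C^\ell\}\), and Lemma~\ref{lem:auxiliary-tangent} gives \(P_{U(\ell)}\mathsf T_{\mathrm{aux}}(s)=0\). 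Since \(P_{U(\ell)}\) preserves the splitting into the fixed block and the auxiliary block, the fixed subspace is exactly the first block, which has dimension \(2n+2\). Finally, the identification \(\ker L_B=T_B\mathcal M_{1,\ell}\) holds because these generators are precisely the derivatives of \(\mathcal T_{h,\eta}B\) at the identity frame.

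The main obstacle is the passage from tests in \(\dot S^1_\ell\) to full tests in the completed space. This requires the surjectivity and completeness statements already recorded, together with care that the weight term is continuous on \(\dot H^1\) via Hölder and Folland--Stein. The other delicate point is verifying that the span of the orbit tangents equals the full pulled-back \(\mathcal H_{1,0}\oplus\mathcal H_{0,1}\), which is a dimension count plus linear independence.
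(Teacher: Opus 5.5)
Your proposal is correct and follows the paper's proof essentially step for step. You identify $\ker L_B$ with the $U(\ell)$-fixed part of $\ker\widetilde L$ using the surjective completed lift and Haar-averaged tests. You read off the full kernel as $(\mathcal H_{1,0}\oplus\mathcal H_{0,1})_{\R}$ from the level equation $(N+2j)(N+2k)=N(N+2)$, and span it by the $2N+2$ orbit tangents. You then discard the auxiliary block because $P_{U(\ell)}$ annihilates it, which is equivalent to the paper's remark that this block carries the standard representation of $U(\ell)$.

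The only difference is cosmetic. You prove linear independence of the orbit tangents via their $U(N)$-isotypic type and $t$-parity, whereas the paper appeals to freeness of the translation--dilation action; either argument works.
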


\begin{proof}
Write points of \(\Heis^N\) as \((z,w,t)\), with
\(z\in\C^n\) and \(w\in\C^\ell\).  The polar identities
intertwine the two weak linearized equations:
\[
 \phi\in\ker L_B
 \quad\Longleftrightarrow\quad
 \phi^\uparrow\in
 \ker\widetilde L\cap
 \dot S^1(\Heis^N)^{U(\ell)},
\]
where
\[
 \widetilde L
 =-\Delta_{\Heis^N}
 -(q-1)\widetilde{\widehat B}^{\,q-2}.
\]
Indeed, one implication follows by testing against lifted functions; the
other follows from the surjectivity of the completed lift in
Lemma~\ref{lem:polar-lift}.  The averaged-test argument used in
Lemma~\ref{lem:weak-lift} then upgrades the identity to arbitrary
full-space tests.

By Lemma~\ref{lem:spherical-spectrum-normalization}, the full linearized
kernel is carried by the spherical levels satisfying
\[
 \Lambda_{j,k}=q-1=\frac{N+2}{N}.
\]
Equivalently,
\[
 (N+2j)(N+2k)=N(N+2),
 \qquad
 N(j+k-1)+2jk=0.
\]
For \(j,k\in\mathbb N_0\), the only solutions are
\((j,k)=(1,0)\) and \((0,1)\).  Completeness of the spherical
decomposition therefore gives
\[
 \ker\widetilde L
 \simeq
 (\mathcal H_{1,0}\oplus\mathcal H_{0,1})_{\R},
 \qquad
 \dim_{\R}\ker\widetilde L=2N+2.
\]

The translation--dilation action on the centered bubble is free, so the
differential of its \((2N+2)\)-dimensional orbit map is injective.
Differentiating the orbit of \(\widetilde{\widehat B}\) therefore produces
the linearly independent elements
\[
 \widehat X_\alpha\widetilde{\widehat B},
 \quad
 \widehat Y_\alpha\widetilde{\widehat B}\ (1\leq\alpha\leq N),
 \quad
 \partial_t\widetilde{\widehat B},
 \quad
 \Lambda\widetilde{\widehat B}.
\]
They belong to the kernel by differentiation of the equation, and their
number equals the dimension just computed.  Hence they span the full
kernel.  This characterization agrees with Lemma~5 of
Malchiodi--Uguzzoni \cite{MalchiodiUguzzoni2002Webster}; see also the
direct full-space proof of Qiang--Tang--Zhang
\cite{QiangTangZhang2026Nondegeneracy}.

Separate this space into
\[
 \mathcal V
 =\operatorname{span}_{\R}
 \left\{
 \widehat X_j\widetilde{\widehat B},
 \widehat Y_j\widetilde{\widehat B}\ (1\leq j\leq n),
 \partial_t\widetilde{\widehat B},
 \Lambda\widetilde{\widehat B}
 \right\}
\]
and the \(2\ell\)-dimensional auxiliary translation space
\[
 \mathcal W
 =\operatorname{span}_{\R}
 \left\{
 \widehat X^w_\alpha\widetilde{\widehat B},
 \widehat Y^w_\alpha\widetilde{\widehat B}
 \ (1\leq\alpha\leq\ell)
 \right\}.
\]
Every vector in \(\mathcal V\) is fixed by \(U(\ell)\).  On the other
hand, the equivariant injection
\(\mathsf T_{\mathrm{aux}}:\C^\ell\to\mathcal W\) of
Lemma~\ref{lem:auxiliary-tangent} identifies the action on
\(\mathcal W\) with the standard representation of \(U(\ell)\).
That representation has no nonzero fixed vector.  Hence
\[
 \ker\widetilde L\cap
 \dot S^1(\Heis^N)^{U(\ell)}
 =\mathcal V.
\]
Descending the generators in \(\mathcal V\) gives
\eqref{eq:reduced-kernel}.  They are precisely the derivatives of the
reduced translation--dilation orbit at \(B\), which proves the final
assertion.
\end{proof}

Define the normal gauge
\begin{equation}\label{eq:normal-gauge}
 Y_\ell
 =\left\{
 \phi\in\dot S^1_\ell(\Ucal;\R):
 \int_{\Ucal}B^{q-1}\phi\dd\mu_\ell=0,\quad
 E_\ell(\phi,Z)=0\ \text{for all }Z\in\ker L_B
 \right\}.
\end{equation}
Since
\[
 E_\ell(B,\phi)
 =S\int_{\Ucal}B^{q-1}\phi\dd\mu_\ell,
\]
this is the energy-orthogonal complement of
\(\R B\oplus\ker L_B\).

\begin{theorem}[Exact normal spectral ratio]
\label{thm:normal-spectral-gap}
For every \(\phi\in Y_\ell\),
\begin{equation}\label{eq:exact-normal-gap}
 \mathcal Q_B(\phi,\phi)
 \geq\frac{2}{n+\ell+4}E_\ell(\phi,\phi).
\end{equation}
The constant is optimal.
\end{theorem}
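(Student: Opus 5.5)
The plan is to lift to \(\Heis^N\), diagonalize there using Lemma~\ref{lem:spherical-spectrum-normalization}, and read off the ratio level by level on the \(U(\ell)\)-fixed part of the spherical decomposition. First I rewrite \(\mathcal Q_B\) in coefficient-one form: since \((q-1)SB^{q-2}=(q-1)\widehat B^{q-2}\), the polar identities of Lemma~\ref{lem:polar-lift} give
\[
 c_\ell\,\mathcal Q_B(\phi,\phi)
 =\|\Phi\|_{\dot H^1}^2-(q-1)\int_{\Heis^N}\widetilde{\widehat B}^{\,q-2}\Phi^2,
 \qquad
 c_\ell E_\ell(\phi)=\|\Phi\|_{\dot H^1}^2,
\]
with \(\Phi=\phi^\uparrow\), which lies in \(\dot S^1(\Heis^N)^{U(\ell)}\). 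By Lemma~\ref{lem:spherical-spectrum-normalization}, after the energy-normalized Cayley transform, \(\Phi\) decomposes as \(\sum_{j,k}\Phi_{j,k}\) with \(\Phi_{j,k}\in\mathcal H_{j,k}\). The decomposition is orthogonal both for the energy and for the weighted \(L^2\) form \(\int\widetilde{\widehat B}^{\,q-2}\Phi^2\), because the Cayley image of the weight is the constant \(b^{q-2}\). On each level the energy equals \(\Lambda_{j,k}\) times the weighted mass. Hence
\[
 c_\ell\,\mathcal Q_B(\phi,\phi)=\sum_{j,k}\Bigl(1-\frac{q-1}{\Lambda_{j,k}}\Bigr)\|\Phi_{j,k}\|_{\dot H^1}^2 .
\]

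Next I identify which levels are excluded by the constraints in \(Y_\ell\). The condition \(\int B^{q-1}\phi=0\) is energy-orthogonality to \(B\), whose lift spans \(\mathcal H_{0,0}\). The conditions \(E_\ell(\phi,Z)=0\) for \(Z\in\ker L_B\), together with Theorem~\ref{thm:fixed-nondegeneracy}, show that \(\Phi\) is energy-orthogonal to the fixed part \(\mathcal V\) of \(\mathcal H_{1,0}\oplus\mathcal H_{0,1}\). The non-fixed part \(\mathcal W\) is orthogonal to every fixed vector in any case, because \(P_{U(\ell)}\) is an orthogonal projection commuting with the level decomposition. So \(\Phi\) is supported on levels with \(j+k\geq2\). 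On these levels the coefficient \(1-(q-1)/\Lambda_{j,k}\) is increasing in \(j\) and in \(k\), so its minimum is \(\min(\Lambda_{2,0},\Lambda_{1,1})\). Here \(\Lambda_{1,1}=(N+2)^2/N^2\) and \(\Lambda_{2,0}=(N+4)/N\), and \(\Lambda_{2,0}<\Lambda_{1,1}\). This gives the bound
\[
 1-\frac{(N+2)/N}{(N+4)/N}=\frac{2}{N+4}=\frac{2}{n+\ell+4},
\]
which is \eqref{eq:exact-normal-gap}.

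For optimality I need a nonzero \(U(\ell)\)-fixed vector in \(\mathcal H_{2,0}\) (or in \(\mathcal H_{0,2}\)), since such a vector descends to an element of \(Y_\ell\) attaining equality. This existence step is the main obstacle, because the paper itself flags it with the phrase "\(U(\ell)\)-fixed bispherical harmonics remain". The construction I would try first, in sphere coordinates \(\zeta=(\zeta',\zeta'',\zeta_{N+1})\in\C^n\times\C^\ell\times\C\), is the polynomial \(\zeta_{N+1}^2\). It is holomorphic of bidegree \((2,0)\), hence harmonic, and it depends only on the variable fixed by \(U(\ell)\), provided the Cayley transform is taken with the pole in the last coordinate so that it intertwines rotations of \(\zeta''\) with rotations of \(w\). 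If this fails for \(n\) alone, I would instead use \(\zeta_1^2\) with \(\zeta_1\in\zeta'\), which exists since \(n\geq1\). The real part, which the intro also uses, gives a real test function. It remains to check that the descended function lies in \(\dot S^1_\ell(\Ucal;\R)\), and this follows from the surjectivity in Lemma~\ref{lem:polar-lift}. Equality then holds in \eqref{eq:exact-normal-gap}.
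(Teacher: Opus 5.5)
Your proposal is correct and follows the paper's proof essentially step for step. You lift to \(\Heis^N\), use the bispherical decomposition of Lemma~\ref{lem:spherical-spectrum-normalization} to write the quotient as \(1-(q-1)/\Lambda_{j,k}\) on each level, discard the amplitude level and the \(U(\ell)\)-fixed kernel part of \(\mathcal H_{1,0}\oplus\mathcal H_{0,1}\), and get optimality from \(\operatorname{Re}\zeta_{N+1}^2\); you also make explicit two points the paper leaves implicit (automatic orthogonality of a fixed \(\Phi\) to the auxiliary modes \(\mathcal W\), and \(\Lambda_{2,0}<\Lambda_{1,1}\)).
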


\begin{proof}
By Lemma~\ref{lem:spherical-spectrum-normalization}, the generalized
eigenvalues are given by \eqref{eq:normalized-spherical-spectrum}.
The first three distinct levels are
\[
 1,\qquad \frac{N+2}{N}=q-1,\qquad \frac{N+4}{N}.
\]
The first is the amplitude direction, and the \(U(\ell)\)-fixed part of
the second is the orbit kernel identified in
Theorem~\ref{thm:fixed-nondegeneracy}.  For a generalized eigenfunction
with eigenvalue \(\Lambda\), the quotient of the linearized form by the
energy is
\[
 1-\frac{q-1}{\Lambda}.
\]
Removing the amplitude and kernel levels therefore gives
\[
 1-\frac{(N+2)/N}{(N+4)/N}
 =\frac{2}{N+4}.
\]

The next level survives the invariant restriction.  Indeed, if
\[
 \zeta=(\zeta_z,\zeta_w,\zeta_{N+1})
 \in\C^n\times\C^\ell\times\C
\]
are the sphere coordinates, then
\(\operatorname{Re}\zeta_{N+1}^2\) and
\(\operatorname{Im}\zeta_{N+1}^2\) belong to
\(\mathcal H_{2,0}\oplus\mathcal H_{0,2}\) and are fixed by \(U(\ell)\).
This proves optimality.
\end{proof}

Let \(A_B\) be the bounded self-adjoint Riesz operator defined by
\[
 E_\ell(A_B\phi,\psi)=\mathcal Q_B(\phi,\psi).
\]
The spectral calculation also gives the following inverse estimate.

\begin{corollary}[Normal invertibility]\label{cor:normal-inverse}
The restriction of \(A_B\) to
\((\ker L_B)^{\perp_E}\) is invertible, and
\begin{equation}\label{eq:normal-inverse}
 \|A_B\phi\|_{\dot S^1_\ell}
 \geq\frac{2}{n+\ell+4}\|\phi\|_{\dot S^1_\ell}
 \qquad
 \bigl(\phi\perp_E\ker L_B\bigr).
\end{equation}
\end{corollary}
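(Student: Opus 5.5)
The plan is to diagonalize \(A_B\) on the \(U(\ell)\)-fixed generalized spherical spectrum of Lemma~\ref{lem:spherical-spectrum-normalization}, and then read off the smallest nonzero eigenvalue in absolute value. Write \(N=n+\ell\).

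First I would write \(A_B=I-(q-1)S\,K\). Here \(K\) is the Riesz operator, with respect to \(E_\ell\), of the weighted form
\[
 (\phi,\psi)\mapsto\int_{\Ucal}B^{q-2}\phi\psi\dd\mu_\ell .
\]
This form is bounded on \(\dot S^1_\ell(\Ucal)\) by Hölder and the sharp inequality of Theorem~\ref{thm:sharp-sobolev}. Hence \(K\) is bounded, self-adjoint and nonnegative, and \(A_B\) is bounded and self-adjoint.

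Next I would transport the problem through the completed polar lift (Lemma~\ref{lem:polar-lift}) and the Cayley isomorphism of Lemma~\ref{lem:spherical-spectrum-normalization}. Under this identification the \(U(\ell)\)-fixed parts of the spaces \(\mathcal H_{j,k}\) give an \(E_\ell\)-orthogonal decomposition of \(\dot S^1_\ell(\Ucal)\) whose algebraic sum is dense. On each summand the generalized eigenvalue equation holds with eigenvalue \(\Lambda_{j,k}\), and \(SK\) acts as multiplication by \(1/\Lambda_{j,k}\). Consequently \(A_B\) is diagonal, with eigenvalue
\[
 \mu_{j,k}=1-\frac{q-1}{\Lambda_{j,k}}=1-\frac{N(N+2)}{(N+2j)(N+2k)}
\]
on the fixed part of \(\mathcal H_{j,k}\).

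The third step is to list these eigenvalues.
\begin{itemize}
 \item At \((j,k)=(0,0)\), the amplitude direction \(B\), one gets \(\mu=1-(q-1)=-2/N\).
 \item At \((1,0)\) and \((0,1)\) one gets \(\mu=0\). By Theorem~\ref{thm:fixed-nondegeneracy}, the fixed part of these levels is exactly \(\ker L_B\).
 \item Every other \((j,k)\) has \(\Lambda_{j,k}\ge (N+4)/N\); the smallest such levels are \((2,0)\), \((0,2)\) and \((1,1)\). For these \(\mu_{j,k}\in[2/(N+4),1)\).
\end{itemize}
Since the decomposition is \(E_\ell\)-orthogonal and \(A_B\) preserves each summand, \(A_B\) maps \((\ker L_B)^{\perp_E}\) into itself. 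On that subspace every eigenvalue satisfies
\[
 |\mu|\ge\min\{2/N,\,2/(N+4)\}=2/(N+4).
\]
Expanding \(\phi\perp_E\ker L_B\) in the orthogonal decomposition and applying Parseval then gives \eqref{eq:normal-inverse}. The same lower bound, together with self-adjointness, gives injectivity with closed range, and hence invertibility on the invariant subspace.

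The step I expect to need the most care is justifying that this diagonalization really is a complete spectral resolution of \(A_B\) on the whole completed space, and not only on a core. I would handle this directly from Lemma~\ref{lem:spherical-spectrum-normalization}. That lemma says the fixed bispherical sum is a form core and is \(E_\ell\)-orthogonal, and the eigenvalue identity holds on each summand by the Cayley normalization \(\Lambda_{j,k}=a_{j,k}/a_{0,0}\). A bounded operator that is diagonal on an orthogonal dense decomposition with the stated eigenvalues is determined by them, so the bound extends by density.

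One more point needs attention. The amplitude direction \(B\) lies in \((\ker L_B)^{\perp_E}\) but not in \(Y_\ell\). This is why the corollary's bound is the minimum of \(2/N\) and \(2/(N+4)\), which equals \(2/(N+4)\), rather than only the normal gap of Theorem~\ref{thm:normal-spectral-gap}.
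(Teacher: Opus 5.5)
Your proposal is correct and is essentially the paper's argument. The paper splits \((\ker L_B)^{\perp_E}=\R B\oplus_E Y_\ell\), invokes the normal gap of Theorem~\ref{thm:normal-spectral-gap} on \(Y_\ell\) (itself proved from the same fixed bispherical spectrum \(1-(q-1)/\Lambda_{j,k}\)), and uses \(A_BB=-\tfrac{2}{N}B\) with \(2/N>2/(N+4)\); you inline that spectral computation for \(A_B\) directly, which also makes explicit that \(A_B\) preserves \(Y_\ell\), so the two pieces combine orthogonally.
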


\begin{proof}
On \(Y_\ell\) this follows from
Theorem~\ref{thm:normal-spectral-gap}.  The remaining orthogonal direction
is \(B\), and
\[
 A_BB=-(q-2)B=-\frac{2}{N}B.
\]
Its absolute spectral value is larger than \(2/(N+4)\).
\end{proof}

\begin{proposition}[Exact local stability coefficient]
\label{prop:local-stability-coefficient}
Let \(\mathfrak M_\ell^\R\) be the real optimizer cone.  Then
\begin{equation}\label{eq:local-stability-coefficient}
 \lim_{\delta\downarrow0}
 \inf_{\substack{0<\|r\|_{\dot S^1_\ell}<\delta\\ r\in Y_\ell}}
 \frac{
 E_\ell(B+r)-S\|B+r\|_q^2
 }{
 \operatorname{dist}_{\dot S^1_\ell}
 (B+r,\mathfrak M_\ell^\R)^2
 }
 =\frac{2}{N+4}.
\end{equation}
\end{proposition}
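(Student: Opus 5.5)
The plan is to prove matching upper and lower bounds for the liminf/infimum by a second-order Taylor expansion of the deficit at \(B\), restricted to the normal gauge \(Y_\ell\), together with a second-order expansion of the cone distance.

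\emph{Expansion of the deficit.} Write \(\mathscr D(u)=E_\ell(u)-S\|u\|_q^2\). Since \(\|B\|_q=1\) and \(\int B^{q-1}r\,d\mu_\ell=0\) for \(r\in Y_\ell\), the first-order term of \(\|B+r\|_q^q\) vanishes, and
\[
 \|B+r\|_q^q=1+\tfrac{q(q-1)}2\int B^{q-2}r^2\,d\mu_\ell+o(E_\ell(r)),
\]
where the remainder is controlled by the pointwise inequality \(\bigl||1+x|^q-1-qx-\tfrac{q(q-1)}2x^2\bigr|\le C(|x|^{3}\wedge|x|^q)\) (valid for \(2<q<3\)), Hölder, and Theorem~\ref{thm:sharp-sobolev}. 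This gives \(O(\|r\|^{q})=o(\|r\|^2)\). Raising to the power \(2/q\) yields \(\|B+r\|_q^2=1+(q-1)\int B^{q-2}r^2+o(\|r\|^2)\). Also \(E_\ell(B+r)=S+E_\ell(r)\), because \(E_\ell(B,r)=S\int B^{q-1}r=0\). Hence
\[
 \mathscr D(B+r)=\mathcal Q_B(r,r)+o(E_\ell(r)).
\]

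\emph{Expansion of the distance.} The upper bound \(\operatorname{dist}(B+r,\mathfrak M_\ell^\R)\le\|r\|\) is trivial. For the lower bound we need \(\operatorname{dist}^2\ge E_\ell(r)(1-o(1))\). The cone \(\mathfrak M_\ell^\R\) is a smooth \((2n+3)\)-dimensional manifold near \(B\), with tangent space \(\R B\oplus\ker L_B\) by Theorem~\ref{thm:fixed-nondegeneracy}. The set \(Y_\ell\) is exactly its energy-orthogonal complement. Any near minimizer \(M\) of the distance must lie close to \(B\): it satisfies \(\|M-B\|\le 2\|r\|\), and \(\|M\|_q\) is close to \(1\), so its frame parameters and amplitude are close to the identity. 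Expanding \(M=B+\tau+O(|\tau|^2)\) with \(\tau\in\R B\oplus\ker L_B\), we get
\[
 \|B+r-M\|^2=\|r\|^2+\|\tau\|^2+O(\|r\||\tau|^2+|\tau|^3)\ge\|r\|^2(1-o(1)).
\]
Here the cross term \(E_\ell(r,\tau)\) vanishes by orthogonality. Differentiability of the orbit map in \(\dot S^1_\ell\) follows from strong continuity of translations and dilations, as in Lemma~\ref{lem:auxiliary-tangent}. Noncompactness of the frame group (a minimizer escaping in scale or center) is ruled out because escaping frames converge weakly to zero, which would force \(\operatorname{dist}\approx\|B\|\), contradicting \(\operatorname{dist}\le\|r\|<\delta\). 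Thus \(\operatorname{dist}^2=E_\ell(r)(1+o(1))\) uniformly as \(\delta\to0\).

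\emph{Conclusion.} Combining the two expansions, the quotient equals \(\mathcal Q_B(r,r)/E_\ell(r)+o(1)\). By Theorem~\ref{thm:normal-spectral-gap} it is therefore at least \(2/(N+4)-o(1)\). For the reverse inequality, take \(r=\varepsilon\phi\) with \(\phi\) the \(U(\ell)\)-fixed level \((2,0)\oplus(0,2)\) eigenfunction transported from \(\operatorname{Re}\zeta_{N+1}^2\). This \(\phi\) lies in \(Y_\ell\) by orthogonality of distinct spherical levels, and it realizes \(\mathcal Q_B(\phi,\phi)=\tfrac2{N+4}E_\ell(\phi)\). Letting \(\varepsilon\to0\) gives the matching limit.

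The main obstacle is making the distance expansion uniform: one must justify \(C^1\) (indeed second-order) regularity of the orbit map in the homogeneous space and localize the minimizing cone point. I would handle this with the weak-convergence/escape argument above followed by a local implicit-function parametrization.
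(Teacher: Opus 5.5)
Your proposal is correct and follows essentially the paper's route. You expand the deficit as \(\mathcal Q_B(r,r)+o(E_\ell(r))\), show \(\operatorname{dist}^2=E_\ell(r)(1+o(1))\) for \(r\in Y_\ell\) via the local cone chart with tangent space \(\R B\oplus\ker L_B\), use the normal gap for the lower bound, and test with the \(U(\ell)\)-fixed vector coming from \(\operatorname{Re}\zeta_{N+1}^2\) for sharpness. Your direct expansion of \(\|B+r-M\|^2\), combined with the escape argument that localizes near-minimizers, is a more explicit version of the paper's tubular-projection step. One small correction: your scalar remainder bound also holds at \(q=3\), the case \(n=\ell=1\) that this proposition covers, so you need not restrict to \(2<q<3\).
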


\begin{proof}
In a neighborhood of \(B\), and hence away from the vertex of the cone,
the local cone chart
\[
 (a,\xi)\longmapsto a\,\mathcal T_{\exp\xi}B
\]
has tangent space \(\R B\oplus\ker L_B\) at \((1,0)\).  The implicit
function theorem applied to the energy-orthogonality equations gives a
smooth tubular projection onto the nonzero cone.  Consequently, for
\(r\in Y_\ell\),
\[
 \operatorname{dist}_{\dot S^1_\ell}
 (B+r,\mathfrak M_\ell^\R)^2
 =E_\ell(r)+o(E_\ell(r)).
\]
The deficit has the expansion
\[
 E_\ell(B+r)-S\|B+r\|_q^2
 =\mathcal Q_B(r,r)+o(E_\ell(r)).
\]
The lower bound follows from \eqref{eq:exact-normal-gap}, and equality is
approached along a nonzero invariant vector in
\(\mathcal H_{2,0}\oplus\mathcal H_{0,2}\).
\end{proof}

\begin{remark}\label{rem:local-versus-global}
The value \(2/(N+4)\) is local and does not identify the optimal global
constant in Theorem~\ref{thm:intro-stability}.  Tang--Zhang--Zhang
\cite{TangZhangZhang2024Minimizer} proved that the optimal full-Heisenberg
stability quotient has a nontrivial minimizer.  This full-space attainment
result does not by itself identify the fixed-sector constant: a strict test
can descend only when the entire test, and not merely its leading
two-bubble configuration, is invariant.  Thus local nondegeneracy alone
does not determine the sharp global Bianchi--Egnell constant in the fixed
sector.  Section~\ref{sec:optimal-stability} constructs genuinely
\(U(\ell)\)-fixed strict tests and proves fixed-sector attainment by a
separate compactness argument.
\end{remark}

\begin{lemma}[Local normal slice]\label{lem:normal-slice}
There are \(\varepsilon>0\) and \(C>0\) such that every real
\(u\) satisfying
\[
 \|u\|_q=1,
 \qquad
 \operatorname{dist}_{\dot S^1_\ell}(u,\mathcal M_{1,\ell})
 <\varepsilon
\]
admits a sign \(\sigma\) and a reduced frame \(g\) for which
\[
 \mathcal T_g^{-1}(\sigma u)=B+r,
 \qquad
 E_\ell(r,Z)=0\quad(Z\in\ker L_B),
\]
and
\[
 \|r\|_{\dot S^1_\ell}
 \leq C\operatorname{dist}_{\dot S^1_\ell}
 (u,\mathcal M_{1,\ell}).
\]
The frame is locally unique after the sign and the nearby orbit component
have been fixed.
\end{lemma}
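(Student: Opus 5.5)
The plan is the standard modulation argument: an implicit function theorem near the centered bubble, transported to any nearby orbit point by the unitary frame action. Choose a sign \(\sigma_0\) and a frame \(g_0\) with \(\|u-\sigma_0\mathcal T_{g_0}B\|_{\dot S^1_\ell}\leq 2\operatorname{dist}_{\dot S^1_\ell}(u,\mathcal M_{1,\ell})<2\varepsilon\). Since \(\mathcal T_{g_0}\) is unitary for \(E_\ell\), preserves \(\|\cdot\|_q\), and the orbit is invariant, we may replace \(u\) by \(u_0=\mathcal T_{g_0}^{-1}(\sigma_0u)\). This reduces everything to the case \(\|u_0-B\|_{\dot S^1_\ell}<2\varepsilon\), and all constants below are then uniform.

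Parametrize reduced frames near the identity by exponential coordinates \(\xi\in\R^{2n+2}\), with \(\xi\) running over the translation generators \(\widehat X_j,\widehat Y_j,\partial_t\) and the dilation. Define
\[
 \Phi(\xi,u)=\Bigl(E_\ell\bigl(\mathcal T_{\exp\xi}^{-1}u-B,Z_i\bigr)\Bigr)_{i=1}^{2n+2},
\]
where \(Z_i\) is the basis of \(\ker L_B\) from Theorem~\ref{thm:fixed-nondegeneracy}. Moving the inverse frame onto the fixed vectors gives \(\Phi(\xi,u)=(E_\ell(u,\mathcal T_{\exp\xi}Z_i)-E_\ell(B,Z_i))_i\). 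The map \(\xi\mapsto\mathcal T_{\exp\xi}Z_i\) is \(C^1\) into \(\dot S^1_\ell\), so \(\Phi\) is \(C^1\), and it is linear in \(u\). Moreover \(\Phi(0,B)=0\), and \(D_\xi\Phi(0,B)\) is, up to sign, the Gram matrix \((E_\ell(Z_i,Z_k))\). This matrix is invertible because the \(Z_i\) are linearly independent; this independence is the freeness of the orbit map used in Theorem~\ref{thm:fixed-nondegeneracy}.

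The main obstacle is the \(C^1\) regularity of \(\xi\mapsto\mathcal T_{\exp\xi}Z_i\). The dilation and translation generators act on \(Z_i\) as unbounded operators, so \(C^1\) needs \(Z_i\) to lie in the domain of the generators, i.e.\ that second derivatives of \(B\) such as \(\widehat X_j\widehat X_kB\) and \(\Lambda_\ell\widehat X_jB\) have finite \(\dot S^1_\ell\) energy. I would verify this as in Lemma~\ref{lem:auxiliary-tangent}. On the lift these are smooth functions of \((z,w,t)\), with decay \(O(R_*^{-(Q_\ell-1)})\) for the function and \(O(R_*^{-Q_\ell})\) for its horizontal gradient, so the same dyadic-shell and cutoff argument gives membership in the completion. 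Strong continuity then gives the derivative, and Lemma~\ref{lem:polar-lift} transports everything back to \(\dot S^1_\ell\).

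Granting this, the implicit function theorem produces \(\delta>0\) and a \(C^1\) map \(u\mapsto\xi(u)\) on the ball \(\{\|u-B\|<\delta\}\). It is unique in a small \(\xi\)-ball and satisfies \(\Phi(\xi(u),u)=0\) and \(|\xi(u)|\leq C\|u-B\|\). Take \(\varepsilon=\delta/2\), \(g=g_0\exp\xi(u_0)\) and \(\sigma=\sigma_0\). Then \(r=\mathcal T_{\exp\xi(u_0)}^{-1}u_0-B\) is energy-orthogonal to \(\ker L_B\), and by unitarity
\[
 \|r\|_{\dot S^1_\ell}\leq\|u_0-B\|_{\dot S^1_\ell}+\|(\mathcal T_{\exp\xi}-I)B\|_{\dot S^1_\ell}\leq C\|u_0-B\|_{\dot S^1_\ell}\leq 2C\operatorname{dist}_{\dot S^1_\ell}(u,\mathcal M_{1,\ell}),
\]
where the middle bound uses \(|\xi|\leq C\|u_0-B\|\) and the Lipschitz dependence of \(\mathcal T_{\exp\xi}B\) on \(\xi\).

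Local uniqueness comes from the IFT uniqueness of \(\xi\) once \(\sigma\) and the nearby orbit component are fixed. Note that \(\sigma\) is itself determined for small \(\varepsilon\), since \(\|B+\mathcal T_gB\|_q\) is bounded away from zero; the sign of \(\int B^{q-1}u\) separates the two components. The normalization \(\|u\|_q=1\) plays no role beyond fixing the amplitude, and the amplitude error is simply kept in \(r\).
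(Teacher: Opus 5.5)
Your argument is the paper's argument. Your map \(\Phi\) coincides with the paper's \(E_\ell(\widetilde u-\mathcal T_{\exp\xi}B,\mathcal T_{\exp\xi}Z_a)\) by unitarity. The remaining steps also match: nondegeneracy of the Gram matrix, the implicit function theorem near \(B\), and transport from a signed orbit point within twice the orbit distance.

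There are two small points to fix or note.

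\begin{itemize}
\item \emph{The case of zero distance.} Your choice of \((\sigma_0,g_0)\) with \(\|u-\sigma_0\mathcal T_{g_0}B\|\leq 2\operatorname{dist}_{\dot S^1_\ell}(u,\mathcal M_{1,\ell})\) is only available when that distance is positive. When the distance is zero, you need the unit orbit to be closed, so that \(u\) lies on it and \(r=0\). The paper states this explicitly: along an escaping frame \(\mathcal T_gB\) is weakly null, while every orbit element has energy \(S\). Hence a strong limit of orbit points has bounded frames and is itself an orbit point.

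\item \emph{Regularity of \(\xi\mapsto\mathcal T_{\exp\xi}Z_a\).} Your check that this map is \(C^1\) is a point the paper passes over with the remark that only finite-dimensional orbit maps are differentiated, so it is a real addition. One correction to your decay rates: the pure dilation second derivative \(\Lambda_\ell^2B\) decays only like \(B\) itself. On the lift it is \(O(R_*^{-(Q_\ell-2)})\), with horizontal gradient \(O(R_*^{-(Q_\ell-1)})\). The same dyadic-shell and cutoff argument still gives finite energy and membership in the completion, so nothing breaks.
\end{itemize}

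Your aside that the sign of \(\int B^{q-1}u\) separates the two components is valid only after recentering. The separation you state first, \(\|B+\mathcal T_gB\|_q\geq1\), is the correct one, and the lemma does not depend on either.
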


\begin{proof}
Choose a basis \(Z_1,\ldots,Z_{2n+2}\) of \(\ker L_B\), and choose local
coordinates \(\xi=(\xi_1,\ldots,\xi_{2n+2})\) on the reduced frame group so
that
\[
 \left.\partial_{\xi_b}\right|_{\xi=0}
 \mathcal T_{\exp\xi}B=Z_b.
\]
For \(\widetilde u\) near \(B\), consider the smooth map
\[
 \Phi_a(\xi,\widetilde u)
 =E_\ell\!\left(
 \widetilde u-\mathcal T_{\exp\xi}B,
 \mathcal T_{\exp\xi}Z_a
 \right),
 \qquad 1\leq a\leq2n+2.
\]
Only the finite-dimensional orbit maps are differentiated here.  At
\((\xi,\widetilde u)=(0,B)\),
\[
 \partial_{\xi_b}\Phi_a(0,B)=-E_\ell(Z_b,Z_a),
\]
because the second differentiated factor is paired with \(B-B=0\).
The Gram matrix is invertible, so the implicit function theorem supplies a
unique small \(\xi=\xi(\widetilde u)\) for which
\(\Phi(\xi,\widetilde u)=0\).  By invariance of the energy form, this is
equivalent to
\[
 E_\ell\!\left(
 \mathcal T_{\exp(-\xi)}\widetilde u-B,Z_a
 \right)=0
 \qquad(1\leq a\leq2n+2).
\]

The unit orbit is closed: an escaping frame is weakly null while every
orbit element has the same nonzero energy norm.  Thus, if the orbit
distance is zero, the conclusion is immediate.  Otherwise, for a general
\(u\) satisfying the hypothesis, choose a signed orbit point
\(\sigma\mathcal T_{g_0}B\) within twice its orbit distance, and apply the preceding construction to
\(\widetilde u=\mathcal T_{g_0}^{-1}(\sigma u)\).  The local Lipschitz
bound furnished by the implicit function theorem, followed by energy
invariance, yields the asserted estimate.  It also gives local uniqueness
once the sign and the nearby orbit component are fixed.
\end{proof}

\section{The optimal fixed-sector stability quotient}
\label{sec:optimal-stability}

The stability theorem in Section~\ref{sec:stability} supplies a positive
constant but does not identify the optimal quotient.  In this section we
prove that the optimal value is attained.  Two strict inequalities are
needed: one excludes convergence to the optimizer cone, and the other
excludes asymptotic splitting into two bubbles.
The local strictness test is modeled on K\"onig's Euclidean construction
\cite{Koenig2023BianchiEgnell}, while the two-bubble threshold and
attainment strategy follow the compactness scheme of
\cite{Koenig2025SobolevMinimizer}.  The correlation balance and the
admissible frames must here be established inside the
\(U(\ell)\)-fixed sector.

Put
\[
 N=n+\ell,\qquad q=q_\ell=2+\frac2N,\qquad S=S_\ell,
\]
and write
\[
 \mathscr D_\ell(u)=E_\ell(u)-S\|u\|_q^2,\qquad
 d_\ell(u)=\operatorname{dist}_{\dot S^1_\ell}
 (u,\mathfrak M_\ell^{\mathbb R}).
\]
The optimal real fixed-sector constant is
\begin{equation}\label{eq:optimal-fixed-constant}
 \kappa_{n,\ell}^{\mathrm{opt}}
 =\inf_{u\in\dot S^1_\ell(\Ucal;\mathbb R)
 \setminus\mathfrak M_\ell^{\mathbb R}}
 \frac{\mathscr D_\ell(u)}{d_\ell(u)^2}.
\end{equation}
Both terms in the quotient are homogeneous of degree two, so minimizing
sequences may be normalized by \(\|u\|_q=1\).
Theorem~\ref{thm:intro-stability} gives
\(\kappa_{n,\ell}^{\mathrm{opt}}>0\).

\begin{theorem}[Attainment of the optimal stability quotient]
\label{thm:optimal-fixed-attainment}
For all integers \(n,\ell\geq1\),
\begin{equation}\label{eq:two-strict-stability-thresholds}
 0<\kappa_{n,\ell}^{\mathrm{opt}}
 <\min\left\{
 \frac2{n+\ell+4},\,
 2-2^{(n+\ell)/(n+\ell+1)}
 \right\}.
\end{equation}
There is a real
\[
 u_*\in\dot S^1_\ell(\Ucal)
 \setminus\mathfrak M_\ell^{\mathbb R}
\]
which attains \eqref{eq:optimal-fixed-constant}.  Moreover, if
\((u_k)\) is any minimizing sequence with \(\|u_k\|_q=1\), then, after
passing to a subsequence, there are reduced frames \(g_k\) such that
\(\mathcal T_{g_k}^{-1}u_k\) converges strongly in
\(\dot S^1_\ell(\Ucal)\).
\end{theorem}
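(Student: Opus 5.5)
Positivity of \(\kappa_{n,\ell}^{\mathrm{opt}}\) is already Theorem~\ref{thm:intro-stability}, so the remaining work is the two strict upper bounds and a compactness argument that uses them. For the first bound, \(\kappa^{\mathrm{opt}}<2/(N+4)\), we follow K\"onig's local construction inside the fixed sector. The starting point is Proposition~\ref{prop:local-stability-coefficient}, which already gives \(\le 2/(N+4)\) along \(u_\varepsilon=B+\varepsilon\phi\). Here \(\phi\in Y_\ell\) is the lifted-and-descended image of \(\operatorname{Re}\zeta_{N+1}^2\in\mathcal H_{2,0}\oplus\mathcal H_{0,2}\), which is \(U(\ell)\)-fixed by the proof of Theorem~\ref{thm:normal-spectral-gap}.

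To get strictness we expand to fourth order. We refine the test to \(u_\varepsilon=B+\varepsilon\phi+\varepsilon^2\psi\), where \(\psi\) is \(U(\ell)\)-fixed and \(\psi\perp_E \R B\oplus\ker L_B\). On the sphere the cubic term \(\int b^{q-3}\phi^3\) vanishes, since \(\zeta_{N+1}^2\) has no constant component in its cube paired with the constant. The \(\varepsilon^4\) coefficient of \(\mathscr D_\ell(u_\varepsilon)-\tfrac{2}{N+4}d_\ell(u_\varepsilon)^2\) is then a quadratic polynomial in \(\psi\) plus terms from the quartic moment of \(\phi\). We need the tubular projection from the proof of Proposition~\ref{prop:local-stability-coefficient} to second order, so that \(d_\ell^2\) is expanded correctly. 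We choose \(\psi\) in the span of \(\phi^2\)'s projection onto fixed bispherical harmonics, namely \(|\zeta_{N+1}|^4\) components in \(\mathcal H_{0,0},\mathcal H_{1,1},\mathcal H_{2,2},\mathcal H_{4,0}\oplus\mathcal H_{0,4}\), all of which are \(U(\ell)\)-invariant. We then optimize the coefficient. Computing this coefficient with explicit spherical moments of \(|\zeta_{N+1}|^{2k}\) on \(\mathbb S^{2N+1}\) and showing it is negative is the main computational obstacle. If the sign computation fails for this \(\psi\), we would add the real multiple of \(\mathcal H_{1,1}\) fixed harmonic \(|\zeta_{N+1}|^2-\frac1{N+1}\) as a free parameter.

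For the second bound, \(\kappa^{\mathrm{opt}}<2-2^{N/(N+1)}\), we take \(u_k=B+\mathcal T_{h_k,0}B\) with \(h_k\to\infty\). Both bubbles are centered and therefore fixed. We have \(E_\ell(u_k)\to2S\) and \(\|u_k\|_q^2\to2^{2/q}=2^{N/(N+1)}\), so \(\mathscr D\to S(2-2^{N/(N+1)})\). The distance satisfies \(d_\ell(u_k)^2\to S\), because the nearest cone point takes one bubble and the interaction tends to \(0\). This gives the limit value exactly. To get strictness we expand in the interaction \(\varepsilon_k=\int B^{q-1}\mathcal T_{h_k}B\asymp h_k^{-N}\). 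The first-order interaction term in the deficit coming from \(\|\cdot\|_q^2\) (the \(q\)-superadditivity gain) beats the first-order decrease of \(d_\ell^2\), as in K\"onig, which yields a quotient below the limit for large \(k\). We use the raw sum, possibly with unequal amplitudes \(B+a\mathcal T_{h_k}B\), whichever gives the right sign.

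Attainment then goes by taking a minimizing sequence with \(\|u_k\|_q=1\) and applying Theorem~\ref{thm:reduced-profile}. There are three cases.
\begin{enumerate}
\item If \(d_\ell(u_k)\to0\), Lemma~\ref{lem:normal-slice} and Proposition~\ref{prop:local-stability-coefficient} give \(\liminf\ge2/(N+4)\), which contradicts the first bound.
\item Otherwise, suppose the profile decomposition shows splitting into at least two nontrivial components. We write \(d_\ell^2=E_\ell(u)-\sup_{g}\frac{E_\ell(u,\mathcal T_gB)^2}{S}\), where the supremum is the maximal bubble correlation. The max-splitting identity writes the correlation of \(u_k\) as the max over components. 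If the components carry unequal correlation mass, rescaling the smaller one lowers the quotient (the fixed-sector correlation balance). In the balanced case, concavity of \(s\mapsto s^{2/q}\) gives \(\liminf\ge 2-2^{N/(N+1)}\), contradicting the second bound. The vanishing remainder case is excluded since then \(\mathscr D\to E\) while \(d^2\le E\), giving quotient \(\ge1\).
\item Hence there is a single profile with the full mass, and the Hilbert remainder converges strongly modulo the reduced frame \(g_k\). The limit \(u_*\) is not in the cone, because \(d_\ell\) stays bounded below, and it attains the infimum by continuity.
\end{enumerate}
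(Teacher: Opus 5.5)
\textbf{Verdict: the route matches the paper's, but neither strict inequality is actually proved, and in both places your sketch leaves open exactly the quantity that decides the sign.}

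Your architecture is the paper's. It uses a K\"onig-type fourth-order normal test built from $\operatorname{Re}\zeta_{N+1}^2$ with a fixed second-order correction, and the raw centered two-bubble test. It then closes with a profile decomposition argument using correlation max-splitting, correlation balance, and the two thresholds. The attainment part is fine in outline. The gaps are in the two strict inequalities.

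\emph{Two-bubble test.} Take $u=B+\mathcal T_{h,0}B$ and $I=E_\ell(B,\mathcal T_{h,0}B)\asymp h^{-N}$. The deficit is $S(2-2^{2/q})-2(2^{2/q}-1)I+o(I)$, and $d_\ell(u)^2=E_\ell(u)-m(u)$.

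Testing with either summand gives only $\sup_gE_\ell(u,\mathcal T_gB)\geq S+I$, i.e.\ $d_\ell^2\leq S-I^2/S$. That is the useless direction. You need the matching upper bound $\sup_gE_\ell(u,\mathcal T_gB)\leq S+I+O(I^2)$, so that $d_\ell^2=S+O(I^2)$ has \emph{no} first-order term. Without it, your claim that the deficit gain beats the first-order decrease of $d_\ell^2$ compares two coefficients, one of which you have not controlled.

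The paper proves this bound in two steps. First, it localizes near-maximizing frames in two wells around the two bubble frames. Escaping frames, and frames bounded relative to one bubble but away from it, are excluded by weak nullity and strict Cauchy--Schwarz. Second, in each well it combines $E(B,\mathcal T_{\exp\xi}B)\leq S-c|\xi|^2$ with $|\nabla_\xi E(\mathcal T_hB,\mathcal T_{\exp\xi}B)|\leq CI$. The computation is done on $\Heis^N$, using that the fixed cone lies inside the full cone.

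Your unequal-amplitude fallback is not available. For $B+a\mathcal T_{h,0}B$ with $0<a<1$, the limiting quotient is $1-((1+a^q)^{2/q}-1)/a^2>2-2^{2/q}$. So only $a=1$ reaches the threshold.

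\emph{Local test.} With $\psi=0$, the $\varepsilon^4$ coefficient of $\mathscr D-\tfrac{2}{N+4}\operatorname{dist}^2$ is $C_0=-\tfrac{(q-1)(q-2)(q-3)}{12}M_4+\tfrac{(q-2)(q-1)^2}{4}M_2^2$. This is strictly positive because $q\leq 3$. So the uncorrected test lies \emph{above} $2/(N+4)$.

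The whole conclusion therefore rests on the completed-square gain $\tfrac{(q-1)(q-2)^2}{4}\sum p_{jk}/\delta_{jk}$. This gain comes from the components of $\varphi^2$ in $\mathcal H_{4,0}\oplus\mathcal H_{0,4}$, $\mathcal H_{1,1}$ and $\mathcal H_{2,2}$. It has to be computed; the paper does so and obtains the net coefficient $-\tfrac{2N^3+12N^2+20N+7}{2N^2(N+1)^2(N+2)(N+3)(N+4)}<0$.

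Three smaller corrections to your sketch of this step:
\begin{itemize}
\item Your proposed fallback adds nothing, because the $\mathcal H_{1,1}$ direction is already in that span.
\item The $\mathcal H_{0,0}$ component you list must be dropped, since $\psi$ has to be normal to $\R B$.
\item No second-order tubular expansion is needed. For normal $r$, the point $1+r$ lies on the normal fiber over $1$, so the squared distance is exactly $\langle r,\mathbf Ar\rangle$.
\end{itemize}
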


\subsection{A strict two-bubble test}

We first work on the full group \(\Heis^N\).  Let
\(\widetilde E\) and \(\widetilde S\) denote its energy and sharp
Folland--Stein constant, and let \(\widetilde B\) be a centered bubble
with \(\|\widetilde B\|_q=1\).  For \(0<h<1\), let
\(\widetilde B_h\) be its centered dilation at scale \(h\), and set
\[
 F_h=\widetilde B+\widetilde B_h,\qquad
 I_h=\widetilde E(\widetilde B,\widetilde B_h)>0.
\]
Both summands are invariant under \(U(N)\).  In particular, \(F_h\) is
\(U(\ell)\)-fixed and belongs to the range of the completed polar lift.

\begin{lemma}[Two-bubble expansions]
\label{lem:fixed-two-bubble-expansions}
As \(h\downarrow0\),
\begin{align}
 I_h&=\gamma_Nh^N+o(h^N),\qquad \gamma_N>0,
 \label{eq:fixed-two-bubble-interaction}\\
 \int_{\Heis^N}F_h^q
 &=2+\frac{2q}{\widetilde S}I_h+O(h^{N+1}),
 \label{eq:fixed-two-bubble-critical-mass}\\
 \operatorname{dist}_{\dot H^1}
 (F_h,\widetilde{\mathfrak M}^{\mathrm{full}})^2
 &=\widetilde S+O(I_h^2).
 \label{eq:fixed-two-bubble-distance}
\end{align}
\end{lemma}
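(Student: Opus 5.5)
The plan is to prove the three expansions separately, working entirely on \(\Heis^N\). We use the normalization \(-\Delta_{\Heis^N}\widetilde B=\widetilde S\widetilde B^{q-1}\) and \(\widetilde E(\widetilde B)=\widetilde S\), together with the dilation invariance of \(\widetilde E\) and \(\|\cdot\|_q\). Recall that \(Q=2N+2\) and \(N(q-1)=N+2\), so that
\[
 \widetilde B_h^{\,q-1}(x)=h^{-(N+2)}\widetilde B^{q-1}(\delta_{1/h}x).
\]

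\emph{Interaction.} Testing the Euler--Lagrange equation of \(\widetilde B_h\) against \(\widetilde B\) gives \(I_h=\widetilde S\int\widetilde B_h^{q-1}\widetilde B\). The change of variables \(x=\delta_hy\) then yields
\[
 I_h=\widetilde S\,h^{N}\int_{\Heis^N}\widetilde B^{q-1}(y)\,\widetilde B(\delta_hy)\,dy .
\]
The factor \(\widetilde B\) is bounded and continuous, and \(\widetilde B^{q-1}=O(|y|^{-2(N+2)})\) is integrable, so dominated convergence gives \eqref{eq:fixed-two-bubble-interaction}. The constant is \(\gamma_N=\widetilde S\,\widetilde B(0)\int\widetilde B^{q-1}>0\).

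\emph{Critical mass.} Write \(a=\widetilde B\) and \(b=\widetilde B_h\), and expand
\[
 (a+b)^q=a^q+b^q+qa^{q-1}b+qab^{q-1}+R .
\]
Each cross term integrates to \(I_h/\widetilde S\), which produces \(2+\tfrac{2q}{\widetilde S}I_h\). Since \(2<q<3\), the elementary bound \(|R|\le C\min(a,b)^2\max(a,b)^{q-2}\) holds. We split the space at the gauge sphere \(|x|\sim h^{1/2}\), where \(a\sim b\).
\begin{itemize}
 \item On the outer region, \(a\sim1\) and \(b\sim h^N|x|^{-2N}\). Hence \(\int R\lesssim h^{2N}\int_{|x|>h^{1/2}}|x|^{-4N}\,dx\sim h^{2N}\,h^{(Q-4N)/2}=h^{N+1}\).
 \item The inner region is symmetric after rescaling by \(\delta_{1/h}\).
\end{itemize}
Together these give \eqref{eq:fixed-two-bubble-critical-mass}.

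\emph{Distance.} Every element of the full cone has the form \(A\,U\) with \(U=\mathcal T^{\mathrm{full}}_{g}\widetilde B\). Optimizing over the amplitude gives
\[
 \operatorname{dist}^2=\widetilde E(F_h)-\sup_g\frac{\langle F_h,U\rangle^2}{\widetilde S},
 \qquad
 \widetilde E(F_h)=2\widetilde S+2I_h .
\]
The choice \(U=\widetilde B\) gives \(\langle F_h,U\rangle=\widetilde S+I_h\), hence the upper bound \(\operatorname{dist}^2\le\widetilde S-I_h^2/\widetilde S\). For the matching lower bound we must show
\[
 \sup_g\langle F_h,\mathcal T_g\widetilde B\rangle\le\widetilde S+I_h+O(I_h^2).
\]
This is the main obstacle.

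First we localize the maximizing frame. By the weak vanishing of escaping frames, together with \(\langle\widetilde B,\widetilde B_h\rangle\to0\), any near-maximizing \(g\) must lie in a small neighbourhood of either the identity frame or the frame \((h,0)\). By the symmetry \(\delta_{1/h}\), which exchanges the two summands, it suffices to treat the neighbourhood of the identity, and there we use local coordinates \(g=\exp\xi\).
\begin{itemize}
 \item Nondegeneracy (Theorem~\ref{thm:fixed-nondegeneracy} in its full-space form) together with the injectivity of the orbit differential gives \(\langle\widetilde B,\mathcal T_{\exp\xi}\widetilde B\rangle\le\widetilde S-c|\xi|^2\).
 \item Differentiating the interaction integral along the orbit, with the same scaling computation as above, gives \(|\langle\widetilde B_h,\mathcal T_{\exp\xi}\widetilde B\rangle-I_h|\le CI_h|\xi|\).
\end{itemize}
Maximizing \(\widetilde S+I_h-c|\xi|^2+CI_h|\xi|\) over \(\xi\) gives \(\widetilde S+I_h+O(I_h^2)\). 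This yields \eqref{eq:fixed-two-bubble-distance}.
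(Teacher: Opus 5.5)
\textbf{Verdict: the overall approach matches the paper's, but the critical-mass step rests on a false pointwise inequality.} The three-step plan is the same as in the paper: scaling plus dominated convergence for \(I_h\); a split of space at the crossing sphere \(R\sim\sqrt h\) for the critical mass (the paper shows the crossing set is exactly \(\{b\le a\}=\{R\ge\sqrt h\}\)); and, for the distance, the amplitude-optimized correlation formula, two-well localization, and maximization of \(\widetilde S+I_h-c|\xi|^2+CI_h|\xi|\). The interaction and distance parts are sound in substance.

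\textbf{The error.} With \(R=(a+b)^q-a^q-b^q-qa^{q-1}b-qab^{q-1}\), you assert \(|R|\le C\min(a,b)^2\max(a,b)^{q-2}\). On \(\{b\le a\}\), however, the subtracted cross term satisfies \(ab^{q-1}=a^{q-2}b^2(a/b)^{3-q}\), which is much larger than \(a^{q-2}b^2\) when \(q<3\). For instance, \(a=1\), \(b=\varepsilon\) gives \(R=-q\varepsilon^{q-1}+O(\varepsilon^2)\), not \(O(\varepsilon^2)\). Since \(q=2+2/N<3\) whenever \(N\ge3\), this is exactly the regime you invoke.

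\textbf{The repair.} The correct symmetric bound is \(|R|\le C\max(a,b)\min(a,b)^{q-1}\). The wrong-side linear interactions \(\int_{\{b\le a\}}ab^{q-1}\) and \(\int_{\{a<b\}}a^{q-1}b\) must therefore be estimated on their own; this is precisely the paper's bound on the ``omitted linear interactions''. They are still \(O(h^{N+1})\). On \(\{R\ge\sqrt h\}\), \(ab^{q-1}\lesssim h^{N+2}R^{-2N-4}\) gives \(h^{N+2}\int_{\sqrt h}^\infty R^{-3}\,dR=O(h^{N+1})\). On \(\{R<\sqrt h\}\), one gets \(h^N\int_0^{\sqrt h}(h^2+R^2)^{-N}R^{2N+1}\,dR=O(h^{N+1})\). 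So the expansion holds, but the integral you wrote, \(h^{2N}\int R^{-4N}\), does not control the remainder you defined.

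\textbf{Smaller points.}
\begin{enumerate}
\item \emph{The symmetry you invoke.} The dilation \(\delta_{1/h}\) does not exchange the two summands: \(S_{1/h}F_h=\widetilde B_{1/h}+\widetilde B\ne F_h\). Rescaling alone therefore does not reduce the inner region or the second well to the first. The map that exchanges the summands is the centered CR inversion composed with a dilation, which is what the paper uses. Alternatively, treat the second well directly: estimate the gradient through \(\widetilde S\int\widetilde B_{1/h}\,\partial_\xi(\mathcal T_{\exp\xi}\widetilde B)^{q-1}\), using \(\|\widetilde B_{1/h}\|_\infty=h^N\|\widetilde B\|_\infty\).
\item \emph{The localization.} Your argument omits frames that stay bounded relative to one bubble but leave the chosen neighbourhood; weak vanishing does not handle them. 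The paper uses strict Cauchy--Schwarz between distinct normalized bubbles, together with compactness of the normalized parameters, to obtain a deficit \(\eta>0\) independent of \(h\).
\item \emph{The quadratic bound.} For \(\langle\widetilde B,\mathcal T_{\exp\xi}\widetilde B\rangle\le\widetilde S-c|\xi|^2\) you need only injectivity of the orbit differential, via \(\widetilde S-\langle\widetilde B,\mathcal T_{\exp\xi}\widetilde B\rangle=\tfrac12\|\mathcal T_{\exp\xi}\widetilde B-\widetilde B\|^2\). Nondegeneracy of the linearization is not needed.
\end{enumerate}
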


\begin{proof}
Up to the common normalization of the bubbles, write
\[
 a(Z,t)=\bigl((1+|Z|^2)^2+t^2\bigr)^{-N/2},
\]
\[
 b(Z,t)=h^N
 \bigl((h^2+|Z|^2)^2+t^2\bigr)^{-N/2}.
\]
Thus \(F_h=a+b\).  With
\[
 R=(|Z|^4+t^2)^{1/4},
\]
one has
\[
 a\asymp(1+R^2)^{-N},\qquad
 b\asymp h^N(h^2+R^2)^{-N},
\qquad
 \dd Z\,\dd t\asymp R^{2N+1}\dd R.
\]
Dominated convergence applied to \(h^{-N}a^{q-1}b\) gives
\eqref{eq:fixed-two-bubble-interaction}.

The crossing set can be computed exactly:
\[
 \{b\leq a\}=\{R\geq\sqrt h\}.
\]
Indeed, after raising \(b\leq a\) to the power \(2/N\), the difference
between the right- and left-hand sides is
\[
 (1-h^2)(R^4-h^2).
\]
On \(\{b\leq a\}\) and its complement, respectively, use
\[
 (a+b)^q=a^q+qa^{q-1}b+O(a^{q-2}b^2),
\]
\[
 (a+b)^q=b^q+qb^{q-1}a+O(b^{q-2}a^2).
\]
The omitted pure masses and linear interactions satisfy
\[
 \begin{aligned}
 \int_{R<\sqrt h}a^q
 +\int_{R\geq\sqrt h}b^q&=O(h^{N+1}),\\
 \int_{R<\sqrt h}a^{q-1}b
 +\int_{R\geq\sqrt h}b^{q-1}a&=O(h^{N+1}).
 \end{aligned}
\]
The two Taylor remainders are bounded by
\[
 h^{2N}\int_{\sqrt h}^1R^{-2N+1}\dd R
 +h^2\int_h^{\sqrt h}R^{2N-3}\dd R
 +O(h^{2N})
=O(h^{N+1}),
\]
because \(N\geq2\).  Finally, the Euler--Lagrange equations and symmetry
of the energy pairing give
\[
 \int a^{q-1}b=\int b^{q-1}a=\frac{I_h}{\widetilde S}.
\]
This proves \eqref{eq:fixed-two-bubble-critical-mass}.

It remains to compute the cone distance.  For a unit-\(L^q\) bubble
\(\widetilde B_g\), minimization over its real amplitude gives
\begin{equation}\label{eq:full-cone-correlation-formula}
 \operatorname{dist}_{\dot H^1}
 (F,\widetilde{\mathfrak M}^{\mathrm{full}})^2
 =\widetilde E(F)
 -\frac1{\widetilde S}\sup_g
 |\widetilde E(F,\widetilde B_g)|^2.
\end{equation}
Put
\[
 C_h=\sup_g\widetilde E(a+b,\widetilde B_g).
\]
Positivity removes the absolute value, and testing with either summand
gives \(C_h\geq\widetilde S+I_h\).

We record the required two-well localization quantitatively.  There are
fixed coordinate neighborhoods of the scale-one and scale-\(h\) frames
such that every frame \(g_h\) satisfying
\begin{equation}\label{eq:two-well-near-maximizer}
 \widetilde E(a+b,\widetilde B_{g_h})
 \geq C_h-I_h^2
 \geq\widetilde S+I_h-I_h^2
\end{equation}
belongs to their union for all small \(h\).  Suppose otherwise along a
sequence \(h\downarrow0\).  There are two cases.

If \(g_h\) escapes relative to both bubble frames, frame orthogonality and
weak convergence of translated--dilated bubbles give
\[
 \widetilde E(a,\widetilde B_{g_h})
 +\widetilde E(b,\widetilde B_{g_h})=o(1),
\]
contradicting \eqref{eq:two-well-near-maximizer}.  Otherwise \(g_h\)
remains bounded relative to one of the two frames.  Suppose it is the
scale-one frame.  After passage to a subsequence, its normalized parameters
converge to a frame outside the chosen neighborhood.  Strictness in the
Cauchy--Schwarz inequality for two distinct normalized bubbles, followed
by compactness of the normalized parameter set, gives a constant
\(\eta>0\), independent of \(h\), such that
\[
 \widetilde E(a,\widetilde B_{g_h})
 \leq\widetilde S-\eta.
\]
The scale-\(h\) frame is orthogonal to every frame bounded relative to the
scale-one frame, and hence
\[
 \widetilde E(b,\widetilde B_{g_h})=o(1).
\]
The resulting upper bound \(\widetilde S-\eta+o(1)\) again contradicts
\eqref{eq:two-well-near-maximizer}.  The case in which \(g_h\) remains
bounded relative to the scale-\(h\) frame is identical after the centered
CR inversion.  This proves the localization.

Near the scale-one frame, normalized orbit coordinates satisfy
\[
 \widetilde E(a,\widetilde B_{\exp\xi})
 \leq\widetilde S-c|\xi|^2.
\]
For \(J_h(\xi)=\widetilde E(b,\widetilde B_{\exp\xi})\),
\[
 \partial_{\xi_j}J_h(\xi)
 =\widetilde S\int b^{q-1}
 \partial_{\xi_j}\widetilde B_{\exp\xi}.
\]
The tangent functions are uniformly bounded on a fixed coordinate
neighborhood, while critical scaling and
\eqref{eq:fixed-two-bubble-interaction} give
\[
 \int b^{q-1}=h^N\int a^{q-1},\qquad I_h\asymp h^N.
\]
Hence \(|\nabla J_h|\leq CI_h\), and
\[
 \widetilde E(a+b,\widetilde B_{\exp\xi})
 \leq\widetilde S+I_h-c|\xi|^2+CI_h|\xi|.
\]
The maximum in this well is therefore
\(\widetilde S+I_h+O(I_h^2)\).  The centered CR inversion that exchanges
the two scales gives the same estimate in the second well.  Choose \(g_h\)
as in \eqref{eq:two-well-near-maximizer}.  The two-well localization and
the two local estimates give
\[
 C_h-I_h^2
 \leq\widetilde E(a+b,\widetilde B_{g_h})
 \leq\widetilde S+I_h+O(I_h^2).
\]
Together with \(C_h\geq\widetilde S+I_h\), this yields
\[
 C_h=\widetilde S+I_h+O(I_h^2).
\]
Since
\(\widetilde E(F_h)=2\widetilde S+2I_h\),
\eqref{eq:full-cone-correlation-formula} proves
\eqref{eq:fixed-two-bubble-distance}.
\end{proof}

\begin{proposition}[Strictness below the two-bubble level]
\label{prop:strict-two-bubble-level}
For every \(n,\ell\geq1\),
\[
 \kappa_{n,\ell}^{\mathrm{opt}}
 <2-2^{N/(N+1)}.
\]
\end{proposition}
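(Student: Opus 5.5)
Test the quotient on the descended two-bubble function. Since \(F_h=\widetilde B+\widetilde B_h\) is \(U(\ell)\)-fixed, Lemma~\ref{lem:polar-lift} writes \(F_h=c\,u_h^\uparrow\) for a real \(u_h\in\dot S^1_\ell(\Ucal)\). The reduced deficit and the reduced cone distance both scale by the same polar factor \(c_\ell\) (as in the proof of Theorem~\ref{thm:intro-stability}). The full deficit and the distance to the fixed cone therefore give exactly the reduced quotient:
\[
 \frac{\mathscr D_\ell(u_h)}{d_\ell(u_h)^2}
 =\frac{\widetilde E(F_h)-\widetilde S\|F_h\|_q^2}
 {\operatorname{dist}_{\dot H^1}(F_h,\widetilde{\mathfrak M}^{\mathrm{fix}})^2}
 \le
 \frac{\widetilde E(F_h)-\widetilde S\|F_h\|_q^2}
 {\operatorname{dist}_{\dot H^1}(F_h,\widetilde{\mathfrak M}^{\mathrm{full}})^2}.
\]
The inequality holds because the fixed cone is a subset of the full one, so its distance is larger. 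This direction is favorable, and no appeal to Proposition~\ref{prop:distance-comparison} is needed. It therefore suffices to show that the full-space quotient of \(F_h\) is strictly below \(2-2^{N/(N+1)}\) for small \(h\).

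Next, expand the numerator using Lemma~\ref{lem:fixed-two-bubble-expansions}. We have \(\widetilde E(F_h)=2\widetilde S+2I_h\), and
\[
 \|F_h\|_q^2=\Bigl(2+\tfrac{2q}{\widetilde S}I_h+O(h^{N+1})\Bigr)^{2/q}
 =2^{2/q}\Bigl(1+\tfrac{2}{\widetilde S}I_h\Bigr)+O(h^{N+1})+O(I_h^2).
\]
Since \(2/q=N/(N+1)\), the numerator equals
\[
 \widetilde S\bigl(2-2^{N/(N+1)}\bigr)+2I_h\bigl(1-2^{N/(N+1)}\bigr)+O(h^{N+1}).
\]
The denominator is \(\widetilde S+O(I_h^2)\) by \eqref{eq:fixed-two-bubble-distance}. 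The quotient is therefore
\[
 \bigl(2-2^{N/(N+1)}\bigr)-\frac{2\bigl(2^{N/(N+1)}-1\bigr)}{\widetilde S}\,I_h+O(h^{N+1}).
\]
By \eqref{eq:fixed-two-bubble-interaction}, \(I_h=\gamma_Nh^N+o(h^N)\) with \(\gamma_N>0\), and \(2^{N/(N+1)}>1\). The first-order correction is thus strictly negative and of exact order \(h^N\), which dominates the \(O(h^{N+1})\) and \(O(I_h^2)\) errors. Fixing \(h\) small gives the strict inequality. We also need \(u_h\notin\mathfrak M_\ell^{\R}\); this holds because the distance is \(\widetilde S^{1/2}+o(1)>0\).

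The main point of care is the bookkeeping of the error terms. The critical-mass remainder \(O(h^{N+1})\) must be genuinely \(o(I_h)\), and the expansion of the power \(2/q\) must be carried only to first order in \(I_h\); both follow directly from the lemma. The other point to check is that the polar normalization constants cancel in the ratio. They do, because numerator and denominator are both quadratic and the lift is a similarity with the same factor \(c_\ell\) for energy and for squared distance. The critical norm \(\|F\|_q^2\) scales by \(c_\ell^{2/q}\), but this is compensated by \(S_\ell=\widetilde S c_\ell^{2/q-1}\), exactly as in the proof of Theorem~\ref{thm:intro-stability}.
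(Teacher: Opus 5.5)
Your proposal is correct and follows the paper's own proof essentially step for step. You test the quotient with the $U(\ell)$-fixed two-bubble $F_h$, use the inclusion of the fixed cone in the full cone to bound the fixed-cone quotient by the full-cone quotient, and read off the strictly negative first-order correction $-2(2^{N/(N+1)}-1)I_h/\widetilde S$ from Lemma~\ref{lem:fixed-two-bubble-expansions}. Your extra checks are the bookkeeping the paper leaves implicit: that the $O(h^{N+1})$ and $O(I_h^2)$ errors are $o(I_h)$, that the polar factor $c_\ell$ cancels, and that $u_h\notin\mathfrak M_\ell^{\R}$.
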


\begin{proof}
Lemma~\ref{lem:fixed-two-bubble-expansions} and the exact identity
\(\widetilde E(F_h)=2\widetilde S+2I_h\) give
\[
 \begin{aligned}
 \widetilde E(F_h)-\widetilde S\|F_h\|_q^2
 ={}&\widetilde S\bigl(2-2^{2/q}\bigr)\\
 &-2\bigl(2^{2/q}-1\bigr)I_h+o(I_h).
 \end{aligned}
\]
The fixed optimizer cone is contained in the full optimizer cone.  Thus,
for a fixed function, the quotient with distance to the fixed cone is no
larger than the quotient with distance to the full cone.  The polar lift
multiplies both the deficit and the fixed-cone distance squared by the
same factor \(c_\ell\).  Using
\eqref{eq:fixed-two-bubble-distance}, we obtain
\[
 \kappa_{n,\ell}^{\mathrm{opt}}
 \leq
 2-2^{2/q}
 -\frac{2(2^{2/q}-1)}{\widetilde S}I_h+o(I_h).
\]
Since \(I_h>0\), the claim follows from
\(2/q=N/(N+1)\).
\end{proof}

\subsection{A strict normal test on the CR sphere}

The two-bubble level lies above the local spectral ratio in the lowest
dimensions.  We next construct a fixed normal perturbation that is
strictly below the local ratio in every dimension.

Let \(\dd\sigma\) be probability measure on
\(\mathbb S^{2N+1}\), and normalize the CR intertwining operator by its
constant eigenvalue.  Denote the resulting operator by \(\mathbf A\).
Lemma~\ref{lem:spherical-spectrum-normalization} gives
\begin{equation}\label{eq:normalized-cr-operator}
 \mathbf A1=1,\qquad
 \mathbf A|_{\mathcal H_{j,k}}
 =\Lambda_{j,k}
 =\frac{(N+2j)(N+2k)}{N^2}.
\end{equation}
This normalization multiplies the deficit and the squared energy distance
by the same positive constant.  The spherical deficit is therefore
\[
 \mathscr D_{\mathbb S}(v)
 =\langle v,\mathbf Av\rangle
 -\left(\int_{\mathbb S^{2N+1}}|v|^q\dd\sigma\right)^{2/q}.
\]

At the constant optimizer, the tangent space to the real nonzero full
optimizer cone is
\[
 \mathbb R1\oplus
 (\mathcal H_{1,0}\oplus\mathcal H_{0,1})_{\mathbb R}.
\]
Let \(Y_{\mathbb S}\) be its \(\mathbf A\)-orthogonal complement.
The normal tubular map of this finite-dimensional smooth cone gives, for
all sufficiently small \(r\in Y_{\mathbb S}\),
\begin{equation}\label{eq:exact-spherical-normal-distance}
 \operatorname{dist}_{\mathbf A}
 (1+r,\mathfrak M_{\mathbb S}^{\mathrm{full}})^2
 =\langle r,\mathbf Ar\rangle.
\end{equation}
Indeed, \(1+r\) lies on the normal fiber over \(1\), so its unique local
metric projection is exactly \(1\).  Properness of the normalized bubble
orbit, together with separation from the cone vertex, excludes a second
nearest point outside the local cone chart.

\begin{proposition}[Strictness below the local spectral ratio]
\label{prop:strict-local-stability-level}
For every \(n,\ell\geq1\),
\[
 \kappa_{n,\ell}^{\mathrm{opt}}<\frac2{N+4}.
\]
\end{proposition}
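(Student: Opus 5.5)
The plan follows K\"onig's Euclidean construction on the CR sphere. We take a $U(\ell)$-fixed normal perturbation of the constant optimizer in the lowest surviving normal level and add a $U(\ell)$-fixed second-order correction. The goal is to show that the quotient of the spherical deficit by the squared distance to the full cone lies strictly below $2/(N+4)$ for small amplitude.

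\emph{Transfer to the fixed sector.} We transfer back as in Proposition~\ref{prop:strict-two-bubble-level}. For a $U(\ell)$-fixed function, the distance to the fixed cone is at least the distance to the full cone. The energy-normalized Cayley transform (Lemma~\ref{lem:spherical-spectrum-normalization}) intertwines the $U(\ell)$-actions. The polar lift multiplies the deficit and the fixed-cone distance squared by the same factor $c_\ell$. Hence a fixed spherical test with full-cone quotient below $2/(N+4)$ bounds $\kappa^{\mathrm{opt}}_{n,\ell}$ from above by that value.

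\emph{The test and its expansion.} Let $r=\operatorname{Re}\zeta_{N+1}^2\in\mathcal H_{2,0}\oplus\mathcal H_{0,2}$. It is real, $U(\ell)$-fixed, lies in $Y_{\mathbb S}$, and satisfies $\mathbf Ar=\frac{N+4}{N}r$. Set
\[
 v_\varepsilon=1+\varepsilon r+\varepsilon^2 s ,
\]
where $s$ is real, $U(\ell)$-fixed and smooth, lies in $Y_{\mathbb S}$, and is $\mathbf A$-orthogonal to $\mathcal H_{2,0}\oplus\mathcal H_{0,2}$. The perturbation $w=\varepsilon r+\varepsilon^2 s$ is $O(\varepsilon)$ in $L^\infty$ and has zero mean. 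We Taylor expand $\int(1+w)^q\,d\sigma$ to fourth order with an $O(\varepsilon^5)$ remainder, then expand the power $2/q$. By \eqref{eq:exact-spherical-normal-distance}, the denominator is exactly
\[
 \varepsilon^2\langle r,\mathbf Ar\rangle+\varepsilon^4\langle s,\mathbf As\rangle .
\]

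The terms of the deficit behave as follows.
\begin{itemize}
 \item At order $\varepsilon^2$ the term is $\langle r,(\mathbf A-(q-1))r\rangle=\frac{2}{N+4}\langle r,\mathbf Ar\rangle$.
 \item At order $\varepsilon^3$ the term contains $\int r^3\,d\sigma$ together with cross terms $\langle r,\mathbf As\rangle$ and $\int rs$. The cubic moment vanishes: under $\zeta_{N+1}\mapsto e^{i\theta}\zeta_{N+1}$, which preserves $\sigma$, $r^3$ has only the Fourier modes $e^{\pm2i\theta}$ and $e^{\pm6i\theta}$. The cross terms vanish by the choice of $s$.
 \item At order $\varepsilon^4$ the term is
\[
 c_4(s)=\langle s,(\mathbf A-(q-1))s\rangle-(q-1)(q-2)\int r^2s\,d\sigma-K_N .
\]
Here $K_N$ collects the explicit quartic contributions of $r$ alone: the $\int r^4$ term and the $(\int r^2)^2$ term produced by the $2/q$ power.
\end{itemize}
The quotient is then
\[
 \frac{2}{N+4}+\varepsilon^2\,\frac{c_4(s)-\frac{2}{N+4}\langle s,\mathbf As\rangle}{\langle r,\mathbf Ar\rangle}+O(\varepsilon^3).
\]
It therefore suffices to find an admissible $s$ making the numerator negative.

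\emph{Choice of $s$ and the main obstacle.} We choose $s$ to minimize the quadratic functional
\[
 \langle s,(\mathbf A-(q-1)-\tfrac{2}{N+4}\mathbf A)s\rangle-(q-1)(q-2)\int r^2s\,d\sigma ,
\]
restricted to the $U(\ell)$-fixed parts of the spherical levels appearing in $r^2$. We have
\[
 r^2=\tfrac14\bigl(\zeta_{N+1}^4+\bar\zeta_{N+1}^4\bigr)+\tfrac12|\zeta_{N+1}|^4 .
\]
The components of $\zeta_{N+1}^4$ and $\bar\zeta_{N+1}^4$ lie in $\mathcal H_{4,0}\oplus\mathcal H_{0,4}$. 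The function $|\zeta_{N+1}|^4$ decomposes into $\mathcal H_{0,0}\oplus\mathcal H_{1,1}\oplus\mathcal H_{2,2}$, and all these components are $U(\ell)$-fixed. These levels are disjoint from the cone tangent and from $\mathcal H_{2,0}\oplus\mathcal H_{0,2}$, once the constant component is dropped (the constant also fixes the mean). On them, $\Lambda_{j,k}(1-\frac{2}{N+4})-(q-1)>0$, because each such $\Lambda_{j,k}$ exceeds $\frac{N+4}{N}$. The optimal $s$ is therefore the explicit multiple, level by level, of the projection of $r^2$. This yields a negative gain
\[
 -\tfrac14(q-1)^2(q-2)^2\sum_{(j,k)}\frac{\|\Pi_{j,k}r^2\|^2}{\Lambda_{j,k}\frac{N+2}{N+4}-\frac{N+2}{N}} .
\]

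The main obstacle is the explicit computation: the moments $\int r^2$ and $\int r^4$, and the norms $\|\Pi_{j,k}r^2\|^2$, all computed with the $U(N+1)$-invariant moments $\int|\zeta_{N+1}|^{2m}d\sigma=\frac{m!\,N!}{(N+m)!}$. One must then verify that the gain beats $K_N$ for every $N\ge2$, uniformly in $N$. I would first try the single level $\mathcal H_{1,1}$, or the combination $\mathcal H_{1,1}\oplus\mathcal H_{2,2}$, whose eigenvalue gap is smallest, and check the resulting rational inequality in $N$. If that alone is insufficient, I would add the $\mathcal H_{4,0}\oplus\mathcal H_{0,4}$ contribution.
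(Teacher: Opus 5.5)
\textbf{Same route as the paper, but the decisive computation is not done.} Your construction is the paper's. The test $1+\varepsilon\operatorname{Re}\zeta_{N+1}^2+\varepsilon^2 s$, the exact normal distance, the vanishing cubic moment, the transfer through the fixed/full cone inclusion and the polar factor, and the level-by-level completion of the square over $\mathcal H_{4,0}\oplus\mathcal H_{0,4}$, $\mathcal H_{1,1}$, $\mathcal H_{2,2}$ all match. Every coefficient you wrote down is correct.

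What is missing is the one step that actually proves strictness: the sign of the optimized numerator $c_4(s)-\frac2{N+4}\langle s,\mathbf As\rangle$. You leave this as ``the main obstacle'' and hedge on which levels are needed. It is not a formality, because without the correction the quartic coefficient has the wrong sign. With $X=|\zeta_{N+1}|^2\sim\mathrm{Beta}(1,N)$ one gets $\int r^2\,d\sigma=\frac1{(N+1)(N+2)}$ and $\int r^4\,d\sigma=\frac9{(N+1)(N+2)(N+3)(N+4)}$, hence
\[
 -K_N=\frac1{2N^3(N+1)^2}+\frac{3(N-2)}{2N^3(N+1)(N+3)(N+4)}>0 .
\]
So the strict inequality lives entirely in whether the gain beats this positive term.

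\textbf{How to close it.} One has
\[
 \Pi_{1,1}r^2=\frac2{N+3}\Bigl(X-\frac1{N+1}\Bigr),
 \qquad
 \|\Pi_{1,1}r^2\|^2=\frac{4N}{(N+1)^2(N+2)(N+3)^2},
\]
and
\[
 \Lambda_{1,1}\frac{N+2}{N+4}-\frac{N+2}{N}=\frac{4(N+2)}{N^2(N+4)} .
\]
The $\mathcal H_{1,1}$ gain is therefore $-\frac{N+4}{N(N+1)^2(N+3)^2}$, and already
\[
 -K_N-\frac{N+4}{N(N+1)^2(N+3)^2}
 =-\frac{2N^4+12N^3+16N^2-18N-18}{2N^3(N+1)^2(N+3)^2(N+4)}<0
 \qquad(N\geq2),
\]
so your first attempt suffices. (Incidentally, $\mathcal H_{1,1}$ has the smallest gap and $\mathcal H_{2,2}$ the largest, not the smallest.)

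Adding the other two levels, with
\[
 \|\Pi_{4,0}r^2\|^2=\frac3{(N+1)(N+2)(N+3)(N+4)},
 \qquad
 \|\Pi_{2,2}r^2\|^2=\frac N{(N+2)^2(N+3)^2(N+4)},
\]
reproduces the paper's value
\[
 -\frac{2N^3+12N^2+20N+7}{2N^2(N+1)^2(N+2)(N+3)(N+4)} .
\]
With either computation written out, your proposal is a complete proof.
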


\begin{proof}
Put
\[
 \varphi=\operatorname{Re}(\zeta_{N+1}^2),\qquad
 X=|\zeta_{N+1}|^2.
\]
The function \(\varphi\) is \(U(\ell)\)-fixed and belongs to
\((\mathcal H_{2,0}\oplus\mathcal H_{0,2})_{\mathbb R}\).  Since
\(X\) has the \(\operatorname{Beta}(1,N)\) distribution and the phase of
\(\zeta_{N+1}\) is uniform,
\begin{equation}\label{eq:fixed-quartic-moments}
 M_2:=\int\varphi^2\dd\sigma
 =\frac1{(N+1)(N+2)},\qquad
 \int\varphi^3\dd\sigma=0,
\end{equation}
\[
 M_4:=\int\varphi^4\dd\sigma
 =\frac9{(N+1)(N+2)(N+3)(N+4)}.
\]
Apart from its constant part, the decomposition
\[
 \varphi^2
 =\frac14\bigl(\zeta_{N+1}^4+\overline{\zeta}_{N+1}^4\bigr)
 +\frac12X^2
\]
has components only in
\[
 \mathcal H_{4,0}\oplus\mathcal H_{0,4},\qquad
 \mathcal H_{1,1},\qquad
 \mathcal H_{2,2}.
\]
If \(p_{jk}\) denotes the squared \(L^2(\dd\sigma)\)-norm of the
corresponding real projection, beta-moment orthogonalization gives
\begin{equation}\label{eq:fixed-quartic-projection-norms}
 \begin{aligned}
 p_{40}
 &=\frac3{(N+1)(N+2)(N+3)(N+4)},\\
 p_{11}
 &=\frac{4N}{(N+1)^2(N+2)(N+3)^2},\\
 p_{22}
 &=\frac{N}{(N+2)^2(N+3)^2(N+4)}.
 \end{aligned}
\end{equation}
More explicitly, if
\(\zeta_{N+1}=\sqrt X e^{i\vartheta}\), then
\[
 \begin{aligned}
 P_{40}\varphi^2
 &=\tfrac12X^2\cos(4\vartheta),\\
 P_{11}\varphi^2
 &=\frac2{N+3}\left(X-\frac1{N+1}\right),\\
 P_{22}\varphi^2
 &=\frac12\left[
 X^2-\frac2{(N+1)(N+2)}
 -\frac4{N+3}\left(X-\frac1{N+1}\right)
 \right].
 \end{aligned}
\]

The eigenvalue of \(\varphi\) is
\[
 \Lambda_2=\frac{N+4}{N}.
\]
Relative to \(\Lambda_2\), the three positive gaps are
\begin{equation}\label{eq:fixed-quartic-spectral-gaps}
 \delta_{40}=\frac4{N+4},\qquad
 \delta_{11}=\frac4{N(N+4)},\qquad
 \delta_{22}=\frac4N.
\end{equation}
Take
\[
 r_\varepsilon=\varepsilon\varphi+\varepsilon^2\psi,\qquad
 \psi\in Y_{\mathbb S},\qquad
 \int\varphi\psi\dd\sigma=0.
\]
For small \(\varepsilon\), the function \(1+r_\varepsilon\) is positive,
and \eqref{eq:exact-spherical-normal-distance} gives
\[
 \operatorname{dist}_{\mathbf A}
 (1+r_\varepsilon,\mathfrak M_{\mathbb S}^{\mathrm{full}})^2
 =\Lambda_2M_2\varepsilon^2
 +\langle\psi,\mathbf A\psi\rangle\varepsilon^4.
\]
Expanding first the \(q\)-power and then the outer \(2/q\)-power yields
\[
 \begin{aligned}
 \left(\int(1+r_\varepsilon)^q\dd\sigma\right)^{2/q}
 &=1+(q-1)M_2\varepsilon^2\\
 &\quad+\varepsilon^4\left[
 (q-1)\|\psi\|_2^2
 +(q-1)(q-2)\langle\varphi^2,\psi\rangle\right.\\
 &\hspace{26mm}\left.
 +\frac{(q-1)(q-2)(q-3)}{12}M_4
 -\frac{(q-2)(q-1)^2}{4}M_2^2
 \right]+o(\varepsilon^4).
 \end{aligned}
\]

Let \(\kappa_{\mathrm{loc}}=2/(N+4)\).  Since
\[
 1-\kappa_{\mathrm{loc}}=\frac{q-1}{\Lambda_2},
\]
the coefficient of \(\varepsilon^4\) in
\[
 \mathscr D_{\mathbb S}(1+r_\varepsilon)
 -\kappa_{\mathrm{loc}}
 \operatorname{dist}_{\mathbf A}
 (1+r_\varepsilon,\mathfrak M_{\mathbb S}^{\mathrm{full}})^2
\]
is
\[
 \begin{aligned}
 \mathcal C(\psi)
 ={}&C_0
 +(q-1)\sum_\mu
 \left(\frac\mu{\Lambda_2}-1\right)\|\psi_\mu\|_2^2\\
 &-(q-1)(q-2)\langle\varphi^2,\psi\rangle,
 \end{aligned}
\]
where
\[
 C_0
 =-\frac{(q-1)(q-2)(q-3)}{12}M_4
 +\frac{(q-2)(q-1)^2}{4}M_2^2.
\]
Completing the square in the three components above gives
\[
 \psi_*
 =\frac{q-2}{2}
 \left(
 \frac{P_{40}\varphi^2}{\delta_{40}}
 +\frac{P_{11}\varphi^2}{\delta_{11}}
 +\frac{P_{22}\varphi^2}{\delta_{22}}
 \right),
\]
or
\[
 \psi_*
 =\frac{N+4}{4N}P_{40}\varphi^2
 +\frac{N+4}{4}P_{11}\varphi^2
 +\frac14P_{22}\varphi^2.
\]
This correction remains real, \(U(\ell)\)-fixed, and normal to the
optimizer cone.  Substitution of
\eqref{eq:fixed-quartic-moments}--\eqref{eq:fixed-quartic-spectral-gaps}
gives
\[
 \mathcal C(\psi_*)
 =-\frac{2N^3+12N^2+20N+7}
 {2N^2(N+1)^2(N+2)(N+3)(N+4)}<0.
\]
Consequently,
\begin{equation}\label{eq:fixed-quartic-quotient}
 \begin{aligned}
 &\frac{\mathscr D_{\mathbb S}(1+r_\varepsilon)}
 {\operatorname{dist}_{\mathbf A}
 (1+r_\varepsilon,\mathfrak M_{\mathbb S}^{\mathrm{full}})^2}\\
 &\quad=\frac2{N+4}
 -\frac{2N^3+12N^2+20N+7}
 {2N(N+1)(N+3)(N+4)^2}\varepsilon^2
 +o(\varepsilon^2).
 \end{aligned}
\end{equation}
The whole test is \(U(\ell)\)-fixed.  Moreover, its nearest point in the
full optimizer cone is \(1\), which also belongs to the fixed cone.
Thus its distances to the two cones agree.  The inverse Cayley transform
and the polar descent preserve the quotient, and the strict negativity in
\eqref{eq:fixed-quartic-quotient} proves the proposition.
\end{proof}

\subsection{Compactness below the two thresholds}

For \(w\in\dot S^1_\ell(\Ucal;\mathbb R)\), define
\begin{equation}\label{eq:fixed-correlation-mass}
 m(w)=\frac1S\sup_g
 |E_\ell(w,\mathcal T_gB_0)|^2,
\end{equation}
where the supremum is over reduced frames.  Minimization over the real
amplitude gives
\begin{equation}\label{eq:fixed-cone-distance-correlation}
 d_\ell(w)^2=E_\ell(w)-m(w),
\end{equation}
and H\"older's inequality gives
\begin{equation}\label{eq:fixed-correlation-bound}
 m(w)\leq S\|w\|_q^2.
\end{equation}
We also write
\[
 \mathcal S(w)=\frac{E_\ell(w)}{\|w\|_q^2}
 \qquad(w\neq0).
\]

\begin{lemma}[Correlation max-splitting]
\label{lem:fixed-correlation-max-splitting}
If \(u_k\rightharpoonup u\) in \(\dot S^1_\ell\) and
\(v_k=u_k-u\rightharpoonup0\), then
\[
 m(u_k)=\max\{m(u),m(v_k)\}+o(1).
\]
\end{lemma}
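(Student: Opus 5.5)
The idea is to show that, for any sequence of reduced frames, the correlation \(E_\ell(u_k,\mathcal T_{g_k}B_0)\) splits as \(E_\ell(u,\mathcal T_{g_k}B_0)+E_\ell(v_k,\mathcal T_{g_k}B_0)\), and that at most one of the two summands can be non-negligible along a subsequence. Since \(m\) is a supremum, we will prove two inequalities: a lower bound obtained from near-maximizing frames, and an upper bound obtained by extracting subsequences.

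\emph{Lower bound.} Fix \(\varepsilon>0\) and a frame \(g\) with \(|E_\ell(u,\mathcal T_gB_0)|^2\geq Sm(u)-\varepsilon\). Since \(v_k\rightharpoonup0\), we have \(E_\ell(u_k,\mathcal T_gB_0)\to E_\ell(u,\mathcal T_gB_0)\), so \(\liminf m(u_k)\geq m(u)-\varepsilon/S\). For the other term, choose frames \(g_k\) that nearly maximize \(m(v_k)\). We then use the dichotomy below for \(g_k\):
\begin{itemize}
\item If \(g_k\) stays in a compact set of reduced frames, pass to a subsequence with \(g_k\to g\). Strong continuity of the frame action (translations and dilations are strongly continuous and unitary) gives \(\mathcal T_{g_k}B_0\to\mathcal T_gB_0\) strongly, so \(E_\ell(v_k,\mathcal T_{g_k}B_0)\to0\). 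Hence \(m(v_k)\to0\) along this subsequence, and the required bound is trivial.
\item If \(g_k\) escapes, that is, \(|\log h_k|\to\infty\) or the normalized center leaves every compact set, then \(\mathcal T_{g_k}B_0\rightharpoonup0\). This is the same weak-null property of escaping frames already used in the proofs of Lemma~\ref{lem:normal-slice} and Theorem~\ref{thm:reduced-profile}, and it gives \(E_\ell(u,\mathcal T_{g_k}B_0)\to0\). Therefore \(E_\ell(u_k,\mathcal T_{g_k}B_0)=E_\ell(v_k,\mathcal T_{g_k}B_0)+o(1)\), and \(\liminf m(u_k)\geq\limsup m(v_k)-o(1)\) along such subsequences.
\end{itemize}
Arguing by subsequences then gives \(m(u_k)\geq\max\{m(u),m(v_k)\}-o(1)\).

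\emph{Upper bound.} Take near-maximizing frames \(g_k\) for \(m(u_k)\), with error \(1/k\), and apply the same dichotomy.
\begin{itemize}
\item Along a subsequence where \(g_k\) converges, the \(v_k\)-term tends to zero, so \(m(u_k)\leq m(u)+o(1)\).
\item Along an escaping subsequence, the \(u\)-term tends to zero, so \(m(u_k)\leq m(v_k)+o(1)\).
\end{itemize}
In both cases \(m(u_k)\leq\max\{m(u),m(v_k)\}+o(1)\). Since every subsequence has a further subsequence in one of the two regimes, the full statement follows by the usual subsequence principle. Uniform boundedness of \(u_k\) and \(v_k\), which follows from weak convergence, controls the cross terms when passing from \(|a+b|^2\) to \(|a|^2\) with \(b=o(1)\).

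\emph{Main obstacle.} The only delicate point is the dichotomy itself: a sequence of reduced frames either has a convergent subsequence or \(\mathcal T_{g_k}B_0\rightharpoonup0\) in \(\dot S^1_\ell\). I would prove this through the polar lift of Lemma~\ref{lem:polar-lift}. The reduced frames embed as the zero-auxiliary-center frames \(\mathcal T^{\mathrm{full}}_{h,\bar\eta}\) of \(\Heis^{n+\ell}\). Escape in scale, or in \(\Heis^n\)-gauge, implies weak nullity on \(C_c^\infty\): for translations this follows from separation of supports, and for dilations from the scaling of the \(L^1\) and \(L^\infty\) norms against the critical norm. Density and unitarity then extend this to all of \(\dot H^1\). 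The remaining case, where the frame parameters stay bounded, is handled by local compactness of the frame group \(\R_+\times\Heis^n\).
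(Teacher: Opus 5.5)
Your proof is correct and is essentially the paper's argument. Both rest on the properness alternative for reduced frames: a subsequence either converges, or the bubbles \(\mathcal T_{g_k}B_0\) are weakly null. You apply it to near-maximizers of \(m(u_k)\) for the upper bound, and to a fixed near-maximizer of \(m(u)\) together with near-maximizers of \(m(v_k)\) for the lower bound, exactly as the paper does; the only difference is that you spell out the subsequence bookkeeping and a proof of the dichotomy, which the paper takes from its profile decomposition.
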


\begin{proof}
The reduced bubble orbit has the properness alternative used in
Theorem~\ref{thm:intro-profile}: a frame sequence has a relatively compact
subsequence, or the corresponding bubbles converge weakly to zero.  Apply
this alternative to near-maximizers in
\eqref{eq:fixed-correlation-mass}.  A relatively compact frame does not
see \(v_k\), while an escaping frame does not see \(u\), which proves the
upper bound.  For the converse, test first with a fixed near-maximizer for
\(m(u)\).  A near-maximizer for \(m(v_k)\) either escapes, and hence does
not see \(u\), or is relatively compact, in which case \(m(v_k)=o(1)\).
\end{proof}

\begin{lemma}[Balance of correlation masses]
\label{lem:fixed-correlation-balance}
Let \((u_k)\) be a normalized minimizing sequence for
\eqref{eq:optimal-fixed-constant}.  After moving a nonzero profile to the
identity, write
\[
 u_k=f+g_k,\qquad f\neq0,\qquad g_k\rightharpoonup0.
\]
If \(\liminf_kE_\ell(g_k)>0\), then
\[
 m(f)=m(g_k)+o(1).
\]
\end{lemma}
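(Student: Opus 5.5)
We argue by contradiction: if one correlation mass strictly dominates, we rescale the other component and show that $t=1$ cannot be a minimum of the quotient along this one-parameter family. Pass to a subsequence with
$$E_\ell(g_k)\to G>0,\qquad \|g_k\|_q^q\to b,\qquad m(g_k)\to \mu,$$
and put $a=\|f\|_q^q>0$. Suppose, say, $m(f)>\mu$; the case $\mu>m(f)$ is symmetric, with the amplitude of $f$ varied instead. For $t$ in a fixed neighbourhood of $1$, consider
$$u_k^t=f+tg_k.$$
Since $tg_k\rightharpoonup0$, we get three splittings:
\begin{itemize}
\item the Hilbert Pythagorean identity gives $E_\ell(u_k^t)=E_\ell(f)+t^2E_\ell(g_k)+o(1)$;
\item the Brezis--Lieb lemma gives $\|u_k^t\|_q^q=a+t^q\|g_k\|_q^q+o(1)$;
\item Lemma~\ref{lem:fixed-correlation-max-splitting} gives $m(u_k^t)=\max\{m(f),t^2m(g_k)\}+o(1)$, which equals $m(f)+o(1)$ for $t$ close to $1$ because $m(f)>\mu$.
\end{itemize}
All these $o(1)$ terms are uniform for $t$ in a compact interval, since $t$ enters only through finitely many bounded pairings.

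By \eqref{eq:fixed-cone-distance-correlation}, the limiting deficit and squared distance are
$$\mathcal N(t)=E_\ell(f)+t^2G-S(a+t^qb)^{2/q},\qquad \mathcal D(t)=E_\ell(f)-m(f)+t^2G.$$
Here $\mathcal D(t)\geq t^2G>0$, so $u_k^t\notin\mathfrak M_\ell^{\R}$ for large $k$ and the quotient is defined. Let $\kappa=\kappa^{\mathrm{opt}}_{n,\ell}$ and set $\varphi(t)=\mathcal N(t)-\kappa\mathcal D(t)$. By the definition of $\kappa$, $\mathcal N(t)\geq\kappa\mathcal D(t)+o(1)$ for every $t$. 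Because $(u_k)$ is minimizing, $\varphi(1)=0$. Hence the smooth function $\varphi\geq0$ has an interior minimum at $t=1$, so $\varphi'(1)=0$ and $\varphi''(1)\geq0$.

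Next we compute these derivatives. Put $c=(a+b)^{2/q-1}$. The first-order condition is
$$(1-\kappa)G=Scb.$$
If $b=0$, this forces $(1-\kappa)G=0$. That is impossible, because $G>0$ and $\kappa<2/(N+4)<1$ by Propositions~\ref{prop:strict-two-bubble-level} and~\ref{prop:strict-local-stability-level} (even Theorem~\ref{thm:intro-stability} together with $\kappa\leq 2-2^{2/q}<1$ suffices). Hence $b>0$. Differentiating $(a+t^qb)^{2/q}$ twice at $t=1$ gives $2cb\bigl[(q-1)-(q-2)\tfrac{b}{a+b}\bigr]$. Therefore
$$\varphi''(1)=2(1-\kappa)G-2Scb\Bigl[(q-1)-(q-2)\frac{b}{a+b}\Bigr].$$
Substituting the first-order condition yields
$$\varphi''(1)=2Scb\,(q-2)\Bigl(\frac{b}{a+b}-1\Bigr)=-2Scb\,(q-2)\frac{a}{a+b}<0,$$
since $a,b>0$ and $q>2$. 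This contradicts $\varphi''(1)\geq0$, so $m(f)>\mu$ is impossible.

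In the symmetric case $\mu>m(f)$, use $sf+g_k$ instead. Then $m=\mu+o(1)$ near $s=1$, and the squared distance is $s^2E_\ell(f)+G-\mu$. The same two-derivative computation, with the roles of $a$ and $b$ exchanged and the Sobolev bound $E_\ell(f)\geq S a^{2/q}$ not even needed, gives the same sign contradiction. Hence $m(f)=m(g_k)+o(1)$.

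The main obstacle is the uniformity of Lemma~\ref{lem:fixed-correlation-max-splitting} in the parameter $t$ (or $s$). We must know that the maximum is \emph{strictly} attained by the dominant component on a whole $t$-interval, so that $\mathcal D$ is smooth with the stated form there. This follows from the properness alternative applied to near-maximizing frames, using that the strict gap $m(f)-\mu>0$ persists under small changes of $t$.
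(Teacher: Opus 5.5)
Your argument is correct, and it takes a different route from the paper's.

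The paper proceeds in three steps:
\begin{enumerate}
\item It first removes the degenerate configurations one by one: a component with vanishing critical mass, a vanishing limiting correlation mass, and a component on the optimizer cone.
\item It then normalizes \(\|g_k\|_q=1\) and rescales the component with the smaller correlation mass by the finite factor \(c>1\) that makes the two masses equal.
\item Finally it combines strict monotonicity of \(\Phi(t)/t^2\), where \(\Phi(t)=(1+t^q)^{2/q}-1\), with a weighted-ratio inequality. This produces an explicit competitor \(g_k+cf\) whose limiting quotient lies strictly below \(\kappa^{\mathrm{opt}}_{n,\ell}\).
\end{enumerate}

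You instead vary the amplitude of the subdominant component infinitesimally on both sides of \(t=1\). You use that \(\varphi=\mathcal N-\kappa\mathcal D\) is smooth, nonnegative near \(1\), and vanishes at \(t=1\). In the case \(m(f)>\mu\), the first-order condition rules out \(b=0\), and \(\varphi''(1)=-2Scb(q-2)a/(a+b)<0\) gives the contradiction.

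Your version is shorter. It also needs no separate treatment of zero correlation masses or cone components, because near \(t=1\) the formula for \(\mathcal D\) involves only the dominant mass. The paper's version has the advantage of exhibiting an explicit strictly better competitor at finite distance.

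Two small corrections.

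First, the case \(\mu>m(f)\) is not literally symmetric. There \(\varphi''(1)=-2Sca(q-2)\,b/(a+b)\), which vanishes when \(b=0\). Moreover, the first-order condition \((1-\kappa)E_\ell(f)=Sca\) no longer rules out \(b=0\). You need one extra line: \(\mu>m(f)\geq0\) together with \eqref{eq:fixed-correlation-bound} gives \(0<\mu\leq Sb^{2/q}\), hence \(b>0\). Likewise, \(\mathcal D(1)\geq E_\ell(f)>0\) in that case because \(\mu\leq G\), which you need in order to get \(\varphi(1)=0\).

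Second, the uniformity in \(t\) that you single out as the main obstacle is unnecessary. Along your fixed subsequence, Lemma~\ref{lem:fixed-correlation-max-splitting} and the Brezis--Lieb lemma give the limits for each fixed \(t\). Passing to the limit pointwise in \(\mathscr D_\ell(u_k^t)\geq\kappa\,d_\ell(u_k^t)^2\) then already yields \(\varphi\geq0\) near \(1\).
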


\begin{proof}
Energy orthogonality, the Brezis--Lieb lemma, and
Lemma~\ref{lem:fixed-correlation-max-splitting} split the numerator and
replace the correlation mass in the denominator by the larger of the two
component masses.

We first dispose of zero-mass degeneracies.  If
\(\|g_k\|_q\to0\) while its energy stays positive, its contribution to
both numerator and denominator has quotient one.  The other component has
quotient at least \(\kappa_{n,\ell}^{\mathrm{opt}}<1\), so their weighted
quotient cannot tend to the infimum.  If both critical masses have positive
limits but one limiting correlation mass is zero, strict concavity of the
map \(t\mapsto t^{2/q}\) produces a strictly positive splitting remainder
in the numerator, again contradicting minimality.  The same conclusion
holds if one component lies on the optimizer cone: its zero numerator and
denominator are omitted, while the strict splitting remainder remains.
Thus both limiting correlation masses are positive.

Suppose, after passage to a subsequence, that the correlation mass of
\(g_k\) is strictly larger than \(m(f)\).  Rescale so that
\(\|g_k\|_q=1\), put \(\eta=\|f\|_q\), and define
\[
 \Phi(\eta)=(1+\eta^q)^{2/q}-1.
\]
The limiting quotient has the form
\[
 \frac{A+C}{B+D},
\]
where
\[
 \begin{aligned}
 A&=E_\ell(g_k)-S,&
 B&=E_\ell(g_k)-m(g_k),\\
 C&=E_\ell(f)-S\Phi(\eta),&
 D&=E_\ell(f).
 \end{aligned}
\]
All quantities here denote their limits along the chosen subsequence.
Since \(A/B\geq\kappa_{n,\ell}^{\mathrm{opt}}\) and the combined ratio
tends to the infimum, cross multiplication gives \(A/B\geq C/D\).

Choose \(c>1\) so that \(m(cf)=\lim m(g_k)\), and denote by
\(C'\) and \(D'=c^2D\) the last two quantities after replacing \(f\) by
\(cf\).  Since \(\Phi(t)/t^2\) is strictly increasing for \(q>2\),
\[
 \frac CD
 =1-\frac S{\mathcal S(f)}\frac{\Phi(\eta)}{\eta^2}
 >
 1-\frac S{\mathcal S(f)}
 \frac{\Phi(c\eta)}{(c\eta)^2}
 =\frac{C'}{D'}.
\]
Together with \(D'>D\), the elementary weighted-ratio inequality gives
\[
 \frac{A+C'}{B+D'}<\frac{A+C}{B+D}.
\]
The left-hand side is the limiting quotient of the valid competitor
\(g_k+cf\), a contradiction.  If \(B=0\), the strict inequality
\(C/D>C'/D'\) gives the same conclusion directly.  The reverse ordering
is treated by interchanging the two components.  Hence their correlation
masses must agree.
\end{proof}

\begin{corollary}[Sequential local threshold]
\label{cor:sequential-local-stability-threshold}
If \(\|u_k\|_q=1\),
\(d_\ell(u_k)\to0\), and \(u_k\notin\mathfrak M_\ell^{\mathbb R}\), then
\[
 \liminf_{k\to\infty}
 \frac{\mathscr D_\ell(u_k)}{d_\ell(u_k)^2}
 \geq\frac2{N+4}.
\]
\end{corollary}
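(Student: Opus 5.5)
The plan is to reduce to the local normal slice and the exact local coefficient. Since \(d_\ell(u_k)\to0\) and \(\|u_k\|_q=1\), the distance to the signed unit orbit \(\mathcal M_{1,\ell}\) also tends to zero. To see this, write \(d_\ell(u_k)^2=E_\ell(u_k)-m(u_k)\) by \eqref{eq:fixed-cone-distance-correlation}. The nearest cone point \(A_k\mathcal T_{g_k}B_0\) satisfies \(A_k\to\pm1\), because its \(L^q\) norm is within \(O(d_\ell(u_k))\) of \(1\) by the critical embedding. Lemma~\ref{lem:normal-slice} then applies for large \(k\). It gives signs \(\sigma_k\) and frames \(g_k\) with
\[
 \mathcal T_{g_k}^{-1}(\sigma_ku_k)=B+r_k,
 \qquad E_\ell(r_k,Z)=0\ (Z\in\ker L_B),
 \qquad \|r_k\|_{\dot S^1_\ell}\to0.
\]
By frame invariance of \(\mathscr D_\ell\) and \(d_\ell\), it suffices to treat \(B+r_k\).

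Next I would remove the amplitude direction. Write \(r_k=a_kB+\tilde r_k\), where \(\tilde r_k\in Y_\ell\) is the \(E_\ell\)-orthogonal part and \(a_k\to0\). Then
\[
 B+r_k=(1+a_k)\Bigl(B+\frac{\tilde r_k}{1+a_k}\Bigr).
\]
Both the deficit and the squared cone distance are homogeneous of degree two, and the real cone is invariant under scalar multiplication. The quotient of \(B+r_k\) therefore equals the quotient of \(B+s_k\) with \(s_k=\tilde r_k/(1+a_k)\in Y_\ell\) and \(s_k\to0\). We have \(s_k\neq0\), since otherwise \(u_k\) would lie in the cone.

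Proposition~\ref{prop:local-stability-coefficient} now gives the conclusion directly. For every \(\delta>0\), the infimum over \(0<\|s\|<\delta\), \(s\in Y_\ell\), is bounded below by a quantity tending to \(2/(N+4)\). Eventually \(\|s_k\|<\delta\), so \(\liminf_k\) of the quotients is at least that infimum. Letting \(\delta\downarrow0\) yields the bound.

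The main obstacle is checking the inputs of that proposition uniformly. The distance expansion \(d_\ell(B+s)^2=E_\ell(s)+o(E_\ell(s))\) requires that the global nearest cone point be the local tubular one. This holds because the orbit is proper (escaping frames are weakly null) and separated from the vertex, as in the proof of Lemma~\ref{lem:normal-slice}. The deficit expansion \(\mathscr D_\ell(B+s)=\mathcal Q_B(s,s)+o(E_\ell(s))\) comes from the \(C^{1,q-2}\) Taylor expansion of \(\|\cdot\|_q^q\) at \(B\) together with the critical embedding. Its remainder is \(O(\|s\|^{q})=o(\|s\|^2)\). With these, Theorem~\ref{thm:normal-spectral-gap} closes the argument.
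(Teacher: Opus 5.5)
Your argument is correct and is essentially the paper's proof. The paper also uses a sign, amplitude and reduced-frame modulation to write $u_k=a_k\mathcal T_{g_k}(B_0+r_k)$ with $r_k\in Y_\ell$ and $r_k\to0$, and then invokes homogeneity and Proposition~\ref{prop:local-stability-coefficient}; you have only made explicit the passage through Lemma~\ref{lem:normal-slice} and the removal of the $B$-component. The global nearest-point issue you flag is not actually needed for this lower bound: since $B$ lies in the cone, $d_\ell(B+s)^2\le E_\ell(s)$ trivially, and $\mathscr D_\ell\ge0$, so $\mathscr D_\ell(B+s)/d_\ell(B+s)^2\ge\mathcal Q_B(s,s)/E_\ell(s)-o(1)\ge\frac{2}{N+4}-o(1)$ follows directly from Theorem~\ref{thm:normal-spectral-gap}.
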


\begin{proof}
The normalization excludes convergence to the cone vertex.  The tubular
projection near the nonzero cone gives, after an amplitude and frame
modulation,
\[
 u_k=a_k\mathcal T_{g_k}(B_0+r_k),\qquad
 r_k\in Y_\ell,\qquad r_k\to0.
\]
Homogeneity and Proposition~\ref{prop:local-stability-coefficient} give
the conclusion.
\end{proof}

\begin{proof}[Proof of Theorem~\ref{thm:optimal-fixed-attainment}]
The two strict inequalities in
\eqref{eq:two-strict-stability-thresholds} follow from
Propositions~\ref{prop:strict-two-bubble-level}
and~\ref{prop:strict-local-stability-level}; positivity follows from
Theorem~\ref{thm:intro-stability}.

Let \((u_k)\) be a minimizing sequence with \(\|u_k\|_q=1\).  Since
\(0\in\mathfrak M_\ell^{\mathbb R}\),
\(d_\ell(u_k)^2\leq E_\ell(u_k)\), and therefore
\[
 E_\ell(u_k)-S
 \leq\bigl(\kappa_{n,\ell}^{\mathrm{opt}}+o(1)\bigr)
 E_\ell(u_k).
\]
The strict local bound implies
\(\kappa_{n,\ell}^{\mathrm{opt}}<1\), so \((u_k)\) is bounded.
The profile decomposition in
Theorem~\ref{thm:intro-profile} supplies, after applying reduced frames,
a nonzero weak profile:
\[
 u_k=f+g_k,\qquad f\neq0,\qquad g_k\rightharpoonup0.
\]
It also gives
\[
 E_\ell(u_k)=E_\ell(f)+E_\ell(g_k)+o(1),
\]
\[
 1=\|f\|_q^q+\|g_k\|_q^q+o(1).
\]

Suppose that \(E_\ell(g_k)\not\to0\).  By
Lemma~\ref{lem:fixed-correlation-balance},
\[
 m(f)=m(g_k)+o(1).
\]
After interchanging the two components, assume
\(\|g_k\|_q\leq\|f\|_q+o(1)\), rescale so that
\(\|f\|_q=1\), and put \(r_k=\|g_k\|_q\leq1+o(1)\).
If \(f\notin\mathfrak M_\ell^{\mathbb R}\), its quotient is no smaller
than the infimum.  If instead \(f\in\mathfrak M_\ell^{\mathbb R}\), its
zero deficit and zero squared distance are omitted from the split
quotient.  Thus in either case,
\eqref{eq:fixed-cone-distance-correlation} and the balance identity give
\[
 \begin{aligned}
 \kappa_{n,\ell}^{\mathrm{opt}}+o(1)
 &\geq
 1-\frac S{\mathcal S(g_k)}
 \frac{(1+r_k^q)^{2/q}-1}{r_k^2}\\
 &\geq
 1-\frac S{\mathcal S(g_k)}(2^{2/q}-1)\\
 &\geq 2-2^{2/q}.
 \end{aligned}
\]
Here the second inequality uses the monotonicity of
\(((1+r^q)^{2/q}-1)/r^2\) on \((0,1]\), and the last uses the sharp
Sobolev inequality \(\mathcal S(g_k)\geq S\).  This contradicts
Proposition~\ref{prop:strict-two-bubble-level}.  Hence
\[
 E_\ell(g_k)\longrightarrow0,
\]
and the convergence to \(f\) is strong in \(\dot S^1_\ell\).

If \(f\) belonged to the nonzero optimizer cone, then
\(d_\ell(u_k)\to0\), and
Corollary~\ref{cor:sequential-local-stability-threshold} would contradict
Proposition~\ref{prop:strict-local-stability-level}.  Thus
\[
 f\notin\mathfrak M_\ell^{\mathbb R}.
\]
Strong convergence preserves the energy, the critical norm, and the
distance to the closed cone.  The function \(f\) therefore attains
\eqref{eq:optimal-fixed-constant}.  The same argument applies to every
normalized minimizing sequence and proves the asserted compactness modulo
reduced frames.
\end{proof}

\section{Cayley conjugation and the intrinsic form}
\label{sec:cayley}

Let \(m=n+1\) and equip \(\CHyp^m\) with the metric of holomorphic sectional
curvature \(-4\).  In Siegel coordinates,
\[
 \omega=t+i(|z|^2+\rho),\qquad \rho>0,
\]
the volume and Laplacian are
\begin{equation}\label{eq:ch-volume-laplacian}
 \dd V_g=\frac14\rho^{-(n+2)}\dd z\dd t\dd\rho,
 \qquad
 \Delta_g
 =\rho\Delta_{\Heis^n}
 +4\rho^2(\partial_{\rho\rho}+T^2)-4n\rho\partial_\rho.
\end{equation}
After translating Lu--Yang's
\(\Delta_b=\frac14\Delta_{\Heis^n}\) convention, these are their
Siegel formulas for holomorphic sectional curvature \(-4\)
\cite{LuYang2022SiegelHardySobolevMazya}.  Equivalently,
\eqref{eq:ch-volume-laplacian} exhibits \(\CHyp^m\) in its harmonic
\(AN\) realization \(\Heis^n\rtimes\R_+\); the spherical analysis of
these groups, including the radial part of \(\Delta_g\), is developed in
\cite{AnkerDamekYacoub1996HarmonicAN}.

For a real parameter \(a\) with \(n+a>0\), set
\[
 \gamma_a=\frac{n+a}{2},
 \qquad p_a=1+\frac2{n+a},
\]
\[
 \Gell_a=-\Delta_{\Heis^n}
 -4\rho(\partial_{\rho\rho}+T^2)-4a\partial_\rho
\]
and
\[
 P_a^{\mathrm{CH}}
 =-\Delta_g-(n+1)^2+(a-1)^2.
\]

\begin{proposition}[Exact Cayley conjugation]\label{prop:cayley}
For every compactly supported smooth \(u\),
\begin{equation}\label{eq:cayley-conjugation}
 \Gell_a(\rho^{-\gamma_a}u)
 =\rho^{-\gamma_a-1}P_a^{\mathrm{CH}}u.
\end{equation}
Consequently, if \(u\geq0\) and \(v=\rho^{-\gamma_a}u\), then
\[
 (P_a^{\mathrm{CH}}-\lambda)u=u^{p_a}
 \quad\Longleftrightarrow\quad
 \Gell_av-\lambda\rho^{-1}v=v^{p_a}.
\]
\end{proposition}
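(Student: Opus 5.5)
The proof is a direct pointwise computation on \(\Ucal\), where \(\rho>0\) and \(\rho^{-\gamma_a}\) is smooth. The plan is to write \(v=\rho^{-\gamma_a}u\) with \(\gamma=\gamma_a\), expand \(\Gell_a v\) by the product rule, and recognize the bracket multiplying \(\rho^{-\gamma-1}\) as \(P_a^{\mathrm{CH}}u\) using \eqref{eq:ch-volume-laplacian}. The equivalence of the two nonlinear equations then follows by comparing powers of \(\rho\). Compact support of \(u\) is used only so that both sides are defined classically; the identity itself is local.

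\emph{Step 1 (the linear part).} The horizontal fields and \(T\) do not differentiate \(\rho\), so
\[
 \Delta_{\Heis^n}v=\rho^{-\gamma}\Delta_{\Heis^n}u
 \qquad\text{and}\qquad
 T^2v=\rho^{-\gamma}T^2u .
\]
For the \(\rho\)-derivatives,
\[
 v_\rho=\rho^{-\gamma}\bigl(u_\rho-\gamma\rho^{-1}u\bigr),
 \qquad
 v_{\rho\rho}=\rho^{-\gamma}\bigl(u_{\rho\rho}-2\gamma\rho^{-1}u_\rho+\gamma(\gamma+1)\rho^{-2}u\bigr).
\]
Substituting into \(\Gell_a\) gives
\[
 \Gell_a v=\rho^{-\gamma}\Bigl[-\Delta_{\Heis^n}u-4\rho(u_{\rho\rho}+T^2u)
 +(8\gamma-4a)u_\rho-4\gamma(\gamma+1-a)\rho^{-1}u\Bigr].
\]
The choice \(\gamma=(n+a)/2\) is exactly what makes \(8\gamma-4a=4n\). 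This is the drift coefficient in \(\Delta_g\).

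\emph{Step 2 (the zeroth-order constant).} Compute
\[
 4\gamma(\gamma+1-a)=(n+a)(n+2-a)=\bigl((n+1)+(a-1)\bigr)\bigl((n+1)-(a-1)\bigr)=(n+1)^2-(a-1)^2.
\]
Factoring \(\rho^{-\gamma-1}\) out of the bracket leaves
\[
 -\rho\Delta_{\Heis^n}u-4\rho^2(u_{\rho\rho}+T^2u)+4n\rho u_\rho-\bigl[(n+1)^2-(a-1)^2\bigr]u .
\]
By \eqref{eq:ch-volume-laplacian} this equals \(-\Delta_gu-(n+1)^2u+(a-1)^2u=P_a^{\mathrm{CH}}u\). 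This proves \eqref{eq:cayley-conjugation}.

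\emph{Step 3 (the nonlinear equivalence).} For \(u\geq0\) we have \(v^{p_a}=\rho^{-\gamma p_a}u^{p_a}\). Since
\[
 \gamma p_a=\gamma+\frac{2\gamma}{n+a}=\gamma+1,
\]
we get \(v^{p_a}=\rho^{-\gamma-1}u^{p_a}\), and likewise \(\lambda\rho^{-1}v=\rho^{-\gamma-1}\lambda u\). Hence
\[
 \Gell_av-\lambda\rho^{-1}v-v^{p_a}=\rho^{-\gamma-1}\bigl[(P_a^{\mathrm{CH}}-\lambda)u-u^{p_a}\bigr].
\]
Because \(\rho^{-\gamma-1}>0\) on \(\Ucal\), one side vanishes exactly when the other does. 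For classical solutions this is immediate. For solutions that are not compactly supported, one applies the same local computation to \(C^2\) functions, since the derivation never used the support.

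No step is a genuine obstacle; the only care needed is bookkeeping. The normalization in \eqref{eq:ch-volume-laplacian} must be matched exactly, namely the factor \(4\rho^2\) and the drift \(-4n\rho\partial_\rho\). With that fixed, the sign of the zeroth-order term is forced by the factorization \((n+a)(n+2-a)\), and it produces precisely the shift \(-(n+1)^2+(a-1)^2\) in \(P_a^{\mathrm{CH}}\).
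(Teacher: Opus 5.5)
Your proposal is correct and follows the paper's proof: substitute \(v=\rho^{-\gamma_a}u\), use \(2\gamma_a-a=n\) to match the drift \(-4n\rho\partial_\rho\) in \(\Delta_g\), use \(4\gamma_a(\gamma_a+1-a)=(n+1)^2-(a-1)^2\) for the spectral shift, and use \(\gamma_ap_a=\gamma_a+1\) for the nonlinearity. The only difference is that you take the Siegel formula \eqref{eq:ch-volume-laplacian} as given, whereas the paper adds a remark that it comes from transporting the ball metric through the Cayley map.
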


\begin{proof}
Formula \eqref{eq:ch-volume-laplacian} follows by transporting the ball
metric through the Cayley map; the powers of the Cayley denominator cancel
in the invariant volume form.  Substituting
\(v=\rho^{-\gamma_a}u\) into \(\Gell_av\) and collecting the
first-order terms gives
\[
 \Gell_av
 =\rho^{-\gamma_a-1}
 \left[
 -\Delta_g
 +4\gamma_a(a-\gamma_a-1)
 \right]u.
\]
The identities
\[
 2\gamma_a-a=n,
 \qquad
 4\gamma_a(a-\gamma_a-1)
 =(a-1)^2-(n+1)^2
\]
give \eqref{eq:cayley-conjugation}.  Finally,
\(\gamma_ap_a=\gamma_a+1\), so the nonlinear powers transform without an
additional weight.
\end{proof}

\begin{remark}
The exact reduced image of a constant complex-hyperbolic spectral term is
\(\rho^{-1}\).  In particular,
\(\bigl[(1+\rho+|z|^2)^2+t^2\bigr]^{-1}\) is a different localized
potential.  Moreover, the ordinary Riemannian critical power in real
dimension \(2n+2\) is \(1+2/n\); it agrees with \(p_a\) only when
\(a=0\).
\end{remark}

For \(\sigma>0\), define on \(C_c^\infty(\CHyp^m)\)
\[
 \mathfrak b_\sigma(U,\Phi)
 =\int_{\CHyp^m}
 \bigl(
 \langle\nabla_gU,\nabla_g\Phi\rangle_g
 -m^2U\Phi+\sigma U\Phi
 \bigr)\dd V_g.
\]

\begin{theorem}[Completed Cayley correspondence]
\label{thm:completed-cayley}
Let \(n\geq1\) and let \(\ell>1\) be an integer.  The Friedrichs form domain of
\[
 -\Delta_g-m^2+(\ell-1)^2
\]
is \(H^1(\CHyp^m,g)\).  The map
\[
 \mathcal C_\ell U=\rho^{-(n+\ell)/2}U
\]
extends from the smooth core onto \(\dot S^1_\ell(\Ucal)\), and
\begin{align}
 E_\ell(\mathcal C_\ell U,\mathcal C_\ell\Phi)
 &=4\mathfrak b_{(\ell-1)^2}(U,\Phi),
 \label{eq:cayley-energy}\\
 \int_{\Ucal}\rho^{-1}(\mathcal C_\ell U)
 (\mathcal C_\ell\Phi)\dd\mu_\ell
 &=4\int_{\CHyp^m}U\Phi\dd V_g,
 \label{eq:cayley-hardy}\\
 \|\mathcal C_\ell U\|_{q_\ell}^{q_\ell}
 &=4\|U\|_{L^{q_\ell}(dV_g)}^{q_\ell}.
 \label{eq:cayley-critical}
\end{align}
If \(0<\lambda<(\ell-1)^2\) and
\(\mu=(\ell-1)^2-\lambda\), then
\[
 A^H_{\lambda,\ell}(\mathcal C_\ell U,\mathcal C_\ell\Phi)
 =4\mathfrak b_\mu(U,\Phi),
\]
where
\[
 A^H_{\lambda,\ell}(v,\varphi)
 =E_\ell(v,\varphi)
 -\lambda\int_{\Ucal}\rho^{-1}v\varphi\dd\mu_\ell.
\]
\end{theorem}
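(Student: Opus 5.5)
The plan is to verify the three identities on the smooth core by direct computation and then pass to completions using the Hardy inequality with constant \((\ell-1)^2\).

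\emph{Pointwise identities on the core.} Fix \(U,\Phi\in C_c^\infty(\CHyp^m)\), put \(\gamma=\gamma_\ell=(n+\ell)/2\) and \(v=\rho^{-\gamma}U\). Since \(dV_g=\tfrac14\rho^{-(n+2)}\dd z\dd t\dd\rho\) and \(\dd\mu_\ell=\rho^{\ell-1}\dd z\dd t\dd\rho\), we have
\[
 \rho^{-1}v\varphi\,\dd\mu_\ell
 =\rho^{-2\gamma+\ell-2}U\Phi\dd z\dd t\dd\rho
 =\rho^{-(n+2)}U\Phi\dd z\dd t\dd\rho .
\]
This gives \eqref{eq:cayley-hardy}, with the factor \(4\) coming from the volume density. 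For \eqref{eq:cayley-critical}, note that \(\gamma q_\ell=n+\ell+2\), so \(|v|^{q_\ell}\rho^{\ell-1}=\rho^{-(n+2)}|U|^{q_\ell}\).

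For \eqref{eq:cayley-energy}, integrate by parts against the compactly supported \(\varphi\). Formal self-adjointness of \(\Gell_\ell\) for \(\dd\mu_\ell\) gives \(E_\ell(v,\varphi)=\int\Gell_\ell v\,\varphi\dd\mu_\ell\). Proposition~\ref{prop:cayley} with \(a=\ell\) then turns this into
\[
 \int\rho^{-\gamma-1}(P_\ell^{\mathrm{CH}}U)\rho^{-\gamma}\Phi\rho^{\ell-1}
 =4\int(P_\ell^{\mathrm{CH}}U)\Phi\dd V_g=4\mathfrak b_{(\ell-1)^2}(U,\Phi).
\]
The last identity of the theorem follows by subtracting \(\lambda\) times the Hardy identity.

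\emph{Form domain.} The spectrum of \(-\Delta_g\) on \(\CHyp^m\) has bottom \(m^2\) (since \((2m)^2/4=m^2\) for real dimension \(2m\) with curvature normalized so that \(\rho_{\mathrm{sym}}=m\)). Hence for \(\sigma>0\),
\[
 \sigma\|U\|_2^2\le\mathfrak b_\sigma(U,U)\le\|U\|_{H^1}^2+\sigma\|U\|_2^2 .
\]
Writing \(\|\nabla U\|^2=\mathfrak b_\sigma-\sigma\|U\|^2+m^2\|U\|^2\), the norm \(\mathfrak b_\sigma^{1/2}\) is equivalent to the \(H^1\) norm on \(C_c^\infty\). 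By completeness of \(\CHyp^m\), \(C_c^\infty\) is dense in \(H^1\), so the Friedrichs form domain is \(H^1(\CHyp^m,g)\) whenever \(\sigma=(\ell-1)^2>0\), which is exactly where \(\ell>1\) is used.

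\emph{Extension to completions.} The main obstacle is showing that \(\mathcal C_\ell\) maps onto \(\dot S^1_\ell(\Ucal)\), i.e.\ that the smooth cores on both sides correspond densely. The map \(U\mapsto\rho^{-\gamma}U\) is a bijection \(C_c^\infty(\CHyp^m)\to C_c^\infty(\Ucal)\), because the Siegel coordinates identify \(\CHyp^m\) with \(\Ucal\) diffeomorphically (the shift \(|z|^2+\rho\) is smooth and the chart is global). Both spaces are defined as completions of these cores, and by \eqref{eq:cayley-energy} the norm \(E_\ell^{1/2}\) corresponds to \(2\mathfrak b_{(\ell-1)^2}^{1/2}\), which by the previous step is equivalent to the \(H^1\) norm. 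Hence \(\mathcal C_\ell\) extends to an isomorphism \(H^1\to\dot S^1_\ell\), and all three identities pass to the limit by continuity. For \eqref{eq:cayley-critical} this uses the critical embedding from Theorem~\ref{thm:sharp-sobolev}, and for \eqref{eq:cayley-hardy} the resulting Hardy bound \(4(\ell-1)^2\int\rho^{-1}v^2\dd\mu_\ell\le 4E_\ell(v)\), with constant \((\ell-1)^2\).

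The only care needed is that no boundary terms arise at \(\rho=0\) on the core; this holds because the supports stay away from the axis, so the capacity argument of Lemma~\ref{lem:axis} is not needed here.
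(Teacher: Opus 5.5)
Your proposal is correct and follows the paper's proof step for step: the core identities come from Proposition~\ref{prop:cayley} and the volume density, the equivalence of $\mathfrak b_{(\ell-1)^2}^{1/2}$ with the $H^1_g$ norm comes from the spectral bound together with density of $C_c^\infty$ by completeness, and the core bijection extends to the completions. The only differences are two small points. First, where you cite the bottom of the spectrum, the paper proves $\|\nabla_gU\|_2^2\geq m^2\|U\|_2^2$ by the ground-state representation with $h=\rho^{m/2}$; this is equivalent to the $a=1$ conjugation $E_1(\rho^{-m/2}U)=4\int(|\nabla_gU|^2-m^2U^2)\dd V_g\geq0$, and only this lower bound is needed. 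Second, there is an arithmetic slip: $\gamma q_\ell=n+\ell+1$, not $n+\ell+2$, and it is $n+\ell+1$ that gives the exponent $-(n+2)$ you correctly state.
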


\begin{proof}
Let \(h=\rho^{m/2}\).  A direct calculation gives
\(\Delta_gh=-m^2h\), and the ground-state representation in the sense of
Frank--Seiringer \cite{FrankSeiringer2008Hardy} yields
\begin{equation}\label{eq:ground-state-transform}
 \int_{\CHyp^m}(|\nabla_gU|^2-m^2U^2)\dd V_g
 =\int_{\CHyp^m}h^2|\nabla_g(U/h)|^2\dd V_g\geq0.
\end{equation}
We also record why the lower bound is sharp.  Fix
\(0\ne\eta\in C_c^\infty(\Heis^n)\), and choose
\(\chi_\varepsilon\in C_c^\infty(0,\infty)\) which vanishes below
\(\varepsilon^2\), rises logarithmically to one on
\([\varepsilon^2,\varepsilon]\), equals one on \([\varepsilon,1]\), and
vanishes above \(2\).  Set
\[
 v_\varepsilon(z,t,\rho)=\eta(z,t)\chi_\varepsilon(\rho),
 \qquad U_\varepsilon=\rho^{m/2}v_\varepsilon.
\]
The smooth conjugation with parameter \(a=1\) gives
\[
 4\int_{\CHyp^m}(|\nabla_gU_\varepsilon|^2-m^2U_\varepsilon^2)
 \dd V_g=E_1(v_\varepsilon),
 \qquad
 4\int_{\CHyp^m}U_\varepsilon^2\dd V_g
 =\int_{\Ucal}\rho^{-1}v_\varepsilon^2\dd\mu_1.
\]
The logarithmic transition contributes
\(O(|\log\varepsilon|^{-1})\) to the radial energy; all other energy
terms remain bounded, whereas the last integral is comparable to
\(|\log\varepsilon|\).  The Rayleigh quotient of \(U_\varepsilon\)
therefore tends to \(m^2\).  Together with
\eqref{eq:ground-state-transform}, this proves
\[
 \inf\sigma(-\Delta_g)=m^2.
\]

For every \(\sigma>0\), the spectral inequality implies
\[
 \sigma\|U\|_2^2\leq\mathfrak b_\sigma(U,U)
 \leq\|\nabla_gU\|_2^2+\sigma\|U\|_2^2,
\]
and
\[
 \|\nabla_gU\|_2^2
 \leq\max\left\{1,\frac{m^2}{\sigma}\right\}
 \mathfrak b_\sigma(U,U).
\]
Thus \(\mathfrak b_\sigma^{1/2}\) and the ordinary \(H^1_g\) norm are
equivalent.  The metric is complete; geodesic cutoffs
\(\chi_R(x)=\chi(d_g(o,x)/R)\), with
\(|\nabla_g\chi_R|\leq C/R\), followed by local mollification show that
\(C_c^\infty(\CHyp^m)\) is dense in \(H^1_g\).  Consequently the closure of
\(\mathfrak b_\sigma\) has domain exactly \(H^1_g\).  The general
complete-manifold and semibounded-form frameworks are due to Strichartz and
Friedrichs
\cite{Strichartz1983CompleteLaplacian,Friedrichs1934Spektraltheorie}.

Proposition~\ref{prop:cayley}, \eqref{eq:ch-volume-laplacian}, and
integration by parts give
\eqref{eq:cayley-energy}--\eqref{eq:cayley-critical} on the two smooth
cores.  On these cores, \(\mathcal C_\ell\) is a bijection and
\[
 \|\mathcal C_\ell U\|_{\dot S^1_\ell}
 =2\mathfrak b_{(\ell-1)^2}(U,U)^{1/2}.
\]
Hence \(\mathcal C_\ell\) and its algebraic inverse extend continuously to
the respective completions.  To see surjectivity explicitly, let
\(v_j\in C_c^\infty(\Ucal)\) converge to
\(v\in\dot S^1_\ell(\Ucal)\), and put
\(U_j=\mathcal C_\ell^{-1}v_j\).  The preceding identity makes
\((U_j)\) Cauchy in \(H^1_g\); if \(U_j\to U\), then the extended map sends
\(U\) to \(v\).  The core compositions of the two maps are the identity, so
their extensions are mutual inverses.  The identities
\eqref{eq:cayley-energy}--\eqref{eq:cayley-critical} pass to the completed
spaces by density.  Finally, subtracting \(\lambda\) times
\eqref{eq:cayley-hardy} from \eqref{eq:cayley-energy} proves the perturbed
identity.
\end{proof}

\begin{proposition}[Transported isometries]\label{prop:transported-action}
Let \(\ell\geq1\) be an integer.  For
\(g\in\operatorname{Isom}(\CHyp^m,g)\), define on the reduced smooth core
\[
 (\widetilde\Pi_gv)(x)
 =\left[\frac{\rho(g^{-1}x)}{\rho(x)}\right]^{(n+\ell)/2}
 v(g^{-1}x).
\]
This action extends to an isometric action on
\(\dot S^1_\ell(\Ucal)\), preserves the critical norm, and, when
\(\ell>1\), preserves the Hardy form
\(\int\rho^{-1}v^2\dd\mu_\ell\).
\end{proposition}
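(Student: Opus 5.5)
The plan is to write $\widetilde\Pi_g$ as a conjugate of the natural isometric action on $\CHyp^m$ and read the three invariances off the Cayley identities. On $\CHyp^m$ let $(\pi_gU)(x)=U(g^{-1}x)$; this is unitary on $L^2(dV_g)$ and preserves $L^{q_\ell}(dV_g)$. Since $g$ is an isometry, it also preserves $|\nabla_gU|_g^2$ pointwise after composition, and hence preserves $\mathfrak b_\sigma$ for every $\sigma$. A direct check shows
\[
 \mathcal C_\ell\,\pi_g\,\mathcal C_\ell^{-1}v(x)
 =\rho(x)^{-(n+\ell)/2}\rho(g^{-1}x)^{(n+\ell)/2}v(g^{-1}x)
 =(\widetilde\Pi_gv)(x).
\]
So on the smooth core $\widetilde\Pi_g=\mathcal C_\ell\pi_g\mathcal C_\ell^{-1}$. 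In particular $\widetilde\Pi_g$ maps $C_c^\infty(\Ucal)$ to $C_c^\infty(\Ucal)$, since $g$ is a diffeomorphism of $\Ucal\cong\CHyp^m$ and $\rho>0$ is smooth there. It is also a group action.

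\emph{Case $\ell>1$.} The smooth identities \eqref{eq:cayley-energy}, \eqref{eq:cayley-hardy} and \eqref{eq:cayley-critical} give
\[
 E_\ell(\widetilde\Pi_gv)=4\mathfrak b_{(\ell-1)^2}(\pi_gU)=4\mathfrak b_{(\ell-1)^2}(U)=E_\ell(v),
\]
with $U=\mathcal C_\ell^{-1}v$. The same argument shows that the Hardy form and the critical norm are preserved. An isometry defined on a dense core extends uniquely to the completion $\dot S^1_\ell(\Ucal)$. For the critical norm, the identity passes to the limit through the continuous Sobolev embedding of Theorem~\ref{thm:sharp-sobolev}, and the Hardy form is handled in the same way using its continuity on the completion (the Hardy inequality from Theorem~\ref{thm:completed-cayley}). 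The group law also extends by density.

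\emph{Case $\ell=1$.} This is the main obstacle: the completed Cayley correspondence is unavailable, although Proposition~\ref{prop:cayley} still holds pointwise for smooth compactly supported functions with $a=1$. I would work on the smooth core only, using the conjugation identity \eqref{eq:cayley-conjugation} with $a=1$ and integration by parts. These give
\[
 E_1(v)=4\int_{\CHyp^m}\bigl(|\nabla_gU|^2-m^2U^2\bigr)\dd V_g
\]
for $v=\mathcal C_1U$ with $U\in C_c^\infty$. This is exactly the identity already used in the proof of Theorem~\ref{thm:completed-cayley}, and the right-hand side is isometry invariant. Likewise $\|v\|_{q_1}^{q_1}=4\|U\|_{L^{q_1}(dV_g)}^{q_1}$, which follows by the same change of variables: the weight $\rho^{\ell-1}\rho^{-(n+\ell)q_\ell/2}=\rho^{-(n+2)}$ matches $dV_g$. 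The point to check is that these identities need only compact support away from $\rho=0$, never a form-domain identification, so the invariance holds on $C_c^\infty(\Ucal)$ for all integers $\ell\ge1$. Extension by density then proceeds as before.

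In fact, I would run the core argument uniformly in $\ell$ and use Theorem~\ref{thm:completed-cayley} only for the Hardy statement. A final routine verification is that the pointwise exponent bookkeeping in $\widetilde\Pi_g$ (the factor $(n+\ell)/2=\gamma_\ell$) matches $\mathcal C_\ell$.
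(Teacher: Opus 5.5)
Your proposal is correct and follows essentially the same route as the paper. You conjugate the natural isometry action by \(\mathcal C_\ell\) on the smooth core, read off invariance of \(E_\ell\) and of the critical norm from the smooth Cayley identities for every integer \(\ell\geq1\), extend by density (using the sharp Sobolev embedding for the critical norm and \(\widetilde\Pi_{g^{-1}}\) for the inverse), and use Theorem~\ref{thm:completed-cayley} with \eqref{eq:cayley-hardy} only for the Hardy form when \(\ell>1\). Your explicit observation that the core identities need only compact support in \(\Ucal\), so that \(\ell=1\) requires no form-domain identification, is exactly what the paper's phrase ``for every integer \(\ell\geq1\)'' relies on.
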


\begin{proof}
On the smooth core this is the conjugate under \(\mathcal C_\ell\) of the
ordinary complex-hyperbolic isometry action.  The smooth Cayley identities
from Proposition~\ref{prop:cayley}, together with
\eqref{eq:ch-volume-laplacian}, show for every integer \(\ell\geq1\) that
this conjugate preserves \(E_\ell\) and the critical norm.  Energy density
therefore gives a unique isometric extension to
\(\dot S^1_\ell(\Ucal)\), while the sharp Sobolev inequality passes critical
norm preservation to the completion.  The action of \(g^{-1}\) gives the
inverse.  When \(\ell>1\), Theorem~\ref{thm:completed-cayley} and
\eqref{eq:cayley-hardy} additionally give preservation of the continuous
Hardy form.
\end{proof}

\begin{remark}
Proposition~\ref{prop:transported-action} makes the set of minimizers
invariant.  By itself this does not imply that an individual minimizer is
fixed by a point stabilizer.  For sufficiently small Hardy coupling, the
additional local uniqueness argument in
the proof of Theorem~\ref{thm:intro-symmetry} does yield point-stabilizer
invariance and geodesic radiality.  No analogous assertion is made here for
arbitrary higher-energy solutions.
\end{remark}

\section{The Hardy quotient and an intrinsic positive ground state}
\label{sec:hardy}

For spectral Brezis--Nirenberg equations on real hyperbolic space,
Mancini--Sandeep and Li--Lu--Yang establish existence and nonexistence
results and, in theorem-specific parameter regimes, symmetry conclusions
\cite{ManciniSandeep2008Semilinear,LiLuYang2022HyperbolicBN}.  Here the
integer Geller exponent and the reduced Hardy weight arise from the
completion-level Cayley correspondence of
Theorem~\ref{thm:completed-cayley}.  The small-coupling symmetry conclusion
proved below instead follows from fixed-sector nondegeneracy, local
uniqueness, and the transported holomorphic isometry action.  We then show
that the complete energy-space kernel of the small-coupling ground state
consists precisely of the infinitesimal center-moving isometries.

For \(\ell>1\), define
\[
 \mathsf H_\ell(v,\varphi)
 =\int_{\Ucal}\rho^{-1}v\varphi\dd\mu_\ell.
\]
The following sharp Hardy inequality determines the coercive spectral range
used below.

\begin{lemma}[Exact Hardy bottom]\label{lem:hardy-bottom-paper}
If \(\ell>1\), then
\begin{equation}\label{eq:hardy}
 (\ell-1)^2\mathsf H_\ell(v,v)\leq E_\ell(v)
 \qquad(v\in\dot S^1_\ell(\Ucal)),
\end{equation}
and the constant \((\ell-1)^2\) is optimal.
\end{lemma}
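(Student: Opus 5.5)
The plan is a one-dimensional weighted Hardy inequality in \(\rho\), applied fiberwise in \((z,t)\), with the \(\nabla_H\) and \(t\)-derivative terms discarded. For \(v\in C_c^\infty(\Ucal)\), fix \((z,t)\) and set \(f(\rho)=v(z,t,\rho)\). The claim to prove is
\[
 \int_0^\infty 4\rho^{\ell}|f'|^2\dd\rho
 \geq(\ell-1)^2\int_0^\infty\rho^{\ell-2}f^2\dd\rho .
\]
This is the classical weighted Hardy inequality \(\int\rho^{\ell}|f'|^2\geq\frac{(\ell-1)^2}{4}\int\rho^{\ell-2}f^2\) for \(f\) compactly supported in \((0,\infty)\). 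I would prove it by the ground-state substitution \(f=\rho^{-(\ell-1)/2}g\). Expanding gives
\[
 \rho^{\ell}|f'|^2=\rho|g'|^2-\tfrac{\ell-1}{2}(g^2)'+\tfrac{(\ell-1)^2}{4}\rho^{-1}g^2 ,
\]
and the middle term integrates to zero because \(g\) has compact support. The inequality follows.

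Integrating over \((z,t)\) and adding the nonnegative terms \(|\nabla_Hv|^2+4\rho|v_t|^2\) gives \eqref{eq:hardy} on \(C_c^\infty(\Ucal)\). To pass to the completion, take an approximating sequence in \(\dot S^1_\ell\). It is Cauchy in the weighted \(L^2(\rho^{-1}\dd\mu_\ell)\) norm and converges almost everywhere along a subsequence, because \(\dot S^1_\ell\subset L^{q_\ell}\) by Theorem~\ref{thm:sharp-sobolev}. Fatou's lemma, or identification of the two limits, then extends the inequality and makes \(\mathsf H_\ell\) continuous on \(\dot S^1_\ell\).

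For optimality I would use product test functions \(v_\varepsilon=\eta(z,t)\chi_\varepsilon(\rho)\), with \(\eta\in C_c^\infty(\Heis^n)\) fixed and \(\chi_\varepsilon\) approximating the critical profile \(\rho^{-(\ell-1)/2}\). Concretely, take \(\chi_\varepsilon(\rho)=\rho^{-(\ell-1)/2}\) on \([\varepsilon,1]\), with cutoffs on \([\varepsilon/2,\varepsilon]\) and \([1,2]\). In \(g\)-variables this means \(g\equiv1\) on \([\varepsilon,1]\). Then \(\int\rho^{\ell-2}\chi_\varepsilon^2\dd\rho\sim|\log\varepsilon|\). The excess \(\int\rho|g'|^2\dd\rho\) is \(O(1)\), since \(g\) changes by \(O(1)\) over intervals of comparable length. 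The terms \(|\nabla_H\eta|^2\chi_\varepsilon^2\) and \(4\rho\,\eta_t^2\chi_\varepsilon^2\) contribute \(O(|\log\varepsilon|)\) and \(O(1)\) respectively.

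The main obstacle is the \(|\nabla_H\eta|^2\chi_\varepsilon^2\) term: it is of the same logarithmic order as the denominator, so it does not vanish relative to it. I would handle this by scaling \(\eta\) anisotropically. Replace \(\eta\) by \(\eta_R(z,t)=\eta(z/R,t/R^2)\) with \(R\to\infty\). The horizontal term then picks up an extra factor \(R^{-2}\) relative to \(\int\eta_R^2\). The \(\rho\)-interval should also be stretched compatibly, to \([\varepsilon,R']\), or the order of limits chosen so that the \(\eta_t\) term, of relative size \(R^{-4}\), remains negligible.

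Taking \(\varepsilon\to0\) first and then \(R\to\infty\), the Rayleigh quotient \(E_\ell(v)/\mathsf H_\ell(v,v)\) tends to \(4\cdot\frac{(\ell-1)^2}{4}=(\ell-1)^2\). Alternatively, optimality follows by transporting through the Cayley correspondence: Theorem~\ref{thm:completed-cayley} together with \(\inf\sigma(-\Delta_g)=m^2\), proved there, shows that \(E_\ell-\lambda\mathsf H_\ell\) is unbounded below for \(\lambda>(\ell-1)^2\).
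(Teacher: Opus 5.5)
Your proof is correct and follows essentially the paper's route: a fiberwise one-dimensional Hardy inequality in \(\rho\), where your ground-state substitution is equivalent to the paper's integration by parts plus Cauchy--Schwarz, and optimality via truncations of the ground state \(\rho^{-(\ell-1)/2}\) multiplied by a fixed \(\eta(z,t)\). The one slip is your ``main obstacle'': since \(\chi_\varepsilon^2\rho^{\ell-1}=g^2\), the horizontal term equals \(\|\nabla_H\eta\|_{L^2}^2\int_0^2 g^2\,d\rho=O(1)\), not a quantity of logarithmic order, and likewise the \(t\)-term equals \(4\|\partial_t\eta\|_{L^2}^2\int_0^2\rho g^2\,d\rho=O(1)\). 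Against the Hardy mass \(\geq\|\eta\|_{L^2}^2\log(1/\varepsilon)\), your \([\varepsilon,1]\) truncation therefore already gives the quotient \((\ell-1)^2+O(|\log\varepsilon|^{-1})\), and the rescaling \(\eta_R\) is unnecessary, though harmless; this extra power of \(\rho\) in the horizontal terms is exactly what the paper exploits by compressing its log-scale profile via \(\rho\mapsto\rho/\varepsilon\).
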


\begin{proof}
For \(v\in C_c^\infty(\Ucal)\), integration by parts gives
\[
 (\ell-1)\int_0^\infty \rho^{\ell-2}v^2\dd\rho
 =-2\int_0^\infty\rho^{\ell-1}vv_\rho\dd\rho.
\]
Cauchy--Schwarz and the radial term \(4\rho|v_\rho|^2\) in \(E_\ell\)
give \eqref{eq:hardy}.  Density extends it to the completion.

To prove optimality, put \(a=(\ell-1)/2\).  Choose
\(\chi\in C_c^\infty(\R)\), \(\chi\not\equiv0\), and set
\[
 g_R(s)=\chi(s/R),\qquad
 f_R(\rho)=\rho^{-a}g_R(\log\rho).
\]
Then \(f_R\in C_c^\infty(0,\infty)\), and the substitution
\(s=\log\rho\) gives
\[
\begin{aligned}
 \int_0^\infty\rho^{\ell-2}f_R^2\,d\rho
   &=\int_{\R}g_R^2\,ds,\\
 4\int_0^\infty\rho^\ell|f_R'|^2\,d\rho
   &=(\ell-1)^2\int_{\R}g_R^2\,ds
     +4\int_{\R}|g_R'|^2\,ds.
\end{aligned}
\]
The mixed term vanishes by integration by parts.  Since
\(\int|g_R'|^2/\int g_R^2=O(R^{-2})\), the one-dimensional quotient tends
to \((\ell-1)^2\).

Fix \(0\ne\eta\in C_c^\infty(\Heis^n)\) and define
\[
 v_{R,\varepsilon}(z,t,\rho)
 =\eta(z,t)f_R(\rho/\varepsilon).
\]
For fixed \(R\), both the Hardy mass and the radial energy have the factor
\(\varepsilon^{\ell-1}\).  After division by the Hardy mass, the
\(\nabla_H\eta\)-term is \(O_R(\varepsilon)\), while the additional
\(4\rho|\partial_t\eta|^2\)-term is \(O_R(\varepsilon^2)\).  Therefore
\[
 \lim_{R\to\infty}\lim_{\varepsilon\downarrow0}
 \frac{E_\ell(v_{R,\varepsilon})}
      {\mathsf H_\ell(v_{R,\varepsilon},v_{R,\varepsilon})}
 =(\ell-1)^2.
\]
A diagonal choice of \(\varepsilon=\varepsilon_R\downarrow0\) gives a
single core sequence and proves optimality.
\end{proof}

\begin{remark}[The case \(\ell=1\)]\label{rem:ell-one}
On the unchanged space \(\dot S^1_1(\Ucal)\), the bottom Hardy quotient is
zero and \(\mathsf H_1\) is not continuous.  Indeed, a logarithmic transition
from \(\rho=\varepsilon^2\) to \(\rho=\varepsilon\) has bounded energy while
its Hardy mass is comparable to \(|\log\varepsilon|\).  Thus there is no
positive coercive interval for the minus-sign Hardy perturbation when
\(\ell=1\).
\end{remark}

Assume henceforth in this section that
\[
 \ell>1,\qquad 0<\lambda<(\ell-1)^2.
\]
Put
\[
 A^H_{\lambda,\ell}(v)
 =E_\ell(v)-\lambda\mathsf H_\ell(v,v)
\]
and
\begin{equation}\label{eq:hardy-quotient}
 S^H_{\lambda,\ell}
 =\inf_{v\ne0}
 \frac{A^H_{\lambda,\ell}(v)}{\|v\|_{q_\ell}^2}.
\end{equation}

\begin{theorem}[Attainment of the Hardy quotient]
\label{thm:hardy-minimizer}
Let \(n\geq1\), let \(\ell>1\) be an integer, and assume
\(0<\lambda<(\ell-1)^2\).  Then
\[
 0<S^H_{\lambda,\ell}<S_\ell.
\]
The infimum in \eqref{eq:hardy-quotient} is attained by a nonnegative
\(v_\lambda\in\dot S^1_\ell(\Ucal)\), normalized by
\(\|v_\lambda\|_{q_\ell}=1\).  Every normalized minimizing sequence is
precompact in \(\dot S^1_\ell(\Ucal)\) and \(L^{q_\ell}\), after reduced
translations and dilations.  The minimizer satisfies
\begin{equation}\label{eq:hardy-euler}
 A^H_{\lambda,\ell}(v_\lambda,\varphi)
 =S^H_{\lambda,\ell}
 \int_{\Ucal}v_\lambda^{q_\ell-1}\varphi\dd\mu_\ell
\end{equation}
for every \(\varphi\in\dot S^1_\ell(\Ucal)\).
\end{theorem}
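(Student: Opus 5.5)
The argument is the standard Brezis--Nirenberg scheme: a strict energy gap below $S_\ell$, followed by concentration--compactness through the reduced profile decomposition. The key structural input is that the Hardy form $\mathsf H_\ell$ is \emph{not} invariant under dilations and Heisenberg translations. It is weakly continuous in a form that kills escaping profiles but not profiles sitting at a fixed frame, so the argument has to be organized carefully around which frames carry mass.

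\emph{Step 1: bounds on $S^H_{\lambda,\ell}$.} By Lemma~\ref{lem:hardy-bottom-paper},
\[
A^H_{\lambda,\ell}(v)\geq \bigl(1-\lambda/(\ell-1)^2\bigr)E_\ell(v).
\]
Combined with Theorem~\ref{thm:sharp-sobolev}, this gives $S^H_{\lambda,\ell}\geq(1-\lambda/(\ell-1)^2)S_\ell>0$. It also shows that minimizing sequences are bounded in $\dot S^1_\ell$ and that $A^H_{\lambda,\ell}$ is equivalent to $E_\ell$. For the strict upper bound, test with $B_0$. The Hardy mass $\mathsf H_\ell(B_0,B_0)$ is finite and positive, since $\rho^{\ell-2}$ is integrable at $0$ for $\ell>1$ and the bubble decays. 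Hence $S^H_{\lambda,\ell}\leq S_\ell-\lambda\mathsf H_\ell(B_0,B_0)<S_\ell$. No delicate bubble expansion is needed here, because the perturbation acts at every scale; only the dilation weight matters.

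\emph{Step 2: compactness of minimizing sequences.} Take $\|v_k\|_{q_\ell}=1$ with $A^H_{\lambda,\ell}(v_k)\to S^H_{\lambda,\ell}$. Replacing $v_k$ by $|v_k|$ does not increase $E_\ell$ and leaves the other terms unchanged, so we may assume $v_k\geq0$.

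Two invariances of the Hardy form will be used.
\begin{itemize}
\item $\mathsf H_\ell$ is invariant under reduced Heisenberg translations and dilations. Under $\mathcal T_{h,\eta}$ the weight $\rho^{-1}$ scales like $E_\ell$, since both are quadratic of homogeneity $-2$ in the Heisenberg dilation. This can be checked directly: $\rho\mapsto\rho/h^2$ and $d\mu_\ell$ scales by $h^{2(n+\ell)+2}$.
\item $\mathsf H_\ell$ is continuous and weakly sequentially continuous in the following sense: $\mathsf H_\ell(\phi_k,\psi)\to0$ whenever $\phi_k\rightharpoonup0$, because it is a bounded bilinear form on $\dot S^1_\ell$.
\end{itemize}

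Apply Theorem~\ref{thm:reduced-profile}. Because the Hardy form is frame invariant, I would not try to place the profiles at a fixed frame. Instead, use the Hilbert structure of $A^H_{\lambda,\ell}$ directly. This form is an equivalent inner product, and it is invariant under the frames. Therefore the profile orthogonality, meaning that relative frames tend to zero in the weak operator topology for $E_\ell$, also holds for $A^H$. Hence
\[
A^H(v_k)=\sum_j A^H(\phi^j)+A^H(r^J_k)+o(1).
\]
The mass splitting \eqref{eq:profile-mass} holds, and $\|r^J_k\|_{q}\to0$ in the double limit.

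Using $A^H(\phi)\geq S^H\|\phi\|_q^2$, we get
\[
S^H\geq S^H\sum_j\|\phi^j\|_q^2
\]
with $\sum_j\|\phi^j\|_q^q=1$. By strict concavity of $s\mapsto s^{2/q}$, exactly one profile $\phi$ is nonzero, and it has $\|\phi\|_q=1$. The energy identity then forces $A^H(r_k)\to0$, so the remainder vanishes strongly. Thus $\mathcal T_{g_k}^{-1}v_k\to\phi$ strongly, and $\phi$ is a nonnegative minimizer.

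The main obstacle is checking that the Hardy cross terms between distinct profiles vanish. I expect this to follow from the same weak-operator-topology vanishing of relative frames, since $\mathsf H_\ell$ is a bounded frame-invariant form: write $\mathsf H_\ell(\phi,\psi)=E_\ell(K\phi,\psi)$ for a bounded $K$ commuting with the frames. I should also note that the inequality $S^H<S_\ell$ is not even needed for this argument, because the frame invariance of $\mathsf H_\ell$ means a splitting profile pays the full $A^H$ cost. Step 1 is still needed, however, to exclude the trivial case.

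\emph{Step 3: Euler--Lagrange equation.} Differentiate the quotient at $v_\lambda$ in an arbitrary direction $\varphi\in\dot S^1_\ell$. Because $\|v_\lambda\|_q=1$, this yields \eqref{eq:hardy-euler} directly. Differentiability of $\|\cdot\|_q^q$ on $\dot S^1_\ell$ follows from the Sobolev embedding, and nonnegativity was arranged in Step 2.
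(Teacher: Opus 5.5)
Your argument is correct and follows the paper's proof. The steps match: the Hardy bottom gives coercivity, the \(B_0\) test gives \(S^H_{\lambda,\ell}<S_\ell\), frame-invariant splitting of \(A^H_{\lambda,\ell}\) runs through the reduced profile decomposition, and strict subadditivity of \(s\mapsto s^{2/q_\ell}\) forces a single profile, with no use of the strict gap in the compactness step. Two corrections: your opening claim that \(\mathsf H_\ell\) is not frame invariant and kills escaping profiles is false and contradicts your own Step~2, which correctly uses invariance; and the reduction to \(v_k\ge0\) should be dropped, since it is not valid for the claim about every minimizing sequence and the argument never uses the sign.
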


\begin{proof}
Lemma~\ref{lem:hardy-bottom-paper} gives
\[
 A^H_{\lambda,\ell}(v)
 \geq
 \left(1-\frac{\lambda}{(\ell-1)^2}\right)E_\ell(v),
\]
so the quotient is positive and minimizing sequences are bounded.
 A normalized pure extremal \(B_0\) has positive Hardy mass, hence
\[
 S^H_{\lambda,\ell}
 \leq E_\ell(B_0)-\lambda\mathsf H_\ell(B_0,B_0)
 <S_\ell.
\]

Apply Theorem~\ref{thm:reduced-profile} to a normalized minimizing sequence.
The inclusion
\[
 \dot S^1_\ell(\Ucal)
 \hookrightarrow L^2(\rho^{-1}\dd\mu_\ell)
\]
is bounded by Lemma~\ref{lem:hardy-bottom-paper}, and every reduced frame
action is unitary in the target space.  Framewise weak-null remainders
therefore give, at each fixed stage,
\[
 \mathsf H_\ell(v_k,v_k)
 =\sum_{j=1}^J\mathsf H_\ell(\phi^j,\phi^j)
 +\mathsf H_\ell(r_k^J,r_k^J)+o_k(1).
\]
Together with \eqref{eq:profile-energy} and
\eqref{eq:profile-mass}, this yields finite-stage splitting of
\(A^H_{\lambda,\ell}\).

Set
\[
 m_j=\|\phi^j\|_{q_\ell}^{q_\ell},
 \qquad \alpha=\frac2{q_\ell}\in(0,1).
\]
The critical-mass splitting and the ordered remainder estimate give
\[
 \sum_{j\geq1}m_j=1;
\]
in particular, vanishing is impossible.  Since the perturbed quadratic
form is nonnegative, the finite-stage splitting and the definition of the
quotient imply, for every \(J\),
\[
 S^H_{\lambda,\ell}
 \geq\sum_{j=1}^JA^H_{\lambda,\ell}(\phi^j)
 \geq S^H_{\lambda,\ell}\sum_{j=1}^Jm_j^\alpha.
\]
Letting \(J\to\infty\) gives
\[
 1\geq\sum_{j\geq1}m_j^\alpha.
\]
On the other hand,
\[
 \sum_{j\geq1}m_j^\alpha
 \geq\left(\sum_{j\geq1}m_j\right)^\alpha=1.
\]
Because \(\alpha<1\), equality is possible only when exactly one \(m_j\)
is nonzero; indeed \(m^\alpha+(1-m)^\alpha>1\) for \(0<m<1\).
Thus one profile carries the full critical mass and attains the quotient.
The perturbed energy of the remaining term tends to zero, and coercivity
then gives strong convergence in \(\dot S^1_\ell(\Ucal)\), hence also in
\(L^{q_\ell}\).  Replacing the minimizer by its absolute value does not
increase the quadratic form.  The first variation on the unit
\(L^{q_\ell}\)-sphere gives \eqref{eq:hardy-euler}.
\end{proof}

We transport this minimizer to the intrinsic space.  Set
\[
 \mu=(\ell-1)^2-\lambda>0
\]
and define
\[
 S^{\mathrm{CH}}_{\mu,\ell}
 =\inf_{U\ne0}
 \frac{\mathfrak b_\mu(U,U)}
 {\|U\|_{L^{q_\ell}(dV_g)}^2}.
\]
Theorem~\ref{thm:completed-cayley} gives
\begin{equation}\label{eq:quotient-factor}
 S^H_{\lambda,\ell}
 =4^{1/(n+\ell+1)}S^{\mathrm{CH}}_{\mu,\ell}.
\end{equation}

\begin{theorem}[Positive least-action ground state]
\label{thm:positive-ground-state}
Let \(n\geq1\), let \(\ell>1\) be an integer, and assume
\(0<\lambda<(\ell-1)^2\).  Set
\(\mu=(\ell-1)^2-\lambda\).  Then there exists
\[
 U_\lambda\in H^1(\CHyp^m,g)\cap C^\infty(\CHyp^m),
 \qquad U_\lambda>0,
\]
such that
\begin{equation}\label{eq:intrinsic-equation}
 [-\Delta_g-m^2+\mu]U_\lambda
 =U_\lambda^{q_\ell-1}.
\end{equation}
It has least action among all nonzero real critical points of
\[
 J^{\mathrm{CH}}_{\mu,\ell}(U)
 =\frac12\mathfrak b_\mu(U,U)
 -\frac1{q_\ell}\|U\|_{q_\ell}^{q_\ell}.
\]
More precisely,
\[
 J^{\mathrm{CH}}_{\mu,\ell}(U_\lambda)
 =d^{\mathrm{CH}}_{\mu,\ell}
 =\frac1{Q_\ell}
 \bigl(S^{\mathrm{CH}}_{\mu,\ell}\bigr)^{Q_\ell/2}.
\]
\end{theorem}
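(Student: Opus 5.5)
The plan is to transport the reduced minimizer of Theorem~\ref{thm:hardy-minimizer} through the completed Cayley map of Theorem~\ref{thm:completed-cayley}, rescale it to coefficient one, establish regularity and positivity, and finally identify its action with the least action by a Nehari-type argument.

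\emph{Construction.} Let \(v_\lambda\ge0\) be the normalized minimizer of Theorem~\ref{thm:hardy-minimizer}, and put \(W=\mathcal C_\ell^{-1}v_\lambda\in H^1(\CHyp^m,g)\). By \eqref{eq:cayley-energy}--\eqref{eq:cayley-critical} and the perturbed identity \(A^H_{\lambda,\ell}=4\mathfrak b_\mu\circ\mathcal C_\ell^{-1}\), the quotient \(\mathfrak b_\mu(U,U)/\|U\|_{q_\ell}^2\) on \(H^1_g\) equals \(4^{-1+2/q_\ell}\) times the reduced Hardy quotient. Hence \(W\) attains \(S^{\mathrm{CH}}_{\mu,\ell}\), which is consistent with \eqref{eq:quotient-factor}, and \(W\ge0\). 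Transporting \eqref{eq:hardy-euler}, or taking the first variation directly on \(H^1_g\), gives
\[
\mathfrak b_\mu(W,\Phi)=S^{\mathrm{CH}}_{\mu,\ell}\|W\|_{q_\ell}^{2-q_\ell}\int W^{q_\ell-1}\Phi\,dV_g .
\]
Setting \(U_\lambda=cW\) with \(c^{q_\ell-2}=S^{\mathrm{CH}}_{\mu,\ell}\|W\|_{q_\ell}^{2-q_\ell}\) then yields a nonnegative weak solution of \eqref{intrinsic-equation}.

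\emph{Regularity and positivity.} This is the step I expect to be the main technical point, although it is local. On \(\CHyp^m\) the operator \(-\Delta_g-m^2+\mu\) is uniformly elliptic with smooth coefficients on compact sets, and the nonlinearity is critical only for the sub-Riemannian exponent. Since \(q_\ell<2(n+1)/n\), the nonlinearity is in fact Riemannian-subcritical in real dimension \(2n+2\). So \(U_\lambda\in H^1_{\mathrm{loc}}\) with \(U_\lambda^{q_\ell-2}\in L^{r}_{\mathrm{loc}}\) for some \(r>(2n+2)/2\), and standard Moser iteration (Brezis--Kato) gives \(U_\lambda\in L^\infty_{\mathrm{loc}}\). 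Schauder bootstrapping then gives \(C^\infty\), using that \(U_\lambda^{q_\ell-1}\) is smooth wherever \(U_\lambda>0\) and Hölder in general. Positivity follows from the strong maximum principle applied to \((-\Delta_g+C_K)U_\lambda\ge0\) on each compact \(K\), where \(C_K\ge\mu-m^2\) is a local constant, together with \(U_\lambda\not\equiv0\) and connectedness.

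\emph{Least action.} For any nonzero real critical point \(U\) of \(J^{\mathrm{CH}}_{\mu,\ell}\), testing the equation with \(U\) gives \(\mathfrak b_\mu(U,U)=\|U\|_{q_\ell}^{q_\ell}\). Therefore
\[
J(U)=\Bigl(\tfrac12-\tfrac1{q_\ell}\Bigr)\|U\|_{q_\ell}^{q_\ell}=\tfrac1{Q_\ell}\,\mathfrak b_\mu(U,U).
\]
The quotient inequality \(\mathfrak b_\mu(U,U)\ge S^{\mathrm{CH}}_{\mu,\ell}\|U\|_{q_\ell}^2=S^{\mathrm{CH}}_{\mu,\ell}\,\mathfrak b_\mu(U,U)^{2/q_\ell}\) then yields \(\mathfrak b_\mu(U,U)\ge(S^{\mathrm{CH}}_{\mu,\ell})^{Q_\ell/2}\), since \(1/(1-2/q_\ell)=Q_\ell/2\). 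Hence \(J(U)\ge d^{\mathrm{CH}}_{\mu,\ell}\). For \(U_\lambda\), which is a scaled minimizer, equality holds in the quotient inequality, so \(J(U_\lambda)=d^{\mathrm{CH}}_{\mu,\ell}\).
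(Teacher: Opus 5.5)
Your proposal is correct and follows essentially the same route as the paper. You transport the nonnegative Hardy minimizer by \(\mathcal C_\ell^{-1}\) and rescale; your constant satisfies \(c^{q_\ell-2}=S^H_{\lambda,\ell}\), which is exactly the paper's factor. You then obtain local boundedness by Moser iteration from the Riemannian-subcritical integrability \(U_\lambda^{q_\ell-2}\in L^{s}_{\mathrm{loc}}\) with \(s=(n+1)(n+\ell)/n>n+1\), followed by \(C^{2,\alpha}\) regularity, positivity via the strong maximum principle, smoothness by bootstrapping, and the least-action bound from the Nehari identity. The only cosmetic point is that the \(C^\infty\) bootstrap should come after the positivity step, as you implicitly acknowledge.
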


\begin{proof}
Let \(v_\lambda\) be the normalized minimizer in
Theorem~\ref{thm:hardy-minimizer} and set
\[
 U_\lambda^{\mathrm{qn}}=\mathcal C_\ell^{-1}v_\lambda,
 \qquad
 U_\lambda
 =\bigl(S^H_{\lambda,\ell}\bigr)^{1/(q_\ell-2)}
 U_\lambda^{\mathrm{qn}}.
\]
Equations \eqref{eq:cayley-energy}--\eqref{eq:cayley-critical} and
\eqref{eq:hardy-euler} give \eqref{eq:intrinsic-equation}.

The ordinary local Sobolev embedding in real dimension \(2n+2\) implies
\[
 U_\lambda^{q_\ell-2}+m^2-\mu
 \in L^{s_0}_{\mathrm{loc}},
 \qquad
 s_0=\frac{(n+1)(n+\ell)}{n}>\frac{2n+2}{2}.
\]
Testing truncated powers of \(U_\lambda\), using a Caccioppoli estimate and
absorbing the coefficient by Hölder and interpolation, gives the standard
local Moser iteration.  Thus \(U_\lambda\in L^\infty_{\mathrm{loc}}\).
Interior elliptic estimates give a \(C^{2,\alpha}_{\mathrm{loc}}\)
representative.  On each relatively compact ball, choose
\[
 K\geq\|(U_\lambda^{q_\ell-2}+m^2-\mu)^-\|_\infty.
\]
The classical strong maximum principle for \(\Delta_g-K\), together with
connectedness and nontriviality, yields \(U_\lambda>0\).  The right-hand
side is then smooth on compact subsets, so elliptic bootstrapping gives
\(U_\lambda\in C^\infty\).

Every nonzero real critical point \(W\) satisfies the Nehari identity
\[
 \mathfrak b_\mu(W,W)=\|W\|_{q_\ell}^{q_\ell}.
\]
The definition of \(S^{\mathrm{CH}}_{\mu,\ell}\) consequently gives
\[
 J^{\mathrm{CH}}_{\mu,\ell}(W)
 =\frac1{Q_\ell}\mathfrak b_\mu(W,W)
 \geq\frac1{Q_\ell}
 \bigl(S^{\mathrm{CH}}_{\mu,\ell}\bigr)^{Q_\ell/2}.
\]
Equality holds for the rescaled quotient minimizer.
\end{proof}

\subsection{Small-parameter asymptotics and quantitative rigidity}

We now apply the profile compactness and stability theorems to the family of
Hardy minimizers.  The reduced translation--dilation group acts on the signed unit-norm
orbit \(\mathcal M_{1,\ell}\) transitively; the estimates below are
therefore orbit-distance estimates and do not select a preferred center or
scale.

\begin{lemma}[Hardy invariance under reduced frames]\label{lem:hardy-frame}
Let \(n\geq1\) and let \(\ell>1\) be an integer.  For every reduced frame
\(\mathcal T_{h,\eta}\) and all
\(u,w\in\dot S^1_\ell(\Ucal;\R)\),
\[
 \mathsf H_\ell(\mathcal T_{h,\eta}u,\mathcal T_{h,\eta}w)
 =\mathsf H_\ell(u,w).
\]
\end{lemma}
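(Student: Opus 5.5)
The plan is a direct change of variables on the smooth core $C_c^\infty(\Ucal)$, followed by a density argument. The Hardy form is continuous on $\dot S^1_\ell(\Ucal)$ by Lemma~\ref{lem:hardy-bottom-paper}, and $\mathcal T_{h,\eta}$ is unitary for $E_\ell$. It therefore suffices to verify the identity for $u,w\in C_c^\infty(\Ucal)$: both sides are then continuous bilinear forms on the completion, and they agree on a dense set.

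For smooth compactly supported $u,w$, write out
\[
 \mathsf H_\ell(\mathcal T_{h,\eta}u,\mathcal T_{h,\eta}w)
 =h^{-2(n+\ell)}\int_{\Ucal}\rho^{-1}\,
 u(\Theta(z,t,\rho))\,w(\Theta(z,t,\rho))\,\rho^{\ell-1}\dd z\dd t\dd\rho,
\]
where
\[
 \Theta(z,t,\rho)=\Bigl(\tfrac{z-z_*}{h},\tfrac{t-t_*+2\operatorname{Im}\langle z,z_*\rangle}{h^2},\tfrac{\rho}{h^2}\Bigr).
\]
The map $(z,t)\mapsto(z-z_*,\,t-t_*+2\operatorname{Im}\langle z,z_*\rangle)$ is a left translation on $\Heis^n$, and its Jacobian is $1$ because it is triangular with unit diagonal. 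The dilation $(z,t,\rho)\mapsto(z/h,t/h^2,\rho/h^2)$ changes the variables as follows:
\[
 \dd z\dd t\dd\rho=h^{2n+4}\dd z'\dd t'\dd\rho',\qquad \rho^{\ell-2}=h^{2\ell-4}(\rho')^{\ell-2}.
\]
The total power of $h$ is therefore $-2(n+\ell)+2n+4+2\ell-4=0$, and the identity follows. This is the same scaling bookkeeping that makes $E_\ell$ invariant. The weight $\rho^{-1}$ costs a factor $h^{-2}$, which matches the missing derivative factor $h^{-2}$ in the energy; it does not interact with the Heisenberg translation, since that translation leaves $\rho$ fixed.

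The only point requiring care is the extension to the completion. The identity on the core gives
\[
 \mathsf H_\ell(\mathcal T_{h,\eta}u_k,\mathcal T_{h,\eta}w_k)=\mathsf H_\ell(u_k,w_k).
\]
Passing to the limit uses two facts: $\mathcal T_{h,\eta}$ extends isometrically, and $|\mathsf H_\ell(a,b)|\le(\ell-1)^{-2}E_\ell(a)^{1/2}E_\ell(b)^{1/2}$. This last bound holds because $\ell>1$, which is exactly where that hypothesis enters, in line with Remark~\ref{rem:ell-one}. I do not expect any step to be a real obstacle; the whole argument is a change of variables plus density.
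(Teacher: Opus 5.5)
Your proposal is correct and follows the paper's proof. The paper also notes that Heisenberg translations fix \(\rho\) and \(\dd\mu_\ell\), and it balances the dilation factors \(h^{Q_\ell}\), \(h^{-2}\), and \(h^{-2(n+\ell)}\), which is equivalent to your count \(h^{2n+4}\cdot h^{2\ell-4}\cdot h^{-2(n+\ell)}=1\). It then passes from the smooth core to the completion by continuity of the Hardy form, which follows from the Hardy inequality for \(\ell>1\).
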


\begin{proof}
Heisenberg translations leave both \(\rho\) and \(\dd\mu_\ell\) unchanged.
Under the dilation in \(\mathcal T_{h,\eta}\), the measure, the Hardy weight,
and the two function amplitudes contribute respectively
\(h^{Q_\ell}\), \(h^{-2}\), and \(h^{-2(n+\ell)}\).  The factors cancel
because \(Q_\ell-2-2(n+\ell)=0\).  The identity follows first on the smooth
core and then on the completed space from the Hardy inequality.
\end{proof}

\begin{proof}[Proof of Theorem~\ref{thm:intro-rigidity}]
Write \(S=S_\ell\),
\(c_{H,\ell}=(\ell-1)^2\), and
\(\lambda_0=c_{H,\ell}/2\).  Let \(v_\lambda\) be any
real normalized minimizer of \(S^H_{\lambda,\ell}\) with
\(\|v_\lambda\|_{q_\ell}=1\).

\emph{Step~1: Uniform energy bound.}
The Hardy inequality gives
\(A^H_{\lambda,\ell}(v)\geq(1-\lambda/c_{H,\ell})E_\ell(v)\).
Testing the quotient with \(B_0\) yields
\(S^H_{\lambda,\ell}\leq S-\lambda\,\mathsf H_\ell(B_0,B_0)\leq S\).
Hence
\[
 S\leq E_\ell(v_\lambda)
 \leq\frac{S^H_{\lambda,\ell}}{1-\lambda/c_{H,\ell}}
 \leq\frac{S}{1-\lambda/c_{H,\ell}}
 \leq 2S
\]
for \(0<\lambda\leq\lambda_0\).

\emph{Step~2: Defect identity.}
Set \(\delta_\lambda=E_\ell(v_\lambda)-S\geq0\) and
\(\Delta_\lambda=\mathsf H_\ell(v_\lambda,v_\lambda)
-\mathsf H_\ell(B_0,B_0)\).  The minimizing identity gives
\[
 S^H_{\lambda,\ell}
 =S+\delta_\lambda-\lambda\,\mathsf H_\ell(v_\lambda,v_\lambda).
\]
Comparing with the bubble test bound
\(S^H_{\lambda,\ell}\leq S-\lambda\,\mathsf H_\ell(B_0,B_0)\) yields
\begin{equation}\label{eq:defect-sign}
 0\leq\delta_\lambda\leq\lambda\,\Delta_\lambda.
\end{equation}
In particular \(\Delta_\lambda\geq0\), and the quotient remainder
\(r_\lambda=\delta_\lambda-\lambda\,\Delta_\lambda\leq0\).

\emph{Step~3: Cone-to-orbit comparison.}
Let \(D_\lambda=\operatorname{dist}_{\dot S^1_\ell}
(v_\lambda,\mathfrak M_\ell^{\R})\) and
\(d_\lambda^{\mathrm{orb}}=\operatorname{dist}_{\dot S^1_\ell}
(v_\lambda,\mathcal M_{1,\ell})\).
Since \(\mathcal M_{1,\ell}\subset\mathfrak M_\ell^{\R}\),
\(D_\lambda\leq d_\lambda^{\mathrm{orb}}\).
For the reverse, let \(W=A\,\mathcal T_{h,\eta}B_0\) with
\(A\in\R\).  If \(A\ne0\), set \(\sigma=\operatorname{sgn}A\); otherwise
choose either sign.  The reverse triangle inequality in \(L^{q_\ell}\) and
the sharp embedding give \(||A|-1|\leq S^{-1/2}\|v_\lambda-W\|_{\dot
S^1_\ell}\), so
\[
 \|v_\lambda-\sigma\,\mathcal T_{h,\eta}B_0\|_{\dot S^1_\ell}
 \leq 2\|v_\lambda-W\|_{\dot S^1_\ell}.
\]
Taking the infimum over the cone yields
\(d_\lambda^{\mathrm{orb}}\leq 2D_\lambda\).
Theorem~\ref{thm:intro-stability} then gives
\begin{equation}\label{eq:stability-applied}
 \frac{\kappa_{n,\ell}}{4}\,(d_\lambda^{\mathrm{orb}})^2
 \leq\kappa_{n,\ell}\,D_\lambda^2
 \leq\delta_\lambda.
\end{equation}

\emph{Step~4: Hardy continuity.}
The polarized Hardy form satisfies
\(|\mathsf H_\ell(u,w)|\leq c_{H,\ell}^{-1}\|u\|_{\dot S^1_\ell}
\|w\|_{\dot S^1_\ell}\) by Cauchy--Schwarz and the Hardy inequality.  For
any approximate orbit element
\(W_\varepsilon\in\mathcal M_{1,\ell}\) with
\(\|v_\lambda-W_\varepsilon\|_{\dot S^1_\ell}
\leq d_\lambda^{\mathrm{orb}}+\varepsilon\),
the frame invariance and sign invariance of the Hardy form give
\(\mathsf H_\ell(W_\varepsilon,W_\varepsilon)=\mathsf H_\ell(B_0,B_0)\)
and \(\|W_\varepsilon\|_{\dot S^1_\ell}=\sqrt{S}\).
Thus
\[
 |\Delta_\lambda|
 \leq\frac{\sqrt{2S}+\sqrt{S}}{c_{H,\ell}}\,
 d_\lambda^{\mathrm{orb}}
 =:C_H\,d_\lambda^{\mathrm{orb}},
\]
after sending \(\varepsilon\downarrow0\).

\emph{Step~5: Closure.}
Combining \eqref{eq:defect-sign}, \eqref{eq:stability-applied}, and the
Hardy continuity bound gives
\[
 \frac{\kappa_{n,\ell}}{4}\,(d_\lambda^{\mathrm{orb}})^2
 \leq\delta_\lambda
 \leq C_H\lambda\,d_\lambda^{\mathrm{orb}}.
\]
If \(d_\lambda^{\mathrm{orb}}>0\), division yields
\(d_\lambda^{\mathrm{orb}}\leq
4C_H\lambda/\kappa_{n,\ell}\).  If
\(d_\lambda^{\mathrm{orb}}=0\), then
\(\Delta_\lambda=0\) and \(\delta_\lambda=0\), so the same conclusion holds
without division.  Substituting back gives
\[
 0\leq\delta_\lambda\leq\frac{4C_H^2}{\kappa_{n,\ell}}\lambda^2,
 \qquad
 -\frac{4C_H^2}{\kappa_{n,\ell}}\lambda^2
 \leq r_\lambda\leq0.
\]
All constants depend only on \((n,\ell)\), and no choice among normalized
minimizers enters.

It remains to pass from normalized minimizers to coefficient-one solutions.
Set
\[
 S_\lambda=S^H_{\lambda,\ell},\qquad
 \theta_\ell=\frac1{q_\ell-2},\qquad
 \bar v_\lambda=S_\lambda^{\theta_\ell}v_\lambda,
 \qquad
 \widehat B_0=S^{\theta_\ell}B_0.
\]
Then \(\bar v_\lambda\) satisfies
\[
 \Gell_\ell\bar v_\lambda
 -\lambda\rho^{-1}\bar v_\lambda
 =|\bar v_\lambda|^{q_\ell-2}\bar v_\lambda,
\]
whereas \(\widehat B_0\) is the coefficient-one pure bubble.  If
\[
 \widehat{\mathcal M}_{1,\ell}
 =\{\sigma\mathcal T_{h,\eta}\widehat B_0:
 \sigma\in\{-1,1\},\ h>0,\ \eta\in\Heis^n\},
\]
then comparison with approximate elements of \(\mathcal M_{1,\ell}\) gives
\begin{equation}\label{eq:coefficient-one-distance}
 \operatorname{dist}_{\dot S^1_\ell}
 \bigl(\bar v_\lambda,\widehat{\mathcal M}_{1,\ell}\bigr)
 \leq S_\lambda^{\theta_\ell}d_\lambda^{\mathrm{orb}}
 +\sqrt S\,|S_\lambda^{\theta_\ell}-S^{\theta_\ell}|.
\end{equation}
The quotient bounds used in Step~1 also imply
\[
 \left(1-\frac{\lambda}{c_{H,\ell}}\right)S
 \leq S_\lambda\leq S,
 \qquad
 0\leq S-S_\lambda\leq\frac S{c_{H,\ell}}\lambda.
\]
For \(0<\lambda\leq c_{H,\ell}/2\), the map
\(x\mapsto x^{\theta_\ell}\) has a bounded derivative on \([S/2,S]\).
Combining this observation, \eqref{eq:coefficient-one-distance}, and the
already proved estimate for \(d_\lambda^{\mathrm{orb}}\) gives
\[
 \operatorname{dist}_{\dot S^1_\ell}
 \bigl(\bar v_\lambda,\widehat{\mathcal M}_{1,\ell}\bigr)
 =O_{n,\ell}(\lambda).
\]

Finally, Taylor expansion of \(Q_\ell^{-1}x^{Q_\ell/2}\) on \([S/2,S]\),
together with
\(S_\lambda=S-\lambda\mathsf H_\ell(B_0,B_0)+O_{n,\ell}(\lambda^2)\),
yields
\[
 d_{\lambda,\ell}
 =\frac1{Q_\ell}S_\lambda^{Q_\ell/2}
 =\frac1{Q_\ell}S^{Q_\ell/2}
 -\frac12S^{Q_\ell/2-1}\mathsf H_\ell(B_0,B_0)\lambda
 +O_{n,\ell}(\lambda^2).
\]
The nonpositive remainder established above belongs to the quotient itself,
and does not transfer to this last expansion.  Finally,
\eqref{eq:cayley-energy}--\eqref{eq:cayley-critical} give
\(J^H_{\lambda,\ell}(\mathcal C_\ell U)=4J^{\mathrm{CH}}_{\mu,\ell}(U)\),
so the intrinsic action quantum is \(d_{\lambda,\ell}/4\).
\end{proof}

\subsection{Normal deformation and the second-order quotient}

We first justify the solvability condition and the strict sign of the
coefficient introduced in \eqref{eq:intro-first-correction}.

\begin{lemma}[The first normal correction]
\label{lem:first-normal-correction}
The right-hand side of \eqref{eq:intro-first-correction} annihilates
\(\R B_0\oplus\mathcal Z_\ell\).  Consequently there is a unique
\(w_{1,\ell}\in Y_\ell\) satisfying that equation for every
\(\phi\in\dot S^1_\ell(\Ucal;\R)\), and
\[
 c_{2,\ell}
 =\mathsf H_\ell(B_0,w_{1,\ell})
 =\mathcal Q_{B_0}(w_{1,\ell},w_{1,\ell})>0.
\]
\end{lemma}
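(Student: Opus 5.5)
Set \(\ell_H(\phi)=\mathsf H_\ell(B_0,\phi)-H_{0,\ell}\int_{\Ucal}B_0^{q_\ell-1}\phi\dd\mu_\ell\), the right-hand side of \eqref{eq:intro-first-correction}. We will check that \(\ell_H\) annihilates \(\R B_0\oplus\mathcal Z_\ell\), invert \(A_B\) on \(Y_\ell\), and then verify the two expressions for \(c_{2,\ell}\) and its positivity.

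\emph{Annihilation.} Since \(\|B_0\|_q=1\), we have \(\ell_H(B_0)=H_{0,\ell}-H_{0,\ell}=0\). For \(Z\in\mathcal Z_\ell\), write \(Z=\frac{d}{ds}\big|_{s=0}\mathcal T_{g(s)}B_0\) along a one-parameter path of reduced frames. Lemma~\ref{lem:hardy-frame} makes \(s\mapsto\mathsf H_\ell(\mathcal T_{g(s)}B_0,\mathcal T_{g(s)}B_0)\) constant. The orbit map is differentiable into \(\dot S^1_\ell\), as in the proof of Lemma~\ref{lem:normal-slice}, and the Hardy form is continuous there. Differentiating therefore gives \(2\mathsf H_\ell(B_0,Z)=0\). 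Likewise \(\|\mathcal T_{g(s)}B_0\|_q^q\) is constant, so \(\int B_0^{q-1}Z=0\). Hence \(\ell_H\) vanishes on the whole span. The main technical point is this differentiation, especially for the dilation generator \(\Lambda_\ell B_0\). If the direct argument is delicate, we can instead use the explicit decay of \(\Lambda_\ell B_0\) together with the Hardy inequality (Lemma~\ref{lem:hardy-bottom-paper}) to justify differentiating under the integral.

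\emph{Solvability.} By the Hardy inequality and the critical embedding, \(\ell_H\) is a bounded functional on \(\dot S^1_\ell\), so Riesz gives a unique \(h\) with \(E_\ell(h,\phi)=\ell_H(\phi)\). By the annihilation step, \(h\perp_E \R B_0\oplus\mathcal Z_\ell\), that is, \(h\in Y_\ell\). Next we check that \(A_B\) preserves \(Y_\ell\):
\begin{itemize}
\item it annihilates \(\mathcal Z_\ell=\ker L_B\) by Theorem~\ref{thm:fixed-nondegeneracy};
\item it maps \(B_0\) to a multiple of \(B_0\), as computed in Corollary~\ref{cor:normal-inverse};
\item by self-adjointness it therefore leaves the orthogonal complement \(Y_\ell\) invariant.
\end{itemize}
Theorem~\ref{thm:normal-spectral-gap} makes \(\mathcal Q_{B_0}\) coercive on \(Y_\ell\), so Lax--Milgram on \(Y_\ell\) yields a unique \(w_{1,\ell}\in Y_\ell\) with \(\mathcal Q_{B_0}(w_{1,\ell},\phi)=\ell_H(\phi)\) for all \(\phi\in Y_\ell\).

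To extend the identity to all \(\phi\), decompose \(\phi=\phi_Y+aB_0+Z\). Both sides vanish on the \(Z\) component: the left because \(Z\in\ker L_B\), the right by annihilation. On \(B_0\) the right side vanishes. The left side equals \(\mathcal Q_{B_0}(w_{1,\ell},B_0)=E_\ell(w_{1,\ell},A_BB_0)\), a multiple of \(E_\ell(w_{1,\ell},B_0)\), which is zero since \(w_{1,\ell}\in Y_\ell\). Uniqueness in \(Y_\ell\) follows from coercivity.

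\emph{The constant.} Take \(\phi=w_{1,\ell}\). Since \(\int B_0^{q-1}w_{1,\ell}=0\), this gives \(\mathcal Q_{B_0}(w_{1,\ell},w_{1,\ell})=\mathsf H_\ell(B_0,w_{1,\ell})=c_{2,\ell}\). By the gap, \(c_{2,\ell}\ge\frac{2}{n+\ell+4}E_\ell(w_{1,\ell})\ge0\), with equality only if \(w_{1,\ell}=0\).

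So positivity reduces to showing \(\ell_H\not\equiv0\), i.e., that \(\rho^{-1}B_0\) is not a combination of \(B_0^{q-1}\) and \(\Gell_\ell\) applied to kernel elements. If \(w_{1,\ell}=0\), then \(\ell_H\equiv0\), which gives \(\rho^{-1}B_0=H_{0,\ell}B_0^{q-1}\) in the distributional sense on \(\Ucal\). This is false pointwise: \(\rho^{-1}B_0\to\infty\) as \(\rho\downarrow0\), while \(B_0^{q-1}\) stays bounded. Hence \(c_{2,\ell}>0\).
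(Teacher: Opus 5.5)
Your proof is correct and follows the paper's argument essentially step for step: you differentiate the Hardy and critical-norm invariances along the reduced orbit to get annihilation of \(\R B_0\oplus\mathcal Z_\ell\), solve on \(Y_\ell\) using the normal spectral gap, extend to all tests via \(\mathcal Q_{B_0}(w,B_0)=0\) and \(\mathcal Z_\ell=\ker L_{B_0}\), and rule out \(w_{1,\ell}=0\) because it would force \(\rho^{-1}B_0=H_{0,\ell}B_0^{q_\ell-1}\). The only differences are cosmetic: you invoke Lax--Milgram on \(Y_\ell\) where the paper cites Corollary~\ref{cor:normal-inverse}, and you refute the last identity by the blow-up of \(\rho^{-1}B_0\) as \(\rho\downarrow0\) rather than by noting that \([(1+\rho+|z|^2)^2+t^2]/\rho\) would have to be constant.
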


\begin{proof}
Differentiating the critical-norm and Hardy invariances along the reduced
orbit through \(B_0\) gives, for every \(Z\in\mathcal Z_\ell\),
\[
 \int_{\Ucal}B_0^{q_\ell-1}Z\dd\mu_\ell=0,
 \qquad
 \mathsf H_\ell(B_0,Z)=0.
\]
For \(\phi=B_0\), the same functional vanishes because
\(\|B_0\|_{q_\ell}=1\).  Corollary~\ref{cor:normal-inverse} therefore
produces a unique solution in \(Y_\ell\).  Since
\(\mathcal Q_{B_0}(w,B_0)=0\) for \(w\in Y_\ell\) and
\(\mathcal Z_\ell=\ker L_{B_0}\), the equation then holds against the
whole energy-orthogonal decomposition
\[
 \dot S^1_\ell(\Ucal;\R)
 =\R B_0\oplus_E\mathcal Z_\ell\oplus_EY_\ell.
\]
Testing it with \(w_{1,\ell}\) gives the two formulas for
\(c_{2,\ell}\), and Theorem~\ref{thm:normal-spectral-gap} gives
\(c_{2,\ell}\geq0\).

If equality held, the same spectral gap would force \(w_{1,\ell}=0\).
The forcing functional would then vanish on all of
\(\dot S^1_\ell(\Ucal;\R)\), and hence
\[
 \rho^{-1}B_0=H_{0,\ell}B_0^{q_\ell-1}
\]
in the distributional sense.  This is impossible.  Indeed, for the
centered bubble
\(B_0=C[(1+\rho+|z|^2)^2+t^2]^{-(n+\ell)/2}\), the asserted identity
would require \([(1+\rho+|z|^2)^2+t^2]/\rho\) to be constant.  Thus
\(c_{2,\ell}>0\).
\end{proof}

\begin{proof}[Proof of Theorem~\ref{thm:intro-second-order}]
Write
\[
 q=q_\ell,\qquad S=S_\ell,\qquad B=B_0,\qquad
 H_0=H_{0,\ell},\qquad w_1=w_{1,\ell},\qquad c_2=c_{2,\ell}.
\]
By \eqref{eq:intro-subcubic-range}, \(2<q<3\).
Theorem~\ref{thm:intro-rigidity} and
Lemma~\ref{lem:normal-slice} give signs and reduced frames such that
\begin{equation}\label{eq:hardy-modulation}
 u_\lambda
 :=\mathcal T_{g_\lambda}^{-1}(\sigma_\lambda v_\lambda)
 =B+r_\lambda,
 \qquad
 r_\lambda\perp_E\mathcal Z_\ell,
 \qquad
 \|r_\lambda\|_{\dot S^1_\ell}=O(\lambda).
\end{equation}
The Hardy quotient and the critical norm are invariant under these
operations; hence
\[
 \|B+r_\lambda\|_q=1
\]
and
\begin{equation}\label{eq:normalized-hardy-equation}
 E_\ell(B+r_\lambda,\phi)
 -\lambda\mathsf H_\ell(B+r_\lambda,\phi)
 =S^H_{\lambda,\ell}
 \int_{\Ucal}|B+r_\lambda|^{q-2}(B+r_\lambda)\phi\dd\mu_\ell
\end{equation}
for every \(\phi\in\dot S^1_\ell(\Ucal;\R)\).

The second-order expansion of the normalization gives
\begin{equation}\label{eq:normalization-expansion}
\begin{aligned}
 q\int_{\Ucal}B^{q-1}r_\lambda\dd\mu_\ell
 &+\frac{q(q-1)}2
 \int_{\Ucal}B^{q-2}r_\lambda^2\dd\mu_\ell\\
 &=O\!\left(\|r_\lambda\|_{\dot S^1_\ell}^q\right).
\end{aligned}
\end{equation}
Consequently,
\[
 a_\lambda
 :=\int_{\Ucal}B^{q-1}r_\lambda\dd\mu_\ell
 =O(\lambda^2).
\]
Writing
\[
 r_\lambda=a_\lambda B+y_\lambda,
\]
we have \(y_\lambda\in Y_\ell\).

For \(2<q<3\), the critical Nemytskii map satisfies
\begin{equation}\label{eq:nemytskii-remainder}
\begin{aligned}
 \bigl\|
 |B+r|^{q-2}(B+r)-B^{q-1}
 -(q-1)B^{q-2}r
 \bigr\|_{q'}
 \leq C\|r\|_q^{q-1}.
\end{aligned}
\end{equation}
Subtract the equation for \(B\) from
\eqref{eq:normalized-hardy-equation} and test with \(\phi\in Y_\ell\).
The term
\((S^H_{\lambda,\ell}-S)\int B^{q-1}\phi\) vanishes, and
Theorem~\ref{thm:intro-rigidity},
\eqref{eq:hardy-modulation}, and
\eqref{eq:nemytskii-remainder} yield
\[
 \left|
 \mathcal Q_B(y_\lambda,\phi)
 -\lambda\mathsf H_\ell(B,\phi)
 \right|
 \leq C\lambda^{q-1}\|\phi\|_{\dot S^1_\ell}.
\]
For \(\phi\in Y_\ell\), equation
\eqref{eq:intro-first-correction} reads
\[
 \mathcal Q_B(w_1,\phi)=\mathsf H_\ell(B,\phi).
\]
The exact normal coercivity
\eqref{eq:exact-normal-gap} therefore gives
\[
 \|y_\lambda-\lambda w_1\|_{\dot S^1_\ell}
 \leq C\lambda^{q-1}.
\]
Since \(a_\lambda=O(\lambda^2)\) and \(q-1<2\), this proves
\eqref{eq:intro-modulated-shape}.

We next expand the quotient.  The scalar Taylor estimate
\[
\begin{aligned}
 \bigl|
 |a+b|^q-|a|^q-q|a|^{q-2}ab
 -\tfrac{q(q-1)}2|a|^{q-2}b^2
 \bigr|
 \leq C|b|^q
\end{aligned}
\]
and \eqref{eq:normalization-expansion} imply
\[
 E_\ell(B+r_\lambda)-S
 =\mathcal Q_B(r_\lambda,r_\lambda)
 +O\!\left(\|r_\lambda\|_{\dot S^1_\ell}^q\right).
\]
Using the shape expansion and
\[
 c_2
 =\mathcal Q_B(w_1,w_1)
 =\mathsf H_\ell(B,w_1),
\]
we obtain
\[
 E_\ell(B+r_\lambda)-S
 =c_2\lambda^2+O(\lambda^q)
\]
and
\[
 \mathsf H_\ell(B+r_\lambda,B+r_\lambda)
 =H_0+2c_2\lambda+O(\lambda^{q-1}).
\]
The exact minimizing identity now gives
\[
 S^H_{\lambda,\ell}
 =E_\ell(B+r_\lambda)
 -\lambda\mathsf H_\ell(B+r_\lambda,B+r_\lambda)
 =S-\lambda H_0-\lambda^2c_2+O(\lambda^q),
\]
which is \eqref{eq:intro-second-order-quotient}.

Finally, Taylor expansion of
\(Q_\ell^{-1}x^{Q_\ell/2}\) at \(S\) gives
\[
\begin{aligned}
 d_{\lambda,\ell}
={}&\frac1{Q_\ell}S^{Q_\ell/2}
 -\frac12S^{Q_\ell/2-1}H_0\lambda\\
&+\left[
 -\frac12S^{Q_\ell/2-1}c_2
 +\frac{Q_\ell-2}{8}S^{Q_\ell/2-2}H_0^2
 \right]\lambda^2
 +O(\lambda^q).
\end{aligned}
\]
\end{proof}

\begin{corollary}[Coefficient-one first correction]
\label{cor:coefficient-one-correction}
Let
\[
 \theta_\ell=\frac1{q_\ell-2},\qquad
 \widehat B_0=S_\ell^{\theta_\ell}B_0,
\]
and define
\[
 \widehat w_{1,\ell}
 =S_\ell^{\theta_\ell}w_{1,\ell}
 -\theta_\ell H_{0,\ell}
 S_\ell^{\theta_\ell-1}B_0.
\]
Then
\[
 \mathcal T_{g_\lambda}^{-1}
 \left(\sigma_\lambda
 (S^H_{\lambda,\ell})^{\theta_\ell}v_\lambda\right)
 =\widehat B_0+\lambda\widehat w_{1,\ell}
 +O_{\dot S^1_\ell}(\lambda^{q_\ell-1}),
\]
and
\[
 \Gell_\ell\widehat w_{1,\ell}
 -(q_\ell-1)\widehat B_0^{q_\ell-2}\widehat w_{1,\ell}
 =\rho^{-1}\widehat B_0.
\]
\end{corollary}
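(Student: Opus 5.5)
The plan has two parts: transfer the shape expansion of Theorem~\ref{thm:intro-second-order} by rescaling, then verify the linear equation for \(\widehat w_{1,\ell}\) directly from \eqref{eq:intro-first-correction}.

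\emph{Shape expansion.} Since \(\mathcal T_{g_\lambda}^{-1}\) is linear, the left side equals \((S^H_{\lambda,\ell})^{\theta_\ell}\bigl(B_0+\lambda w_{1,\ell}+O(\lambda^{q_\ell-1})\bigr)\) by \eqref{eq:intro-modulated-shape}. By \eqref{eq:intro-second-order-quotient}, \(S^H_{\lambda,\ell}=S_\ell-\lambda H_{0,\ell}+O(\lambda^2)\). A Taylor expansion of \(x\mapsto x^{\theta_\ell}\) at \(S_\ell\) then gives
\[
 (S^H_{\lambda,\ell})^{\theta_\ell}
 =S_\ell^{\theta_\ell}-\theta_\ell S_\ell^{\theta_\ell-1}H_{0,\ell}\lambda+O(\lambda^2).
\]
Multiplying out, the \(O(\lambda^2)\) terms are absorbed into \(O(\lambda^{q_\ell-1})\), because \(q_\ell-1<2\). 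This leaves \(\widehat B_0+\lambda\widehat w_{1,\ell}+O(\lambda^{q_\ell-1})\), with \(\widehat w_{1,\ell}\) as defined.

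\emph{Equation for \(\widehat w_{1,\ell}\).} Read weakly against \(\phi\in\dot S^1_\ell\), the linearized operator at \(\widehat B_0\) is exactly \(\mathcal Q_{B_0}\), since \(\widehat B_0^{q-2}=S_\ell B_0^{q-2}\). Equation \eqref{eq:intro-first-correction} holds for all \(\phi\), so \(S_\ell^{\theta_\ell}\mathcal Q_{B_0}(w_{1,\ell},\phi)\) equals
\[
 \mathsf H_\ell(\widehat B_0,\phi)-S_\ell^{\theta_\ell}H_{0,\ell}\int B_0^{q-1}\phi\dd\mu_\ell.
\]
For the \(B_0\) term, \(\mathcal Q_{B_0}(B_0,\phi)=(1-(q-1))E_\ell(B_0,\phi)=-(q-2)S_\ell\int B_0^{q-1}\phi\), using \(\Gell_\ell B_0=S_\ell B_0^{q-1}\). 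Since \(\theta_\ell(q-2)=1\), multiplying by \(-\theta_\ell H_{0,\ell}S_\ell^{\theta_\ell-1}\) gives exactly \(+S_\ell^{\theta_\ell}H_{0,\ell}\int B_0^{q-1}\phi\). This cancels the extra term and leaves \(\mathcal Q_{B_0}(\widehat w_{1,\ell},\phi)=\mathsf H_\ell(\widehat B_0,\phi)\) for all \(\phi\), which is the stated equation in weak form.

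The whole argument is bookkeeping. The only real point is the exponent comparison: the \(\lambda^2\) terms from expanding the scalar factor must be dominated by \(\lambda^{q_\ell-1}\), and this holds because \(2<q_\ell<3\).
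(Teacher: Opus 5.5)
Your proposal is correct and follows the paper's proof essentially step by step. The paper likewise rescales \eqref{eq:intro-modulated-shape} using the Taylor expansion \((S^H_{\lambda,\ell})^{\theta_\ell}=S_\ell^{\theta_\ell}-\theta_\ell H_{0,\ell}S_\ell^{\theta_\ell-1}\lambda+O(\lambda^2)\). It then verifies the weak equation through the identity \(\mathcal Q_{B_0}(B_0,\phi)=-(q_\ell-2)S_\ell\int_{\Ucal}B_0^{q_\ell-1}\phi\,d\mu_\ell\), which cancels the normalization term exactly as in your computation.
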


\begin{proof}
The quotient expansion gives
\[
 (S^H_{\lambda,\ell})^{\theta_\ell}
 =S_\ell^{\theta_\ell}
 -\theta_\ell H_{0,\ell}
 S_\ell^{\theta_\ell-1}\lambda+O(\lambda^2).
\]
Combining this identity with
\eqref{eq:intro-modulated-shape} proves the first assertion.  Moreover,
\[
 \mathcal Q_{B_0}(B_0,\phi)
 =-(q_\ell-2)S_\ell
 \int_{\Ucal}B_0^{q_\ell-1}\phi\dd\mu_\ell.
\]
Substitution of \eqref{eq:intro-first-correction} into the definition of
\(\widehat w_{1,\ell}\) cancels the normalization term and gives the
displayed coefficient-one equation.
\end{proof}

\subsection{Local uniqueness and intrinsic radiality}

\begin{proposition}[Unique normalized branch in the normal slice]
\label{prop:unique-hardy-branch}
For all sufficiently small positive \(\lambda\), the normalized Hardy
equation
\[
 \Gell_\ell u-\lambda\rho^{-1}u
 =s|u|^{q_\ell-2}u
\]
has exactly one solution \((u,s)\) near \((B_0,S_\ell)\) satisfying
\[
 \|u\|_{q_\ell}=1,
 \qquad
 E_\ell(u-B_0,Z)=0\quad(Z\in\mathcal Z_\ell).
\]
\end{proposition}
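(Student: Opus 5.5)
Uniqueness and existence will both come from an implicit function argument on the normal slice, with the only non-smoothness (the Nemytskii map is merely \(C^{1,q_\ell-2}\)) handled by a contraction estimate rather than by a \(C^2\) implicit function theorem. Write \(u=B_0+r\) with \(r\perp_E\mathcal Z_\ell\) and decompose \(r=aB_0+y\), \(y\in Y_\ell\). The unknowns are \((a,y,s)\). I would recast the problem as a fixed point for the map
\[
 F_\lambda(a,y,s)=\Bigl(\mathcal P\bigl[A^H_{\lambda,\ell}\text{-Riesz}(B_0+r)-s\,\mathcal N(B_0+r)\bigr],\ \|B_0+r\|_{q_\ell}^{q_\ell}-1\Bigr),
\]
where \(\mathcal N(u)=|u|^{q_\ell-2}u\) is represented in \(\dot S^1_\ell\) via the energy Riesz map, and \(\mathcal P\) denotes the energy projection onto \((\mathcal Z_\ell)^{\perp_E}=\R B_0\oplus Y_\ell\). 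The component along \(\mathcal Z_\ell\) must also vanish. I would check this a posteriori: it follows from invariance of \(\|\cdot\|_{q_\ell}\), \(E_\ell\) and \(\mathsf H_\ell\) under reduced frames (Lemma~\ref{lem:hardy-frame}) via the usual Lyapunov--Schmidt natural-constraint argument. Alternatively, and more simply, I would just include the \(\mathcal Z_\ell\)-component in the equation and note that uniqueness only needs injectivity, not that the component is automatically zero.

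\textbf{Step 1: the linearization.} At \((\lambda,a,y,s)=(0,0,0,S_\ell)\), differentiate the Riesz form of the equation in \((a,y,s)\). By the definition of \(\mathcal Q_{B_0}\) and \(A_B\), the \(y\)-derivative is \(A_B|_{Y_\ell}\), which is invertible with bound \(2/(n+\ell+4)\) by Corollary~\ref{cor:normal-inverse}. The \(a\)-derivative is \(A_BB_0=-(q_\ell-2)B_0\). The \(s\)-derivative is \(-\mathcal N(B_0)\), which is proportional to \(B_0\) because \(E_\ell(B_0,\phi)=S\int B_0^{q-1}\phi\). The constraint linearizes to \(q_\ell\int B_0^{q_\ell-1}r=q_\ell S_\ell^{-1}E_\ell(B_0,r)\), i.e.\ to \(a\) times a nonzero constant. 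The \(Y_\ell\)-block is therefore isomorphic, and the \((a,s)\) block is a \(2\times2\) triangular system with nonzero diagonal: the constraint fixes \(a\), and the \(B_0\)-component of the equation then fixes \(s\). Hence the full linearization \(L_0\) is an isomorphism of \(\R\times Y_\ell\times\R\) onto \(\R B_0\oplus Y_\ell\times\R\).

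\textbf{Step 2: Lipschitz contraction.} Write the equation as \(L_0(a,y,s-S_\ell)=-\lambda\,\mathsf H\text{-term}+\mathcal R(a,y,s)\). Using the Hardy bound \(|\mathsf H_\ell(u,w)|\le(\ell-1)^{-2}\|u\|\|w\|\), the Hölder-type estimate
\[
 \bigl\||B+r_1|^{q-2}(B+r_1)-|B+r_2|^{q-2}(B+r_2)-(q-1)B^{q-2}(r_1-r_2)\bigr\|_{q'}\le C(\|r_1\|+\|r_2\|)^{q-2}\|r_1-r_2\|
\]
(the differentiated form of \eqref{eq:nemytskii-remainder}), and the critical embedding, the remainder \(\mathcal R\) has Lipschitz constant \(O(\delta^{q_\ell-2}+\lambda)\) on a \(\delta\)-ball. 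Then \(L_0^{-1}\) composed with the right-hand side is a contraction on a small ball for \(\delta\) and \(\lambda\) small, by the Banach fixed point theorem. This gives exactly one solution in that neighborhood, and existence is additionally confirmed by the modulated minimizers of Theorem~\ref{thm:intro-rigidity} and Lemma~\ref{lem:normal-slice}.

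The main obstacle is Step 2's handling of the \(\mathcal Z_\ell\)-component. If the equation is posed only modulo \(\mathcal Z_\ell\), an extraneous Lagrange term could appear, so uniqueness must be stated for the full equation. The cleanest route is to keep the full equation, notice that the \(\mathcal Z_\ell\)-component of the linearization vanishes (since \(\mathcal Z_\ell=\ker L_B\)), and then argue that any solution of the full equation in the ball solves the projected system. Uniqueness of the projected system then gives uniqueness of the full one. The remaining care point is the uniform Lipschitz estimate for \(q_\ell<3\), where \(C^2\) is unavailable; this is why I use contraction rather than the smooth implicit function theorem.
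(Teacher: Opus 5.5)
Your proposal is correct and follows essentially the same route as the paper. Both arguments solve a Lyapunov--Schmidt system at $(B_0,S_\ell)$ and invert its linearization through $\R B_0\oplus_E\mathcal Z_\ell\oplus_E Y_\ell$ and Corollary~\ref{cor:normal-inverse}. The paper keeps $u$ free and adjoins Lagrange multipliers $\mathbf b$ on $\mathcal Z_\ell$ together with the slice equations, where you restrict to the slice and project. It then uses the modulated minimizer to force $\mathbf b=0$ and to supply existence, exactly as you do.

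Two small remarks. Your contraction is just the proof of the ordinary $C^1$ implicit function theorem, which the paper applies directly since the Nemytskii map is $C^{1,q_\ell-2}$, so no $C^2$ regularity is needed. Of your two options for the $\mathcal Z_\ell$-component, rely on the minimizer route: the natural-constraint argument would require differentiating the frame orbit at a non-smooth $u$.
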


\begin{proof}
Let \(R_E:\dot S^1_\ell\to(\dot S^1_\ell)^*\) be the Riesz map induced by
the energy, and choose a basis \(Z_1,\ldots,Z_{2n+2}\) of
\(\mathcal Z_\ell\).  Introduce the augmented map
\[
\begin{aligned}
 \mathbf F(\lambda,u,s,\mathbf b)
 =\Bigg(&R_E^{-1}\!\left[
 E_\ell(u,\cdot)-\lambda\mathsf H_\ell(u,\cdot)
 -s\int_{\Ucal}|u|^{q_\ell-2}u(\cdot)\dd\mu_\ell
 \right]
 -\sum_{a=1}^{2n+2}b_aZ_a,\\
 &\int_{\Ucal}|u|^{q_\ell}\dd\mu_\ell-1,\,
 \bigl(E_\ell(u-B_0,Z_a)\bigr)_{a=1}^{2n+2}
 \Bigg).
\end{aligned}
\]
At \((0,B_0,S_\ell,0)\), its derivative in
\((u,s,\mathbf b)\) sends \((w,\sigma,\mathbf c)\) to
\[
\begin{aligned}
 \Bigg(&A_{B_0}w-\frac{\sigma}{S_\ell}B_0
 -\sum_{a=1}^{2n+2}c_aZ_a,\,
 \frac{q_\ell}{S_\ell}E_\ell(B_0,w),\,
 \bigl(E_\ell(w,Z_a)\bigr)_{a=1}^{2n+2}\Bigg).
\end{aligned}
\]
The energy-orthogonal decomposition
\[
 \dot S^1_\ell
 =\R B_0\oplus_E\mathcal Z_\ell\oplus_EY_\ell
\]
and Corollary~\ref{cor:normal-inverse} show that this derivative is an
isomorphism.  The critical Nemytskii map is
\(C^{1,q_\ell-2}:L^{q_\ell}\to L^{q_\ell'}\), so the implicit function
theorem gives a unique nearby augmented branch.

A modulated normalized minimizer exists and solves the augmented equation
with \(\mathbf b=0\).  Uniqueness therefore forces the augmented branch
to have \(\mathbf b=0\).  Any normalized solution of the original equation
in the same slice is another augmented zero with \(\mathbf b=0\), and must
coincide with this branch.
\end{proof}

\begin{proof}[Proof of the uniqueness and symmetry assertions in
Theorem~\ref{thm:intro-symmetry}]
Let \(o=(0,0,1)\) in Siegel coordinates and let
\(K_o\simeq U(n+1)\) be its stabilizer.  With \(N=n+\ell\) and
\[
 D=(1+\rho+|z|^2)^2+t^2,
\]
the curvature-\(-4\) distance formula is
\[
 \cosh^2d_g(o,x)=\frac{D}{4\rho}.
\]
The inverse Cayley image of the coefficient-one centered bubble is
\[
 U_0(x)
 =\rho^{N/2}\widehat B_0(x)
 =C(\cosh d_g(o,x))^{-N}.
\]
It is therefore fixed by \(K_o\), and the same is true of \(B_0\).

The Iwasawa subgroup
\[
 AN\simeq\Heis^n\rtimes\R_+
\]
is the reduced translation--dilation group and, as in the harmonic
\(AN\) picture of \cite{AnkerDamekYacoub1996HarmonicAN}, acts simply
transitively
on \(\CHyp^{n+1}\).  If \(G\) denotes the identity component of the
holomorphic isometry group and \(g\in G\), choose \(\gamma\in AN\) with
\(\gamma o=go\).  Then \(\gamma^{-1}g\in K_o\), and hence the full
\(G\)-orbit of \(B_0\) equals its reduced \(AN\)-orbit.

By Proposition~\ref{prop:transported-action}, the transported \(G\)-action
preserves the energy, the Hardy form, the critical norm, and the equation.
Since \(K_o\) fixes \(B_0\), it preserves
\(\mathcal Z_\ell=T_{B_0}(G\cdot B_0)\) and the normal slice.  If
\(u_\lambda\) is the centered solution furnished by
Proposition~\ref{prop:unique-hardy-branch}, then
\(\widetilde\Pi_ku_\lambda\) is another solution in the same slice for
every \(k\in K_o\).  Local uniqueness gives
\[
 \widetilde\Pi_ku_\lambda=u_\lambda.
\]
The completed Cayley correspondence and
Theorem~\ref{thm:positive-ground-state} supply a smooth intrinsic
representative, and
transitivity of \(K_o\) on geodesic spheres turns this invariance into
geodesic radiality.  Finally, every minimizer may be placed in the normal
slice by Lemma~\ref{lem:normal-slice}; local uniqueness and the equality
of the \(G\)- and \(AN\)-orbits prove the orbit assertion.
\end{proof}

\subsection{Nondegeneracy of the small-coupling ground state}

We now show that the pure-bubble nondegeneracy persists along the
small-coupling branch.  This assertion is modulo the full transported
holomorphic isometry group, rather than only modulo the
translation--dilation subgroup used to impose the normal gauge.

Put
\[
 N=n+\ell,\qquad q=q_\ell=2+\frac2N,\qquad
 \theta=(q-2)^{-1}=\frac N2.
\]
Let \(v_\lambda\) be the centered positive normalized minimizer furnished
by Proposition~\ref{prop:unique-hardy-branch}, and set
\[
 S_\lambda=S^H_{\lambda,\ell},\qquad
 \bar v_\lambda=S_\lambda^\theta v_\lambda.
\]
Thus
\[
 \Gell_\ell\bar v_\lambda-\lambda\rho^{-1}\bar v_\lambda
 =\bar v_\lambda^{q-1}.
\]
Define
\begin{equation}\label{eq:small-hardy-linearized-form}
 \begin{aligned}
 \mathcal Q_\lambda(\phi,\psi)
 ={}&E_\ell(\phi,\psi)-\lambda\mathsf H_\ell(\phi,\psi)\\
 &-(q-1)\int_{\Ucal}
 \bar v_\lambda^{q-2}\phi\psi\dd\mu_\ell,
 \end{aligned}
\end{equation}
and let \(A_\lambda\) be its bounded self-adjoint Riesz representative:
\[
 E_\ell(A_\lambda\phi,\psi)=\mathcal Q_\lambda(\phi,\psi).
\]

\begin{theorem}[Small-coupling ground-state nondegeneracy]
\label{thm:small-hardy-nondegeneracy}
Let \(n\geq1\), and let \(\ell>1\) be an integer.  Then there exist
\[
 0<\lambda_*<(\ell-1)^2,\qquad c_*>0,
\]
depending only on \((n,\ell)\), such that, whenever
\[
 0<\lambda<\lambda_*,
\]
one has
\begin{equation}\label{eq:small-hardy-kernel}
 \ker A_\lambda
 =T_{\bar v_\lambda}(G\cdot\bar v_\lambda),
 \qquad
 \dim\ker A_\lambda=2n+2,
\end{equation}
where \(G=\operatorname{Isom}_0(\CHyp^{n+1},g)\) acts through
Proposition~\ref{prop:transported-action}.  Moreover,
\begin{equation}\label{eq:small-hardy-uniform-inverse}
 \|A_\lambda\phi\|_{\dot S^1_\ell}
 \geq c_*\|\phi\|_{\dot S^1_\ell}
 \qquad
 \bigl(\phi\perp_E\ker A_\lambda\bigr).
\end{equation}
One may take \(c_*=1/(N+4)\) after decreasing \(\lambda_*\).

Equivalently, if
\[
 U_\lambda=\mathcal C_\ell^{-1}\bar v_\lambda,
\]
then the weak \(H^1\)-kernel of
\[
 \mathcal L_\lambda^{\mathrm{CH}}
 =-\Delta_g-(n+1)^2+(\ell-1)^2-\lambda
 -(q-1)U_\lambda^{q-2}
\]
is \(T_{U_\lambda}(G\cdot U_\lambda)\).  Here the weak kernel consists
of those \(Z\in H^1(\CHyp^{n+1})\) for which the linearized form vanishes
against every \(H^1\) test function.
\end{theorem}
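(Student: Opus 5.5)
Nondegeneracy at \(\bar v_\lambda\) will be obtained by perturbing from the pure bubble, using the modulated expansion of Theorem~\ref{thm:intro-second-order} (in coefficient-one form, Corollary~\ref{cor:coefficient-one-correction}) together with the exact kernel and normal inverse at \(\widehat B_0\). By Proposition~\ref{prop:unique-hardy-branch} the centered minimizer lies in the normal slice, so \(\|\bar v_\lambda-\widehat B_0\|_{\dot S^1_\ell}=O(\lambda)\) with no modulation needed. Let \(A_0\) denote the Riesz operator of the coefficient-one linearization at \(\widehat B_0\); this is the operator \(A_B\) of Corollary~\ref{cor:normal-inverse}, since \(\widehat B_0^{q-2}=SB^{q-2}\). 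The first step estimates \(\|A_\lambda-A_0\|_{\mathrm{op}}\). The Hardy term contributes at most \(\lambda/(\ell-1)^2\) by Lemma~\ref{lem:hardy-bottom-paper}. For the potential term, \(|\bar v_\lambda^{q-2}-\widehat B_0^{q-2}|\le|\bar v_\lambda-\widehat B_0|^{q-2}\) because \(0<q-2<1\). Hölder with exponents \((q/(q-2),q,q)\) and the Sobolev inequality then bound this contribution by \(C\lambda^{q-2}\). Hence \(\|A_\lambda-A_0\|_{\mathrm{op}}\le C\lambda^{q-2}\to0\).

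The second step counts the kernel by spectral perturbation. By Corollary~\ref{cor:normal-inverse}, \(A_0\) is self-adjoint with kernel \(\mathcal Z_\ell\) of dimension \(2n+2\). Its remaining spectrum lies outside \((-2/(N+4),2/(N+4))\); the direction \(B_0\) has eigenvalue \(-2/N\), which also lies outside this interval. Riesz projections then show that for small \(\lambda\) the spectral subspace of \(A_\lambda\) in \((-1/(N+4),1/(N+4))\) has dimension exactly \(2n+2\). Moreover \(A_\lambda\) satisfies \(\|A_\lambda\phi\|\ge\frac1{N+4}\|\phi\|\) on its orthogonal complement. It therefore suffices to exhibit \(2n+2\) linearly independent elements of \(\ker A_\lambda\), and the same subspace count shows that these span the kernel.

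The third step produces these elements. By Proposition~\ref{prop:transported-action}, \(G\) preserves \(E_\ell\), \(\mathsf H_\ell\) and the critical norm, so every \(\widetilde\Pi_g\bar v_\lambda\) solves the same equation. Differentiating along one-parameter subgroups of \(G\) gives \(\mathfrak g\)-generators in \(\ker A_\lambda\). This differentiation is the main obstacle: the completed space requires the orbit map \(g\mapsto\widetilde\Pi_g\bar v_\lambda\) to be \(C^1\) into \(\dot S^1_\ell\). I would first verify this for the \(AN\) generators, via their explicit vector fields and the decay and smoothness of \(\bar v_\lambda\) from Theorem~\ref{thm:positive-ground-state}, using the same cutoff argument as in Lemma~\ref{lem:auxiliary-tangent}. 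The \(K_o\) directions need no such argument, because the symmetry proof of Theorem~\ref{thm:intro-symmetry} shows that \(K_o\) fixes \(\bar v_\lambda\) and their generators are zero. Since \(G=AN\,K_o\), the tangent space \(T_{\bar v_\lambda}(G\cdot\bar v_\lambda)\) is spanned by the \(2n+2\) \(AN\)-generators \(Z_a^\lambda\). Differentiating the equation, and passing to the weak form with test functions through the Cayley isometry, places them in \(\ker A_\lambda\).

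Linear independence follows by continuity. The generators \(Z_a^\lambda\) are obtained by applying the fixed \(\mathfrak{an}\) generators to \(\bar v_\lambda\), and I would justify \(Z_a^\lambda\to Z_a\) (the generators at \(\widehat B_0\)) through a weak-derivative identity combined with the \(O(\lambda)\) convergence and uniform decay. Since \(Z_a\) form a basis of \(\mathcal Z_\ell\) by Theorem~\ref{thm:fixed-nondegeneracy}, the \(Z_a^\lambda\) are independent for small \(\lambda\). This proves \eqref{eq:small-hardy-kernel}, and the inverse estimate \eqref{eq:small-hardy-uniform-inverse} with \(c_*=1/(N+4)\) follows from the spectral gap of step two. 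The intrinsic statement about \(\mathcal L_\lambda^{\mathrm{CH}}\) then follows by transporting everything through \(\mathcal C_\ell\) using Theorem~\ref{thm:completed-cayley}, which identifies weak kernels because the forms agree up to the factor \(4\).
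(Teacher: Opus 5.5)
\emph{Verdict: genuine gap.} Your first two steps match the paper: the operator-norm bound \(\|A_\lambda-A_0\|\le C\lambda^{q-2}\) from \(\bigl\||w|^{q-2}-|z|^{q-2}\bigr\|_{N+1}\le\|w-z\|_q^{q-2}\) and the Hardy bound, the Riesz projection giving \(\dim\ker A_\lambda\le 2n+2\), and \(c_*=\delta_0/2=1/(N+4)\) once \(2n+2\) independent zero modes are exhibited. The gap is in step three, at exactly the point you call the main obstacle.

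\textbf{Why step three fails as written.} Theorem~\ref{thm:positive-ground-state} gives only interior smoothness and positivity of \(U_\lambda\). It says nothing about decay at the ideal boundary of \(\CHyp^{n+1}\), that is, as \(\rho\downarrow0\) or at infinity in \(\Ucal\), and no such estimate is proved before this theorem. The cutoff argument of Lemma~\ref{lem:auxiliary-tangent} relies on explicit bounds \(|E|=O(R_*^{-(Q_\ell-1)})\) and \(|\nabla_HE|=O(R_*^{-Q_\ell})\) for derivatives of an explicit smooth full-space bubble. Nothing comparable is available for \(\bar v_\lambda\). In reduced coordinates it is not even bounded at the axis: its behaviour at \(\rho=0\) is governed by the negative indicial root \(\tfrac12\bigl(\sqrt{(\ell-1)^2-\lambda}-(\ell-1)\bigr)\).

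A decay rate alone would not suffice either. Intrinsically \(|\partial_t|_g=1/(2\rho)\), which is of order \(e^{2r}\) on a positive-measure set of directions, while \(U_\lambda\) decays only like \(e^{-(m+\sqrt{\mu_\lambda})r}\) with \(\mu_\lambda=(\ell-1)^2-\lambda\). Hence for \(\ell\le3\) the naive bound \(|\partial_tU_\lambda|\le|\partial_t|_g|\nabla_gU_\lambda|\) is not square integrable against \(J_m(r)\,dr\). That the generators lie in the energy space rests on a cancellation supplied by radiality, which you use only to discard the \(K_o\) directions. The ``uniform decay'' you invoke for \(Z_a^\lambda\to Z_a\) is equally unsupported. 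For independence alone, testing against a core of smooth vectors would suffice, but only once the \(Z_a^\lambda\) are known to exist.

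\textbf{How the paper closes it.} It writes \(U_\lambda=f_\lambda(r)\) and obtains \(|f_\lambda|+|f_\lambda'|+|f_\lambda''|\le C_\varepsilon e^{-(m+\sqrt{\mu_\lambda}-\varepsilon)r}\) from the radial ODE, via the substitution \(y=J_m^{1/2}f_\lambda\) and exponential dichotomy. It then differentiates along center-moving directions \(X\in\mathfrak p\). The mode is \(Z_{\lambda,X}=f_\lambda'(r)\omega_X\) with \(|\omega_X|\le|X|\) and, by Rauch comparison, \(|\nabla\omega_X|\le|X|/\sinh r\), so \(Z_{\lambda,X}\in H^1\). The Bochner identity \(\pi(g_s)U_\lambda-U_\lambda=\int_0^s\pi(g_t)Z_{\lambda,X}\,dt\) gives \(H^1\)-differentiability. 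Independence comes from the first isotropy representation on a geodesic sphere where \(f_\lambda'\neq0\). Since \(K_o\) fixes \(\bar v_\lambda\), each of your \(\mathfrak{an}\)-generators equals the \(\mathfrak p\)-generator of its \(\mathfrak p\)-component, so your choice of directions is fine; the missing piece is the regularity.

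\textbf{A decay-free repair in your reduced setting.} For a frame generator \(X\), the quotient \(D_s=s^{-1}(\mathcal T_{\exp sX}\bar v_\lambda-\bar v_\lambda)\) is a difference quotient of two solutions. Your H\"older estimate then gives \(\|A_\lambda D_s\|\le C\|\mathcal T_{\exp sX}\bar v_\lambda-\bar v_\lambda\|_q^{q-2}\|D_s\|\), and the spectral gap yields \(\|(I-\mathbb P^0_\lambda)D_s\|=o(1)\|D_s\|\). Also, \(\mathbb P^0_\lambda\) is norm-close to the projection onto \(\mathcal Z_\ell\). Finally, \(E_\ell(D_s,Z)=E_\ell\bigl(\bar v_\lambda,s^{-1}(\mathcal T_{\exp(-sX)}Z-Z)\bigr)\) stays bounded, because the explicit vectors \(Z\in\mathcal Z_\ell\) are smooth vectors of the frame action. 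Hence \(\sup_s\|D_s\|<\infty\), so \(\bar v_\lambda\) lies in the generator's domain and the limit lies in \(\ker A_\lambda\).
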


\begin{proof}
For \(w\in\dot S^1_\ell(\Ucal)\), define bounded self-adjoint operators
\(P,K_w\) by
\[
 E_\ell(P\phi,\psi)=\mathsf H_\ell(\phi,\psi),\qquad
 E_\ell(K_w\phi,\psi)
 =\int_{\Ucal}|w|^{q-2}\phi\psi\dd\mu_\ell.
\]
Thus
\[
 A_\lambda=I-\lambda P-(q-1)K_{\bar v_\lambda}.
\]
The Hardy inequality gives \(\|P\|\leq(\ell-1)^{-2}\).
Since \(q-2=2/N\in(0,1)\), the scalar H\"older estimate and
\(q/(q-2)=N+1\) give
\[
 \bigl\||w|^{q-2}-|z|^{q-2}\bigr\|_{N+1}
 \leq\|w-z\|_q^{q-2}.
\]
H\"older's inequality and the sharp Sobolev embedding therefore yield
\begin{equation}\label{eq:hardy-linearization-holder}
 \|K_w-K_z\|_{\mathcal B(\dot S^1_\ell)}
 \leq S_\ell^{-1}\|w-z\|_q^{q-2}.
\end{equation}
The centered shape estimate
\eqref{eq:intro-modulated-shape} and the quotient expansion
\eqref{eq:intro-second-order-quotient} imply
\[
 \|\bar v_\lambda-\widehat B_0\|_{\dot S^1_\ell}
 =O(\lambda).
\]
Consequently, set
\[
 A_0:=I-(q-1)K_{\widehat B_0}=A_{B_0},
\]
where the last equality uses the normalized-bubble convention of
Section~\ref{sec:nondegeneracy}.  Then
\begin{equation}\label{eq:hardy-linearization-norm-convergence}
 \|A_\lambda-A_0\|_{\mathcal B(\dot S^1_\ell)}
 \leq C\lambda^{q-2}=o(1).
\end{equation}

By Theorem~\ref{thm:fixed-nondegeneracy}, zero is an eigenvalue of
\(A_0\) of multiplicity \(2n+2\).  Corollary~\ref{cor:normal-inverse}
shows that the remainder of its spectrum has distance at least
\[
 \delta_0=\frac2{N+4}
\]
from zero.  Let \(H_{\mathbb C}\) be the complexification of
\(\dot S^1_\ell(\Ucal;\mathbb R)\), and let \(A_{\lambda,\mathbb C}\)
be the complex-linear extension.  The norm convergence
\eqref{eq:hardy-linearization-norm-convergence} and the resolvent identity
show that, for small \(\lambda\), the Riesz projection
\[
 \mathbb P_{\lambda,\mathbb C}^0
 =\frac1{2\pi i}\int_{|z|=\delta_0/2}
 (z-A_{\lambda,\mathbb C})^{-1}\dd z
\]
has complex rank \(2n+2\).  It commutes with conjugation and is the
complexification of the real spectral projection
\[
 \mathbb P_\lambda^0
 =\mathbf 1_{(-\delta_0/2,\delta_0/2)}(A_\lambda).
\]
Hence
\begin{equation}\label{eq:small-hardy-upper-nullity}
 \dim_{\mathbb R}\ker A_\lambda\leq2n+2.
\end{equation}

It remains to produce all the zero modes in the form domain.  By the
radiality established in the preceding proof, the intrinsic ground state
is radial:
\[
 U_\lambda(x)=f_\lambda(r),\qquad r=d_g(o,x).
\]
Set \(m=n+1\) and
\[
 \mu_\lambda=(\ell-1)^2-\lambda>0.
\]
For holomorphic sectional curvature \(-4\), the radial Laplacian and
Jacobian are
\[
 \Delta_gf=f''+a_m(r)f',\qquad
 a_m(r)=(2m-1)\coth r+\tanh r,
\]
\[
 J_m(r)=(\sinh r)^{2m-1}\cosh r
\]
up to a fixed positive factor.  Thus
\begin{equation}\label{eq:small-hardy-radial-ode}
 -f_\lambda''-a_mf_\lambda'
 +(\mu_\lambda-m^2)f_\lambda=f_\lambda^{q-1}.
\end{equation}
With \(y_\lambda=J_m^{1/2}f_\lambda\), this becomes
\[
 -y_\lambda''
 +\bigl(\mu_\lambda+V_m(r)-f_\lambda^{q-2}\bigr)y_\lambda=0,
\]
where
\[
 V_m(r)=\frac12a_m'(r)+\frac14a_m(r)^2-m^2
 \longrightarrow0
\]
exponentially.  Since \(U_\lambda\in H^1\),
\(y_\lambda\in H^1(1,\infty)\).  The coefficient in the last equation
tends to \(\mu_\lambda>0\); the one-dimensional exponential dichotomy,
with its growing solution excluded by \(y_\lambda\in L^2\), gives, for
every \(0<\varepsilon<\sqrt{\mu_\lambda}\),
\begin{equation}\label{eq:small-hardy-radial-decay}
 |f_\lambda(r)|+|f_\lambda'(r)|+|f_\lambda''(r)|
 \leq C_\varepsilon
 e^{-(m+\sqrt{\mu_\lambda}-\varepsilon)r}.
\end{equation}

Let \(X\in\mathfrak p\simeq T_o\CHyp^m\), put
\[
 g_s=\exp_G(sX),\qquad p_s=g_so,\qquad
 r_s(x)=d_g(p_s,x),
\]
and let \(\Gamma_{s,x}\) be the affinely parametrized geodesic from
\(p_s\) to \(x\).  The center-moving field
\[
 \mathcal J_{X,x}
 =\left.\partial_s\right|_{s=0}\Gamma_{s,x}
\]
is a Jacobi field with
\(\mathcal J_{X,x}(0)=X\) and \(\mathcal J_{X,x}(1)=0\).
Set
\[
 \omega_X(x)=\left.\partial_s\right|_{s=0}r_s(x).
\]
The first-variation formula gives
\[
 \omega_X(x)=-\langle X,\vartheta(x)\rangle,\qquad
 |\omega_X(x)|\leq|X|,
\]
where \(\vartheta(x)=\dot\gamma_{o,x}(0)\).

For \(Y\in T_x\CHyp^m\), let \(\mathcal K_Y\) be the endpoint-moving
Jacobi field along \(\gamma_{o,x}\), with
\[
 \mathcal K_Y(0)=0,\qquad
 \mathcal K_Y(r)=Y^\perp,
\]
and put \(\eta_Y=\nabla_t\mathcal K_Y(0)\).  Then
\[
 d\vartheta_x[Y]=\eta_Y.
\]
The sectional curvatures lie in \([-4,-1]\), so Rauch comparison gives
\[
 \sinh r\,|\eta_Y|
 \leq|\mathcal K_Y(r)|
 \leq\frac12\sinh(2r)|\eta_Y|.
\]
It follows that
\begin{equation}\label{eq:small-hardy-angular-gradient}
 |\nabla\omega_X(x)|
 \leq\frac{|X|}{\sinh r}
 \leq C_m|X|(1+\coth r).
\end{equation}

Since
\[
 U_\lambda\circ g_s^{-1}(x)=f_\lambda(r_s(x)),
\]
the pointwise orbit derivative is
\[
 Z_{\lambda,X}=f_\lambda'(r)\omega_X.
\]
Equations \eqref{eq:small-hardy-radial-decay} and
\eqref{eq:small-hardy-angular-gradient} give
\[
 |Z_{\lambda,X}|\leq|X||f_\lambda'|,
\]
\[
 |\nabla Z_{\lambda,X}|
 \leq C_m|X|
 \bigl(|f_\lambda''|+(1+\coth r)|f_\lambda'|\bigr).
\]
Smooth radiality gives \(f_\lambda'(r)=O(r)\) at the origin, while
\eqref{eq:small-hardy-radial-decay} is square integrable at infinity
against \(J_m(r)\dd r\).  Hence
\[
 Z_{\lambda,X}\in H^1(\CHyp^m).
\]

To justify differentiation in the global energy topology, write
\(\pi(g)U=U\circ g^{-1}\).  This is a strongly continuous isometric
representation on \(H^1\).  The pointwise fundamental theorem of calculus
gives first distributionally, and then as a Bochner identity in \(H^1\),
\begin{equation}\label{eq:small-hardy-group-integral}
 \pi(g_s)U_\lambda-U_\lambda
 =\int_0^s\pi(g_t)Z_{\lambda,X}\dd t.
\end{equation}
Therefore
\[
 \frac{\pi(g_s)U_\lambda-U_\lambda}{s}
 \longrightarrow Z_{\lambda,X}
 \quad\text{in }H^1.
\]
Differentiating the invariant weak equation is now legitimate and shows
that \(Z_{\lambda,X}\) belongs to the weak kernel of
\(\mathcal L_\lambda^{\mathrm{CH}}\).
If \(\mathcal Q_\lambda^{\mathrm{CH}}\) denotes the intrinsic linearized
form, the completed Cayley identities give
\[
 \mathcal Q_\lambda
 \bigl(\mathcal C_\ell Z,\mathcal C_\ell\Psi\bigr)
 =4\mathcal Q_\lambda^{\mathrm{CH}}(Z,\Psi)
 \qquad(Z,\Psi\in H^1(\CHyp^{n+1})).
\]
Hence \(\mathcal C_\ell Z_{\lambda,X}\in\ker A_\lambda\).

The \(2m=2n+2\) intrinsic modes are linearly independent.  Indeed,
\(f_\lambda\) is not constant, and on a geodesic sphere on which
\(f_\lambda'\neq0\), the functions \(\omega_X\) form the real first
isotropy representation.  Their Cayley images remain linearly independent
because \(\mathcal C_\ell\) is an energy-space isomorphism.  Together with
\eqref{eq:small-hardy-upper-nullity}, this proves
\eqref{eq:small-hardy-kernel}.  Since the rest of the spectrum lies
outside \((-\delta_0/2,\delta_0/2)\), the spectral theorem gives
\eqref{eq:small-hardy-uniform-inverse} with
\[
 c_*=\frac{\delta_0}{2}=\frac1{N+4}.
\]
\end{proof}

\begin{corollary}[Uniform constrained coercivity]
\label{cor:small-hardy-constrained-coercivity}
After decreasing \(\lambda_*\), one has
\[
 \mathcal Q_\lambda(\phi,\phi)
 \geq\frac1{2(N+4)}E_\ell(\phi)
\]
whenever
\[
 \int_{\Ucal}v_\lambda^{q-1}\phi\dd\mu_\ell=0,\qquad
 \phi\perp_E\ker A_\lambda.
\]
In particular, the coefficient-one ground state has Morse index one and
nullity \(2n+2\).
\end{corollary}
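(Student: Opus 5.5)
The coercivity will follow from the spectral picture of \(A_\lambda\) set up in the proof of Theorem~\ref{thm:small-hardy-nondegeneracy}, which we perturb off the pure-bubble decomposition. At \(\lambda=0\) the space splits \(E\)-orthogonally as \(\R B_0\oplus_E\mathcal Z_\ell\oplus_EY_\ell\). On \(Y_\ell\), Theorem~\ref{thm:normal-spectral-gap} gives \(\mathcal Q_{B_0}\geq\frac{2}{N+4}E_\ell\), and \(A_0B_0=-(q-2)B_0\). The plan is to transport this splitting to \(\lambda>0\) using the norm estimate \eqref{eq:hardy-linearization-norm-convergence}, \(\|A_\lambda-A_0\|=O(\lambda^{q-2})\), together with the convergence \(\bar v_\lambda\to\widehat B_0\), \(v_\lambda\to B_0\) in \(\dot S^1_\ell\).

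\textbf{Step 1: identify the negative direction.} The constraint \(\int v_\lambda^{q-1}\phi=0\) is, by the Euler--Lagrange equation \eqref{eq:hardy-euler}, the same as \(A^H_{\lambda,\ell}(v_\lambda,\phi)=0\). In Riesz form this says \(E_\ell(\phi,e_\lambda)=0\), where \(e_\lambda\) is the Riesz representative of \(A^H_{\lambda,\ell}(v_\lambda,\cdot)\). We have \(e_\lambda=v_\lambda-\lambda Pv_\lambda\to B_0\) in \(\dot S^1_\ell\).

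\textbf{Step 2: localize the constrained space.} Put \(W_\lambda=\{\phi:\ E_\ell(\phi,e_\lambda)=0,\ \phi\perp_E\ker A_\lambda\}\). By the proof of Theorem~\ref{thm:small-hardy-nondegeneracy}, \(\ker A_\lambda\) is the range of the Riesz projection \(\mathbb P^0_\lambda\). That projection converges in norm to the projection onto \(\mathcal Z_\ell\), and \(e_\lambda/\|e_\lambda\|\to B_0/\sqrt{S_\ell}\). So \(W_\lambda\) is the \(E\)-orthogonal complement of a \((2n+3)\)-dimensional subspace that converges to \(\R B_0\oplus\mathcal Z_\ell\). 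Decompose \(\phi\in W_\lambda\) as \(\phi=y+\beta B_0+z\), with \(y\in Y_\ell\) and \(z\in\mathcal Z_\ell\). The orthogonality conditions, combined with the convergence of the subspaces, give \(|\beta|\,\|B_0\|+\|z\|\leq o(1)\|\phi\|\).

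\textbf{Step 3: compare the forms.} Write
\[
 \mathcal Q_\lambda(\phi,\phi)=\mathcal Q_{B_0}(\phi,\phi)+E_\ell\bigl((A_\lambda-A_0)\phi,\phi\bigr).
\]
The second term is \(o(1)E_\ell(\phi)\). Expanding \(\mathcal Q_{B_0}(\phi,\phi)\) and using \(\mathcal Q_{B_0}(y,y)\geq\frac{2}{N+4}E_\ell(y)\), boundedness of \(\mathcal Q_{B_0}\), and the smallness from Step 2, we get \(\mathcal Q_{B_0}(\phi,\phi)\geq\bigl(\frac{2}{N+4}-o(1)\bigr)E_\ell(\phi)\). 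After decreasing \(\lambda_*\), this yields \(\mathcal Q_\lambda(\phi,\phi)\geq\frac{1}{2(N+4)}E_\ell(\phi)\). The main obstacle is Step 2: the smallness has to be uniform, and in particular it requires controlling the gap between \(\ker A_\lambda\) and \(\mathcal Z_\ell\). We obtain this from the contour formula for \(\mathbb P^0_\lambda\), which gives \(\|\mathbb P^0_\lambda-\mathbb P^0_0\|\leq C\|A_\lambda-A_0\|\).

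\textbf{Step 4: Morse index and nullity.} The quadratic form is negative on \(v_\lambda\):
\[
 \mathcal Q_\lambda(\bar v_\lambda,\bar v_\lambda)=(2-q)\int\bar v_\lambda^{q}<0,
\]
so the Morse index is at least one. The coercivity of Step 3 holds on a subspace of codimension \(2n+3\) that contains no kernel directions; combined with \(\dim\ker A_\lambda=2n+2\), this forces the negative eigenspace to have dimension at most one. Hence the Morse index is exactly one and the nullity is \(2n+2\). Since \(\mathcal C_\ell\) is a form isomorphism, both statements transfer to the coefficient-one intrinsic ground state.
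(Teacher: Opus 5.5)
Your proof is correct, but it takes a different route from the paper's.

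\textbf{The paper's route.} The paper uses the spectral decomposition of \(A_\lambda\) itself. It writes \(\phi\perp_E\ker A_\lambda\) as \(a\) times a unit vector spanning the rank-one negative spectral subspace, plus \(w\in\operatorname{Ran}\mathbb P_\lambda^+\). The bound \(\mathcal Q_\lambda(w,w)\geq\frac{\delta_0}{2}\|w\|^2\) then comes directly from the spectral theorem. The estimate \(|a|\leq\varepsilon_\lambda\|w\|\) comes from two facts: the constraint functional \(\mathfrak c_\lambda\) converges to \(\mathfrak c_0\) in the dual space (by continuity of the Nemytskii map), and the separated spectral projections converge in norm. The Morse index bound is proved separately from the minimizing property, since \(\mathcal Q_\lambda\geq0\) on the tangent space of the \(L^q\)-sphere.

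\textbf{Your route.} You freeze the \(\lambda=0\) splitting \(Y_\ell\oplus_E\R B_0\oplus_E\mathcal Z_\ell\). The Euler--Lagrange equation turns the nonlinear constraint into \(E\)-orthogonality to the explicit vector \(v_\lambda-\lambda Pv_\lambda\), which tends to \(B_0\). You bound the \(\mathcal Z_\ell\)-component by \(\|\mathbb P_\lambda^0-\mathbb P_0^0\|\) and the \(B_0\)-component by \(\|v_\lambda-\lambda Pv_\lambda-B_0\|\). You then transfer the exact normal gap on \(Y_\ell\) through \(\|A_\lambda-A_0\|=O(\lambda^{q-2})\).

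\textbf{Comparison.} Your version needs only the perturbation of the kernel projection, not of the negative eigenvector. It also avoids the dual-space convergence of \(\mathfrak c_\lambda\), and gives the constant \(\frac2{N+4}-o(1)\) with no extra work. The paper's index-one statement, by contrast, uses minimality alone and no smallness of \(\lambda\). Your dimension count in Step 4 is sound: \(\mathcal Q_\lambda\leq0\) on the \(E\)-orthogonal sum of the negative spectral subspace and \(\ker A_\lambda\). That sum must meet \(W_\lambda\) trivially, so its dimension is at most \(\operatorname{codim}W_\lambda=2n+3\), and hence the negative index is at most one.
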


\begin{proof}
The minimizing property makes \(\mathcal Q_\lambda\) nonnegative on the
tangent space to the normalized \(L^q\)-sphere, whereas
\[
 \mathcal Q_\lambda(\bar v_\lambda,\bar v_\lambda)
 =(2-q)\|\bar v_\lambda\|_q^q<0.
\]
Thus the negative index of the form is one.

Let \(\mathbb P_\lambda^-\) be the rank-one spectral projection around
\(-(q-2)\), retain \(\mathbb P_\lambda^0\) for the kernel projection,
and put
\[
 \mathbb P_\lambda^+
 =I-\mathbb P_\lambda^--\mathbb P_\lambda^0.
\]
The separated spectral projections converge in operator norm to their
\(\lambda=0\) counterparts.  Choose a unit vector
\(e_\lambda\in\operatorname{Ran}\mathbb P_\lambda^-\), with its sign fixed
so that
\[
 e_\lambda\longrightarrow
 e_0=\frac{B_0}{\|B_0\|_{\dot S^1_\ell}},
\]
and write
\[
 A_\lambda e_\lambda=-\nu_\lambda e_\lambda,\qquad
 \nu_\lambda\longrightarrow q-2.
\]
On the positive spectral subspace,
\[
 \mathcal Q_\lambda(w,w)
 \geq\frac{\delta_0}{2}\|w\|_{\dot S^1_\ell}^2.
\]

Define
\[
 \mathfrak c_\lambda(\phi)
 =\int_{\Ucal}v_\lambda^{q-1}\phi\dd\mu_\ell.
\]
The continuity of
\(v\mapsto|v|^{q-2}v:L^q\to L^{q'}\) gives
\(\mathfrak c_\lambda\to\mathfrak c_0\) in the dual energy space.
Moreover,
\[
 \mathfrak c_0(e_0)\neq0,\qquad
 \mathfrak c_0|_{\operatorname{Ran}\mathbb P_0^+}=0.
\]
It follows that there is \(\varepsilon_\lambda\to0\) such that, whenever
\[
 \phi\perp_E\ker A_\lambda,\qquad
 \mathfrak c_\lambda(\phi)=0,
\]
the unique decomposition
\[
 \phi=ae_\lambda+w,\qquad
 w\in\operatorname{Ran}\mathbb P_\lambda^+,
\]
satisfies
\[
 |a|\leq\varepsilon_\lambda\|w\|_{\dot S^1_\ell}.
\]
Since \(\nu_\lambda\) remains bounded, decreasing \(\lambda_*\) gives
\[
 \begin{aligned}
 \mathcal Q_\lambda(\phi,\phi)
 &=-\nu_\lambda a^2+\mathcal Q_\lambda(w,w)\\
 &\geq\frac{\delta_0}{4}
 \|\phi\|_{\dot S^1_\ell}^2
 =\frac1{2(N+4)}
 \|\phi\|_{\dot S^1_\ell}^2.
 \end{aligned}
\]
\end{proof}

\section{Global compactness for the invariant Hardy perturbation}
\label{sec:hardy-global}

Retain the assumptions of Section~\ref{sec:hardy} and write
\[
 H_\ell=\dot S^1_\ell(\Ucal;\R),\qquad
 J^H_{\lambda,\ell}(v)
 =\frac12A^H_{\lambda,\ell}(v)
 -\frac1{q_\ell}\|v\|_{q_\ell}^{q_\ell}.
\]
The Hardy form is invariant under every reduced translation and dilation.
Thus it persists in every escaping profile equation, in contrast with the
compact potentials considered in Section~\ref{sec:compact-potentials}.

\begin{theorem}[Palais--Smale global compactness]
\label{thm:hardy-global}
Let \(n\geq1\), let \(\ell>1\) be an integer, and assume
\(0<\lambda<(\ell-1)^2\).  Let \((v_k)\subset H_\ell\) be real and satisfy
\[
 J^H_{\lambda,\ell}(v_k)\longrightarrow c,
 \qquad
 \|(J^H_{\lambda,\ell})'(v_k)\|_{H_\ell^*}\longrightarrow0.
\]
Then \((v_k)\) is bounded.  After passage to one subsequence, there are
finitely many nonzero real solutions
\(\phi^1,\ldots,\phi^K\in H_\ell\) of
\[
 \Gell_\ell\phi-\lambda\rho^{-1}\phi
 =|\phi|^{q_\ell-2}\phi,
\]
pairwise orthogonal reduced frames \(g_k^j\), and a remainder
\(\omega_k\to0\) strongly in \(H_\ell\) and \(L^{q_\ell}\), such that
\begin{equation}\label{eq:hardy-ps-expansion}
 v_k=\sum_{j=1}^K\mathcal T_{g_k^j}\phi^j+\omega_k.
\end{equation}
A nonzero weak limit is included as an identity-frame profile.  Moreover,
\begin{align}
 E_\ell(v_k)
 &=\sum_{j=1}^KE_\ell(\phi^j)+o(1),
 \label{eq:hardy-ps-energy}\\
 \mathsf H_\ell(v_k,v_k)
 &=\sum_{j=1}^K\mathsf H_\ell(\phi^j,\phi^j)+o(1),
 \label{eq:hardy-ps-hardy}\\
 \|v_k\|_{q_\ell}^{q_\ell}
 &=\sum_{j=1}^K\|\phi^j\|_{q_\ell}^{q_\ell}+o(1),
 \label{eq:hardy-ps-mass}\\
 c&=\sum_{j=1}^KJ^H_{\lambda,\ell}(\phi^j).
 \label{eq:hardy-ps-action}
\end{align}
The finite-stage derivative splitting holds in \(H_\ell^*\).
\end{theorem}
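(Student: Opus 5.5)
The argument is a Struwe-type global compactness proof built on the reduced profile decomposition (Theorem~\ref{thm:reduced-profile}). The Hardy form is invariant under reduced frames (Lemma~\ref{lem:hardy-frame}), so it survives in every profile equation. The strict coercivity $A^H_{\lambda,\ell}\ge(1-\lambda/(\ell-1)^2)E_\ell$ of Lemma~\ref{lem:hardy-bottom-paper} replaces the usual role of the bare energy.

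\emph{Boundedness.} Since $J^H_{\lambda,\ell}(v_k)-\frac1{q_\ell}(J^H_{\lambda,\ell})'(v_k)v_k=\frac1{Q_\ell}A^H_{\lambda,\ell}(v_k)$, we get
\[
 \frac1{Q_\ell}A^H_{\lambda,\ell}(v_k)\le c+1+o(1)\|v_k\|_{H_\ell}.
\]
Coercivity then gives $\sup_kE_\ell(v_k)<\infty$.

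\emph{Iterative extraction.} Set $v_k^0=v_k$. If $v_k^{j-1}$ does not tend to zero in $L^{q_\ell}$, the profile decomposition yields a frame $g_k^j$ with $\mathcal T_{g_k^j}^{-1}v_k^{j-1}\rightharpoonup\phi^j\neq0$. A nonzero weak limit of $v_k$ itself is taken first with the identity frame.

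\emph{The profile solves the equation.} Test $(J^H)'(v_k^{j-1})$ against $\mathcal T_{g_k^j}\varphi$ for fixed $\varphi\in C_c^\infty(\Ucal)$. Frame invariance of $E_\ell$, $\mathsf H_\ell$ and the critical norm pulls the test back. Local Rellich compactness in the new frame gives a.e.\ convergence, so $|u_k|^{q_\ell-2}u_k\rightharpoonup|\phi^j|^{q_\ell-2}\phi^j$ weakly in $L^{q_\ell'}$. Hence $\phi^j$ is a weak solution; density extends the test class to all of $H_\ell$.

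\emph{Passing the Palais--Smale property down.} Put $v_k^j=v_k^{j-1}-\mathcal T_{g_k^j}\phi^j$. Hilbert orthogonality splits $E_\ell$, and boundedness of $\mathsf H_\ell$ on $H_\ell$ together with frame invariance and weak nullity splits $\mathsf H_\ell$. Brezis--Lieb splits the $L^{q_\ell}$ mass. For the derivative we need $\bigl\||a+b|^{q_\ell-2}(a+b)-|a|^{q_\ell-2}a-|b|^{q_\ell-2}b\bigr\|_{q_\ell'}\to0$ for a fixed profile $b$ and a remainder $a$ that is weakly null in its frame. This is the standard estimate: truncate $\phi^j$ to a compact set, use local strong $L^r$ convergence for $r<q_\ell$ there, and note that the tails are small. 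Thus $(v_k^j)$ is again Palais--Smale at level $c-\sum_{i\le j}J^H(\phi^i)$, and the derivative splitting holds in $H_\ell^*$.

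\emph{Termination.} Every nonzero solution satisfies the Nehari identity, so $J^H(\phi)=\frac1{Q_\ell}A^H(\phi)\ge d_{\lambda,\ell}>0$, by the definition of $S^H_{\lambda,\ell}$ exactly as in Theorem~\ref{thm:positive-ground-state}. Energy is additive and nonnegative, so at most $\frac{c+o(1)}{d_{\lambda,\ell}}$ profiles can occur.

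\emph{Killing the remainder.} Once $v_k^K\to0$ in $L^{q_\ell}$, evaluate the Palais--Smale condition on the remainder itself:
\[
 A^H(v_k^K)=(J^H)'(v_k^K)v_k^K+\|v_k^K\|_{q_\ell}^{q_\ell}\to0.
\]
Coercivity then gives strong convergence in $H_\ell$. Pairwise orthogonality of the frames comes from the decomposition theorem, since each remainder is weakly null in all previous frames. The action identity \eqref{eq:hardy-ps-action} follows by summing the splittings.

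The main obstacle is the derivative splitting in $H_\ell^*$, uniformly over unit tests. The nonlinearity is only $C^{1,q_\ell-2}$, so it needs the truncation and local compactness argument above rather than a global Taylor estimate. The remaining claims about nodal profiles and the action quantum belong to the stated theorem's sequel and are not needed here.
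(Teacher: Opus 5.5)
Your proposal is correct and follows essentially the paper's route: boundedness from the identity \(q_\ell J^H_{\lambda,\ell}(v)-\langle (J^H_{\lambda,\ell})'(v),v\rangle=(\tfrac{q_\ell}{2}-1)A^H_{\lambda,\ell}(v)\) together with Hardy coercivity; reduced profile extraction with Hardy splitting by frame invariance; the derivative Brezis--Lieb splitting transported through each moving frame, so that every remainder stays Palais--Smale; the Nehari quantum to bound the number of profiles; and testing the vanishing remainder's derivative against itself. The differences are cosmetic. You iterate single-profile extraction until \(L^{q_\ell}\)-vanishing, get orthogonality by the usual induction, prove the derivative Brezis--Lieb step by truncation instead of Vitali, and count profiles via the action (which really uses \(J^H_{\lambda,\ell}(v_k^j)\geq o(1)\) for the Palais--Smale remainder, not ``nonnegative energy''); the paper instead takes one decomposition, uses its ordered remainder estimate, and counts via \(E_\ell(\phi^j)\geq(S^H_{\lambda,\ell})^{Q_\ell/2}\).
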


\begin{proof}
For every \(v\in H_\ell\),
\begin{equation}\label{eq:ps-bounded-identity}
 q_\ell J^H_{\lambda,\ell}(v)
 -\langle(J^H_{\lambda,\ell})'(v),v\rangle
 =\left(\frac{q_\ell}{2}-1\right)
 A^H_{\lambda,\ell}(v).
\end{equation}
The coercivity following from Lemma~\ref{lem:hardy-bottom-paper} and
\eqref{eq:ps-bounded-identity} show that \((v_k)\) is bounded.

Apply Theorem~\ref{thm:reduced-profile}, including a nonzero weak limit as
the identity profile.  The bounded inclusion into
\(L^2(\rho^{-1}\dd\mu_\ell)\), together with unitarity of the frame actions
for the Hardy norm, gives finite-stage Hardy splitting.  Consequently the
quadratic form \(A^H_{\lambda,\ell}\) also splits at each fixed stage.

Fix a profile frame.  Local Rellich compactness gives, after the common
subsequence,
\[
 \mathcal T_{g_k^j}^{-1}v_k\to\phi^j
 \quad\text{almost everywhere locally},
\]
while the sequence remains bounded in \(L^{q_\ell}\).  The weak convergence
of the critical Nemytskii term and invariance of the linear form imply
\((J^H_{\lambda,\ell})'(\phi^j)=0\).

The derivative splitting uses the frame-independent structure of the
Nemytskii map.  Regard it as the functional
\[
 \mathcal N(u)[\varphi]
 =\int_{\Ucal}|u|^{q_\ell-2}u\varphi\,d\mu_\ell.
\]
If \(\mathcal T_g^*\Lambda=\Lambda\circ\mathcal T_g^{-1}\), then the
critical change of variables gives
\(\mathcal N(\mathcal T_gu)=\mathcal T_g^*\mathcal N(u)\).
Since \(\mathcal T_g\) is an isometry on \(L^{q_\ell}\) and on the energy
space, the induced maps are isometries on \(L^{q_\ell'}\) and on the dual
energy space.  All constants and error norms are therefore independent of
the profile frame.  Write \(r_k^0=v_k\).  At the \(j\)-th extraction, the
common diagonal subsequence gives
\[
 \mathcal T_{g_k^j}^{-1}r_k^{j-1}
 =\phi^j+\mathcal T_{g_k^j}^{-1}r_k^j
 \longrightarrow\phi^j
 \quad\text{almost everywhere locally},
\]
and this normalized sequence is bounded in \(L^{q_\ell}\).  Applying
Lemma~\ref{lem:derivative-bl} in the normalized frame and transporting the
result back by the dual isometry yields
\begin{equation}\label{eq:moving-frame-derivative-bl}
 \left\|
 \mathcal N(r_k^{j-1})
 -\mathcal N(\mathcal T_{g_k^j}\phi^j)
 -\mathcal N(r_k^j)
 \right\|_{L^{q_\ell'}}\longrightarrow0.
\end{equation}
The critical Sobolev embedding sends this error continuously into
\(H_\ell^*\).  Summing \eqref{eq:moving-frame-derivative-bl} over
\(j=1,\ldots,J\) is legitimate for each fixed \(J\); the linear part splits
exactly because the reduced frames preserve both the energy and Hardy forms.
Consequently,
\[
 \left\|
 (J^H_{\lambda,\ell})'(v_k)
 -\sum_{j=1}^J
 (J^H_{\lambda,\ell})'(\mathcal T_{g_k^j}\phi^j)
 -(J^H_{\lambda,\ell})'(r_k^J)
 \right\|_{H_\ell^*}\longrightarrow0.
\]
The telescoping is finite: it is applied only for a fixed \(J\), not in a
double limit.  Thus every fixed-stage remainder is again asymptotically
critical.

If \(\phi\ne0\) is a critical profile, then
\[
 A^H_{\lambda,\ell}(\phi)=\|\phi\|_{q_\ell}^{q_\ell}
 \geq\bigl(S^H_{\lambda,\ell}\bigr)^{Q_\ell/2}.
\]
Since \(E_\ell(\phi)\geq A^H_{\lambda,\ell}(\phi)\), the finite-stage
Hilbert splitting gives
\[
 \#\{j:\phi^j\ne0\}
 \bigl(S^H_{\lambda,\ell}\bigr)^{Q_\ell/2}
 \leq \limsup_{k\to\infty}E_\ell(v_k).
\]
Thus the number of nonzero profiles is bounded independently of the
terminal remainder.  This proves finite termination before that remainder
is used.

Let \(K\) be the last nonzero profile and put \(\omega_k=r_k^K\).
The ordered remainder statement in
Theorem~\ref{thm:reduced-profile} now gives
\(\|\omega_k\|_{q_\ell}\to0\).  The derivative splitting and coercivity give
\[
 A^H_{\lambda,\ell}(\omega_k)
 =\langle(J^H_{\lambda,\ell})'(\omega_k),\omega_k\rangle
 +\|\omega_k\|_{q_\ell}^{q_\ell}\longrightarrow0,
\]
hence \(\omega_k\to0\) in \(H_\ell\).  Substitution in the finite-stage
identities proves \eqref{eq:hardy-ps-energy}--\eqref{eq:hardy-ps-action}.
\end{proof}

Recall
\begin{equation}\label{eq:hardy-quantum}
 d_{\lambda,\ell}
 =\frac1{Q_\ell}
 \bigl(S^H_{\lambda,\ell}\bigr)^{Q_\ell/2}.
\end{equation}

\begin{corollary}[Sharp quantum and profile count]
\label{cor:hardy-quantum}
Every nonzero real critical point \(\phi\) satisfies
\[
 J^H_{\lambda,\ell}(\phi)\geq d_{\lambda,\ell},
\]
and equality is attained by the coefficient-one rescaling of a quotient
minimizer.  Every genuinely sign-changing critical point satisfies
\[
 J^H_{\lambda,\ell}(\phi)\geq2d_{\lambda,\ell}.
\]
If \(K_{\mathrm{nod}}\) of the \(K\) profiles in
Theorem~\ref{thm:hardy-global} are nodal, then
\[
 K\leq\left\lfloor\frac c{d_{\lambda,\ell}}\right\rfloor,
 \qquad
 K+K_{\mathrm{nod}}
 \leq\left\lfloor\frac c{d_{\lambda,\ell}}\right\rfloor.
\]
There is no finite-level Palais--Smale sequence for \(c<0\) or for
\(0<c<d_{\lambda,\ell}\); at \(c=0\) every such sequence converges strongly
to zero.  If
\[
 d_{\lambda,\ell}\leq c<2d_{\lambda,\ell},
\]
there is exactly one nonnodal profile.  At \(c=d_{\lambda,\ell}\) it is a
quotient ground state and convergence is strong after applying the inverse
profile frame.
\end{corollary}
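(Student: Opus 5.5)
The argument is the standard Nehari-manifold bookkeeping, combined with the global compactness of Theorem~\ref{thm:hardy-global}.

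\emph{Ground-state bound.} For a nonzero critical point $\phi$, testing the equation with $\phi$ gives the Nehari identity $A^H_{\lambda,\ell}(\phi)=\|\phi\|_{q_\ell}^{q_\ell}$. Hence
\[
J^H_{\lambda,\ell}(\phi)=\Bigl(\tfrac12-\tfrac1{q_\ell}\Bigr)A^H_{\lambda,\ell}(\phi)=\tfrac1{Q_\ell}A^H_{\lambda,\ell}(\phi).
\]
The definition of $S^H_{\lambda,\ell}$ gives $A^H\geq S^H\|\phi\|_q^2=S^H (A^H)^{2/q}$, so $A^H_{\lambda,\ell}(\phi)\geq (S^H_{\lambda,\ell})^{Q_\ell/2}$. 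Equality holds for $\bar v_\lambda=(S^H)^{\theta_\ell}v_\lambda$, with $v_\lambda$ from Theorem~\ref{thm:hardy-minimizer}.

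\emph{Nodal case.} Test the equation with $\phi^\pm$. The form $A^H$ is built from $E_\ell$ and the Hardy term, and both are local. Since $\phi^+\phi^-=0$, and the energy cross terms also vanish because $\nabla\phi^+\cdot\nabla\phi^-=0$ a.e., we get $A^H(\phi,\phi^\pm)=A^H(\phi^\pm)$. Moreover $\int|\phi|^{q-2}\phi\,\phi^\pm=\|\phi^\pm\|_q^q$. So each nonzero part satisfies the Nehari identity, and therefore $A^H(\phi^\pm)\geq (S^H)^{Q/2}$ by the same argument. Adding the two gives $J^H(\phi)=\frac1Q\bigl(A^H(\phi^+)+A^H(\phi^-)\bigr)\geq 2d_{\lambda,\ell}$. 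One has to check that $\phi^\pm\in\dot S^1_\ell$; this follows by approximating with truncations on the smooth core, or from lattice closure of the completion, which holds since the energy is a Dirichlet form.

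\emph{Profile count and levels.} By \eqref{eq:hardy-ps-action}, $c=\sum_j J^H(\phi^j)\geq (K-K_{\mathrm{nod}})d+2K_{\mathrm{nod}}d=(K+K_{\mathrm{nod}})d$. Both displayed floor bounds follow because the counts are integers.
\begin{itemize}
\item If $c<0$, no profile decomposition is possible, because every term is nonnegative and the remainder is null. This is a contradiction, so no such sequence exists.
\item If $c=0$, then $K=0$ and $v_k=\omega_k\to0$ strongly.
\item If $0<c<d$, then $K=0$ forces $c=0$, which is a contradiction.
\item If $d\leq c<2d$, then $K+K_{\mathrm{nod}}\leq 1$, and $c>0$ forces $K\geq1$. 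So there is exactly one profile, and it is nonnodal.
\end{itemize}

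\emph{The threshold $c=d$.} The single profile $\phi^1$ has $J^H(\phi^1)=d$, so equality holds in the Nehari/quotient inequality. Hence $\phi^1/\|\phi^1\|_q$ attains $S^H_{\lambda,\ell}$ and is a quotient ground state. Since $\omega_k\to0$ strongly, $\mathcal T_{g_k^1}^{-1}v_k\to\phi^1$ strongly.

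The only delicate point is membership of $\phi^\pm$ in the completed space together with the splitting of the quadratic form. Everything else is bookkeeping on top of Theorem~\ref{thm:hardy-global}.
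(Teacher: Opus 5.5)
Your proposal is correct and follows the paper's proof essentially step for step. The Nehari identity together with the definition of \(S^H_{\lambda,\ell}\) gives the quantum; Lemma~\ref{lem:lattice-completion} puts \(\phi^\pm\) in \(H_\ell\) with exact splitting of the energy, Hardy form and critical power, so each nonzero part carries its own quantum; and the action identity \eqref{eq:hardy-ps-action} then yields the profile counts and the edge levels.

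Two points are cosmetic. With the paper's convention \(\phi^-=\max\{-\phi,0\}\), your two identities for \(\phi^-\) each pick up a factor \(-1\), which leaves its Nehari identity intact. At \(c=0\), strong convergence of the whole sequence, not just a subsequence, follows from the subsequence principle, or directly from \eqref{eq:ps-bounded-identity}.
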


\begin{proof}
The Nehari identity and the sharp quotient give
\[
 J^H_{\lambda,\ell}(\phi)
 =\frac1{Q_\ell}A^H_{\lambda,\ell}(\phi)
 \geq d_{\lambda,\ell}.
\]
The rescaled minimizer realizes equality.  By
Lemma~\ref{lem:lattice-completion}, the positive and negative parts of a
critical point belong to \(H_\ell\) and split the energy, Hardy form, and
critical power.  Testing the equation with the two parts shows that each
nonzero part carries one quantum.  The exact action identity
\eqref{eq:hardy-ps-action} then gives the two profile counts and the stated
edge levels.
\end{proof}

The completed Cayley map transports
Theorem~\ref{thm:hardy-global} to \(H^1(\CHyp^m,g)\).  Since
\[
 J^H_{\lambda,\ell}(\mathcal C_\ell U)
 =4J^{\mathrm{CH}}_{\mu,\ell}(U),
\]
the intrinsic ground-state quantum is
\[
 d^{\mathrm{CH}}_{\mu,\ell}=\frac14d_{\lambda,\ell}.
\]
Here the quantum is a sharp positive lower bound in the finite energy
identity; it does not imply that all critical values form an integer lattice.
Together with Theorem~\ref{thm:positive-ground-state}, this proves
Theorem~\ref{thm:intro-hardy}.

\section{Compact quadratic perturbations}
\label{sec:compact-potentials}

Let \(V\geq0\) be locally integrable and suppose that
\[
 P_V(u,v)=\int_{\Ucal}Vuv\dd\mu_\ell
\]
extends continuously to
\(\dot S^1_\ell(\Ucal)\times\dot S^1_\ell(\Ucal)\).
We distinguish the multiplier
\[
 M_{\sqrt V}:\dot S^1_\ell(\Ucal)\longrightarrow L^2(d\mu_\ell),
 \qquad M_{\sqrt V}u=\sqrt V\,u,
\]
from its Riesz representative
\[
 K_V=M_{\sqrt V}^*M_{\sqrt V},
 \qquad E_\ell(K_Vu,v)=P_V(u,v).
\]
Throughout this section \(M_{\sqrt V}\) is compact.

\begin{lemma}[A concrete compact-multiplier class]
\label{lem:lp-compact-multiplier}
If \(V\in L^{Q_\ell/2}(\Ucal,d\mu_\ell)\), then
\(M_{\sqrt V}\) is compact.
\end{lemma}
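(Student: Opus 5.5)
The plan is to approximate \(V\) in \(L^{Q_\ell/2}(d\mu_\ell)\) by bounded potentials with compact support in \(\Ucal\), and then use the fact that the compact operators are norm-closed. I will show two things: each truncated multiplier is compact, and the multiplier norm is controlled by the \(L^{Q_\ell/2}\) norm. The basic norm estimate comes from H\"older's inequality with exponents \(Q_\ell/2\) and \(q_\ell/2=Q_\ell/(Q_\ell-2)\), combined with Theorem~\ref{thm:sharp-sobolev}:
\[
 \int_{\Ucal}|W|\,u^2\dd\mu_\ell
 \leq\|W\|_{L^{Q_\ell/2}(d\mu_\ell)}\|u\|_{q_\ell}^2
 \leq S_\ell^{-1}\|W\|_{L^{Q_\ell/2}(d\mu_\ell)}E_\ell(u).
\]
Applying this to \(W=(\sqrt V-\sqrt{V_k})^2\leq|V-V_k|\), where \(0\leq V_k\leq V\), gives
\[
 \|M_{\sqrt V}-M_{\sqrt{V_k}}\|^2\leq S_\ell^{-1}\|V-V_k\|_{Q_\ell/2}.
\]
(The inequality \(W\leq|V-V_k|\) holds because \((\sqrt a-\sqrt b)^2\leq|a-b|\).)

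For the approximants I take \(V_k=\min\{V,k\}\mathbf 1_{K_k}\). Here \(K_k=\{|z|\leq k,\ |t|\leq k,\ 1/k\leq\rho\leq k\}\) is an exhausting family of compact subsets of \(\Ucal\). By monotone convergence, \(\|V-V_k\|_{Q_\ell/2}\to0\).

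It therefore remains to show that \(M_{\sqrt{V_k}}\) is compact. Since \(\sqrt{V_k}\leq\sqrt k\,\mathbf 1_{K_k}\), it is enough to prove that the restriction map \(\dot S^1_\ell(\Ucal)\to L^2(K;d\mu_\ell)\) is compact for each compact \(K\subset\Ucal\). Take a bounded sequence \(u_j\rightharpoonup u\) in \(\dot S^1_\ell\). On a neighbourhood \(K'\Subset\Ucal\) of \(K\), the weight \(\rho\) is bounded above and below by positive constants. So \(E_\ell\) controls \(\int_{K'}(|\nabla_Hu|^2+|u_\rho|^2+|u_t|^2)\), and the critical embedding controls \(\|u\|_{L^{q_\ell}(K')}\), which bounds \(\|u\|_{L^2(K')}\) by H\"older. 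Since \(\nabla_H\) and \(\partial_t\) span all \(z\)-directions, \(u\) is bounded in the ordinary \(H^1(K')\) norm. Multiplying by a cutoff \(\chi\in C_c^\infty(K')\) equal to one on \(K\), classical Rellich compactness for \(\chi u_j\) gives \(u_j\to u\) in \(L^2(K)\). This is the same local Rellich compactness already used in the profile decomposition. Hence \(M_{\sqrt{V_k}}u_j\to M_{\sqrt{V_k}}u\) in \(L^2(d\mu_\ell)\), so \(M_{\sqrt{V_k}}\) is compact, and \(M_{\sqrt V}\), as a norm limit of compact operators, is compact.

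The main point needing care is the local step. Compact sets must be kept away from the axis \(\rho=0\), where the energy degenerates. Truncating near the axis is harmless because the approximation is done in \(L^{Q_\ell/2}(d\mu_\ell)\). Elsewhere the norm comparison is only routine.
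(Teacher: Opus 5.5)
Your proof is correct and follows the same route as the paper. The Hölder and sharp Sobolev estimate bounds the multiplier norm by \(\|V\|_{L^{Q_\ell/2}}\), each bounded compactly supported approximant is compact by local Rellich compactness, and norm-closedness of the compact operators finishes the argument. Your version makes explicit what the paper leaves implicit: the bound \((\sqrt a-\sqrt b)^2\le|a-b|\), supports kept away from the axis \(\rho=0\), and the reduction of the local energy to a Euclidean \(H^1\) bound.
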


\begin{proof}
Hölder's inequality and Theorem~\ref{thm:sharp-sobolev} give boundedness.
Approximate \(V\) in \(L^{Q_\ell/2}\) by bounded compactly supported
potentials.  For each approximant, local Rellich compactness of the reduced
energy space gives compactness of the multiplier.  The Hölder estimate shows
convergence in operator norm.
\end{proof}

The first consequence records the distinction between attractive and
repulsive perturbations.  Set
\[
 \lambda_1(V)
 =\inf_{P_V(u,u)>0}\frac{E_\ell(u)}{P_V(u,u)}.
\]

\begin{proposition}[Bottom critical quotients]\label{prop:compact-quotients}
Let \(n\geq1\), let \(\ell\geq1\) be an integer, and let
\(V\geq0\) be locally integrable.  Assume that \(P_V\) extends continuously
to \(\dot S^1_\ell(\Ucal)\times\dot S^1_\ell(\Ucal)\) and that
\(M_{\sqrt V}\) is compact.  Then the following statements hold.
\begin{enumerate}[label=\textup{(\roman*)}]
\item Suppose
\[
 0<\lambda<\lambda_1(V)
\]
and
\[
 S^-_{\lambda,V}
 =\inf_{u\ne0}
 \frac{E_\ell(u)-\lambda P_V(u,u)}{\|u\|_{q_\ell}^2}
 <S_\ell.
\]
Then \(S^-_{\lambda,V}>0\), the infimum is attained by a nonnegative
normalized weak solution, and every normalized minimizing sequence is
sequentially precompact in the energy and critical norms.

\item For the repulsive quotient,
\[
 S^+_V
 =\inf_{u\ne0}
 \frac{E_\ell(u)+P_V(u,u)}{\|u\|_{q_\ell}^2}
 =S_\ell.
\]
It is attained if and only if some pure Sobolev extremal \(B\) satisfies
\(P_V(B,B)=0\).  In particular, it is not attained when \(V>0\) almost
everywhere.
\end{enumerate}
\end{proposition}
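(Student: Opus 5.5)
For (i) I would follow the proof of Theorem~\ref{thm:hardy-minimizer}, replacing the frame-invariant Hardy form by the compact form \(P_V\). Positivity comes from coercivity: since \(\lambda<\lambda_1(V)\), we have \(E_\ell(u)-\lambda P_V(u,u)\geq(1-\lambda/\lambda_1(V))E_\ell(u)\), and then Theorem~\ref{thm:sharp-sobolev} gives \(S^-_{\lambda,V}\geq(1-\lambda/\lambda_1(V))S_\ell>0\). This also makes normalized minimizing sequences bounded.

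The compactness step is where compactness of \(M_{\sqrt V}\) enters. Take a normalized minimizing sequence and apply Theorem~\ref{thm:reduced-profile}, arranging the first profile to be the weak limit \(u\) at the identity frame. For every profile whose frame escapes, \(\mathcal T_{g_k^j}\phi^j\rightharpoonup0\). Compactness then gives \(P_V(\mathcal T_{g_k^j}\phi^j)\to0\), and \(P_V(u_k)\to P_V(u)\) since the remainders are weakly null in the identity frame. Thus escaping profiles see only \(E_\ell\), and their quotient is at least \(S_\ell\). With masses \(m_j\) summing to one and \(\alpha=2/q_\ell<1\), we get
\[
S^-_{\lambda,V}\geq S^-_{\lambda,V}m_0^\alpha+S_\ell\sum_{j\geq1}m_j^\alpha .
\]
Because \(S^-<S_\ell\) and the function \(m\mapsto m^\alpha\) is strictly concave, this forces \(m_0=1\) and all escaping masses to vanish. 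So \(u\) carries the full mass and attains the quotient. The remainder then has vanishing perturbed energy, and coercivity gives strong convergence in energy and in \(L^{q_\ell}\). Replacing \(u\) by \(|u|\) does not increase \(E_\ell\) and leaves \(P_V\) unchanged, so the minimizer may be taken nonnegative. The first variation gives the weak equation. The main care is that there is no frame modulation here: precompactness holds without translating, exactly because any loss would cost at least \(S_\ell\).

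For (ii), the bound \(S^+_V\geq S_\ell\) follows from \(V\geq0\). For the reverse bound, test with \(\mathcal T_{h,\eta}B_0\) and let the frame escape, for instance \(h\to0\). Weak convergence to zero plus compactness give \(P_V(\mathcal T_{h,\eta}B_0)\to0\), hence \(S^+_V\leq S_\ell\).

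For attainment, suppose \(u\neq0\) attains \(S_\ell\). Then
\[
S_\ell\|u\|_{q_\ell}^2\leq E_\ell(u)\leq E_\ell(u)+P_V(u,u)=S_\ell\|u\|_{q_\ell}^2 ,
\]
which forces equality in the sharp inequality and \(P_V(u,u)=0\). Theorem~\ref{thm:sharp-sobolev} then identifies \(u\) as a pure extremal \(B\) with \(P_V(B,B)=0\). Conversely, such a \(B\) attains the quotient trivially. If \(V>0\) almost everywhere, the explicit extremals are nowhere zero, so \(P_V(B,B)>0\) and the quotient is not attained.

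I expect the only delicate step to be the Brezis--Lieb/profile bookkeeping for \(P_V\) in (i), and compactness handles it directly.
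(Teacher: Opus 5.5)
Your proposal is correct and follows essentially the same route as the paper. In (i) it uses coercivity from \(\lambda<\lambda_1(V)\), the convergence \(P_V(u_k,u_k)\to P_V(u,u)\) from compactness of \(M_{\sqrt V}\), and the strict gap \(S^-_{\lambda,V}<S_\ell\) to exclude vanishing and dichotomy; in (ii) it uses the sharp inequality, an escaping bubble, and the equality case. The only cosmetic difference is that the paper uses a single Brezis--Lieb split around the weak limit rather than the full reduced profile decomposition; this suffices because any mass that does not stay in the weak limit is already charged at least \(S_\ell\).
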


\begin{proof}
Compactness of the multiplier makes \(P_V\) continuous along weakly
convergent energy sequences.  In the attractive case, coercivity,
Brezis--Lieb splitting, and the strict inequality below the pure threshold
exclude vanishing and dichotomy.  A minimizing sequence therefore converges
strongly, and taking the absolute value gives a nonnegative minimizer.

For the repulsive quotient, the sharp inequality gives
\(S^+_V\geq S_\ell\).  Translating or dilating a pure extremal
along an escaping frame makes it weakly null; compactness of
\(M_{\sqrt V}\) then makes its potential mass tend to zero.  Hence the
reverse inequality holds.  Equality for an actual minimizer forces equality
in the pure sharp inequality and zero potential mass, which proves the
attainment criterion.
\end{proof}

Thus a solution for a nontrivial repulsive potential cannot in general be
obtained by minimizing the bottom quotient.  We turn to higher critical
levels.  Define
\[
 I_V(u)
 =\frac12\bigl(E_\ell(u)+P_V(u,u)\bigr)
 -\frac1{q_\ell}\|u\|_{q_\ell}^{q_\ell},
\]
\[
 I_0(u)=\frac12E_\ell(u)
 -\frac1{q_\ell}\|u\|_{q_\ell}^{q_\ell},
 \qquad
 d_{0,\ell}
 =\frac1{Q_\ell}
 \bigl(S_\ell\bigr)^{Q_\ell/2}.
\]

\begin{theorem}[Compact-potential global compactness]
\label{thm:compact-potential-global}
Let \(n\geq1\), let \(\ell\geq1\) be an integer, and let
\(V\geq0\) be locally integrable.  Assume that \(P_V\) extends continuously
to \(\dot S^1_\ell(\Ucal)\times\dot S^1_\ell(\Ucal)\) and that
\(M_{\sqrt V}\) is compact.  Let
\((u_k)\subset\dot S^1_\ell(\Ucal)\) be real and satisfy
\[
 I_V(u_k)\longrightarrow c,
 \qquad
 \|I_V'(u_k)\|_{(\dot S^1_\ell)^*}\longrightarrow0.
\]
After passage to a subsequence,
\begin{equation}\label{eq:compact-potential-expansion}
 u_k=u^0+\sum_{j=1}^K\mathcal T_{g_k^j}\phi^j+\omega_k,
\end{equation}
where
\[
 \omega_k\longrightarrow0
 \quad\text{in }\dot S^1_\ell(\Ucal)\text{ and }L^{q_\ell},
\]
\(u^0\) may vanish and solves
\[
 \Gell_\ell u^0+Vu^0=|u^0|^{q_\ell-2}u^0,
\]
and every \(\phi^j\ne0\) solves the pure equation
\[
 \Gell_\ell\phi^j=|\phi^j|^{q_\ell-2}\phi^j.
\]
The frames are pairwise orthogonal and escape the identity.  Moreover,
\begin{align}
 E_\ell(u_k)
 &=E_\ell(u^0)+\sum_jE_\ell(\phi^j)+o(1),
 \label{eq:compact-potential-energy}\\
 P_V(u_k,u_k)&=P_V(u^0,u^0)+o(1),
 \label{eq:compact-potential-mass}\\
 \|u_k\|_{q_\ell}^{q_\ell}
 &=\|u^0\|_{q_\ell}^{q_\ell}
 +\sum_j\|\phi^j\|_{q_\ell}^{q_\ell}+o(1),
 \label{eq:compact-potential-critical}\\
 c&=I_V(u^0)+\sum_jI_0(\phi^j).
 \label{eq:compact-potential-action}
\end{align}
Every nonzero full or pure term has action at least \(d_{0,\ell}\).
A nodal pure profile has action at least \(2d_{0,\ell}\), while every
one-sign pure profile has action exactly \(d_{0,\ell}\).  Consequently,
\(I_V\) satisfies \((PS)_c\) whenever
\begin{equation}\label{eq:compact-ps-window}
 d_{0,\ell}<c<2d_{0,\ell}.
\end{equation}
There is no finite-level Palais--Smale sequence for \(c<0\) or
\(0<c<d_{0,\ell}\), and level zero converges strongly to zero.  At
\(c=d_{0,\ell}\) one pure bubble may escape; no compactness assertion is
made at \(c=2d_{0,\ell}\).
\end{theorem}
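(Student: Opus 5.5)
The argument will run parallel to the proof of Theorem~\ref{thm:hardy-global}, with one change: the compact multiplier $M_{\sqrt V}$ replaces the frame-invariant Hardy form. Because of this, escaping profiles solve the pure equation.

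\emph{Boundedness.} First we show that $(u_k)$ is bounded. Since $V\ge0$, we have
\[
 q_\ell I_V(u)-\langle I_V'(u),u\rangle=\bigl(\tfrac{q_\ell}{2}-1\bigr)\bigl(E_\ell(u)+P_V(u,u)\bigr)\ge\bigl(\tfrac{q_\ell}{2}-1\bigr)E_\ell(u).
\]
This controls $E_\ell(u_k)$ by $C+o(1)\|u_k\|$, so the sequence is bounded.

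\emph{Weak limit.} Pass to a weak limit $u_k\rightharpoonup u^0$. Compactness of $M_{\sqrt V}$ gives $\sqrt V u_k\to\sqrt V u^0$ strongly in $L^2$. Hence $K_Vu_k\to K_Vu^0$ in norm and $P_V(u_k,u_k)\to P_V(u^0,u^0)$, which is \eqref{eq:compact-potential-mass}. Local a.e.\ convergence and weak convergence of the Nemytskii term show that $I_V'(u^0)=0$. Set $v_k=u_k-u^0\rightharpoonup0$. The derivative Brezis--Lieb lemma (Lemma~\ref{lem:derivative-bl}), together with the norm convergence of $K_Vv_k\to0$, gives $I_0'(v_k)\to0$ in the dual space and $I_0(v_k)\to c-I_V(u^0)$.

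\emph{Profile extraction.} We apply Theorem~\ref{thm:reduced-profile} to $(v_k)$. Since $v_k\rightharpoonup0$, every nonzero profile has a frame escaping the identity. Then we repeat the frame-transported derivative splitting \eqref{eq:moving-frame-derivative-bl} verbatim, dropping the Hardy term. This shows that each $\phi^j$ is a critical point of $I_0$ and that each finite-stage remainder is again Palais--Smale for $I_0$. The potential term is harmless at these stages, because $P_V(\mathcal T_{g_k^j}\phi^j,\cdot)\to0$ for escaping frames, by compactness and weak nullity.

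\emph{Termination and strong remainder.} By the Nehari identity and Theorem~\ref{thm:sharp-sobolev}, each nonzero $\phi^j$ satisfies $E_\ell(\phi^j)\ge S_\ell^{Q_\ell/2}$. The energy splitting therefore bounds the number of profiles, so the extraction terminates. For the final remainder, $\|\omega_k\|_{q_\ell}\to0$ by \eqref{eq:profile-remainder}, and $E_\ell(\omega_k)=\langle I_0'(\omega_k),\omega_k\rangle+\|\omega_k\|_{q_\ell}^{q_\ell}\to0$. The identities \eqref{eq:compact-potential-energy}, \eqref{eq:compact-potential-critical} and \eqref{eq:compact-potential-action} then follow from the finite-stage splittings.

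\emph{Action bounds.} For a nonzero critical point, the Nehari identity gives $I(\phi)=E/Q_\ell\ge d_{0,\ell}$. For the $V$-term we use $E_\ell+P_V\ge S_\ell\|\cdot\|_{q_\ell}^2$, which holds because $P_V\ge0$. Nodal points require a positive/negative part splitting through Lemma~\ref{lem:lattice-completion}, exactly as in Corollary~\ref{cor:hardy-quantum}, which gives $2d_{0,\ell}$. A one-sign pure profile is classified by Theorem~\ref{thm:weak} (applied to $|\phi|$) as a bubble, so its action is exactly $d_{0,\ell}$.

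\emph{The window.} Let $d_{0,\ell}<c<2d_{0,\ell}$. If some $\phi^j\ne0$, then this profile is one-sign, has action exactly $d_{0,\ell}$, and is the only profile. If moreover $u^0\ne0$, the action would be at least $2d_{0,\ell}$, a contradiction. Hence $u^0=0$ and $c=d_{0,\ell}$, again a contradiction. So $K=0$, and therefore $u_k\to u^0$ strongly. The statements for levels $c<d_{0,\ell}$ follow from the same count.

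\emph{Expected obstacle.} The delicate step is verifying that the derivative splitting survives the potential uniformly. We need $\|K_V\mathcal T_{g_k^j}\phi^j\|\to0$ and $\|K_V r_k^j\|\to0$ in operator-image norm, not merely weakly. I would obtain this from compactness of $M_{\sqrt V}^*M_{\sqrt V}$ applied to weakly null sequences.
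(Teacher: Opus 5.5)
Your proposal is correct and follows essentially the same route as the paper. Both arguments reduce to the weakly null residual $u_k-u^0$, which is Palais--Smale for $I_0$ by compactness of $K_V$ and the derivative Brezis--Lieb lemma. Both then apply the reduced profile decomposition with frame-transported derivative splitting, stop the extraction using the Nehari quantum $S_\ell^{Q_\ell/2}$, and close the window $(d_{0,\ell},2d_{0,\ell})$ by the action count, using Theorem~\ref{thm:weak} and Lemma~\ref{lem:lattice-completion}. Your ``expected obstacle'' is already settled at the reduction step: $I_0$ contains no potential term, so the only fact needed is $\|K_Vv_k\|\to0$, which is exactly what you prove there.
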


\begin{proof}
The identity
\[
 q_\ell I_V(u)-\langle I_V'(u),u\rangle
 =\left(\frac{q_\ell}{2}-1\right)
 \bigl(E_\ell(u)+P_V(u,u)\bigr)
\]
gives boundedness.  Pass to a subsequence with
\(u_k\rightharpoonup u^0\) in the energy space and almost everywhere
locally.  Weak convergence of the critical Nemytskii term shows that
\(u^0\) solves the full equation.

Put \(w_k=u_k-u^0\).  Compactness of \(M_{\sqrt V}\) gives
\[
 P_V(w_k,w_k)\longrightarrow0,
 \qquad
 \|K_Vw_k\|_{\dot S^1_\ell}\longrightarrow0.
\]
Hilbert splitting and the scalar and derivative Brezis--Lieb lemmas then
give
\[
 I_0(w_k)=c-I_V(u^0)+o(1),
 \qquad
 I_0'(w_k)\longrightarrow0.
\]
Thus the entire weakly null residual is a Palais--Smale sequence for the
pure functional.

Apply Theorem~\ref{thm:reduced-profile} to \(w_k\).  In a profile frame,
local almost-everywhere convergence and weak convergence in the energy space
give
\[
 \mathcal T_{g_k^j}^{-1}w_k\rightharpoonup\phi^j.
\]
The frame actions preserve \(I_0\).  Passing to the limit in the derivative,
using Lemma~\ref{lem:derivative-bl}, shows that
\(I_0'(\phi^j)=0\).  The derivative splitting is frame-independent: the
Nemytskii map satisfies
\(\mathcal N(\mathcal T_gu)=\mathcal T_g^*\mathcal N(u)\), and since the
frame actions are isometric on both \(L^{q_\ell}\) and the energy space, the
induced dual-space norms are the same in every frame.  More precisely, with
\(r_k^0=w_k\), the common diagonal subsequence gives
\[
 \mathcal T_{g_k^j}^{-1}r_k^{j-1}
 =\phi^j+\mathcal T_{g_k^j}^{-1}r_k^j
 \longrightarrow\phi^j
 \quad\text{almost everywhere locally}.
\]
The normalized sequences are bounded in \(L^{q_\ell}\), so the transported
estimate \eqref{eq:moving-frame-derivative-bl} applies verbatim.  Summing only
over a fixed number of profiles gives
\[
 I_0(w_k)
 =\sum_{j=1}^JI_0(\phi^j)+I_0(r_k^J)+o(1),
 \qquad I_0'(r_k^J)\longrightarrow0.
\]
The potential term does not appear in the pure residual functional: each
escaping profile frame carries the compact multiplier \(M_{\sqrt V}\) to
zero, so the pure profiles solve the pure Geller equation.
Every nonzero profile satisfies the pure Nehari identity and hence has
energy at least
\(\bigl(S_\ell\bigr)^{Q_\ell/2}\).
The finite-stage Hilbert splitting therefore permits only finitely many
nonzero profiles.  If \(K\) is their number, the ordered remainder estimate
gives \(\|r_k^K\|_{q_\ell}\to0\), and
\[
 E_\ell(r_k^K)
 =\langle I_0'(r_k^K),r_k^K\rangle
  +\|r_k^K\|_{q_\ell}^{q_\ell}\longrightarrow0.
\]
Thus the terminal remainder converges strongly.  Combining this pure
decomposition with the compact-potential splitting at the identity frame
proves
\eqref{eq:compact-potential-expansion}--\eqref{eq:compact-potential-action}.

The Nehari identity, nonnegativity of \(V\), and
Theorem~\ref{thm:sharp-sobolev} show that every nonzero full or pure critical
point has action at least \(d_{0,\ell}\).  Lemma~\ref{lem:lattice-completion}
gives the two-quantum bound for a nodal pure profile.
Theorem~\ref{thm:weak} classifies a one-sign pure profile, and its
coefficient-one normalization gives action exactly \(d_{0,\ell}\).

Suppose \eqref{eq:compact-ps-window} holds.  If \(u^0\ne0\), the action
identity and the upper bound \(c<2d_{0,\ell}\) leave no room for an escaping
profile.  If \(u^0=0\), the identity and \(c>0\) force at least one pure
profile, while \(c<2d_{0,\ell}\) permits only one.  It cannot be nodal and
therefore has action exactly \(d_{0,\ell}\), contradicting
\(c>d_{0,\ell}\).  Hence \(u^0\ne0\) and convergence is strong.

All terms in \eqref{eq:compact-potential-action} have nonnegative action and
every nonzero term contributes at least \(d_{0,\ell}\).  This excludes
finite-level Palais--Smale sequences for \(c<0\) and for
\(0<c<d_{0,\ell}\); at \(c=0\) all terms vanish and the sequence converges
strongly to zero.  At \(c=d_{0,\ell}\) a single pure bubble may escape, and
the two-quantum endpoint is not covered by the preceding strict argument.
\end{proof}

\section{Busemann center--scale linking and a repulsive-potential solution}
\label{sec:minmax}

Let
\[
 \mathcal M_\ell
 =\{u\in\dot S^1_\ell(\Ucal):\|u\|_{q_\ell}=1\},
 \qquad
 F_V(u)=E_\ell(u)+P_V(u,u).
\]
Fix the positive centered extremal \(B_0\in\mathcal M_\ell\), so that
\[
 E_\ell(B_0)=S_\ell.
\]

\subsection{Global center and scale coordinates}

For \(u\in\mathcal M_\ell\), the polar lift defines a probability measure on
\(\Heis^{n+\ell}\),
\[
 \dd\nu_u^{\mathrm H}
 =c_\ell^{-1}|u^\uparrow|^{q_\ell}\dd Z\dd t.
\]
Push this measure to the ideal boundary of \(\CHyp^{n+\ell+1}\) by the
boundary Cayley map.  Since \(\nu_u^{\mathrm H}\) has an \(L^1\) density with
respect to Heisenberg Lebesgue measure, it has no atoms.  The boundary Cayley
map is a smooth identification away from its single point at infinity, which
also has zero mass; hence the resulting boundary probability \(\nu_u\) is
atomless.

We use the complex-hyperbolic metric of holomorphic sectional curvature
\(-4\).  Its sectional curvatures lie in \([-4,-1]\), so its maximum
sectional curvature is \(-1\), exactly the normalization used in the
rank-one barycenter construction.  This construction goes back to
Besson--Courtois--Gallot \cite{BessonCourtoisGallot1995Entropies}; we use
the form stated for \(K\)-hyperbolic spaces in
\cite{Savini2023NaturalMaps}.  The ambient
complex dimension is \(r=n+\ell+1\).  In Savini's notation the dimensional
hypothesis is \(d(r-1)\geq2\); here \(d=2\) and
\(2(n+\ell)\geq4\).  Thus the Busemann functional of \(\nu_u\) has a unique
minimizer, denoted \(b(u)\), and the barycenter is weak-* continuous on
atomless probabilities and equivariant under holomorphic isometries.

Write \(\varrho\) for the Siegel height on the fixed copy of
\(\CHyp^{n+1}\), and let \(o=(0,0,1)\) be its basepoint in
\((z,t,\varrho)\)-coordinates.  For
\(\eta=(z_*,t_*)\in\Heis^n\) and \(h>0\), let \(g_{h,\eta}\) be the
rotation-free Heisenberg translation--dilation isometry normalized by
\[
 g_{h,\eta}o=(z_*,t_*,h^2).
\]
We write
\[
 P_{\mathrm{link}}
 =\{g_{h,\eta}:h>0,\ \eta\in\Heis^n\}
 \cong\Heis^n\rtimes\R_+
\]
for this rotation-free subgroup.

\begin{lemma}[Busemann center--scale map]\label{lem:busemann-coordinates}
The barycenter \(b(u)\) belongs to the \(U(\ell)\)-fixed totally geodesic
copy of \(\CHyp^{n+1}\).  Write
\[
 b(u)=(z_b(u),t_b(u),\varrho_b(u))
\]
in this copy and set
\[
 \beta(u)=(z_b(u),t_b(u))\in\Heis^n,
 \qquad
 \gamma(u)=\frac12\log\varrho_b(u)\in\R.
\]
Then \(\beta\) and \(\gamma\) are continuous,
\[
 b(u)=g_{e^{\gamma(u)},\beta(u)}o,
\]
and, on the normalized bubble orbit,
\begin{equation}\label{eq:barycenter-calibration}
 (\beta,\gamma)(\pm\mathcal T_{h,\eta}B_0)
 =(\eta,\log h).
\end{equation}
\end{lemma}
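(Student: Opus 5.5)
The plan is to exploit equivariance of the Besson--Courtois--Gallot barycenter under holomorphic isometries, in three steps. The \(U(\ell)\)-action on the auxiliary block of \(\Heis^{n+\ell}\) extends to holomorphic isometries of \(\CHyp^{n+\ell+1}\) fixing the basepoint. Its boundary action, through the boundary Cayley map, is exactly \(w\mapsto Bw\) on the Heisenberg model. Since \(u^\uparrow\) is \(U(\ell)\)-invariant, the measure \(\nu_u^{\mathrm H}\), and hence \(\nu_u\), is \(U(\ell)\)-invariant.

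\emph{Fixed-point step.} Uniqueness and equivariance of the barycenter give \(Bb(u)=b(Bu)=b(u)\) for every \(B\in U(\ell)\). In Siegel coordinates \((z,w,t,\varrho)\) of \(\CHyp^{n+\ell+1}\), the element \(B\) acts by \((z,Bw,t,\varrho)\). Since \(U(\ell)\) has no nonzero fixed vector on \(\C^\ell\), the fixed set is \(\{w=0\}\). This set is the totally geodesic complex submanifold \(\CHyp^{n+1}\), being the fixed set of a group of isometries, which proves the first claim.

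\emph{Coordinates and continuity.} On the fixed copy, \(P_{\mathrm{link}}\) acts simply transitively, and the normalization \(g_{h,\eta}o=(z_*,t_*,h^2)\) gives \(b(u)=g_{e^{\gamma},\beta}o\) with \(\beta,\gamma\) as defined. For continuity, I will argue that \(u\mapsto\nu_u\) is weak-\(*\) continuous from \(\mathcal M_\ell\), with the \(\dot S^1_\ell\) topology, into atomless probabilities. Indeed, \(u_k\to u\) in energy implies \(u_k\to u\) in \(L^{q_\ell}(d\mu_\ell)\) by Theorem~\ref{thm:sharp-sobolev}. By the polar identities of Lemma~\ref{lem:polar-lift}, this gives \(|u_k^\uparrow|^{q_\ell}\to|u^\uparrow|^{q_\ell}\) in \(L^1(\Heis^{n+\ell})\), and pushforward under a measurable map preserves total variation convergence. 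Weak-\(*\) continuity of the barycenter, together with smoothness of the coordinates on \(\CHyp^{n+1}\), then gives continuity of \(\beta\) and \(\gamma\).

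\emph{Calibration.} This is the step I expect to be the main obstacle, because it needs an exact identification and not merely a symmetry argument. First consider the centered bubble. The lift of \(B_0\) is the full bubble \(\widetilde B\) up to a constant, and \(|\widetilde B|^{q_\ell}\) is a multiple of \(((1+|Z|^2)^2+t^2)^{-(n+\ell+1)}\). By the boundary Cayley Jacobian of Frank--Lieb \cite[Appendix~A]{FrankLieb2012SharpHeisenberg}, this density is exactly the Jacobian of the Cayley map to the sphere. Hence \(\nu_{B_0}\) is the normalized round, that is visual, measure seen from the center \(o\). That measure is invariant under the stabilizer \(K_o\simeq U(n+\ell+1)\), whose only fixed point in \(\CHyp^{n+\ell+1}\) is \(o\). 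Uniqueness then forces \(b(B_0)=o\), so \((\beta,\gamma)(\pm B_0)=(0,0)\); the sign drops out because only \(|u|^{q_\ell}\) enters.

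For general frames, the reduced operator \(\mathcal T_{h,\eta}\) lifts to \(\mathcal T^{\mathrm{full}}_{h,\bar\eta}\) with \(\bar\eta=(z_*,0,t_*)\). The measure \(|\mathcal T^{\mathrm{full}}_{h,\bar\eta}F|^{q_\ell}\,dZ\,dt\) is the pushforward of \(|F|^{q_\ell}\,dZ\,dt\) under the boundary affine map \(x\mapsto\bar\eta\,\delta_h x\), by critical scaling invariance. This boundary map is the restriction of the isometry \(g_{h,\eta}\). Equivariance then gives \(b(\mathcal T_{h,\eta}B_0)=g_{h,\eta}o=(z_*,t_*,h^2)\), which yields \(\beta=\eta\) and \(\gamma=\tfrac12\log h^2=\log h\), establishing \eqref{eq:barycenter-calibration}. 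The only delicate points are matching the dilation convention, namely that \(\varrho\) scales like \(h^2\) while \(\rho=|w|^2\) does, and checking that the Frank--Lieb Jacobian exponent equals \(q_\ell\cdot(n+\ell)/2=n+\ell+1\); both are direct computations.
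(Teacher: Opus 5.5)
Your proposal is correct and follows the paper's proof. The fixed copy comes from \(U(\ell)\)-invariance of the lifted mass plus uniqueness and equivariance of the barycenter. Continuity comes from upgrading \(L^{q_\ell}\) convergence to total-variation, hence weak-\(*\), convergence of atomless boundary measures. The calibration uses the Frank--Lieb Cayley Jacobian together with equivariance under reduced frames.

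You make explicit two points the paper leaves implicit, without adding any new ingredient. First, the \(K_o\)-invariance argument showing that the visual measure at \(o\) has barycenter \(o\). Second, the push-forward identity \(|\mathcal T^{\mathrm{full}}_{h,\bar\eta}F|^{q_\ell}\,dZ\,dt=(\bar\eta\,\delta_h)_*(|F|^{q_\ell}\,dZ\,dt)\) that links the frame action to the isometry \(g_{h,\eta}\).
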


\begin{proof}
The lifted mass is invariant under \(U(\ell)\).  Uniqueness and equivariance
of the barycenter therefore make \(b(u)\) fixed by \(U(\ell)\).  The fixed
locus is the copy of \(\CHyp^{n+1}\) obtained by setting the auxiliary complex
coordinates equal to zero.  The Heisenberg translation--dilation subgroup
acts simply transitively on this space, which defines \(\beta\) and
\(\gamma\).

Strong convergence in \(\dot S^1_\ell\) implies strong convergence in
\(L^{q_\ell}\), hence total-variation convergence of the lifted critical
mass.  Since total variation dominates weak-*, and the limit probability is
atomless, the weak-* continuous barycenter of
\cite{Savini2023NaturalMaps} gives continuity of
\((\beta,\gamma)\).

By the boundary Cayley Jacobian formula of Frank--Lieb
\cite[Appendix~A]{FrankLieb2012SharpHeisenberg}, the boundary Cayley
push-forward of the normalized lifted mass of \(B_0\) is normalized
spherical measure.  Here normalization follows from
\(\|B_0\|_{q_\ell}=1\) and the polar identity.  With the interior Cayley
normalization fixed in Section~\ref{sec:cayley}, this spherical measure is
the visual probability at \(o\), whose barycenter is \(o\).  Equivariance
under reduced translations and dilations proves
\eqref{eq:barycenter-calibration}.  The sign has no effect on the critical
mass.
\end{proof}

\begin{lemma}[The centered gap]\label{lem:centered-gap}
Assume \(V\geq0\), \(V\not\equiv0\), and \(M_{\sqrt V}\) is compact.  Then
\[
 c_V
 :=\inf\{F_V(u):u\in\mathcal M_\ell,\ \beta(u)=0,\
 \gamma(u)=0\}
 >S_\ell.
\]
\end{lemma}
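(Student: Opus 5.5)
Argue by contradiction.  Since \(V\geq0\), the sharp inequality of Theorem~\ref{thm:sharp-sobolev} gives \(F_V(u)\geq E_\ell(u)\geq S_\ell\) on \(\mathcal M_\ell\), so \(c_V\geq S_\ell\).  Suppose equality holds, and take \(u_k\in\mathcal M_\ell\) with \(\beta(u_k)=0\), \(\gamma(u_k)=0\), and \(F_V(u_k)\to S_\ell\).  Then \(E_\ell(u_k)\to S_\ell\) and \(P_V(u_k,u_k)\to0\).  Corollary~\ref{cor:sharp-compactness} yields signs \(\sigma_k\) and frames \(g_k=(h_k,\eta_k)\) with \(\|u_k-\sigma_k\mathcal T_{g_k}B_0\|_{\dot S^1_\ell}\to0\).

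The next step uses the barycenter constraint to pin the frames.  Put \(w_k=\sigma_k\mathcal T_{g_k}B_0\).  By the calibration \eqref{eq:barycenter-calibration}, \((\beta,\gamma)(w_k)=(\eta_k,\log h_k)\).  I want to show that \((\eta_k,\log h_k)\to(0,0)\).  Continuity of \((\beta,\gamma)\) at a single point does not suffice, because \(w_k\) moves along a noncompact orbit.  Instead apply \(\mathcal T_{g_k}^{-1}\), which is an isometry of energy and critical norm.  Then \(\mathcal T_{g_k}^{-1}u_k\to\sigma_kB_0\) strongly, and after passing to a subsequence we may take \(\sigma_k\) constant.  The lifted mass transforms equivariantly: \(\mathcal T_{g_k}^{-1}\) acts on the lifted boundary measure by the isometry \(g_{h_k,\eta_k}^{-1}\) of \(\CHyp^{n+\ell+1}\).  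Equivariance of the barycenter then gives
\[
b(\mathcal T_{g_k}^{-1}u_k)=g_{h_k,\eta_k}^{-1}b(u_k)=g_{h_k,\eta_k}^{-1}o .
\]
By continuity of \(b\) at \(\pm B_0\), where the limit measure is atomless, the left side tends to \(b(B_0)=o\).  Hence \(g_{h_k,\eta_k}^{-1}o\to o\).  Since \(P_{\mathrm{link}}\) acts simply transitively and properly on the fixed copy of \(\CHyp^{n+1}\), it follows that \(h_k\to1\) and \(\eta_k\to0\).  Strong continuity of the frame action then gives \(u_k\to\sigma B_0\) in \(\dot S^1_\ell\).

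Finally, \(P_V\) is continuous on \(\dot S^1_\ell\), so \(P_V(B_0,B_0)=\lim_kP_V(u_k,u_k)=0\).  But \(B_0>0\) everywhere and \(V\geq0\), \(V\not\equiv0\), so \(\int VB_0^2\dd\mu_\ell>0\).  This is a contradiction, and therefore \(c_V>S_\ell\).

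The main obstacle is the second paragraph: obtaining control of the frames from the barycenter constraint.  In particular one must check that the equivariance \(b(\mathcal T_g^{-1}u)=g^{-1}b(u)\) holds exactly for the transported lifted measures, which is the Cayley conformal covariance of the critical mass used in the proof of Lemma~\ref{lem:busemann-coordinates}.  One must also check that weak-* continuity applies along the recentered sequence; it does, since it is strongly convergent in \(L^{q_\ell}\) and therefore converges in total variation.
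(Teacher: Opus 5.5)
Your proposal is correct and follows essentially the same route as the paper: contradiction via Corollary~\ref{cor:sharp-compactness}, recentering by $\mathcal T_{g_k}^{-1}$ and using equivariance and continuity of the barycenter to get $g_k^{-1}o\to o$, properness of the simply transitive $P_{\mathrm{link}}$-action to force $h_k\to1$, $\eta_k\to0$, and then $P_V(B_0,B_0)>0$ contradicts $P_V(u_k,u_k)\to0$. The equivariance point you flag as the main obstacle is also used without further justification in the paper.
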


\begin{proof}
Suppose that \(u_k\) satisfies the two constraints and
\(F_V(u_k)\to S_\ell\).  The sharp inequality and
nonnegativity of \(P_V\) imply
\[
 E_\ell(u_k)\to S_\ell,
 \qquad P_V(u_k,u_k)\to0.
\]
By Corollary~\ref{cor:sharp-compactness}, there are signs \(\sigma_k\) and
frames \(g_k\) such that
\[
 u_k-\sigma_k\mathcal T_{g_k}B_0\longrightarrow0
 \quad\text{in }\dot S^1_\ell.
\]
Pulling back by \(g_k\), continuity and equivariance of \(b\) give
\[
 g_k^{-1}o
 =b(\mathcal T_{g_k}^{-1}u_k)\longrightarrow b(B_0)=o.
\]
Properness of the Heisenberg--Iwasawa coordinates implies \(g_k\to e\):
explicitly, the orbit map \(g_{h,\eta}\mapsto g_{h,\eta}o=(z_*,t_*,h^2)\)
is a homeomorphism from the rotation-free subgroup
\(P_{\mathrm{link}}\cong\Heis^n\rtimes\R_+\) to the Siegel domain
\(\{(z,t,\varrho):\varrho>0\}\), with inverse
\((z_*,t_*,\varrho)\mapsto g_{\varrho^{1/2},(z_*,t_*)}\).  Convergence
\(g_k^{-1}o\to o\) therefore gives \(h_k\to1\) and \(\eta_k\to0\) in the
Lie-group topology.
Thus \(u_k\to\pm B_0\) strongly.  Continuity of the multiplier yields
\[
 P_V(u_k,u_k)\longrightarrow P_V(B_0,B_0)>0,
\]
because \(B_0>0\) almost everywhere and \(V\) is nonnegative and nonzero.
This contradiction proves the claim.
\end{proof}

\subsection{The linking level}

For \(R>0\), set
\[
 K_R=\overline{B_R^{\,2n+1}}\times[-R,R]
 \subset\Heis^n\times\R
\]
and
\[
 \Phi(\eta,s)=\mathcal T_{e^s,\eta}B_0.
\]
By \eqref{eq:barycenter-calibration},
\[
 (\beta,\gamma)\circ\Phi(\eta,s)=(\eta,s).
\]
Every sequence on \(\partial K_R\), with \(R\to\infty\), is an escaping
reduced frame.  Since \(M_{\sqrt V}\) is compact,
\[
 \max_{y\in\partial K_R}P_V(\Phi(y),\Phi(y))
 \longrightarrow0.
\]
Indeed, otherwise there would be \(R_j\to\infty\) and
\(y_j=(\eta_j,s_j)\in\partial K_{R_j}\) for which the potential mass is
bounded away from zero.  Since either \(|\eta_j|_{\mathbb R^{2n+1}}=R_j\)
or \(|s_j|=R_j\), the corresponding reduced frames escape, and
\(\Phi(y_j)\rightharpoonup0\) in \(\dot S^1_\ell(\Ucal)\).
Compactness of \(M_{\sqrt V}\) then gives
\(P_V(\Phi(y_j),\Phi(y_j))\to0\), a contradiction.
For \(R\) sufficiently large, the boundary energy is therefore below
\(c_V\).  Define
\[
 \Gamma_R
 =\{\zeta\in C(K_R,\mathcal M_\ell):
 \zeta=\Phi\text{ on }\partial K_R\}
\]
and
\begin{equation}\label{eq:minmax-quotient}
 a_{\mathrm{mm}}
 =\inf_{\zeta\in\Gamma_R}\max_{y\in K_R}F_V(\zeta(y)).
\end{equation}

\begin{proposition}[The min--max window]\label{prop:minmax-window}
Let \(n\geq1\), let \(\ell\geq1\) be an integer, and let
\(V\geq0\), \(V\not\equiv0\), be locally integrable.  Assume that
\(P_V\) extends continuously to
\(\dot S^1_\ell(\Ucal)\times\dot S^1_\ell(\Ucal)\) and that
\(M_{\sqrt V}\) is compact.  If
\begin{equation}\label{eq:bubble-orbit-smallness}
 \sup_{h>0,\,\eta\in\Heis^n}
 P_V(\mathcal T_{h,\eta}B_0,\mathcal T_{h,\eta}B_0)
 <\bigl(2^{2/Q_\ell}-1\bigr)S_\ell,
\end{equation}
then
\[
 S_\ell<c_V\leq a_{\mathrm{mm}}
 <2^{2/Q_\ell}S_\ell.
\]
Equivalently,
\[
 d_{0,\ell}<c_{\mathrm{mm}}<2d_{0,\ell},
 \qquad
 c_{\mathrm{mm}}
 =\frac1{Q_\ell}a_{\mathrm{mm}}^{Q_\ell/2}.
\]
\end{proposition}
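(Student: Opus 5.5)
The argument has three parts. We already have \(S_\ell<c_V\) from Lemma~\ref{lem:centered-gap}. The middle inequality \(c_V\le a_{\mathrm{mm}}\) is a degree argument on the center--scale map. The upper bound \(a_{\mathrm{mm}}<2^{2/Q_\ell}S_\ell\) comes from evaluating \(F_V\) on the admissible path \(\zeta=\Phi\) itself.

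\emph{Linking inequality \(c_V\le a_{\mathrm{mm}}\).} Take any \(\zeta\in\Gamma_R\) and form the continuous map
\[
(\beta,\gamma)\circ\zeta:K_R\to\Heis^n\times\R\cong\R^{2n+2},
\]
which is continuous by Lemma~\ref{lem:busemann-coordinates}. On \(\partial K_R\) we have \(\zeta=\Phi\), so \eqref{eq:barycenter-calibration} shows this map equals the identity there. Since \(0\) lies in the interior of \(K_R\) and the identity of \(\partial K_R\) misses \(0\), the Brouwer degree of \((\beta,\gamma)\circ\zeta\) at \(0\) relative to \(K_R\) is \(1\). (Equivalently: if \(0\) were not attained, radial projection onto \(\partial K_R\) would give a retraction of the ball onto its boundary.) Hence some \(y_0\in K_R\) satisfies \(\beta(\zeta(y_0))=0\) and \(\gamma(\zeta(y_0))=0\). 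It follows that \(\max_{K_R}F_V\circ\zeta\ge F_V(\zeta(y_0))\ge c_V\), and taking the infimum over \(\zeta\) gives \(c_V\le a_{\mathrm{mm}}\). The only care needed is that \(\zeta\) takes values in \(\mathcal M_\ell\), where \((\beta,\gamma)\) is defined. This holds by definition of \(\Gamma_R\), and \(R\) is fixed large as in the construction of \(\Gamma_R\).

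\emph{Upper bound.} Use \(\zeta=\Phi\in\Gamma_R\). For every \(y=(\eta,s)\),
\[
F_V(\Phi(y))=E_\ell(\mathcal T_{e^s,\eta}B_0)+P_V(\mathcal T_{e^s,\eta}B_0,\mathcal T_{e^s,\eta}B_0)=S_\ell+P_V(\cdots).
\]
So \(a_{\mathrm{mm}}\le S_\ell+\sup_{h,\eta}P_V(\mathcal T_{h,\eta}B_0,\mathcal T_{h,\eta}B_0)\). The maximum over the compact set \(K_R\) is attained, hence is at most this supremum, and \eqref{eq:bubble-orbit-smallness} gives \(a_{\mathrm{mm}}<2^{2/Q_\ell}S_\ell\) with strict inequality.

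\emph{Translation to action levels.} On the Nehari scaling, a function \(u\in\mathcal M_\ell\) with quotient \(F_V(u)=a\) rescales to an action \(\tfrac1{Q_\ell}a^{Q_\ell/2}\). The map \(a\mapsto\tfrac1{Q_\ell}a^{Q_\ell/2}\) is increasing, sends \(S_\ell\) to \(d_{0,\ell}\), and sends \(2^{2/Q_\ell}S_\ell\) to \(2d_{0,\ell}\). Applying it to the chain of inequalities gives the equivalent action window.

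The main obstacle is the linking step. It depends entirely on the continuity of \((\beta,\gamma)\) on all of \(\mathcal M_\ell\) and on the exact calibration on \(\partial K_R\), both already provided by Lemma~\ref{lem:busemann-coordinates}. Granting that lemma, what remains is to apply the degree argument correctly, with \(K_R\) viewed as a closed ball in \(\R^{2n+2}\).
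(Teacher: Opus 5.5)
Your proposal is correct and follows the paper's proof essentially step for step. The steps match in turn: the boundary map \((\beta,\gamma)\circ\zeta=\mathrm{id}\) on \(\partial K_R\) has degree one and so forces a centered point, which with Lemma~\ref{lem:centered-gap} gives \(S_\ell<c_V\le a_{\mathrm{mm}}\). Evaluating on the canonical filling \(\Phi\) gives the strict upper bound, and the monotone map \(a\mapsto\frac1{Q_\ell}a^{Q_\ell/2}\) converts the window into action levels.
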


\begin{proof}
For every \(\zeta\in\Gamma_R\), the boundary map
\((\beta,\gamma)\circ\zeta\) agrees with the identity on
\(\partial K_R\).  Identify
\(\Heis^n\times\mathbb R\) with \(\mathbb R^{2n+2}\).  Then \(K_R\) is a
compact topological ball, \((0,0)\in\operatorname{int}K_R\), and the
boundary map avoids \((0,0)\).  Since
\((\beta,\gamma)\circ\zeta|_{\partial K_R}=\mathrm{id}\), the Brouwer
degree of \((\beta,\gamma)\circ\zeta\) at \((0,0)\) equals one.  In
particular, every \(\zeta\in\Gamma_R\) meets the centered constraint
\(\{(\beta,\gamma)=(0,0)\}\).  Lemma~\ref{lem:centered-gap}
gives
\[
 a_{\mathrm{mm}}\geq c_V>S_\ell.
\]
The canonical filling \(\Phi\) is admissible, and
\[
 F_V(\Phi(\eta,s))
 =S_\ell
 +P_V(\mathcal T_{e^s,\eta}B_0,
      \mathcal T_{e^s,\eta}B_0).
\]
Condition \eqref{eq:bubble-orbit-smallness} gives the strict upper bound.
The action conversion follows from
\(q_\ell=2Q_\ell/(Q_\ell-2)\).
\end{proof}

\begin{theorem}[A higher critical point]\label{thm:repulsive-existence}
Let \(n\geq1\), let \(\ell\geq1\) be an integer, and let
\(V\geq0\), \(V\not\equiv0\), be locally integrable.  Assume that
\(P_V\) extends continuously to
\(\dot S^1_\ell(\Ucal)\times\dot S^1_\ell(\Ucal)\), that
\(M_{\sqrt V}\) is compact, and that
\eqref{eq:bubble-orbit-smallness} holds.  Then the equation
\[
 \Gell_\ell v+Vv=|v|^{q_\ell-2}v
\]
has a nonzero one-sign weak solution \(v\in\dot S^1_\ell(\Ucal)\) such that
\[
 d_{0,\ell}<I_V(v)<2d_{0,\ell}.
\]
The concrete assumptions
\[
 V\geq0,\qquad
 0<\|V\|_{L^{Q_\ell/2}(d\mu_\ell)}
 <\bigl(2^{2/Q_\ell}-1\bigr)S_\ell
\]
are sufficient.
\end{theorem}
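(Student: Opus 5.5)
The plan is the standard min--max argument on the constraint manifold \(\mathcal M_\ell\), closed up by the compactness window of Theorem~\ref{thm:compact-potential-global}. Proposition~\ref{prop:minmax-window} already places the level \(a_{\mathrm{mm}}\) in \((S_\ell,2^{2/Q_\ell}S_\ell)\), with boundary data on \(\partial K_R\) strictly below \(c_V\leq a_{\mathrm{mm}}\) for \(R\) large.

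\emph{Step 1: a Palais--Smale sequence on the sphere.} I will apply a deformation lemma for the \(C^1\) functional \(F_V\) restricted to the \(C^1\) Hilbert-type manifold \(\mathcal M_\ell\). The boundary values \(\Phi|_{\partial K_R}\) sit strictly below the level, so deformations can be taken to fix them and hence preserve \(\Gamma_R\). This yields \(u_k\in\mathcal M_\ell\) with
\[
 F_V(u_k)\to a_{\mathrm{mm}},\qquad \|(F_V|_{\mathcal M_\ell})'(u_k)\|\to0.
\]
The constraint needs one adjustment to remain on the sphere. Since every \(\zeta\) may be replaced by \(|\zeta|\) with \(F_V(|\zeta|)\leq F_V(\zeta)\) and \(\beta,\gamma\) depend only on \(|u|\), I will also try to pick the sequence near nonnegative paths. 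This is the standard ``symmetric'' refinement of the deformation/Ekeland argument, and it gives \(\|u_k^-\|\to0\).

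\emph{Step 2: pass to the free functional.} Set \(v_k=a_{\mathrm{mm}}^{1/(q_\ell-2)}u_k\). The Lagrange multiplier of the constraint tends to \(a_{\mathrm{mm}}\), because \(\langle F_V'(u_k),u_k\rangle=2F_V(u_k)\). It follows that \(v_k\) is a Palais--Smale sequence for \(I_V\) at level
\[
 c_{\mathrm{mm}}=\tfrac1{Q_\ell}a_{\mathrm{mm}}^{Q_\ell/2}\in(d_{0,\ell},2d_{0,\ell}).
\]
Theorem~\ref{thm:compact-potential-global} then gives strong convergence to a critical point \(v\) of \(I_V\) with \(I_V(v)=c_{\mathrm{mm}}\), which settles the action window.

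\emph{Step 3: one sign.} This is the step I expect to be the main obstacle. If the refined sequence has \(u_k^-\to0\) strongly, then \(v\geq0\) and we are done. If the refinement is unavailable, I will argue instead from Lemma~\ref{lem:lattice-completion}. That lemma says a nodal critical point of \(I_V\) has both parts nonzero, each carrying action at least \(d_{0,\ell}\) by the Nehari identity together with \(V\geq0\) and the sharp inequality. The action would then be at least \(2d_{0,\ell}\), contradicting \(c_{\mathrm{mm}}<2d_{0,\ell}\). This second route looks cleaner and avoids the sign-refined deformation entirely, so I will use it first. The fallback is the path-truncation argument.

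\emph{Step 4: the concrete hypothesis.} Hölder's inequality, the unitarity of \(\mathcal T_{h,\eta}\) in \(L^{q_\ell}\), and \(\|B_0\|_{q_\ell}=1\) give
\[
 P_V(\mathcal T_{h,\eta}B_0,\mathcal T_{h,\eta}B_0)\leq\|V\|_{Q_\ell/2}.
\]
This holds because \(2/Q_\ell+2/q_\ell=1\), so the bound \(\|V\|_{Q_\ell/2}<(2^{2/Q_\ell}-1)S_\ell\) implies \eqref{eq:bubble-orbit-smallness}. Lemma~\ref{lem:lp-compact-multiplier} supplies compactness of \(M_{\sqrt V}\), and the same Hölder estimate supplies continuity of \(P_V\). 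Since \(V\not\equiv0\), the strict gap in Lemma~\ref{lem:centered-gap} applies.
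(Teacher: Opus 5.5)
Your proposal is correct and follows essentially the same route as the paper: a constrained Palais--Smale sequence at level \(a_{\mathrm{mm}}\), rescaling via the Lagrange multiplier to an \(I_V\)-Palais--Smale sequence at level \(c_{\mathrm{mm}}\in(d_{0,\ell},2d_{0,\ell})\), strong convergence from Theorem~\ref{thm:compact-potential-global}, one sign from the two-quantum nodal bound of Lemma~\ref{lem:lattice-completion}, and H\"older's inequality for the concrete \(L^{Q_\ell/2}\) hypotheses. The sign-refined deformation you sketch in Step~1 is not needed, and its claim \(\|u_k^-\|\to0\) is not justified as written, so you can simply drop it and keep the nodal-action argument you already chose.
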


\begin{proof}
The constraint \(\mathcal M_\ell=\{\|u\|_{q_\ell}^{q_\ell}=1\}\) is the
regular level set of \(G(u)=\|u\|_{q_\ell}^{q_\ell}\), whose derivative
satisfies \(G'(u)[u]=q_\ell\) on \(\mathcal M_\ell\).  Since
\(\dot S^1_\ell(\Ucal)\) is a Hilbert space and the critical-norm
functional is \(C^1\) with nonvanishing derivative on \(\mathcal M_\ell\),
the constraint is a complete \(C^1\) codimension-one submanifold.

The boundary value \(\max_{\partial K_R}F_V(\Phi)\) is strictly below
\(a_{\mathrm{mm}}\) for \(R\) large (the escaping potential decay proved
above).  The constrained minimax principle in the form of
\cite[Theorem~3.2 and Corollary~3.3]{Willem1996Minimax} applies: if no
constrained Palais--Smale sequence existed at level \(a_{\mathrm{mm}}\),
a tangential
pseudo-gradient field on \(\mathcal M_\ell\), supported in a thin energy
strip around \(a_{\mathrm{mm}}\) and vanishing below the boundary level,
would generate a deformation flow on \(\mathcal M_\ell\).  Integrating this
flow deforms any near-optimal filling relative to \(\partial K_R\) (where
the flow vanishes) below the min--max level, contradicting
\eqref{eq:minmax-quotient}.  The completeness of the constraint and the
strict boundary gap \(\max_{\partial K_R}F_V(\Phi)<a_{\mathrm{mm}}\) are
essential for the deformation to be well-defined and relative.

Thus there are \(u_k\in\mathcal M_\ell\) such that
\[
 F_V(u_k)\longrightarrow a_{\mathrm{mm}},
 \qquad
 d(F_V|_{\mathcal M_\ell})(u_k)\longrightarrow0.
\]

Consequently there are multipliers \(\theta_k\) for which
\[
 F_V'(u_k)-\theta_kG'(u_k)\longrightarrow0
 \quad\text{in }(\dot S^1_\ell)^*.
\]
Pairing with \(u_k\) and using
\(F_V'(u_k)[u_k]=2F_V(u_k)\) gives
\[
 \theta_k=\frac{2a_{\mathrm{mm}}}{q_\ell}+o(1).
\]
After division by two, the multiplier identity becomes
\[
 E_\ell(u_k,\varphi)+P_V(u_k,\varphi)
 -a_{\mathrm{mm}}
 \int_{\Ucal}|u_k|^{q_\ell-2}u_k\varphi\dd\mu_\ell
 =o(1)\|\varphi\|_{\dot S^1_\ell}.
\]
Hence
\[
 v_k=a_{\mathrm{mm}}^{1/(q_\ell-2)}u_k
\]
is an \(I_V\)-Palais--Smale sequence.  Since
\[
 \frac12-\frac1{q_\ell}=\frac1{Q_\ell},
 \qquad
 \frac{q_\ell}{q_\ell-2}=\frac{Q_\ell}{2},
\]
its action converges to
\[
 c_{\mathrm{mm}}
 =\frac1{Q_\ell}a_{\mathrm{mm}}^{Q_\ell/2}.
\]
Proposition~\ref{prop:minmax-window} places this level in the compactness
window \eqref{eq:compact-ps-window}.
Theorem~\ref{thm:compact-potential-global} therefore gives strong convergence to a
nonzero critical point \(v\).

If \(v\) changed sign, Lemma~\ref{lem:lattice-completion} would permit
testing with \(v^+\) and \(-v^-\).  The two parts satisfy separate Nehari
identities, and the pure sharp inequality gives
\[
 I_V(v)\geq2d_{0,\ell},
\]
contrary to the strict upper bound.  Thus \(v\) has one sign.

Finally, if \(V\in L^{Q_\ell/2}\), Hölder's inequality and
\(\|B_0\|_{q_\ell}=1\) give
\[
 P_V(\mathcal T_{h,\eta}B_0,\mathcal T_{h,\eta}B_0)
 \leq\|V\|_{L^{Q_\ell/2}},
\]
uniformly in the bubble frame.  Lemma~\ref{lem:lp-compact-multiplier}
and the displayed norm bound imply the compact-multiplier and
orbit-smallness hypotheses.
\end{proof}

The degree construction is analogous in purpose to barycenter methods for
critical Euclidean and real-hyperbolic potential equations, but the present
coordinates and mechanisms of loss of compactness are those of the reduced
Geller geometry; compare
\cite{Passaseo1996PositiveSolutions,LiWang2026Global}.

\begin{corollary}[Conditional strict positivity]\label{cor:potential-positive}
Assume the \(L^{Q_\ell/2}\) hypotheses in the final display of
Theorem~\ref{thm:repulsive-existence}, and let \(v\) be the solution supplied
by that theorem.  Choose its sign so that \(v\geq0\), and define the lifted
potential by
\[
 V^\uparrow(z,w,t)=V(z,t,|w|^2),
 \qquad w\in\C^\ell.
\]
Suppose
\[
 V^\uparrow\in L^p_{\mathrm{loc}}(\Heis^{n+\ell})
 \quad\text{for some }p>Q_\ell/2.
\]
Then \(v\) has a locally H\"older representative which is everywhere
strictly positive.  If \(V^\uparrow\) is smooth, the lifted and reduced
representatives are smooth.
\end{corollary}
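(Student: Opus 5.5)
The plan is to pass to the lifted function \(U=v^\uparrow\), which belongs to \(\dot S^1(\Heis^{n+\ell})\) by Lemma~\ref{lem:polar-lift}, and prove positivity and regularity on the full group.

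\emph{Step 1: the lifted weak equation.} The reduced weak equation \(\Gell_\ell v+Vv=|v|^{q_\ell-2}v\) holds with \(C_c^\infty(\Ucal)\) tests. I first extend it to tests smooth through \(\rho=0\), following Lemma~\ref{lem:axis}. The axis cutoffs \(\eta_\varepsilon\) remove the radial-gradient error, using the capacity bounds \(O(\varepsilon^{\ell-1})\) and \(O(|\log\varepsilon|^{-1})\). The potential term on the shrinking strip vanishes because \(P_V\) is continuous on \(\dot S^1_\ell\), by Hölder with \(V\in L^{Q_\ell/2}\) and the critical embedding. The needed convergence is \(P_V(v,\eta_\varepsilon\varphi)\to P_V(v,\varphi)\), which is absolute continuity of \(\int V|v||\varphi|\).

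Then I average an arbitrary \(\Phi\in C_c^\infty(\Heis^{n+\ell})\) over \(U(\ell)\), as in Lemma~\ref{lem:weak-lift}, and factor the average through \(|w|^2\) via Lemma~\ref{lem:factorization}. Polar integration then shows that \(U\) is a nonnegative weak solution of
\[
-\Delta_{\Heis^{n+\ell}}U+V^\uparrow U=U^{q_\ell-1}.
\]

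\emph{Step 2: local boundedness and Hölder continuity.} I rewrite the equation as \(-\Delta_{\Heis^{n+\ell}}U+cU=0\) with \(c=V^\uparrow-U^{q_\ell-2}\). The main obstacle is that \(U^{q_\ell-2}\) lies only in \(L^{Q_\ell/2}_{\mathrm{loc}}\), the borderline class for Moser iteration. I handle it by a Brezis--Kato type argument on the Heisenberg group. Test with truncated powers \(U\min(U,M)^{2\beta}\cdot\chi^2\), split \(U^{q_\ell-2}=a_1+a_2\) with \(\|a_1\|_{Q_\ell/2}\) small and \(a_2\) bounded, and absorb the \(a_1\) term using the Folland--Stein inequality. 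This gives \(U\in L^s_{\mathrm{loc}}\) for every \(s<\infty\), hence \(U^{q_\ell-2}\in L^p_{\mathrm{loc}}\) for some \(p>Q_\ell/2\). The coefficient \(V^\uparrow\) is already in this class by hypothesis. Standard subelliptic De Giorgi--Nash--Moser theory for sums of squares of Hörmander vector fields, with potential in \(L^p\), \(p>Q/2\), then gives local boundedness and local Hölder continuity.

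\emph{Step 3: strict positivity and smoothness.} For strict positivity I use the weak Harnack inequality for nonnegative supersolutions of \(-\Delta_{\Heis}U+c^+U\ge0\). Since \(U\ge0\) solves an equation of the form \(-\Delta_{\Heis}U+c^+U=c^-U\ge0\), this applies. If \(U\) vanished at a point, the Harnack inequality would give \(U\equiv0\) on a neighbourhood. Connectedness of \(\Heis^{n+\ell}\) then propagates this to \(U\equiv0\), contradicting \(v\ne0\).

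The Hölder representative is \(U(\ell)\)-invariant: invariance holds almost everywhere, and continuity makes it hold everywhere. Restricting to \(w=(\sqrt\rho,0,\dots,0)\) gives a locally Hölder, everywhere positive representative of \(v\) on \(\Ucal\). If \(V^\uparrow\) is smooth, subelliptic bootstrapping makes \(U\) smooth, since the right side \(U^{q_\ell-1}\) is smooth where \(U>0\). Lemma~\ref{lem:factorization} then yields a reduced representative smooth up to \(\rho=0\).
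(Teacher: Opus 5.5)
Your proposal is correct and follows essentially the same route as the paper. The paper first uses axis cutoffs and \(U(\ell)\)-averaging to obtain the full-space weak equation for \(U=v^\uparrow\). It then runs a Brezis--Kato iteration to place the coefficient in \(L^s_{\mathrm{loc}}\) with \(s>Q_\ell/2\), and applies subelliptic Moser/Harnack theory (citing Capogna--Danielli--Garofalo) to get local boundedness, H\"older continuity and an open--closed zero set, before finishing with subelliptic bootstrapping and invariant factorization.

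Your differences are inessential. You handle the potential strip term in reduced variables rather than on \(\Heis^{n+\ell}\), and you use the weak Harnack inequality for the supersolution \(-\Delta_{\Heis^{n+\ell}}U+c^+U\geq0\) instead of the full Harnack inequality. You also spell out the descent of the continuous representative by restriction to a ray, which the paper leaves implicit.
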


\begin{proof}
The polar lift and the axis cutoffs from
Lemmas~\ref{lem:polar-lift} and \ref{lem:axis} extend the reduced weak
identity to full-space tests, provided the potential term is controlled on
strips about the auxiliary axis.  For the potential strip estimate, write
\(W=V^\uparrow\), \(U=v^\uparrow\), \(N=n+\ell\), \(Q=Q_\ell\), and
\(q=q_\ell\).  Polar integration gives the three identities
\[
 \int_{\Heis^N}|W|^{Q/2}\,dZ\,dt
 =c_\ell\int_{\Ucal}|V|^{Q/2}\,d\mu_\ell,
 \qquad
 \int_{\Heis^N}|U|^q\,dZ\,dt
 =c_\ell\int_{\Ucal}|v|^q\,d\mu_\ell,
\]
\[
 \int_{\Heis^N}W|U|^2\,dZ\,dt
 =c_\ell\int_{\Ucal}Vv^2\,d\mu_\ell.
\]
In particular, \(W\in L^{Q/2}(\Heis^N)\) and
\(U\in L^q(\Heis^N)\).

Let \(\Phi\) be a compactly supported smooth \(U(\ell)\)-invariant test,
and let \(\chi_\varepsilon(w)=\eta_\varepsilon(|w|^2)\), where
\(\eta_\varepsilon\) is the power cutoff from Lemma~\ref{lem:axis} when
\(\ell>1\) and its logarithmic cutoff when \(\ell=1\).  Thus
\(\chi_\varepsilon\) vanishes near the auxiliary axis and equals one away
from a shrinking tube about that axis.  If \(\mathcal K\) contains the
support of \(\Phi\), H\"older's inequality and
\(2/Q+1/q+1/q=1\) give
\[
 \left|\int_{\mathcal K}WU(1-\chi_\varepsilon)\Phi\right|
 \leq
 \|W\|_{L^{Q/2}(\mathcal K)}
 \|U\|_{L^q(\mathcal K)}
 \|(1-\chi_\varepsilon)\Phi\|_{L^q(\mathcal K)}
 \longrightarrow0.
\]
The last factor tends to zero by dominated convergence, since the auxiliary
axis has Lebesgue measure zero.  The nonlinear strip term also tends to zero:
\[
 \left|\int_{\mathcal K}|U|^{q-2}U
       (1-\chi_\varepsilon)\Phi\right|
 \leq
 \|U\|_{L^q(\operatorname{supp}(1-\chi_\varepsilon)\cap\mathcal K)}^{q-1}
 \|\Phi\|_{L^q(\mathcal K)}
 \longrightarrow0.
\]
Finally, the gradient cutoff error tends to zero by the vanishing-capacity
estimate in Lemma~\ref{lem:axis}.  We may therefore test the reduced equation
with \(\chi_\varepsilon\Phi\), pass to the limit, and cancel the common polar
factor \(c_\ell\).  This proves the full weak identity
\[
 -\Delta_{\Heis^{n+\ell}}v^\uparrow+V^\uparrow v^\uparrow
 =(v^\uparrow)^{q_\ell-1}
\]
first for invariant tests.  Averaging an arbitrary compactly supported smooth
test over \(U(\ell)\) leaves all three pairings unchanged, so the identity
holds for every full-space test.
Lemma~\ref{lem:lifted-regularity} gives local boundedness, H\"older
regularity, Harnack's inequality, and strict positivity.  Smooth
subelliptic bootstrapping and invariant factorization give the last
assertion.
\end{proof}

Theorem~\ref{thm:repulsive-existence} and
Corollary~\ref{cor:potential-positive} prove
Theorem~\ref{thm:intro-potential}; the argument uses the open action window
\((d_{0,\ell},2d_{0,\ell})\), and neither endpoint is included.  Potentials
violating \eqref{eq:bubble-orbit-smallness} are not covered by the present
argument.

\appendix

\section{Splitting and regularity lemmas}
\label{app:technical}

\begin{lemma}[Derivative Brezis--Lieb lemma]
\label{lem:derivative-bl}
Let \(2<q<\infty\), let \(f_k\to f\) almost everywhere, and suppose that
\((f_k)\) is bounded in \(L^q\).  Then, with \(q'=q/(q-1)\),
\[
 \bigl\|
 |f_k|^{q-2}f_k
 -|f_k-f|^{q-2}(f_k-f)
 -|f|^{q-2}f
 \bigr\|_{q'}\longrightarrow0.
\]
\end{lemma}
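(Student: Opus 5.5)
The plan is to reduce the statement to a pointwise inequality plus an Egorov/uniform-integrability argument, in the style of the classical Brezis--Lieb lemma. Put \(g_k=f_k-f\) and \(\Psi(s)=|s|^{q-2}s\). Then \(g_k\) is bounded in \(L^q\) and \(g_k\to0\) almost everywhere. The quantity to control is
\[
 W_k=\bigl|\Psi(g_k+f)-\Psi(g_k)-\Psi(f)\bigr|,
\]
and the aim is \(\int W_k^{q'}\to0\).

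\emph{Step 1: a pointwise bound.} Since \(\Psi\) is \(C^1\) with \(|\Psi'(s)|=(q-1)|s|^{q-2}\), the mean value theorem gives, for every \(\varepsilon>0\), a constant \(C_\varepsilon\) with
\[
 \bigl|\Psi(a+b)-\Psi(a)\bigr|\le\varepsilon|a|^{q-1}+C_\varepsilon|b|^{q-1}
 \qquad(a,b\in\R).
\]
To see this, bound the left side by \((q-1)(|a|+|b|)^{q-2}|b|\). Then apply Young's inequality to \(|a|^{q-2}|b|\) with exponents \((q-1)/(q-2)\) and \(q-1\), which is valid because \(q>2\). Taking \(a=g_k\) and \(b=f\), we get
\[
 W_k\le\varepsilon|g_k|^{q-1}+(C_\varepsilon+1)|f|^{q-1}.
\]

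\emph{Step 2: truncation.} Define
\[
 G_{\varepsilon,k}=\bigl(W_k-\varepsilon|g_k|^{q-1}\bigr)^+ .
\]
Then \(0\le G_{\varepsilon,k}\le(C_\varepsilon+1)|f|^{q-1}\), and this dominating function lies in \(L^{q'}\) because \(f\in L^q\). To justify \(f\in L^q\), use Fatou's lemma together with the \(L^q\)-boundedness of \((f_k)\).

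Pointwise, \(G_{\varepsilon,k}\to0\) almost everywhere: since \(g_k\to0\) and \(\Psi\) is continuous, \(W_k\to|\Psi(f)-0-\Psi(f)|=0\) a.e. Dominated convergence therefore gives \(\|G_{\varepsilon,k}\|_{q'}\to0\) as \(k\to\infty\).

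\emph{Step 3: conclusion.} From \(W_k\le G_{\varepsilon,k}+\varepsilon|g_k|^{q-1}\) we get
\[
 \|W_k\|_{q'}\le\|G_{\varepsilon,k}\|_{q'}+\varepsilon\|g_k\|_q^{q-1}.
\]
Let \(M=\sup_k\|g_k\|_q<\infty\). Taking \(\limsup_k\) yields \(\limsup_k\|W_k\|_{q'}\le\varepsilon M^{q-1}\). Letting \(\varepsilon\downarrow0\) gives the claim.

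The only step needing care is the \(\varepsilon\)-Young inequality in Step 1, in particular the exponent bookkeeping that keeps both terms at power \(q-1\). Because \(q>2\), no case split near \(a=0\) is required. The rest is the standard Brezis--Lieb truncation argument.
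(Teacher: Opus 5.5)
Your proof is correct. It takes a different route from the paper's at the limiting step. Both arguments begin with the same scalar bound $|\Psi(a+b)-\Psi(a)|\leq C_q\bigl(|a|^{q-2}|b|+|b|^{q-1}\bigr)$ and then diverge.

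\emph{The paper's route.} It keeps the mixed term, raises it to the power $q'$, and applies H\"older's inequality,
$\int_A|g_k|^{(q-2)q'}|f|^{q'}\leq\bigl(\int_A|g_k|^q\bigr)^{(q-2)/(q-1)}\bigl(\int_A|f|^q\bigr)^{1/(q-1)}$.
This shows that $(W_k^{q'})$ is uniformly integrable, and Vitali's theorem finishes the proof.

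\emph{Your route.} You absorb the mixed term through the $\varepsilon$-Young inequality into $\varepsilon|g_k|^{q-1}$, which is merely bounded in $L^{q'}$. The truncated remainder is then handled by ordinary dominated convergence with the fixed majorant $(C_\varepsilon+1)|f|^{q-1}$. This is exactly the original Brezis--Lieb truncation scheme.

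\emph{What each buys.} Your argument needs nothing beyond dominated convergence. On the infinite-measure spaces where the lemma is used ($\Ucal$ with $d\mu_\ell$, or $\Heis^{n+\ell}$), Vitali's theorem requires tightness as well as uniform integrability. The paper's H\"older bound does supply tightness, by taking $A$ outside a finite-measure set carrying most of the $L^q$-mass of $f$, but it leaves this implicit. Your fixed integrable majorant removes the issue entirely. The paper's version is shorter and avoids the auxiliary parameter $\varepsilon$, at the cost of invoking the stronger convergence theorem.
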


\begin{proof}
Put \(g_k=f_k-f\) and \(N(s)=|s|^{q-2}s\).  Pointwise continuity gives
\[
 R_k:=N(g_k+f)-N(g_k)-N(f)\longrightarrow0
\quad\text{almost everywhere}.
\]
The scalar inequality
\[
 |N(a+b)-N(a)|
 \leq C_q\bigl(|a|^{q-2}|b|+|b|^{q-1}\bigr)
\]
implies
\[
 |R_k|^{q'}
 \leq C_q\bigl(
 |g_k|^{(q-2)q'}|f|^{q'}+|f|^q
 \bigr).
\]
For every measurable set \(A\), Hölder's inequality gives
\[
 \int_A|g_k|^{(q-2)q'}|f|^{q'}
 \leq
 \left(\int_A|g_k|^q\right)^{(q-2)/(q-1)}
 \left(\int_A|f|^q\right)^{1/(q-1)}.
\]
The first factor is uniformly bounded, while the second is uniformly small
when the \(L^q\)-mass of \(f\) on \(A\) is small.  Thus
\((|R_k|^{q'})\) is uniformly integrable, and Vitali's theorem completes the
proof.
\end{proof}

\begin{lemma}[Lattice operations in the energy completion]
\label{lem:lattice-completion}
For every real \(u\in\dot S^1_\ell(\Ucal)\), the functions
\[
 u^+=\max\{u,0\},\qquad u^-=\max\{-u,0\}
\]
belong to \(\dot S^1_\ell(\Ucal)\), and
\[
 E_\ell(u)=E_\ell(u^+)+E_\ell(u^-),
 \qquad E_\ell(u^+,u^-)=0.
\]
The critical power splits in the same way.  When the Hardy form is
continuous, it splits exactly; the same is true for every nonnegative
potential form \(P_V\) considered in Section~\ref{sec:compact-potentials}.
\end{lemma}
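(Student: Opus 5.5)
The plan is to reduce everything to smooth compactly supported functions through the radial lift, where the statement is the classical fact that \(W^{1,2}\)-type horizontal Sobolev functions are closed under truncation. Let \(u\in\dot S^1_\ell(\Ucal)\) be real and set \(U=u^\uparrow\). By Lemma~\ref{lem:polar-lift}, \(U\) lies in the \(U(\ell)\)-invariant subspace of \(\dot S^1(\Heis^{n+\ell})\). The first step is to show that \(U^\pm\in\dot S^1(\Heis^{n+\ell})\) with \(\nabla_H U^+=\mathbf 1_{\{U>0\}}\nabla_HU\) almost everywhere.

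To do this, we use that \(U\in L^{q_\ell}\) with a locally integrable horizontal gradient. The chain rule for the Lipschitz function \(s\mapsto s^+\) applies to each horizontal vector field \(X\) separately, because it is a Euclidean directional derivative along the flow lines of \(X\). To handle membership in the completion rather than only the weak-derivative space, we take approximants \(U_j\in C_c^\infty\) converging to \(U\) in energy and in \(L^{q_\ell}\). Then \(U_j^+\) is Lipschitz and compactly supported, and mollifying it gives elements of \(C_c^\infty\) lying in the completion. The family \(U_j^+\) is bounded in energy, so some subsequence converges weakly in \(\dot S^1\). Its weak limit must be \(U^+\), since the \(L^{q_\ell}\) convergence identifies the distributional limit. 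The completion is a closed subspace, so it contains \(U^+\).

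Next we descend. The function \(U^+=(u^+)^\uparrow\) is still \(U(\ell)\)-invariant, so surjectivity in Lemma~\ref{lem:polar-lift} gives \(u^+\in\dot S^1_\ell(\Ucal)\), and the same argument gives \(u^-\). The polar identities then transfer the pointwise gradient formula to the reduced setting. The gradients \(\nabla_HU^+\) and \(\nabla_HU^-\) have disjoint supports up to null sets, because on \(\{U=0\}\) the gradient vanishes almost everywhere. Consequently \(E_\ell(u^+,u^-)=0\), and \(E_\ell(u)=E_\ell(u^+)+E_\ell(u^-)\) follows after dividing by \(c_\ell\). The critical power splits trivially, since \(|u|^{q}=(u^+)^q+(u^-)^q\) pointwise.

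For the Hardy form (\(\ell>1\)) and for \(P_V\), both are integrals of a nonnegative weight times \(u\varphi\). On the core they satisfy \(\int w\,u^2=\int w(u^+)^2+\int w(u^-)^2\) with vanishing cross term, because \(u^+u^-=0\) pointwise. This identity extends to the completion by continuity: for \(P_V\) we use the continuous extension together with almost-everywhere convergence along a subsequence and Fatou's lemma, and for the Hardy form we use the inequality \eqref{eq:hardy}. The hardest step to make rigorous is passing the pointwise identity through the completion. Weak limits alone do not give the equality of forms, so to obtain strong convergence we use the known continuity of the truncation map \(U\mapsto U^+\) on first-order horizontal Sobolev spaces. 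With strong convergence, the disjoint-support identity \(w\,u^+u^-=0\) makes the split exact.
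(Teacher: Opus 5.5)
Your proof is correct, but it detours through the full group, whereas the paper stays in the reduced space.

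\textbf{The paper's route.} It takes \(u_j\in C_c^\infty(\Ucal)\) converging to \(u\) in energy and applies smooth contractions \(\theta_r^\pm\) of \(s\mapsto s^\pm\) with \(\theta_r^\pm(0)=0\) and \(|(\theta_r^\pm)'|\leq1\). The functions \(\theta_r^\pm(u_j)\) are themselves in \(C_c^\infty(\Ucal)\): their supports lie inside those of \(u_j\), hence away from \(\rho=0\). They are also bounded in energy. Passing to the limit first in \(r\) and then in \(j\) gives both \(u^\pm\in\dot S^1_\ell(\Ucal)\) and \(Du^\pm=\pm\mathbf 1_{\{\pm u>0\}}Du\) for every field \(D\) in \(\Gamma_\ell\). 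Only the classical local chain rule is needed. On compact subsets of \(\Ucal\) the fields \(X_j,Y_j,\sqrt\rho\,\partial_\rho,\sqrt\rho\,\partial_t\) span every direction and \(\rho^{\ell-1}\) is bounded above and below, so the reduced energy controls the Euclidean gradient locally.

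\textbf{Your route.} You prove the lattice property in \(\dot S^1(\Heis^{n+\ell})\) via the horizontal chain rule and descend by the surjectivity in Lemma~\ref{lem:polar-lift}. This is legitimate. The preimage is identified with \(u^+\) through the \(L^{q_\ell}\) polar identity. The energy splitting then follows from one full-space gradient identity and polarization of the similarity. The cost is an appeal to the axis-capacity argument behind that surjectivity, which this lemma does not need: your own mollified-truncation step would have run verbatim on \(\Ucal\).

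\textbf{Hardy and potential forms.} Your appeal to continuity of truncation in energy (a Marcus--Mizel-type fact) is true but heavier than necessary. Fatou's lemma by itself only gives an inequality. Since \(|a^\pm-b^\pm|\leq|a-b|\), convergence of approximants in \(L^2(\rho^{-1}d\mu_\ell)\) or \(L^2(V\,d\mu_\ell)\) passes directly to their truncations. Equivalently, the continuous extensions are just the integrals of the actual functions, because the \(L^2\)-limit of \(\sqrt V\,u_j\) agrees almost everywhere with \(\sqrt V\,u\). After that the splitting is pointwise, essentially as the paper treats it.
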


\begin{proof}
Choose \(u_j\in C_c^\infty(\Ucal)\) converging to \(u\) in the energy norm.
Approximate \(s^\pm\) by smooth normal contractions
\(\theta_r^\pm\) satisfying
\[
 \theta_r^\pm(0)=0,\qquad |(\theta_r^\pm)'|\leq1.
\]
The chain rule for each of the first-order fields in
\[
 \Gamma_\ell(f,f)
 =|\nabla_Hf|^2+4\rho(|f_\rho|^2+|f_t|^2)
\]
shows that \(\theta_r^\pm(u_j)\) are uniformly controlled in the energy
space.  Passing first in \(r\) and then in \(j\), using the local Sobolev
chain rule and the fact that the weak derivatives of \(u\) vanish almost
everywhere on \(\{u=0\}\), gives
\[
 D u^+=\mathbf1_{\{u>0\}}Du,\qquad
 D u^-=-\mathbf1_{\{u<0\}}Du
\]
for every field \(D\) occurring in \(\Gamma_\ell\).
The energy identities follow because the supports of these derivatives are
disjoint.  The critical-power and potential identities are pointwise.
Continuity of the Hardy form permits passage from the contraction
approximants and yields its splitting.
\end{proof}

\begin{lemma}[Local regularity for the lifted equation]
\label{lem:lifted-regularity}
Let \(Q=Q_\ell\), let \(U\geq0\) be a nonzero finite-energy weak solution on
\(\Heis^{n+\ell}\) of
\[
 -\Delta_{\Heis^{n+\ell}}U+WU=U^{2Q/(Q-2)-1},
\qquad W\geq0.
\]
Assume \(W\in L^p_{\mathrm{loc}}\) for some \(p>Q/2\).
Then \(U\) is locally bounded, locally H\"older continuous, and strictly
positive.  If \(W\) is smooth, then \(U\) is smooth.
\end{lemma}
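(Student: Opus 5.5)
The plan is the classical De Giorgi--Nash--Moser scheme adapted to the sub-Laplacian on \(\Heis^{n+\ell}\), with Harnack giving strict positivity and subelliptic bootstrapping giving smoothness. All the tools needed (Folland--Stein Sobolev inequality, horizontal Caccioppoli estimates, the Moser iteration on Carnot groups, and the Harnack inequality for \(-\Delta_{\Heis}+c\) with \(c\in L^p_{\mathrm{loc}}\), \(p>Q/2\)) are standard on stratified groups. The only nonstandard feature is that the nonlinearity is critical. So the first step is to rewrite the equation as a linear one,
\[
 -\Delta_{\Heis^{n+\ell}}U+c(x)U=0,\qquad c=W-U^{q-2},\quad q=\tfrac{2Q}{Q-2},
\]
and to note that the finite-energy hypothesis gives \(U\in L^q_{\mathrm{loc}}\), hence \(U^{q-2}\in L^{Q/2}_{\mathrm{loc}}\). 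This is exactly the borderline integrability, so a direct Moser argument cannot yet be applied to \(c\).

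\textbf{Step 1: a Brezis--Kato/Trudinger improvement.} Test the equation with \(\eta^2 U\min\{U,M\}^{2(\beta-1)}\) for a cutoff \(\eta\). On a small ball, split \(U^{q-2}=U^{q-2}\mathbf 1_{\{U>K\}}+U^{q-2}\mathbf 1_{\{U\le K\}}\). The first piece has small \(L^{Q/2}\) norm by absolute continuity, so it is absorbed through Folland--Stein. The second piece is bounded. The \(W\) term has a favorable sign because \(W\ge0\) and \(U\ge0\), so it can simply be dropped on the left. Letting \(M\to\infty\) gives \(U\in L^r_{\mathrm{loc}}\) for every \(r<\infty\), and then \(U^{q-2}\in L^{s}_{\mathrm{loc}}\) for some \(s>Q/2\). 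I expect this step to be the main obstacle: the absorption must be carried out uniformly in the truncation level, and the critical exponent leaves no slack.

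\textbf{Step 2: local boundedness.} With \(c^-\le U^{q-2}\in L^s_{\mathrm{loc}}\), \(s>Q/2\), and \(c^+=W\) (or \(W\in L^p_{\mathrm{loc}}\), \(p>Q/2\)), the standard subelliptic Moser iteration gives \(U\in L^\infty_{\mathrm{loc}}\).

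\textbf{Step 3: Hölder continuity and positivity.} The potential \(c\) now lies in \(L^{\min(p,s)}_{\mathrm{loc}}\) with exponent \(>Q/2\). The Harnack inequality for nonnegative weak solutions of \(-\Delta_{\Heis}U+cU=0\) on Carnot groups (via De Giorgi classes or Moser's weak Harnack) gives local Hölder continuity and \(\sup_B U\le C\inf_B U\). Since \(U\not\equiv0\), a chaining argument over the connected group \(\Heis^{n+\ell}\) yields \(U>0\) everywhere.

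\textbf{Step 4: smoothness.} If \(W\) is smooth, the right side \(U^{q-1}-WU\) is Hölder, and Folland's subelliptic Schauder estimates give \(U\in\Gamma^{2,\alpha}_{\mathrm{loc}}\). Because \(U>0\) is locally bounded below, the map \(U\mapsto U^{q-1}\) is smooth along the solution, so iterating the estimates gives \(U\in C^\infty\).
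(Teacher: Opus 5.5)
Your proposal is correct and follows essentially the same route as the paper. Both arguments run a Brezis--Kato truncation step that splits the critical coefficient into a bounded part and an $L^{Q/2}$-small part absorbed via Folland--Stein, then Moser iteration once the coefficient lies in $L^s_{\mathrm{loc}}$ with $s>Q/2$, then the subelliptic Harnack inequality (the paper cites Capogna--Danielli--Garofalo) for H\"older continuity and strict positivity via connectedness, and finally subelliptic bootstrapping when $W$ is smooth. The only difference is cosmetic: you drop the $W$-term by its favorable sign in the integrability step, whereas the paper keeps it inside $|a|=|U^{4/(Q-2)}-W|$.
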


\begin{proof}
Write the equation as
\[
 -\Delta_{\Heis^{n+\ell}}U
 =aU,\qquad a=U^{4/(Q-2)}-W.
\]
Initially \(U^{4/(Q-2)}\in L^{Q/2}_{\mathrm{loc}}\).
Since \(p>Q/2\), the same is true locally of \(W\), and hence of \(a\).
Fix \(B_r\Subset B_R\), choose a horizontal Lipschitz cutoff \(\eta\), and
put
\[
 U_L=\min\{U,L\},\qquad
 Y=U\,U_L^{(\beta-2)/2},\qquad
 \chi=\frac{Q}{Q-2},
\]
where \(\beta\geq2\).  Testing with
\(\eta^2U\,U_L^{\beta-2}\), and then using the local Folland--Stein
inequality, gives
\begin{equation}\label{eq:local-caccioppoli-iteration}
 \|\eta Y\|_{2\chi}^2
 \leq C\beta^2\left[
 \int (|\nabla_H\eta|^2+\eta^2)Y^2
 +\int |a|(\eta Y)^2
 \right].
\end{equation}
The truncation derivative has the favorable sign, and the constant is
independent of \(L\).  For a fixed \(\beta\), split
\(|a|=a_M+b_M\), where \(0\leq a_M\leq M\) and
\(\|b_M\|_{L^{Q/2}(B_R)}\to0\).  Choosing \(M=M(\beta)\) so that the
tail term is absorbed in the left-hand side of
\eqref{eq:local-caccioppoli-iteration}, and then letting \(L\to\infty\),
gives the finite exponent gain
\[
 \|U\|_{L^{\beta\chi}(B_r)}
 \leq C(\beta,r,R,a)\|U\|_{L^\beta(B_R)}.
\]
Only finitely many such gains are needed for any prescribed finite
exponent.  Therefore
\[
 U\in L^r_{\mathrm{loc}}\qquad\text{for every finite }r.
\]
This is the critical Brezis--Kato step; the same critical-to-finite
integrability threshold also appears in
\cite[Theorem~4.1]{Vassilev2006CarnotRegularity}.

Choose \(r\) so large that \(U^{4/(Q-2)}\) belongs locally to some
\(L^{p_0}\) with \(p_0>Q/2\).  Thus
\[
 a\in L^s_{\mathrm{loc}},\qquad
 s=\min\{p,p_0\}>\frac Q2.
\]
For this strictly supercritical exponent, set
\[
 r_s=\frac{2s}{s-1}<2\chi,\qquad
 \theta=\frac{Q}{2s}\in(0,1).
\]
Interpolation and Young's inequality give
\[
 \|f\|_{r_s}^2
 \leq\varepsilon\|f\|_{2\chi}^2
 +C\varepsilon^{-\theta/(1-\theta)}\|f\|_2^2.
\]
Applied to \(f=\eta Y\), this absorbs the coefficient term in
\eqref{eq:local-caccioppoli-iteration} and yields
\[
 \|U\|_{L^{\beta\chi}(B_r)}
 \leq
 \left[C\beta^\kappa
 \bigl(1+(R-r)^{-2}+\|a\|_{L^s(B_R)}^\kappa\bigr)
 \right]^{1/\beta}
 \|U\|_{L^\beta(B_R)}
\]
for some \(\kappa=\kappa(Q,s)\).  Iterating
\(\beta_j=2\chi^j\) on nested balls proves
\(U\in L^\infty_{\mathrm{loc}}\), since both
\(\sum_j\beta_j^{-1}\) and \(\sum_j j\beta_j^{-1}\) converge.

We apply the local theory of
\cite[Theorems~3.1 and~3.35]{CapognaDanielliGarofalo1993Harnack}; we record
the coefficient identification because the integrability threshold is
essential here.  On a bounded open set compactly contained in the Heisenberg
group, use their equation
\(\sum_jX_j^*A_j(x,u,Xu)=f(x,u,Xu)\) with principal exponent \(2\) and set
\[
 A_j(x,u,\xi)=\xi_j,
 \qquad f(x,u,\xi)=a(x)u.
\]
In their structural conditions, take \(c_1=1\),
\(f_2=|a|\), and
\[
 f_1=f_3=g_2=g_3=h_3=0.
\]
The growth inequality for \(f\) is then an equality, while the coercivity
condition reads
\(
 |\xi|^2\geq |\xi|^2-|a(x)|\,|u|^2
\).
All conditions involving the zero coefficient functions are automatic, and
the sole nontrivial source hypothesis is precisely
\(f_2\in L^s_{\mathrm{loc}}\) with \(s>Q/2\).  Moreover, the additive term
\(K(R)\) in their Harnack estimate vanishes because
\(f_3=g_3=h_3=0\).  The global finite-energy and critical-norm bounds imply
\(U\in S^{1,2}_{\mathrm{loc}}\), and the preceding iteration supplies the
local boundedness required by their H\"older theorem; its strengthened
condition on \(g_2,g_3\) is again automatic.  Consequently, whenever
\(B_{4r}\) is relatively compact,
\[
 \sup_{B_r}U\leq C\inf_{B_r}U.
\]
The locally H\"older representative therefore has an open and closed zero
set.  Since \(\Heis^{n+\ell}\) is connected and \(U\not\equiv0\), it follows
that \(U>0\) everywhere.  If \(W\) is smooth, positivity gives a positive
lower bound on each compact subset, so the nonlinearity is smooth there;
subelliptic interior estimates then bootstrap \(U\) to smoothness.
\end{proof}

\section{Normalization identities}
\label{app:normalization}

In the strict Hardy window
\[
 \ell\in\mathbb N,\qquad \ell>1,\qquad
 0<\lambda<(\ell-1)^2,
\]
the constants connecting the reduced and intrinsic formulations are as
follows:
\[
 Q_\ell=2(n+\ell)+2,\qquad
 q_\ell=\frac{2Q_\ell}{Q_\ell-2},
\]
\[
 \mathcal C_\ell U=\rho^{-(n+\ell)/2}U,\qquad
 \mu=(\ell-1)^2-\lambda,
\]
\[
 E_\ell(\mathcal C_\ell U,\mathcal C_\ell\Phi)
 =4\mathfrak b_{(\ell-1)^2}(U,\Phi),
\]
\[
 \mathsf H_\ell(\mathcal C_\ell U,\mathcal C_\ell\Phi)
 =4\int_{\CHyp^m}U\Phi\dd V_g,
\qquad
 \|\mathcal C_\ell U\|_{q_\ell}^{q_\ell}
 =4\|U\|_{q_\ell}^{q_\ell},
\]
\[
 S^H_{\lambda,\ell}
 =4^{1/(n+\ell+1)}S^{\mathrm{CH}}_{\mu,\ell},
\qquad
 d^{\mathrm{CH}}_{\mu,\ell}
 =\frac14d_{\lambda,\ell}.
\]
These identities use the metric of holomorphic sectional curvature \(-4\),
the volume normalization
in \eqref{eq:ch-volume-laplacian}, and the sub-Laplacian convention fixed in
Section~2.

\section*{Declarations}

\noindent\textbf{Acknowledgement.}
The author was supported by the National Natural Science Foundation of
China (Grant No.~12571127).

\medskip
\noindent\textbf{Conflict of interest.}
The author declares no conflict of interest.

\medskip
\noindent\textbf{Data and code availability.}\ Not applicable.

\begingroup
\sloppy
\bibliographystyle{amsplain}
\bibliography{project}

@article{JerisonLee1988Extremals,
  author   = {Jerison, David and Lee, John M.},
  title    = {Extremals for the {Sobolev} inequality on the {Heisenberg} group and the {CR} {Yamabe} problem},
  journal  = {Journal of the American Mathematical Society},
  volume   = {1},
  number   = {1},
  pages    = {1--13},
  year     = {1988},
  doi      = {10.1090/S0894-0347-1988-0924699-9},
  mrnumber = {924699}
}

@article{Vassilev2006CarnotRegularity,
  author  = {Vassilev, Dimiter},
  title   = {Existence of solutions and regularity near the characteristic boundary for sub-{Laplacian} equations on {Carnot} groups},
  journal = {Pacific Journal of Mathematics},
  volume  = {227},
  number  = {2},
  pages   = {361--397},
  year    = {2006},
  doi     = {10.2140/pjm.2006.227.361}
}

@article{CapognaDanielliGarofalo1993Harnack,
  author  = {Capogna, Luca and Danielli, Donatella and Garofalo, Nicola},
  title   = {An embedding theorem and the {Harnack} inequality for nonlinear
             subelliptic equations},
  journal = {Communications in Partial Differential Equations},
  volume  = {18},
  number  = {9--10},
  pages   = {1765--1794},
  year    = {1993},
  doi     = {10.1080/03605309308820992}
}

@article{IvanovVassilev2015Obata,
  author        = {Ivanov, Stefan and Vassilev, Dimiter},
  title         = {The {Lichnerowicz} and {Obata} first eigenvalue theorems and the {Obata} uniqueness result in the {Yamabe} problem on {CR} and quaternionic contact manifolds},
  journal       = {Nonlinear Analysis},
  volume        = {126},
  pages         = {262--323},
  year          = {2015},
  doi           = {10.1016/j.na.2015.06.024},
  eprint        = {1504.03259},
  archiveprefix = {arXiv},
  primaryclass  = {math.DG}
}

@article{LuYang2022SiegelHardySobolevMazya,
  author        = {Lu, Guozhen and Yang, Qiaohua},
  title         = {Sharp {Hardy--Sobolev--Maz'ya}, {Adams} and {Hardy--Adams} inequalities on the {Siegel} domains and complex hyperbolic spaces},
  journal       = {Advances in Mathematics},
  volume        = {405},
  pages         = {108512},
  year          = {2022},
  doi           = {10.1016/j.aim.2022.108512},
  eprint        = {2106.02103},
  archiveprefix = {arXiv},
  primaryclass  = {math.AP}
}

@article{LuYang2019PaneitzHyperbolic,
  author        = {Lu, Guozhen and Yang, Qiaohua},
  title         = {{Paneitz} operators on hyperbolic spaces and high order {Hardy--Sobolev--Maz'ya} inequalities on half spaces},
  journal       = {American Journal of Mathematics},
  volume        = {141},
  number        = {6},
  pages         = {1777--1816},
  year          = {2019},
  doi           = {10.1353/ajm.2019.0047},
  eprint        = {1703.08171},
  archiveprefix = {arXiv},
  primaryclass  = {math.AP}
}

@article{GrahamZworski2003Scattering,
  author        = {Graham, C. Robin and Zworski, Maciej},
  title         = {Scattering matrix in conformal geometry},
  journal       = {Inventiones Mathematicae},
  volume        = {152},
  number        = {1},
  pages         = {89--118},
  year          = {2003},
  doi           = {10.1007/s00222-002-0268-1},
  eprint        = {math/0109089},
  archiveprefix = {arXiv},
  primaryclass  = {math.DG}
}

@article{FrankGonzalezMonticelliTan2015CRExtension,
  author        = {Frank, Rupert L. and Gonz\'alez, Mar\'ia del Mar and Monticelli, Dario D. and Tan, Jinggang},
  title         = {An extension problem for the {CR} fractional {Laplacian}},
  journal       = {Advances in Mathematics},
  volume        = {270},
  pages         = {97--137},
  year          = {2015},
  doi           = {10.1016/j.aim.2014.09.026},
  eprint        = {1312.3381},
  archiveprefix = {arXiv},
  primaryclass  = {math.AP}
}

@article{BiroliSchindlerTintarev2003Symmetries,
  author  = {Biroli, Marco and Schindler, Ian and Tintarev, Cyril},
  title   = {Semilinear equations on {Hausdorff} spaces with symmetries},
  journal = {Rendiconti dell'Accademia Nazionale delle Scienze detta dei XL. Memorie di Matematica e Applicazioni (5)},
  volume  = {27},
  pages   = {175--189},
  year    = {2003}
}

@article{FrankLieb2012SharpHeisenberg,
  author  = {Frank, Rupert L. and Lieb, Elliott H.},
  title   = {Sharp constants in several inequalities on the {Heisenberg} group},
  journal = {Annals of Mathematics},
  volume  = {176},
  number  = {1},
  pages   = {349--381},
  year    = {2012},
  doi     = {10.4007/annals.2012.176.1.6}
}

@article{Benameur2008ProfileHeisenberg,
  author  = {Benameur, Jamel},
  title   = {Description du d{\'e}faut de compacit{\'e} de l'injection de {Sobolev} sur le groupe de {Heisenberg}},
  journal = {Bulletin of the Belgian Mathematical Society -- Simon Stevin},
  volume  = {15},
  number  = {4},
  pages   = {599--624},
  year    = {2008},
  doi     = {10.36045/bbms/1225893942}
}

@article{BianchiEgnell1991Sobolev,
  author  = {Bianchi, Gabriele and Egnell, Henrik},
  title   = {A note on the {Sobolev} inequality},
  journal = {Journal of Functional Analysis},
  volume  = {100},
  number  = {1},
  pages   = {18--24},
  year    = {1991},
  doi     = {10.1016/0022-1236(91)90099-Q}
}

@article{Loiudice2005ImprovedSobolev,
  author   = {Loiudice, Annunziata},
  title    = {Improved {Sobolev} inequalities on the {Heisenberg} group},
  journal  = {Nonlinear Analysis: Theory, Methods \& Applications},
  volume   = {62},
  number   = {5},
  pages    = {953--962},
  year     = {2005},
  doi      = {10.1016/j.na.2005.04.015},
  mrnumber = {2153462}
}

@article{ChenLuTangWang2026HeisenbergStability,
  author        = {Chen, Lu and Lu, Guozhen and Tang, Hanli and Wang, Bohan},
  title         = {Asymptotically sharp stability of {Sobolev} inequalities
                   on the {Heisenberg} group with dimension-dependent constants},
  journal       = {Journal de Math\'{e}matiques Pures et Appliqu\'{e}es},
  volume        = {206},
  pages         = {103832},
  year          = {2026},
  doi           = {10.1016/j.matpur.2025.103832},
  eprint        = {2507.12725},
  archiveprefix = {arXiv},
  primaryclass  = {math.AP}
}

@misc{ChenFanLiao2025GlobalCompactness,
  author        = {Chen, Hua and Fan, Yun-lu and Liao, Xin},
  title         = {Sharp stability of global compactness on the {Heisenberg}
                   group},
  year          = {2025},
  eprint        = {2506.11404},
  archiveprefix = {arXiv},
  primaryclass  = {math.AP},
  doi           = {10.48550/arXiv.2506.11404}
}

@article{DolbeaultEstebanFigalliFrankLoss2025Sharp,
  author  = {Dolbeault, Jean and Esteban, Maria J. and Figalli, Alessio and
             Frank, Rupert L. and Loss, Michael},
  title   = {Sharp stability for {Sobolev} and log-{Sobolev} inequalities,
             with optimal dimensional dependence},
  journal = {Cambridge Journal of Mathematics},
  volume  = {13},
  number  = {2},
  pages   = {359--430},
  year    = {2025}
}

@article{FrankSeiringer2008Hardy,
  author  = {Frank, Rupert L. and Seiringer, Robert},
  title   = {Non-linear ground state representations and sharp {Hardy}
             inequalities},
  journal = {Journal of Functional Analysis},
  volume  = {255},
  number  = {12},
  pages   = {3407--3430},
  year    = {2008},
  doi     = {10.1016/j.jfa.2008.05.015}
}

@article{Anker1990Multipliers,
  author  = {Anker, Jean-Philippe},
  title   = {{$L_p$} {Fourier} multipliers on {Riemannian} symmetric spaces
             of the noncompact type},
  journal = {Annals of Mathematics},
  series  = {2},
  volume  = {132},
  number  = {3},
  pages   = {597--628},
  year    = {1990}
}

@article{AnkerDamekYacoub1996HarmonicAN,
  author  = {Anker, Jean-Philippe and Damek, Ewa and Yacoub, Chokri},
  title   = {Spherical analysis on harmonic {$AN$} groups},
  journal = {Annali della Scuola Normale Superiore di Pisa,
             Classe di Scienze},
  series  = {4},
  volume  = {23},
  number  = {4},
  pages   = {643--679},
  year    = {1996}
}

@article{AnkerJi1999HeatGreen,
  author  = {Anker, Jean-Philippe and Ji, Lizhen},
  title   = {Heat kernel and {Green} function estimates on noncompact
             symmetric spaces},
  journal = {Geometric and Functional Analysis},
  volume  = {9},
  pages   = {1035--1091},
  year    = {1999},
  doi     = {10.1007/s000390050107}
}

@article{AnkerPierfelice2009NLS,
  author  = {Anker, Jean-Philippe and Pierfelice, Vittoria},
  title   = {Nonlinear {Schr\"odinger} equation on real hyperbolic spaces},
  journal = {Annales de l'Institut Henri Poincar\'e,
             Analyse Non Lin\'eaire},
  volume  = {26},
  number  = {5},
  pages   = {1853--1869},
  year    = {2009},
  doi     = {10.1016/j.anihpc.2009.01.009}
}

@article{AnkerPierfeliceVallarino2015Wave,
  author  = {Anker, Jean-Philippe and Pierfelice, Vittoria and
             Vallarino, Maria},
  title   = {The wave equation on {Damek--Ricci} spaces},
  journal = {Annali di Matematica Pura ed Applicata},
  volume  = {194},
  number  = {3},
  pages   = {731--758},
  year    = {2015},
  doi     = {10.1007/s10231-013-0395-x}
}

@article{Strichartz1983CompleteLaplacian,
  author  = {Strichartz, Robert S.},
  title   = {Analysis of the {Laplacian} on the complete {Riemannian} manifold},
  journal = {Journal of Functional Analysis},
  volume  = {52},
  number  = {1},
  pages   = {48--79},
  year    = {1983},
  doi     = {10.1016/0022-1236(83)90090-3}
}

@article{LiLuYang2022HyperbolicBN,
  author        = {Li, Jungang and Lu, Guozhen and Yang, Qiaohua},
  title         = {Higher order {Brezis--Nirenberg} problem on hyperbolic spaces: Existence, nonexistence and symmetry of solutions},
  journal       = {Advances in Mathematics},
  volume        = {399},
  pages         = {108259},
  year          = {2022},
  doi           = {10.1016/j.aim.2022.108259},
  eprint        = {2107.03967},
  archiveprefix = {arXiv},
  primaryclass  = {math.AP}
}

@article{BrezisNirenberg1983Critical,
  author  = {Brezis, Ha{\"\i}m and Nirenberg, Louis},
  title   = {Positive solutions of nonlinear elliptic equations involving critical {Sobolev} exponents},
  journal = {Communications on Pure and Applied Mathematics},
  volume  = {36},
  number  = {4},
  pages   = {437--477},
  year    = {1983},
  doi     = {10.1002/cpa.3160360405}
}

@article{BenciCerami1990Positive,
  author  = {Benci, Vieri and Cerami, Giovanna},
  title   = {Existence of positive solutions of the equation
             \({-\Delta u+a(x)u=u^{(N+2)/(N-2)}}\) in \(\mathbb R^N\)},
  journal = {Journal of Functional Analysis},
  volume  = {88},
  number  = {1},
  pages   = {90--117},
  year    = {1990},
  doi     = {10.1016/0022-1236(90)90120-A}
}

@article{Passaseo1996PositiveSolutions,
  author  = {Passaseo, Donato},
  title   = {Some sufficient conditions for the existence of positive
             solutions to the equation
             \({-\Delta u+a(x)u=u^{2^*-1}}\) in bounded domains},
  journal = {Annales de l'Institut Henri Poincar{\'e} C,
             Analyse non lin{\'e}aire},
  volume  = {13},
  number  = {2},
  pages   = {185--227},
  year    = {1996},
  doi     = {10.1016/S0294-1449(16)30102-0}
}

@article{Struwe1984Global,
  author  = {Struwe, Michael},
  title   = {A global compactness result for elliptic boundary value problems
             involving limiting nonlinearities},
  journal = {Mathematische Zeitschrift},
  volume  = {187},
  number  = {4},
  pages   = {511--517},
  year    = {1984},
  doi     = {10.1007/BF01174186}
}

@article{Friedrichs1934Spektraltheorie,
  author  = {Friedrichs, Kurt},
  title   = {Spektraltheorie halbbeschr{\"a}nkter Operatoren und Anwendung
             auf die Spektralzerlegung von Differentialoperatoren. I},
  journal = {Mathematische Annalen},
  volume  = {109},
  pages   = {465--487},
  year    = {1934},
  doi     = {10.1007/BF01449150}
}

@article{ManciniSandeep2008Semilinear,
  author  = {Mancini, Gianni and Sandeep, Kunnath},
  title   = {On a semilinear elliptic equation in \(\mathbb H^n\)},
  journal = {Annali della Scuola Normale Superiore di Pisa,
             Classe di Scienze},
  series  = {5},
  volume  = {7},
  number  = {4},
  pages   = {635--671},
  year    = {2008},
  doi     = {10.2422/2036-2145.2008.4.03}
}

@article{PalatucciPiccininiTemperini2025Struwe,
  author        = {Palatucci, Giampiero and Piccinini, Mirco and
                   Temperini, Letizia},
  title         = {{Struwe}'s global compactness and energy approximation of
                   the critical {Sobolev} embedding in the {Heisenberg} group},
  journal       = {Advances in Calculus of Variations},
  volume        = {18},
  number        = {3},
  pages         = {731--754},
  year          = {2025},
  doi           = {10.1515/acv-2024-0044},
  eprint        = {2308.01153},
  archiveprefix = {arXiv},
  primaryclass  = {math.AP}
}

@article{BessonCourtoisGallot1995Entropies,
  author  = {Besson, G{\'e}rard and Courtois, Gilles and Gallot, Sylvestre},
  title   = {Entropies et rigidit{\'e}s des espaces localement sym{\'e}triques
             de courbure strictement n{\'e}gative},
  journal = {Geometric and Functional Analysis},
  volume  = {5},
  number  = {5},
  pages   = {731--799},
  year    = {1995},
  doi     = {10.1007/BF01897050}
}

@book{Willem1996Minimax,
  author    = {Willem, Michel},
  title     = {Minimax Theorems},
  series    = {Progress in Nonlinear Differential Equations and
               their Applications},
  volume    = {24},
  publisher = {Birkh{\"a}user},
  address   = {Boston},
  year      = {1996},
  doi       = {10.1007/978-1-4612-4146-1}
}

@article{MalchiodiUguzzoni2002Webster,
  author  = {Malchiodi, Andrea and Uguzzoni, Francesco},
  title   = {A perturbation result for the {Webster} scalar curvature
             problem on the {CR} sphere},
  journal = {Journal de Math\'{e}matiques Pures et Appliqu\'{e}es},
  volume  = {81},
  number  = {10},
  pages   = {983--997},
  year    = {2002},
  doi     = {10.1016/S0021-7824(01)01249-1}
}

@article{QiangTangZhang2026Nondegeneracy,
  author  = {Qiang, Jiechen and Tang, Zhongwei and Zhang, Yichen},
  title   = {On the non-degeneracy and existence of sign-changing solutions
             to elliptic problem on the {Heisenberg} group},
  journal = {Nonlinear Analysis},
  volume  = {264},
  pages   = {113999},
  year    = {2026},
  doi     = {10.1016/j.na.2025.113999}
}

@article{Folland1972TangentialCR,
  author   = {Folland, Gerald B.},
  title    = {The tangential {Cauchy--Riemann} complex on spheres},
  journal  = {Transactions of the American Mathematical Society},
  volume   = {171},
  pages    = {83--133},
  year     = {1972},
  doi      = {10.1090/S0002-9947-1972-0309156-X},
  mrnumber = {0309156}
}

@article{TangZhangZhang2024Minimizer,
  author  = {Tang, Zhongwei and Zhang, Bingwei and Zhang, Yichen},
  title   = {Existence of a minimizer for the {Bianchi--Egnell}
             inequality on the {Heisenberg} group},
  journal = {Journal of Geometric Analysis},
  volume  = {34},
  number  = {5},
  pages   = {148},
  year    = {2024},
  doi     = {10.1007/s12220-024-01578-w}
}

@article{Savini2023NaturalMaps,
  author  = {Savini, Alessio},
  title   = {Natural maps for measurable cocycles of compact hyperbolic
             manifolds},
  journal = {Journal of the Institute of Mathematics of Jussieu},
  volume  = {22},
  number  = {1},
  pages   = {421--448},
  year    = {2023},
  doi     = {10.1017/S1474748021000475}
}

@misc{LiWang2026Global,
  author        = {Li, Jungang and Wang, Zhiwei},
  title         = {Two-channel global compactness and existence for critical
                   {GJMS} equations on hyperbolic space},
  year          = {2024},
  eprint        = {2411.14719v3},
  archiveprefix = {arXiv},
  primaryclass  = {math.AP},
  note          = {arXiv preprint, v3, revised 8 August 2026},
  doi           = {10.48550/arXiv.2411.14719}
}

@article{Koenig2023BianchiEgnell,
  author        = {K{\"o}nig, Tobias},
  title         = {On the sharp constant in the {Bianchi--Egnell} stability
                   inequality},
  journal       = {Bulletin of the London Mathematical Society},
  volume        = {55},
  number        = {4},
  pages         = {2070--2075},
  year          = {2023},
  doi           = {10.1112/blms.12837},
  eprint        = {2210.08482},
  archiveprefix = {arXiv},
  primaryclass  = {math.AP}
}

@article{Koenig2025SobolevMinimizer,
  author  = {K{\"o}nig, Tobias},
  title   = {Stability for the {Sobolev} inequality: Existence of a
             minimizer},
  journal = {Journal of the European Mathematical Society},
  year    = {2025},
  note    = {Published online first},
  doi     = {10.4171/JEMS/1582}
}

@article{FigalliGlaudo2020SharpStability,
  author        = {Figalli, Alessio and Glaudo, Federico},
  title         = {On the sharp stability of critical points of the {Sobolev}
                   inequality},
  journal       = {Archive for Rational Mechanics and Analysis},
  volume        = {237},
  number        = {1},
  pages         = {201--258},
  year          = {2020},
  doi           = {10.1007/s00205-020-01506-6},
  eprint        = {1910.11351},
  archiveprefix = {arXiv},
  primaryclass  = {math.AP}
}

@article{Aubin1976Sobolev,
  author  = {Aubin, Thierry},
  title   = {Probl{\`e}mes isop{\'e}rim{\'e}triques et espaces de {Sobolev}},
  journal = {Journal of Differential Geometry},
  volume  = {11},
  number  = {4},
  pages   = {573--598},
  year    = {1976},
  doi     = {10.4310/jdg/1214433725}
}

@article{Talenti1976BestConstant,
  author  = {Talenti, Giorgio},
  title   = {Best constant in {Sobolev} inequality},
  journal = {Annali di Matematica Pura ed Applicata},
  series  = {4},
  volume  = {110},
  pages   = {353--372},
  year    = {1976},
  doi     = {10.1007/BF02418013}
}

@article{ChenFrankWeth2013Remainder,
  author  = {Chen, Shibing and Frank, Rupert L. and Weth, Tobias},
  title   = {Remainder terms in the fractional {Sobolev} inequality},
  journal = {Indiana University Mathematics Journal},
  volume  = {62},
  number  = {4},
  pages   = {1381--1397},
  year    = {2013},
  doi     = {10.1512/iumj.2013.62.5065},
  eprint        = {1205.5666},
  archiveprefix = {arXiv},
  primaryclass  = {math.AP}
}

@article{EdmundsFortunatoJannelli1990Biharmonic,
  author  = {Edmunds, David E. and Fortunato, Donato and Jannelli, Enrico},
  title   = {Critical exponents, critical dimensions and the biharmonic operator},
  journal = {Archive for Rational Mechanics and Analysis},
  volume  = {112},
  number  = {3},
  pages   = {269--289},
  year    = {1990},
  doi     = {10.1007/BF00381236}
}

@article{Gazzola1998Polyharmonic,
  author  = {Gazzola, Filippo},
  title   = {Critical growth problems for polyharmonic operators},
  journal = {Proceedings of the Royal Society of Edinburgh Section A: Mathematics},
  volume  = {128},
  number  = {2},
  pages   = {251--263},
  year    = {1998},
  doi     = {10.1017/S0308210500012774}
}

@article{GazzolaGrunauSquassina2003Critical,
  author  = {Gazzola, Filippo and Grunau, Hans-Christoph and Squassina, Marco},
  title   = {Existence and nonexistence results for critical growth biharmonic elliptic equations},
  journal = {Calculus of Variations and Partial Differential Equations},
  volume  = {18},
  number  = {2},
  pages   = {117--143},
  year    = {2003},
  doi     = {10.1007/s00526-002-0182-9}
}

@article{BhaktaSandeep2012Poincare,
  author        = {Bhakta, Mousomi and Sandeep, Kunnath},
  title         = {Poincar{\'e}--{Sobolev} equations in the hyperbolic space},
  journal       = {Calculus of Variations and Partial Differential Equations},
  volume        = {44},
  number        = {1--2},
  pages         = {247--269},
  year          = {2012},
  doi           = {10.1007/s00526-011-0433-8},
  eprint        = {1103.4779},
  archiveprefix = {arXiv},
  primaryclass  = {math.AP}
}

@article{BhaktaGangulyKarmakarMazumdar2025Stability,
  author        = {Bhakta, Mousomi and Ganguly, Debdip and Karmakar, Debabrata and Mazumdar, Saikat},
  title         = {Sharp quantitative stability of {Poincar\'e--Sobolev} inequality in the hyperbolic space and applications to fast diffusion flows},
  journal       = {Calculus of Variations and Partial Differential Equations},
  volume        = {64},
  number        = {1},
  pages         = {23},
  year          = {2025},
  doi           = {10.1007/s00526-024-02878-3},
  eprint        = {2207.11024},
  archiveprefix = {arXiv},
  primaryclass  = {math.AP}
}

@article{BhaktaGangulyKarmakarMazumdar2025Struwe,
  author        = {Bhakta, Mousomi and Ganguly, Debdip and Karmakar, Debabrata and Mazumdar, Saikat},
  title         = {Sharp quantitative stability of {Struwe}'s decomposition of the {Poincar\'e--Sobolev} inequalities on the hyperbolic space: Part {I}},
  journal       = {Advances in Mathematics},
  volume        = {479},
  pages         = {110447},
  year          = {2025},
  doi           = {10.1016/j.aim.2025.110447},
  eprint        = {2211.14618},
  archiveprefix = {arXiv},
  primaryclass  = {math.AP}
}

@article{FollandStein1974Heisenberg,
  author  = {Folland, Gerald B. and Stein, Elias M.},
  title   = {Estimates for the \(\bar\partial_b\) complex and analysis on the {Heisenberg} group},
  journal = {Communications on Pure and Applied Mathematics},
  volume  = {27},
  number  = {4},
  pages   = {429--522},
  year    = {1974},
  doi     = {10.1002/cpa.3160270403}
}

@article{Geller1980Kohn,
  author  = {Geller, Daryl},
  title   = {The {Laplacian} and the {Kohn Laplacian} for the sphere},
  journal = {Journal of Differential Geometry},
  volume  = {15},
  number  = {3},
  pages   = {417--435},
  year    = {1980},
  doi     = {10.4310/jdg/1214435651}
}

@article{FlynnLuYang2025CRTrace,
  author        = {Flynn, Joshua and Lu, Guozhen and Yang, Qiaohua},
  title         = {Conformally covariant boundary operators and sharp higher order {CR} {Sobolev} trace inequalities on the {Siegel} domain and complex ball},
  journal       = {Journal f{\"u}r die reine und angewandte Mathematik},
  volume        = {824},
  pages         = {39--87},
  year          = {2025},
  doi           = {10.1515/crelle-2025-0017},
  eprint        = {2311.09956},
  archiveprefix = {arXiv},
  primaryclass  = {math.AP}
}
\endgroup

\end{document}